\title[Coherent sheaves on weighted projective lines]{Introduction to
coherent sheaves on weighted projective lines}
\thanks{Version from September 18, 2010.}
\author[Xiao-Wu Chen]{Xiao-Wu Chen}
\address{Xiao-Wu Chen\\ Department of Mathematics\\ University of Science and
Technology of China\\ Hefei 230026, Anhui\\ PR China.}
\email{xwchen@ustc.edu.cn}
\author[Henning Krause]{Henning Krause}
\address{Henning Krause\\ Fakult\"{a}t f\"ur Mathematik\\
Universit\"at Bielefeld\\ D-33501 Bielefeld\\ Germany.}
\email{hkrause@math.uni-bielefeld.de}
\theoremstyle{plain}
\newtheorem{lem}{Lemma}[subsection]
\newtheorem{prop}[lem]{Proposition}
\newtheorem{cor}[lem]{Corollary}
\newtheorem{thm}[lem]{Theorem}
\theoremstyle{remark}
\theoremstyle{definition}
\newtheorem{rem}[lem]{Remark}
\newtheorem{exm}[lem]{Example}
\numberwithin{equation}{subsection}
\DeclareMathOperator{\add}{add}
\DeclareMathOperator{\Aut}{Aut}
\DeclareMathOperator{\coh}{coh}
\DeclareMathOperator{\Coker}{Coker}
\DeclareMathOperator{\End}{End}
\DeclareMathOperator{\disc}{disc}
\DeclareMathOperator{\Hom}{Hom}
\DeclareMathOperator{\Ext}{Ext}
\DeclareMathOperator{\Ker}{Ker}
\DeclareMathOperator{\rad}{rad}
\DeclareMathOperator{\Tor}{Tor}
\DeclareMathOperator{\PGL}{PGL}
\DeclareMathOperator{\Sq}{Sq}
\DeclareMathOperator{\rank}{rank}
\DeclareMathOperator{\card}{card}
\DeclareMathOperator{\id}{id}
\DeclareMathOperator{\Id}{Id}
\DeclareMathOperator{\proj}{proj}
\DeclareMathOperator{\Proj}{Proj}
\DeclareMathOperator{\pdim}{proj.dim}
\DeclareMathOperator{\idim}{inj.dim}
\DeclareMathOperator{\soc}{soc}
\DeclareMathOperator{\Spec}{Spec}
\DeclareMathOperator{\rep}{rep}
\DeclareMathOperator{\Supp}{supp}
\DeclareMathOperator{\gldim}{gl.dim}
\DeclareMathOperator{\tor}{tor}
\DeclareMathOperator{\ind}{ind}
\DeclareMathOperator{\vect}{vect}
\renewcommand{\mod}{\operatorname{mod}}
\renewcommand{\Im}{\operatorname{Im}}
\renewcommand{\top}{\operatorname{top}}
\renewcommand{\dim}{\operatorname{dim}}
\newcommand{\RHom}{\operatorname{{\bfR}Hom}}
\newcommand{\colim}[1]{\mathop{\mathrm{colim}}\limits_{#1}}
\newcommand{\lto}[1][{}]{\stackrel{#1}{\longrightarrow}}
\newcommand{\rto}[1][{}]{\stackrel{#1}{\longleftarrow}}
\newcommand{\xto}{\xrightarrow}
\newcommand{\smatrix}[1]{\left[\begin{smallmatrix}#1\end{smallmatrix}\right]}
\newcommand{\op}{\mathrm{op}}
\newcommand{\can}{\mathrm{can}}
\newcommand{\qis}{\mathrm{qis}}
\def\a{\alpha}
\def\b{\beta}
\def\e{\varepsilon}
\def\d{\delta}
\def\g{\gamma}
\def\p{\phi}
\def\r{\rho}
\def\s{\sigma}
\def\la{\lambda}
\def\Ga{\Gamma}
\def\La{\Lambda}
\def\Si{\Sigma}
\def\A{{\mathcal A}}
\def\B{{\mathcal B}}
\def\C{{\mathcal C}}
\def\F{{\mathcal F}}
\def\G{{\mathcal G}}
\def\L{{\mathcal L}}
\def\N{{\mathcal N}}
\def\Oc{{\mathcal O}}
\def\Y{{\mathcal Y}}
\def\T{{\mathcal T}}
\def\bbA{\mathbb A}
\def\bbP{\mathbb P}
\def\bbQ{\mathbb Q}
\def\bbX{\mathbb X}
\def\bbZ{\mathbb Z}
\def\bfC{\mathbf C}
\def\bfD{\mathbf D}
\def\bfK{\mathbf K}
\def\bfL{\mathbf L}
\def\bfp{\mathbf p}
\def\bfR{\mathbf R}
\def\bfX{\mathbf X}
\def\fra{\mathfrak a}
\def\frb{\mathfrak b}
\def\frm{\mathfrak m}
\def\frp{\mathfrak p}
\def\frq{\mathfrak q}
\newcommand{\bflambda}{\boldsymbol\lambda}
\newcommand{\ox}{\vec{x}}
\newcommand{\oc}{\vec{c}}
\begin{document}

\begin{abstract}
These notes provide a description of the abelian categories that arise
as categories of coherent sheaves on weighted projective lines. Two
different approaches are presented: one is based on a list of axioms and
the other yields a description in terms of expansions of abelian
categories.

A weighted projective line is obtained from a projective line by
inserting finitely many weights. So we describe the category of
coherent sheaves on a projective line in some detail, and the
insertion of weights amounts to adding simple objects. We call this
process `expansion' and treat it axiomatically. Thus most of these
notes are devoted to studying abelian categories, including a brief
discussion of tilting theory. We provide many details and have tried
to keep the exposition as self-contained as possible.
\end{abstract}

\maketitle

\setcounter{tocdepth}{1}
\tableofcontents

\section*{Introduction}\label{se:intro}

We begin with a brief description of weighted projective lines and
their categories of coherent sheaves.

Let $k$ be an algebraically closed field, let $\bbP^1_k$ be the
projective line over $k$, let $\bflambda=(\la_1,\dots,\la_n)$ be a
(possibly empty) collection of distinct closed points of $\bbP^1_k$,
and let $\bfp=(p_1,\dots,p_n)$ be a \index{weight sequence}
\emph{weight sequence}, that is, a sequence of positive integers.  The
triple $\bbX = (\bbP^1_k,\bflambda,\bfp)$ is called a \index{weighted
projective line} \emph{weighted projective line}.  Geigle and Lenzing
\cite{GL1987} have associated to each weighted projective line a
category $\coh\bbX$ of coherent sheaves on $\bbX$, which is the
quotient category of the category of finitely generated
$\bfL(\bfp)$-graded $S(\bfp,\bflambda)$-modules, modulo the Serre
subcategory of finite length modules. Here $\bfL(\bfp)$ is the rank 1
additive group
\[
\bfL(\bfp) = \langle \ox_1,\dots,\ox_n,\oc \mid p_1 \ox_1 =
\dots = p_n \ox_n = \oc \rangle,
\]
and
\[
S(\bfp,\bflambda) = k[u,v,x_1,\dots,x_n] / ( x_i^{p_i} +  \la_{i1} u
- \la_{i0} v ),
\]
with grading $\deg u = \deg v = \oc$ and $\deg x_i = \ox_i$, where
$\la_i = [\la_{i0}:\la_{i1}]$ in $\bbP^1_k$.  Geigle and Lenzing
showed that $\coh\bbX$ is a hereditary abelian category with finite
dimensional Hom and Ext spaces.  The free module $S(\bfp,\bflambda)$
yields a structure sheaf $\Oc$, and shifting the grading gives twists
$E(\ox)$ for any sheaf $E$ and $\ox\in\bfL(\bfp)$.

Every sheaf is the direct sum of a torsion-free sheaf and a finite
length sheaf. A torsion-free sheaf has a finite filtration by line
bundles, that is, sheaves of the form $\Oc(\ox)$. The finite length
sheaves are easily described as follows.  There are simple sheaves
$S_x$ ($x\in \bbP^1_k\smallsetminus \bflambda$) and $S_{ij}$ ($1\le
i\le n$, $1\le j\le p_i$) satisfying for any $r\in \mathbb{Z}$ that
$\Hom(\Oc(r\oc),S_{ij})\neq 0$ if and only if $j=1$, and the only
extensions between them are
\[
\Ext^1(S_x,S_x) =k, \quad \Ext^1(S_{ij},S_{ij'}) = k \quad (j'
\equiv j-1 \, (\mod \, p_i)).
\]
For each simple sheaf $S$ and $l>0$ there is a unique sheaf with
length $l$ and top $S$, which is \emph{uniserial}, meaning that it
has a unique composition series.  These are all the finite length
indecomposable sheaves.

Categories of the form $\coh\bbX$ for some weighted projective line
$\bbX$ play a special role in the study of abelian categories. This
follows from a theorem of Happel \cite{Ha} which we now explain.
Consider a connected hereditary abelian category $\A$ that is
$k$-linear with finite dimensional Hom and Ext spaces. Suppose in
addition that $\A$ admits a tilting object, that is some object $T$
with $\Ext^1_\A(T,T)=0$ such that $\Hom_\A(T,A)=0$ and
$\Ext^1_\A(T,A)=0$ imply $A=0$. Thus the functor
$\Hom_\A(T,-)\colon\A\to\mod\La$ into the category of modules over the
endomorphism algebra $\La=\End_\A(T)$ induces an equivalence
$$\bfD^b(\A)\lto[\sim]\bfD^b(\mod\La)$$ of derived categories. There
are two important classes of such hereditary abelian categories
admitting a tilting object: module categories over path algebras of
finite connected quivers without oriented cycles, and categories of
coherent sheaves on weighted projective lines. Happel's theorem then
states that there are no further classes. More precisely, an abelian
category $\A$ as above is, up to a derived equivalence, either of the
form $\mod k\Ga$ for some finite connected quiver $\Ga$ without
oriented cycles or of the form $\coh\bbX$ for some weighted projective
line $\bbX$.

The following treatment of coherent sheaves on weighted projective
lines is based on a list of axioms (extending the list in Happel's
theorem) which we postpone until \S\ref{se:cohX}. Before that we
discuss in some detail the necessary background material: abelian
categories, derived categories, tilting theory, expansions of abelian
categories, and coherent sheaves on $\bbP^1_k$.

\begin{gather*}
\xymatrixrowsep{1pc} \xymatrixcolsep{3pc}
\xymatrix{
&&\S3\ar[rd]\\
\S1\ar[rrd]\ar[r]&\S2\ar[r]\ar[ru]&\S4\ar[r]&\S6\ar[r]&\S7\\
&&\S5\ar[ru]
}\\
\text{\centerline{\sc Leitfaden}}
\end{gather*}

\subsection*{Acknowledgements}
These notes are based on a seminar and a course held at the
University of Paderborn in the first half of 2009.  Both authors
wish to thank the participants for their interest and for
stimulating discussions related to the topic of these notes.  In
particular, we are grateful to Dirk Kussin and Helmut Lenzing for
their advice. It is a pleasure to thank Marco Angel
Bertani-{\O}kland  and David Ploog for their detailed comments.

Most of the material presented here is taken from the existing
literature. An exception is \S\ref{se:extensions}, where the concept
of an `expansion of abelian categories' is introduced. The previously
unpublished proof of Theorem~\ref{th:Gabriel} is due to Yu Ye.

\section{Abelian categories}

\subsection{Additive and abelian categories}
A category $\A$ is \emph{additive} \index{category!additive} if every finite family of objects
has a product, each morphism set $\Hom_\A(A,B)$ is an abelian group, and the
composition maps
$$\Hom_\A(A,B)\times\Hom_\A(B,C)\lto\Hom_\A(A,C)$$ are bilinear.
Given a finite number of objects $A_1,\dots,A_r$ of an additive
category $\A$, there exists a \emph{direct sum} \index{direct sum} $A_1\oplus\dots\oplus
A_r$, which is by definition an object $A$ together with morphisms
$\iota_i\colon A_i\to A$ and $\pi_i\colon A\to A_i$ for $1\leq i\leq
r$ such that $\sum_{i=1}^r\iota_i\pi_i=\id_A$,
$\pi_i\iota_i=\id_{A_i}$, and $\pi_j\iota_i=0$ for all $i\neq j$. Note that
the morphisms $\iota_i$ and $\pi_i$ induce isomorphisms
\[
\coprod_{i=1}^r A_i\cong\bigoplus_{i=1}^r A_i\cong \prod_{i=1}^r A_i.
\]
Given any object $A$ in $\A$, we denote by $\add A$ the full
subcategory of $\A$ consisting of all finite direct sums of copies of $A$ and
their direct summands.

A \emph{decomposition} \index{decomposition} $\A=\A_1\amalg\A_2$ of an
additive category $\A$ is a pair of full additive subcategories $\A_1$
and $\A_2$ such that each object in $\A$ is a direct sum of two
objects from $\A_1$ and $\A_2$, and
$\Hom_\A(A_1,A_2)=0=\Hom_\A(A_2,A_1)$ for all $A_1\in\A_1$ and
$A_2\in\A_2$.  An additive category $\A$ is \emph{connected}
\index{category!connected} if it admits no proper decomposition
$\A=\A_1\amalg\A_2$.

A functor $F\colon\A\to\B$ between additive categories is
\emph{additive} \index{functor!additive} if the induced map
$\Hom_\A(A,B)\to\Hom_\B(FA,FB)$ is linear for each pair of objects
$A,B$ in $\A$. The \emph{kernel} \index{kernel} $\Ker F$ of an
additive functor $F\colon\A\to\B$ is by definition the full
subcategory of $\A$ formed by all objects $A$ such that $FA=0$. The
\emph{essential image} \index{essential image} $\Im F$ of
$F\colon\A\to\B$ is the full subcategory of $\B$ formed by all objects
$B$ such that $B$ is isomorphic to $FA$ for some $A$ in $\A$.

An additive category $\A$ is \emph{abelian} \index{category!abelian}
if every morphism $\p\colon A\to B$ has a kernel and a cokernel, and
if the canonical factorization
$$\xymatrixrowsep{1.5pc} \xymatrixcolsep{1.5pc}
\xymatrix{\Ker\p\ar[r]^-{\p'}&A\ar[r]^-\p\ar[d]&
B\ar[r]^-{\p''}&\Coker\p\\
&\Coker\p'\ar[r]^-{\bar\p}&\Ker\p''\ar[u]}$$
of $\p$ induces  an isomorphism $\bar\p$.

Given an abelian category $\A$, a finite sequence of morphisms
$$A_1\lto[\p_1] A_2\lto[\p_2] \cdots \lto[\p_{n}] A_{n+1}$$ in $\A$ is
\index{sequence!exact}\emph{exact} if $\Im\p_i=\Ker\p_{i+1}$ for all
$1\le i< n$. An additive functor $F\colon\A\to\B$ between abelian categories
is \index{functor!exact}\emph{exact} if $F$ sends each exact sequence
in $\A$ to an exact sequence in $\B$.

\begin{exm}
(1) Let $\La$ be a right noetherian ring. The category $\mod\La$ of
finitely generated right modules over $\La$ is an abelian category.

(2) Let $k$ be a field and $\Ga$ a quiver. The category $\rep(\Ga,k)$ of
    finite dimensional $k$-linear representations of $\Ga$ is an abelian
    category.
\end{exm}

\subsection*{Conventions}
Throughout, all categories are supposed to be
\index{category!skeletally small} \emph{skeletally small}, unless
otherwise stated. This means that the isomorphism classes of objects
form a set. Subcategories are usually full subcategories and closed
under isomorphisms. Functors between additive categories are always
assumed to be additive. The composition of morphisms is written from
right to left, and modules over a ring are usually right modules.

\subsection{Serre subcategories and quotient categories}

Let $\mathcal{A}$ be an abelian category.  A non-empty full
subcategory $\C$ of $\A$ is called a \index{Serre subcategory}
\emph{Serre subcategory} provided that $\C$ is closed under taking
subobjects, quotients and extensions. This means that for every exact
sequence $0\to A'\to A\to A''\to 0$ in $\A$, the object $A$ belongs to
$\C$ if and only if $A'$ and $A''$ belong to $\C$.

\begin{exm}
The kernel of an exact functor $\A\to\B$ between abelian categories is
a Serre subcategory of $\A$.
\end{exm}

Given a Serre subcategory $\C$ of $\A$, the \index{quotient
category} \emph{quotient category} $\A/\C$ of $\A$ with respect to
$\C$ is defined as follows. The objects in $\A/\C$ are the objects
in $\A$. Given two objects $A,B$ in $\A$, there is for each pair of
subobjects $A'\subseteq A$ and $B'\subseteq B$ an induced map
$\Hom_\A(A,B)\to\Hom_\A(A',B/B')$.  The pairs $(A',B')$ such that
both $A/A'$ and $B'$ lie in $\C$ form a directed set, and one
obtains a direct system of abelian groups $\Hom_\A(A',B/B')$. We
define
$$\Hom_{\A/\C}(A, B)=\colim{(A', B')} \Hom_\A(A', B/B')$$ and the
composition of morphisms in $\A$ induces the composition in
$\A/\C$.\footnote{One needs to verify that $\A/\C$ is a category, in
particular that the composition of morphisms is associative. This
requires some work; see \cite{G,GZ}.}

The \index{quotient functor} \emph{quotient functor} $Q\colon\A\to
\A/\C$ is by definition the identity on objects. The functor takes a
morphism in $\Hom_{\A}(A, B)$ to its image under the canonical map
$\Hom_{\A}(A, B)\to\Hom_{\A/\C}(A, B)$.

\begin{lem}\label{le:mor}
Each morphism $A\to B$ in $\A/\C$ is of the form
\begin{equation}\label{eq:mor}
A\xto{(Q\iota)^{-1}} A'\xto{Q\p} B/B'\xto{(Q\pi)^{-1}}B
\end{equation}
for some pair $(A',B')$ of subobjects with $A/A'$ and $B'$ in $\C$ and
some morphism $\p\colon A'\to B/B'$ in $\A$, where $\iota\colon A'\to
A$ and $\pi\colon B\to B/B'$ denote the canonical morphisms in $\A$.
\end{lem}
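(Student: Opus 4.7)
The plan is to work from the definition of $\Hom_{\A/\C}(A,B)$ as a filtered colimit. By that definition, any morphism $\bar f\colon A\to B$ in $\A/\C$ arises as the image of some $\p\in\Hom_\A(A',B/B')$ for a pair of subobjects $(A',B')$ with $A/A'\in\C$ and $B'\in\C$. Fix such a $\p$. Two things remain: to make sense of the inverses $(Q\iota)^{-1}$ and $(Q\pi)^{-1}$ appearing in \eqref{eq:mor}, and then to verify that the resulting composite equals $\bar f$.

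For the first point, I would construct explicit inverses inside the colimit. Since $\iota\colon A'\to A$ has trivial kernel and cokernel $A/A'\in\C$, the pair $(A',0)$ of subobjects of $(A,A')$ satisfies the admissibility condition, so $\id_{A'}\in\Hom_\A(A',A')$ represents an element $g\in\Hom_{\A/\C}(A,A')$. A short computation with the composition law in the colimit shows $g\circ Q\iota=\id_{A'}$ and $Q\iota\circ g=\id_A$, giving $g=(Q\iota)^{-1}$. A symmetric recipe produces $(Q\pi)^{-1}$ as the element represented by $\id_{B/B'}$ at the pair $(B/B',B')$ of subobjects of $(B/B',B)$.

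For the second point, it is equivalent to verify $Q\pi\circ\bar f\circ Q\iota=Q\p$ in $\Hom_{\A/\C}(A',B/B')$. I would apply the colimit composition rule twice. Composing $\bar f$, represented by $\p$ at $(A',B')$, with $Q\iota$, represented by $\iota$ at $(A',0)$, I expect (after aligning subobjects) to obtain $\bar f\circ Q\iota$ represented by $\p\colon A'\to B/B'$ at the pair $(A',B')$ of subobjects of $(A',B)$. Subsequent composition with $Q\pi$, represented by $\pi$ at $(B,0)$, should then collapse the subobject $B'$ in the codomain, producing $\p$ at $(A',0)$, which is precisely $Q\p$.

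The main obstacle, I expect, is the bookkeeping in this second step: composing representatives in the colimit requires passing to a common refinement of subobject pairs, forming preimages, and reinterpreting the composite as landing in a suitable further quotient. The content is routine once the formalism is unwound, but care is needed to keep track of which object and which pair of subobjects each morphism lives over at every stage.
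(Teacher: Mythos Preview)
Your proposal is correct, and it proceeds from the same starting point as the paper---the colimit description of $\Hom_{\A/\C}(A,B)$---but the execution differs in an instructive way.

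The paper does not construct explicit inverses to $Q\iota$ and $Q\pi$ as elements of the colimit. Instead it argues by Yoneda: the inclusion $\iota\colon A'\to A$ induces, for every object $C$, a map $\Hom_{\A/\C}(A,C)\to\Hom_{\A/\C}(A',C)$, and this map is a bijection because the indexing systems of admissible subobject pairs for $A$ and for $A'$ are mutually cofinal (using that $A/A'\in\C$ and that $\C$ is a Serre subcategory). Hence $Q\iota$ is an isomorphism; dually for $Q\pi$. The commutativity of the square $Q\pi\circ\bar f\circ Q\iota=Q\p$ is then asserted ``by definition'', i.e.\ as a direct consequence of how composition in $\A/\C$ is set up, and the factorization follows.

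Your route---building the inverses by hand and then verifying the composite via the colimit composition rule---is more constructive and makes explicit exactly the step the paper waves through. The cost is the bookkeeping you already anticipate: aligning representatives along common refinements of subobject pairs. The paper's Yoneda/cofinality argument sidesteps that bookkeeping entirely, at the price of being less explicit about what the inverse morphisms actually are. Both are complete; the paper's version is shorter, yours is more transparent about the underlying mechanics.
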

\begin{proof}
For each morphism $A\to B$ in $\A/\C$, there is by definition a pair
$(A',B')$ of subobjects and a morphism $\p\colon A'\to B/B'$ in $\A$
such that the following diagram commutes.
$$\xymatrix{A\ar[r]&B\ar[d]^{Q\pi}\\
A'\ar[u]^{Q\iota}\ar[r]^-{Q\p}&B/B'}$$ Now observe that for each
object $C$ in $\A$ the inclusion $\iota\colon A'\to A$ induces a bijection
$\Hom_{\A/\C}(A,C)\to\Hom_{\A/\C}(A',C)$. Thus $Q\iota$ is invertible.
Analogously, one shows that $Q\pi$ is invertible.
\end{proof}

The following result summarizes the basic properties of a quotient
category and the corresponding quotient functor.

\begin{prop}\label{pr:quotient}
Let $\A$ be an abelian category and $\C$ a Serre subcategory.
\begin{enumerate}
\item The category $\mathcal{A}/\mathcal{C}$ is abelian and the
quotient functor $Q\colon\A\to \A/\C$ is exact with kernel $\Ker Q=\C$.
\item Let $F\colon\A\to\B$ be an exact functor between abelian
categories. If $\C\subseteq\Ker F$, then there is a unique functor
$\bar F\colon\A/\C\to\B$ such that $F=\bar FQ$. Moreover, the functor
$\bar F$ is exact.
\end{enumerate}
\end{prop}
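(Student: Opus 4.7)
The plan is to establish (1) by transporting the abelian structure from $\A$ to $\A/\C$ via the standard representation of morphisms in Lemma~\ref{le:mor}, and to establish (2) by defining $\bar F$ on such a representative as $F(\pi)^{-1}\comp F(\p)\comp F(\iota)^{-1}$, which makes sense because $F$ kills $\C$ and is exact. That $\A/\C$ is additive is routine: each $\Hom_{\A/\C}(A,B)$ is a filtered colimit of abelian groups, hence an abelian group; bilinearity of composition is inherited from $\A$; and a finite direct sum in $\A$ remains a direct sum in $\A/\C$ since the defining identities among structure morphisms are preserved by any additive functor. To identify $\Ker Q$ with $\C$, note that $Q(A)\cong 0$ is equivalent to $\id_A=0$ in $\A/\C$, which by the colimit description means there exist subobjects $A'\subseteq B'$ of $A$ with $A/A'\in\C$ and $B'\in\C$ such that $A'\hookrightarrow A\twoheadrightarrow A/B'$ is zero; subobject closure then gives $A'\in\C$ and extension closure gives $A\in\C$. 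Conversely every $A\in\C$ satisfies $Q(A)=0$ by taking $A'=0$, $B'=A$.

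Next I would produce kernels and cokernels. Given $f\colon A\to B$ in $\A/\C$ presented as $(Q\pi)^{-1}\comp Q\p\comp (Q\iota)^{-1}$ with $\p\colon A'\to B/B'$, the candidate kernel is $Q$ applied to the composite $\Ker\p\to A'\to A$ and the candidate cokernel is $Q$ applied to $B\to B/B'\to\Coker\p$. Their universal properties in $\A/\C$ are verified by writing any test morphism in the same standard form and exploiting the invertibility of $Q\iota$ and $Q\pi$ together with the universal properties in $\A$. The canonical factorization of $f$ in $\A/\C$ then coincides with the image under $Q$ of the canonical factorization of $\p$ in $\A$, so the induced map $\bar f$ is invertible in $\A/\C$. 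This simultaneously shows that $\A/\C$ is abelian and that $Q$ is exact, since a short exact sequence in $\A$ is mapped by $Q$ to a sequence whose kernel and cokernel comparisons reduce to the corresponding statements in $\A$.

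For (2), put $\bar F(A)=F(A)$ and $\bar F(f)=F(\pi)^{-1}\comp F(\p)\comp F(\iota)^{-1}$ for $f$ as above. Exactness of $F$ together with $\C\subseteq\Ker F$ forces $F(\iota)$ and $F(\pi)$ to be isomorphisms. Independence from the chosen representative follows from a cofinality argument on the directed set of pairs $(A',B')$, and functoriality reduces to the corresponding properties of $F$. Uniqueness of $\bar F$ with $F=\bar F Q$ is automatic because any candidate must agree with $\bar F$ on objects and on morphisms of the form $Q\p$, hence on all morphisms by Lemma~\ref{le:mor}. Exactness of $\bar F$ then follows from exactness of $F$ combined with the description of kernels and cokernels in $\A/\C$ obtained in part (1).

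The main obstacle, and the step that absorbs the most bookkeeping, is verifying the universal property of the candidate kernels and cokernels inside $\A/\C$: a morphism into $Q(\Ker\p)$ in the quotient need not come from a morphism into $\Ker\p$ in $\A$, so one has to represent it in standard form and lift the resulting universal property back into $\A$ modulo the ambiguity encoded by $\C$. Once this is carried out, the abelian axioms, the exactness of $Q$, and the entire construction of $\bar F$ fall out as a matter of routine.
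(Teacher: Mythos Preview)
Your proposal is correct and follows essentially the same approach as the paper: represent a morphism in $\A/\C$ via Lemma~\ref{le:mor} as $(Q\pi)^{-1}\comp Q\p\comp(Q\iota)^{-1}$, take $Q(\iota\iota')$ and $Q(\pi'\pi)$ as kernel and cokernel, and define $\bar F$ by $(F\pi)^{-1}\comp F\p\comp(F\iota)^{-1}$. The paper is terser about the verification of the universal properties and about $\Ker Q=\C$, but the strategy is identical.
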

\begin{proof}
(1) It follows from the construction that the morphism sets of the
quotient category $\A/\C$ are abelian groups. Also, the quotient
functor induces linear maps between the morphism sets and it
preserves finite direct sums. Thus the quotient category and the
quotient functor are both additive.

The quotient functor sends a morphism in $\A$ to the zero
morphism if and only if its image belongs to $\C$. Thus $\Ker
Q=\C$.

Let $\psi=(Q\pi)^{-1}Q\p(Q\iota)^{-1}$ be a morphism in $\A/\C$ as in
\eqref{eq:mor}. Denote by $\iota'\colon\Ker\p\to A'$ the kernel and by $\pi'\colon
B/B'\to\Coker\p$ the cokernel of $\p$ in $\A$. Then the kernel of
$\psi$ is $Q(\iota\iota')\colon \Ker\p\to A$, whereas the cokernel of
$\psi$ is $Q(\pi'\pi)\colon B\to\Coker\p$. It follows that the
category $\A/\C$ is abelian and that the quotient functor preserves
kernels and cokernels.

(2) The functor $\bar F\colon\A/\C\to\B$ takes an object $A$ to $FA$
and a morphism of the form $(Q\pi)^{-1}Q\p(Q\iota)^{-1}$ as in
\eqref{eq:mor} to $(F\pi)^{-1} F\p (F\iota)^{-1}$. Note that $F\iota$
and $F\pi$ are isomorphisms in $\B$, since $F$ is exact and
$\C\subseteq\Ker F$.

The functor $\bar F$ is additive and the description of (co)kernels in (1)
shows that $\bar F$ preserves (co)kernels. Thus $\bar F$ is exact.
\end{proof}

The quotient functor $\A\to\A/\C$ is the universal functor that
inverts the class $S(\C)$ of morphisms $\s$ in $\A$ with $\Ker\s$ and
$\Coker\s$ in $\C$. More precisely, for any class $S$ of morphisms in
$\A$, there exists a universal functor $P\colon \A\to\A[S^{-1}]$ such
that
\begin{enumerate}
\item the morphism $P\s$ is invertible for every $\s\in S$, and
\item every functor $F\colon\A\to \B$ such that $F\s$ is invertible
for each $\s\in S$ admits a unique functor $\bar F\colon\A[S^{-1}]\to
\B$ such that $F=\bar F P$.
\end{enumerate}
The category $\A[S^{-1}]$ is the \index{localization} \emph{localization} of $\A$ with
respect to $S$ and is unique up to a unique isomorphism; see
\cite[I.1]{GZ}.

\begin{lem}\label{le:Serre}
Let $\C$ be a Serre subcategory of $\A$. The quotient functor
$Q\colon\A\to \A/\C$ is the universal functor that inverts all
morphisms in $S(\C)$. Therefore $$\A[S(\C)^{-1}]=\A/\C.$$
\end{lem}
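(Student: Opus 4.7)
The plan is to verify that $Q : \A \to \A/\C$ satisfies both parts of the universal property characterizing $\A[S(\C)^{-1}]$; the uniqueness of localizations then yields the stated equality.

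First I would show that $Q$ inverts every $\s \in S(\C)$. Given such $\s : A \to B$, factor $\s = \mu\epsilon$ through its image and apply the exact functor $Q$ (Proposition \ref{pr:quotient}(1)) to the short exact sequences $0 \to \Ker\s \to A \xto{\epsilon} \Im\s \to 0$ and $0 \to \Im\s \xto{\mu} B \to \Coker\s \to 0$. Since $\Ker Q = \C$ contains both $\Ker\s$ and $\Coker\s$, the images $Q\epsilon$ and $Q\mu$ become isomorphisms in $\A/\C$, hence so does $Q\s$.

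Next, given any functor $F : \A \to \B$ that inverts every $\s \in S(\C)$, I would construct the unique factorization $\bar F : \A/\C \to \B$ with $F = \bar F Q$. For each $C \in \C$, the morphism $C \to 0$ lies in $S(\C)$, so $FC \to 0$ is invertible, forcing $FC = 0$. Because $F$ is not assumed exact, Proposition \ref{pr:quotient}(2) does not apply directly; instead define $\bar F$ on objects by $\bar F A = FA$ and on morphisms by representing $\psi : A \to B$ as $(Q\pi)^{-1} Q\p (Q\iota)^{-1}$ via Lemma \ref{le:mor} and setting
\[
\bar F\psi \;=\; (F\pi)^{-1}\, F\p\, (F\iota)^{-1}.
\]
This makes sense because $\iota : A' \to A$ (monomorphism with cokernel $A/A' \in \C$) and $\pi : B \to B/B'$ (epimorphism with kernel $B' \in \C$) both lie in $S(\C)$ and are therefore inverted by $F$.

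The main obstacle is verifying well-definedness: two triples $(A',B',\p)$ and $(A'',B'',\p'')$ representing the same $\psi$ must yield the same element of $\Hom_\B(FA,FB)$. For this I would pass to the common refinement $(A' \cap A'',\, B' + B'')$ in the directed system defining $\Hom_{\A/\C}(A, B)$. Both $A/(A' \cap A'')$ (a subobject of $A/A' \oplus A/A''$) and $B' + B''$ (a quotient of $B' \oplus B''$) lie in $\C$, and the comparison morphisms $A' \cap A'' \hookrightarrow A'$ and $B/B' \twoheadrightarrow B/(B' + B'')$ are in $S(\C)$ and hence inverted by $F$. A short diagram chase, inserting $(Fj)(Fj)^{-1}$ and $(Fq)^{-1}(Fq)$ for these comparison maps $j$ and $q$, rewrites $(F\pi)^{-1}F\p(F\iota)^{-1}$ as the expression built from the common refinement, and a symmetric argument handles the other triple; since the two images in $\Hom_\A(A' \cap A'', B/(B'+B''))$ agree by the colimit definition, the two recipes agree. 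Functoriality of $\bar F$ is proved by the same common-refinement technique applied to composable pairs. The identity $F = \bar F Q$ is immediate, and uniqueness of $\bar F$ follows since every morphism in $\A/\C$ is built from $Q$-images of morphisms in $\A$ and formal inverses of $Q$-images of morphisms in $S(\C)$, on which any extension of $F$ is forced by $F = \bar F Q$.
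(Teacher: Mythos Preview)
Your proposal is correct and follows essentially the same route as the paper. Both arguments invert $S(\C)$ via the exactness of $Q$ and define $\bar F\psi=(F\pi)^{-1}F\p(F\iota)^{-1}$; the only difference is packaging: the paper observes that the maps $\Hom_\A(A',B/B')\to\Hom_\B(FA,FB)$ form a cocone on the directed system defining $\Hom_{\A/\C}(A,B)$ and invokes the universal property of the colimit, whereas you unpack that same compatibility check by hand via common refinements.
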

\begin{proof}
From Proposition~\ref{pr:quotient} it follows that $Q$ inverts all morphisms
in $S(\C)$. Now let $F\colon\A\to \B$ be a functor such that $F\s$ is
invertible for each $\s\in S(\C)$. Then for each pair $A,B$ of objects
in $\A$ and each pair of subobjects $A'\subseteq A$ and $B'\subseteq
B$ with $A/A'$ and $B'$ in $\C$, the map
$\Hom_\A(A,B)\to\Hom_\B(FA,FB)$ factors through the canonical map
$\Hom_\A(A,B)\to\Hom_\A(A',B/B')$. Thus there are induced maps
$\Hom_{\A/\C}(A,B)\to\Hom_\B(FA,FB)$ which induce a unique functor
$\bar F\colon A/\C\to \B$ such that $F=\bar F Q$. It follows that
$\A[S(\C)^{-1}]=\A/\C$.
\end{proof}

\begin{exm}
(1) Let $\La$ be a commutative noetherian ring and $\La_\mathfrak p$
the localization with respect to a prime ideal $\mathfrak p$. The
localization functor $T\colon\mod\La \to\mod\La_\mathfrak p$ sending
a $\La$-module $M$ to $M_\mathfrak p=M\otimes_\La\La_\mathfrak p$ is
exact and induces an equivalence $\mod\La/\Ker
T\xto{\sim}\mod\La_\mathfrak p$. Roughly speaking, restriction of
scalars along the morphism $\La\to\La_\mathfrak p$ yields a
quasi-inverse.

(2) Let $\La$ be a right noetherian ring and $e^2=e\in\La$ an
idempotent. The functor $T\colon\mod\La\to\mod e\La e$ sending a
$\La$-module $M$ to $Me=M\otimes_\La \La e$ is exact. The kernel
$\Ker T$ identifies with $\mod\La/\La e\La$ and $T$ induces an
equivalence $\mod\La/\Ker T\xto{\sim}\mod e\La e$.  The functor
$\Hom_{e\La e}(\La e,-)$ yields a quasi-inverse.

(3) Let $\La$ be a right artinian ring. Given a set $S_1,\dots, S_n$
of simple $\La$-modules, the $\La$-modules $M$ having a finite
filtration $0=M_0\subseteq M_1\subseteq\dots \subseteq M_r=M$ with
each factor $M_i/M_{i-1}$ isomorphic to one of the simples
$S_1,\dots, S_n$ form a Serre subcategory of $\mod\La$. Moreover,
each Serre subcategory of $\mod\La$ arises in this way and is
therefore of the form $\mod \La/\La e\La$ for some idempotent
$e\in\La$.
\end{exm}

\subsection{Properties of quotient categories}
We collect some further properties of abelian quotient categories.

\begin{lem}\label{le:small}
Let $\A$ be an abelian category that is not supposed to be
skeletally small, and let $\C$ be a Serre subcategory.  Then the
following are equivalent:
\begin{enumerate}
\item The category $\A$ is skeletally small.
\item The categories $\C$ and $\A/\C$ are skeletally small. In
addition, $\Ext^1_\A(A,C)$ and $\Ext^1_\A(C,A)$ are sets for all
$A\in\A$ and $C\in\C$.
\end{enumerate}
\end{lem}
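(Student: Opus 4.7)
For the direction (1) $\Rightarrow$ (2), the Serre subcategory $\C$ inherits skeletal smallness from $\A$, and $\A/\C$ has the same class of objects as $\A$ by construction, so it is skeletally small as well. For any $X, Y \in \A$, the group $\Ext^1_\A(X, Y)$ parametrizes equivalence classes of short exact sequences $0 \to Y \to M \to X \to 0$ in $\A$; for each isomorphism class of the middle term $M$, equivalence classes of such sequences inject into $\Hom_\A(Y, M) \times \Hom_\A(M, X)$ and are thus a set, so the union over the (set of) isomorphism classes of $M$ is again a set.

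The substantive direction is (2) $\Rightarrow$ (1). The plan is to fix, once and for all, a representative in $\A$ for each isomorphism class of object in $\A/\C$; since $\A/\C$ is skeletally small this amounts to a set of choices. Given any $A \in \A$, let $A^*$ be the chosen representative of $[Q(A)]$. By Lemma \ref{le:mor}, together with the description of (co)kernels in the quotient given by Proposition \ref{pr:quotient}, an isomorphism $Q(A) \xrightarrow{\sim} Q(A^*)$ in $\A/\C$ is represented by subobjects $A' \subseteq A$ and $B' \subseteq A^*$ with $A/A', B' \in \C$, together with a morphism $\psi \colon A' \to A^*/B'$ in $\A$ whose kernel $K$ and cokernel $L$ lie in $\C$. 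Setting $I = \Im \psi$, one recovers $A$ from $A^*$ in four layers: the quotient $A^*/B'$ of $A^*$ by a subobject $B' \in \C$; the subobject $I \subseteq A^*/B'$ obtained as the kernel of a surjection onto some $L \in \C$; an extension $0 \to K \to A' \to I \to 0$ with $K \in \C$; and finally an extension $0 \to A' \to A \to D \to 0$ with $D \in \C$.

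At each layer the parametrizing data lies in a set. Subobjects of a fixed object isomorphic to something in $\C$ are indexed by pairs consisting of an isomorphism class in $\C$ and a monomorphism of this representative into the object, and dually for quotients by subobjects in $\C$; these are sets because $\C$ is skeletally small and morphism classes in $\A$ are sets. The extension data in the final two layers lies in $\Ext^1_\A(I, K)$ and $\Ext^1_\A(D, A')$ respectively, both of which are sets by the hypothesis in (2), and each is summed over the set $\bar\C$ of isomorphism classes in $\C$. Combining across the set of choices of $[A^*]$, this exhibits the isomorphism classes of objects in $\A$ as a set. The main obstacle is purely organizational: one must track the dependence of each layer on the previous and be sure that all four layers together do recover $A$ up to isomorphism, with the $\Ext^1$ hypothesis playing its essential role in the upper layers where one genuinely leaves the world of subquotients of $A^*$ and builds new objects by extension.
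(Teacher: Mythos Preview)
Your proof is correct and takes essentially the same approach as the paper: both arguments use the chain $A \leftarrowtail A' \twoheadrightarrow I \rightarrowtail A^*/B' \twoheadleftarrow A^*$ coming from Lemma~\ref{le:mor}, then count the possibilities for $A$ by peeling off the four layers, using skeletal smallness of $\C$ for the subobject/quotient steps and the $\Ext^1$ hypothesis for the two extension steps. Your write-up is slightly more explicit about the four layers than the paper's, but the content is the same.
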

\begin{proof}
One direction is clear. So suppose that $\C$ and $\A/\C$ are
skeletally small, and that extensions with objects in $\C$ form sets.
First observe that the morphisms in $\Hom_{\A/\C}(A,B)$ form a set for
each pair of objects $A,B$. Here one uses that the subobjects
$A'\subseteq A$ with $A'$ or $A/A'$ in $\C$ form, up to isomorphism, a
set, since $\C$ is skeletally small.  Next observe that for each
object $A$ in $\A$, there is only a set of isomorphism classes of
objects $B$ with $A\cong B$ in $\A/\C$. This follows from the fact
that each isomorphism $A\to B$ in $\A/\C$ is represented by a chain
$$A \leftarrowtail A'\twoheadrightarrow I\rightarrowtail
B/B'\twoheadleftarrow B$$ of epis and monos in $\A$ with kernel and
cokernel in $\C$; see Lemma~\ref{le:mor}. Here one uses that $\C$ is
skeletally small and that extensions with objects in $\C$ form sets.
From this it follows that the isomorphism classes of objects in $\A$
form a set, since the quotient $\A/\C$ has this property.
\end{proof}

The following example gives an abelian category $\A$ with a Serre
subcategory $\C$ such that $\C$ and $\A/\C$ are skeletally small but
$\A$ itself is not.

\begin{exm}
Let $k$ be a field and $\Ga$ a quiver with set of vertices $\{1,2\}$
and a proper class of arrows $1\to 2$.  Each arrow of $\Ga$
corresponds to a canonical element of $\Ext^1(S_1,S_2)$, where $S_i$
denotes the simple representation supported at the vertex $i$. These
extensions are linearly independent and yield pairwise non-isomorphic
two-dimensional representations.  The functor $T\colon\rep(\Ga,k)\to
\mod k$ sending a representation of $\Ga$ to the corresponding vector space at
vertex $1$ induces an equivalence $\rep(\Ga,k)/\Ker T\xto{\sim}\mod
k$, and $\Ker T$ is equivalent to $\mod k$.
\end{exm}

An abelian category is called \index{category!noetherian}
\emph{noetherian} if each of its objects is \emph{noetherian} (i.e.\
satisfies the ascending chain condition on subobjects).

\begin{lem}\label{le:noetherian}
Let $\A$ be an abelian category and $\C$ a Serre subcategory. Then
the following are equivalent:
\begin{enumerate}
\item The category $\A$ is noetherian.
\item The categories $\C$ and $\A/\C$ are noetherian, and each object
in $\A$ has a largest subobject that belongs to $\C$.
\end{enumerate}
\end{lem}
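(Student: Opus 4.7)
The plan is to prove $(1)\Leftrightarrow(2)$ by handling the two implications separately, with the \emph{torsion subobject} $t(A)$ (the largest subobject of $A$ lying in $\C$) playing the pivotal role in bridging ACC chains between $\A$, $\C$, and $\A/\C$. Throughout I will use exactness of the quotient functor $Q$ (Proposition~\ref{pr:quotient}) and the characterization $\Ker Q=\C$.

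For $(1)\Rightarrow(2)$, the statement about $\C$ is immediate since $\C$ is a full subcategory of $\A$ closed under subobjects. For $\A/\C$, I would show that every subobject of $Q(A)$ has the form $Q(A')$ for some subobject $A'\subseteq A$ in $\A$: given a mono $B\hookrightarrow Q(A)$ in $\A/\C$, the cokernel can, by Lemma~\ref{le:mor}, be represented as $Q(A)\to Q(A/A')$ with $A'\subseteq A$ canonical, and exactness of $Q$ then identifies $B$ with $Q(A')$. Hence any ascending chain of subobjects of $Q(A)$ is the image under $Q$ of an ascending chain of subobjects of $A$, which stabilizes by (1). For the existence of a largest $\C$-subobject of $A$, note that if $C_1,C_2\subseteq A$ both lie in $\C$ then their sum (the image of $C_1\oplus C_2\to A$) is a quotient of $C_1\oplus C_2\in\C$, hence in $\C$; by (1) the poset of $\C$-subobjects of $A$ has a maximal element, and closure under finite sums forces it to be the largest.

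For $(2)\Rightarrow(1)$, let $A_1\subseteq A_2\subseteq\cdots$ be an ascending chain of subobjects of a given object $A$. Applying $Q$ and using that $\A/\C$ is noetherian, I choose $n_0$ with $Q(A_n)=Q(A_{n_0})$ for all $n\ge n_0$. Exactness of $Q$ together with $\Ker Q=\C$ then gives $A_n/A_{n_0}\in\C$ for every such $n$. Since $A_n/A_{n_0}$ embeds as a subobject of $A/A_{n_0}$, it is contained in the largest $\C$-subobject $t(A/A_{n_0})$, which exists by hypothesis and belongs to $\C$. The chain $\{A_n/A_{n_0}\}_{n\ge n_0}$ is therefore an ascending chain inside the noetherian object $t(A/A_{n_0})\in\C$, so it stabilizes, and by the correspondence between subobjects of $A$ containing $A_{n_0}$ and subobjects of $A/A_{n_0}$ the original chain $\{A_n\}$ stabilizes as well.

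The main obstacle, and the step I would be most careful with, is the lifting of subobjects in $(1)\Rightarrow(2)$: one must verify that a mono in $\A/\C$ targeting $Q(A)$ really can be represented by a genuine subobject $A'\subseteq A$ in $\A$, not just by an equivalence class of roofs as provided by Lemma~\ref{le:mor}. This is where the exactness of $Q$ is essential. The other delicate point is the role of the hypothesis "largest $\C$-subobject exists" in $(2)\Rightarrow(1)$: without it the quotients $A_n/A_{n_0}\in\C$ form a chain inside $A/A_{n_0}$ that need not a priori live in a single noetherian object of $\C$, and this is precisely the gap the torsion subobject $t(A/A_{n_0})$ fills.
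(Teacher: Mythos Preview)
Your proof is correct and follows essentially the same line as the paper's. Both directions match: for $(2)\Rightarrow(1)$ the arguments are identical, and for $(1)\Rightarrow(2)$ both use Lemma~\ref{le:mor} to lift chains of subobjects from $\A/\C$ to $\A$. Your treatment is slightly more careful in one respect: you explicitly argue, via closure of $\C$-subobjects under finite sums, that a \emph{maximal} $\C$-subobject is in fact the \emph{largest}, a point the paper leaves implicit.
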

\begin{proof}
(1) $\Rightarrow$ (2): Suppose $\A$ is noetherian. Then $\C$ is
noetherian. Also, $\A/\C$ is noetherian because each ascending chain
of subobjects in $\A/\C$ can be represented by an ascending chain of
subobjects in $\A$; see Lemma~\ref{le:mor}. Noetherianess implies that
each non-empty set of subobjects has a maximal element. In particular,
each object has a subobject that is maximal among all subobjects
belonging to $\C$.

(2) $\Rightarrow$ (1): Let $A_1\subseteq A_2\subseteq \dots\subseteq
A$ be an ascending chain of subobjects in $\A$. Using that $\A/\C$ is
noetherian, there exists some integer $n$ such that $A_m/A_n$ belongs to $\C$
for all $m>n$. Let $\bar A$ be the maximal subobject of $A/A_n$
belonging to $\C$. Then the chain $A_{n+1}/A_n\subseteq
A_{n+2}/A_n\subseteq \dots \subseteq\bar A$ becomes stationary since
$\C$ is noetherian. It follows that the original chain of subobjects
of $A$ becomes stationary. Thus $A$ is noetherian.
\end{proof}

Next we give an example of an abelian category $\A$ with a Serre
subcategory $\C$ such that $\C$ and $\A/\C$ are noetherian but $\A$ itself is
not.

\begin{exm}
The ring $\La=\smatrix{\bbQ&\bbQ\\ 0&\bbZ}$ is well known to be left
but not right noetherian. Consider the abelian category $\mod\La$ of
finitely presented (right) $\La$-modules and the functor
$T\colon\mod\La\to\mod\bbQ$ sending a $\La$-module $M$ to $Me$ with
$e=\smatrix{1&0\\ 0&0}$. Then $\Ker T$ is equivalent to $\mod\bbZ$
and $T$ induces an equivalence $\mod\La/\Ker T\xto{\sim}\mod\bbQ$.
\end{exm}

The next lemma provides the analogue of a Noether isomorphism for
abelian categories. The proof is straightforward.

\begin{lem}\label{le:noeth}
\pushQED{\qed}
Let $\A$ be an abelian category and $\A_1,\A_2$ a pair of Serre
subcategories such that $\A_2\subseteq\A_1$. Then the following holds:
\begin{enumerate}
\item The inclusion $\A_1\to\A$ identifies $\A_1/\A_2$ with a Serre subcategory of $\A/\A_2$.
\item The quotient functor $\A\to\A/\A_2$ induces
an isomorphism \[\A/\A_1\lto[\sim](\A/\A_2)/(\A_1/\A_2).\qedhere\]
\end{enumerate}
\end{lem}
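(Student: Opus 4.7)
The plan is to apply the universal property of Serre quotients (Proposition~\ref{pr:quotient}(2)) repeatedly, using the colimit formula of Lemma~\ref{le:mor} whenever morphism spaces must be compared. For~(1), the composition $\A_1\hookrightarrow\A\to\A/\A_2$ is exact and annihilates $\A_2$, so by Proposition~\ref{pr:quotient}(2) it descends to an exact functor $j\colon\A_1/\A_2\to\A/\A_2$ that acts as the identity on objects of $\A_1$. To see $j$ is fully faithful, I compare the colimit descriptions of $\Hom_{\A_1/\A_2}(A,B)$ and $\Hom_{\A/\A_2}(A,B)$ for $A,B\in\A_1$: the former is indexed by pairs of subobjects $(A',B')$ in $\A_1$ with $A/A',B'\in\A_2$, the latter by pairs in $\A$. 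Since $\A_1$ is Serre in $\A$, every $\A$-subobject of an object of $\A_1$ automatically lies in $\A_1$, as does every quotient, so the two indexing systems and the transition maps between their terms coincide. Identifying $\A_1/\A_2$ with its essential image in $\A/\A_2$, this image consists of exactly those $X\in\A$ lying in $\A_1$: any isomorphism in $\A/\A_2$ between $X$ and some $Y\in\A_1$ is represented by a zigzag in $\A$ whose morphisms have kernels and cokernels in $\A_2\subseteq\A_1$, and the Serre property of $\A_1$ propagates membership along the zigzag.

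To show that this image is a Serre subcategory of $\A/\A_2$, I note that sub- and quotient objects in $\A/\A_2$ of an $X\in\A_1$ are represented by sub- and quotient objects in $\A$, which lie in $\A_1$ by Serre closure. For extensions, a short exact sequence $0\to A\to B\to C\to 0$ in $\A/\A_2$ lifts to one in $\A$: representing the epimorphism $B\to C$ by Lemma~\ref{le:mor} as $(Q_2\pi)^{-1}(Q_2\p)(Q_2\iota)^{-1}$, and setting $K=\Ker\p$, $I=\Im\p$ in $\A$, yields a short exact sequence $0\to K\to B'\to I\to 0$ in $\A$ with $Q_2K\cong A$, $Q_2B'\cong B$, $Q_2I\cong C$ in $\A/\A_2$. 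If $A$ and $C$ lie in the essential image, the zigzag argument forces $K,I\in\A_1$, hence $B'\in\A_1$, hence $B$ lies in the essential image.

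For~(2), I apply the universal property in both directions. Writing $Q_i\colon\A\to\A/\A_i$ and $P\colon\A/\A_2\to(\A/\A_2)/(\A_1/\A_2)$, the exact composite $P\circ Q_2$ vanishes on $\A_1$, so it descends to a unique exact functor $\bar Q\colon\A/\A_1\to(\A/\A_2)/(\A_1/\A_2)$ with $\bar Q\circ Q_1=P\circ Q_2$. Conversely, $Q_1$ kills $\A_2$, so factors as $\bar F\circ Q_2$ for a unique exact $\bar F\colon\A/\A_2\to\A/\A_1$; since $\bar F$ sends every object of $\A_1$ to $0$ in $\A/\A_1$, it annihilates $\A_1/\A_2$ and factors further as $\tilde F\circ P$. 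Uniqueness in each universal property forces $\tilde F\bar Q=\id_{\A/\A_1}$ and $\bar Q\tilde F=\id_{(\A/\A_2)/(\A_1/\A_2)}$, and because $\bar Q$ is the identity on objects (all three quotients share the object class of $\A$), the resulting equivalence is in fact an isomorphism of categories.

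The main obstacle I expect is the lifting argument underlying the extension-closure step in~(1): turning a short exact sequence in $\A/\A_2$ into one in $\A$ requires massaging the span representations produced by Lemma~\ref{le:mor}, and one must verify that $\Ker\p$ and $\Im\p$ in $\A$ do indeed model the terms $A$ and $C$ after applying $Q_2$. Once this lift is in hand, everything else is bookkeeping with the universal property of Serre quotients.
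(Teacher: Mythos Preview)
Your proof is correct; the paper gives no argument at all for this lemma, merely declaring it ``straightforward'' and placing the \qedhere. Your route via the universal property of Proposition~\ref{pr:quotient}(2) together with the explicit colimit and zigzag descriptions from Lemma~\ref{le:mor} is exactly the standard way one unpacks this Noether isomorphism, and the extension-closure step you flag as the main obstacle is handled correctly: the kernel/cokernel description in the proof of Proposition~\ref{pr:quotient}(1) guarantees that $Q_2(\Ker\p)\cong A$ and $Q_2(\Im\p)\cong C$, so your lift does what you claim.
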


Recall that a non-zero object $S$ of an abelian category is
\index{object!simple}\emph{simple} if $S$ has no proper subobject
$0\neq U\subsetneq S$. The next lemma says that a quotient functor
preserves this property if the object is not annihilated.  The proof
is straightforward.

\begin{lem}\label{le:quotient_simple}
\pushQED{\qed}
Let $\A$ be an abelian category and $\C$ a Serre
subcategory. If $S$ is a simple object not belonging to $\C$, then $S$
is simple in $\A/\C$ and the quotient functor induces an isomorphism
$\End_\A(S)\xto{\sim}\End_{\A/\C}(S)$.
\qedhere
\end{lem}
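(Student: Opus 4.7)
The plan is to exploit the explicit form of morphisms in $\A/\C$ given by Lemma~\ref{le:mor}, together with simplicity of $S$ and the hypothesis $S\notin\C$; these three ingredients together severely restrict which subobjects and quotients of $S$ can lie in $\C$.

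First I would note that $S$ is nonzero in $\A/\C$: by Proposition~\ref{pr:quotient}(1) we have $\Ker Q=\C$, so $QS=0$ would force $S\in\C$, contradicting the hypothesis. For simplicity of $S$ in $\A/\C$, I would take an arbitrary monomorphism $f\colon U\to S$ in $\A/\C$ and show it is either zero or an isomorphism. Writing $f=(Q\pi)^{-1}Q\p(Q\iota)^{-1}$ as in~\eqref{eq:mor}, where $\iota\colon U'\to U$ has cokernel in $\C$ and $\pi\colon S\to S/S'$ has kernel $S'\in\C$, simplicity of $S$ forces $S'=0$ (the only other option $S'=S$ would put $S$ in $\C$). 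Hence $\p$ is an honest morphism $U'\to S$ in $\A$, and its image is $0$ or $S$ by simplicity. The case $\Im\p=0$ yields $f=0$ in $\A/\C$, which together with $f$ being mono gives $U=0$. The case $\Im\p=S$ makes $\p$ an epimorphism, so $Q\p$ is epi by exactness of $Q$, and thus $f$ is both mono and epi in the abelian category $\A/\C$, hence an isomorphism.

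For the endomorphism isomorphism, I would unwind the defining colimit
$$\End_{\A/\C}(S)\;=\;\colim{(U',S')}\Hom_\A(U',S/S')$$
directly, where the index runs over pairs of subobjects of $S$ with $S/U'\in\C$ and $S'\in\C$. Simplicity of $S$ together with $S\notin\C$ forces $U'=S$ (else $S/U'=S\in\C$) and $S'=0$ (else $S'=S\in\C$), so the directed set degenerates to the singleton $\{(S,0)\}$. The colimit therefore collapses to $\Hom_\A(S,S)$, and the canonical map induced by $Q$ is the identity on this common value, giving the desired isomorphism.

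None of the individual steps is really an obstacle; the only care required is to match the source/target conventions of Lemma~\ref{le:mor} correctly (the cokernel, not the kernel, of $\iota$ lies in $\C$, and the kernel of $\pi$ lies in $\C$). Once that bookkeeping is in place, simplicity of $S$ does all the remaining work by collapsing every lattice of subobjects in sight to the two-element chain $0\subset S$.
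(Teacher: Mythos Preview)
Your argument is correct. The paper does not give a proof of this lemma at all; it merely remarks that ``the proof is straightforward'' and records the statement with a \qed. Your write-up is precisely the straightforward argument one would expect: reduce both claims to the observation that the only subobjects of $S$ are $0$ and $S$, and that of these only $0$ can lie in $\C$. The use of Lemma~\ref{le:mor} to represent a monomorphism $U\to S$ in $\A/\C$ by data in $\A$, followed by the case split on $\Im\p$, is clean; the collapse of the indexing system in the colimit for $\End_{\A/\C}(S)$ is the most efficient way to handle the second claim.
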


\subsection{Perpendicular categories}

Let $\A$ be an abelian category.  In some cases, the quotient functor
$\A\to \A/\C$ with respect to a Serre subcategory $\C$ admits a right
adjoint. Then the perpendicular category $\C^\perp$ provides another
description of the quotient category $\A/\C$.

For any class $\C$ of objects in $\A$, its \index{perpendicular
category} \emph{perpendicular categories} are by definition the full
subcategories
\begin{align*}
\C^\perp&=\{A\in\A\mid\Hom_\A(C,A)=0=\Ext^1_\A(C,A)\text{ for all
}C\in\C\},\\
^\perp\C&=\{A\in\A\mid\Hom_\A(A,C)=0=\Ext^1_\A(A,C)\text{ for all
}C\in\C\}.
\end{align*}

\begin{lem}\label{le:perp}
Let $\A$ be an abelian category and $\C$ a Serre subcategory. Then the
following are equivalent for an object $B$ in $\A$:
\begin{enumerate}
\item The object $B$ belongs to $\C^\perp$.
\item The quotient functor induces a bijection
$\Hom_\A(A,B)\to\Hom_{\A/\C}(A,B)$ for every object $A$ in $\A$.
\item The map $\Hom_\A(\s,B)$ is bijective for every morphism $\s$ in
$\A$ with $\Ker\s$ and $\Coker\s$ in $\C$.
\end{enumerate}
\end{lem}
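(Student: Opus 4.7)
I would prove the cycle (1)$\Rightarrow$(3)$\Rightarrow$(2)$\Rightarrow$(1). For (1)$\Rightarrow$(3), given $\s\colon X\to Y$ with $\Ker\s,\Coker\s\in\C$, I would factor $\s$ through $\Im\s$ and apply $\Hom_\A(-,B)$ to the two short exact sequences $0\to\Ker\s\to X\to\Im\s\to 0$ and $0\to\Im\s\to Y\to\Coker\s\to 0$. The hypothesis $B\in\C^\perp$ kills $\Hom(\Ker\s,B)$, $\Hom(\Coker\s,B)$ and $\Ext^1(\Coker\s,B)$, so the induced maps $\Hom(Y,B)\to\Hom(\Im\s,B)\to\Hom(X,B)$ are both isomorphisms, and their composite is $\Hom(\s,B)$.

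The implication (2)$\Rightarrow$(1) is also routine. For $C\in\C$, $Q(C)=0$ gives $\Hom_\A(C,B)\cong\Hom_{\A/\C}(C,B)=0$. Given an extension $0\to B\xto{j}E\to C\to 0$ with $C\in\C$, the morphism $Q(j)$ is invertible in $\A/\C$ since $Q(C)=0$; by the surjectivity part of (2) its inverse lifts to some $r\colon E\to B$ in $\A$, and applying the injectivity part of (2) with $A=B$ forces $rj=\id_B$, so the extension splits.

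The main obstacle is (3)$\Rightarrow$(2), and the technical heart is the following observation: under (3), for any subobject $B'\subseteq B$ with $B'\in\C$ the canonical epi $\pi\colon B\to B/B'$ lies in $S(\C)$, so $\Hom(\pi,B)$ is bijective; lifting $\id_B$ yields $\bar\pi\colon B/B'\to B$ with $\bar\pi\pi=\id_B$, and since $\pi$ is epi this forces $\pi\bar\pi=\id_{B/B'}$, so $\pi$ is in fact an isomorphism in $\A$ (and consequently $B'=0$). For surjectivity of $\Hom_\A(A,B)\to\Hom_{\A/\C}(A,B)$, I would represent an arbitrary morphism $f$ as $(Q\pi)^{-1}Q\p(Q\iota)^{-1}$ via Lemma~\ref{le:mor}, invert $\pi$ in $\A$ as above, and then use (3) applied to $\iota$ to lift $\bar\pi\p\colon A'\to B$ to a unique $g\colon A\to B$ with $g\iota=\bar\pi\p$; a direct computation then shows $Q(g)=f$.

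For injectivity, if $Q(g)=0$ then the colimit description of $\Hom_{\A/\C}(A,B)$ supplies subobjects $A'\subseteq A$ and $B'\subseteq B$ with $A/A',B'\in\C$ and $\pi_{B'}\circ g\circ \iota_{A'}=0$. The image $I=\Im(g\iota_{A'})$ is then a subobject of $B'$, hence lies in $\C$; applying the key observation to $I$ in place of $B'$ yields $I=0$, whence $g\iota_{A'}=0$, and finally (3) applied to $\iota_{A'}$ (the injectivity of $\Hom(\iota_{A'},B)$) gives $g=0$. The most delicate point is the observation that $B$ admits no nonzero $\C$-subobject; once this is in hand, both the surjectivity (via the splitting $\bar\pi$) and the injectivity (via the vanishing of $I$) fall into place.
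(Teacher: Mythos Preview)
Your proof is correct, but it traverses the cycle in the opposite direction from the paper: the paper proves (1)$\Rightarrow$(2)$\Rightarrow$(3)$\Rightarrow$(1), whereas you prove (1)$\Rightarrow$(3)$\Rightarrow$(2)$\Rightarrow$(1). In the paper's route, (1)$\Rightarrow$(2) is handled by observing that for $B\in\C^\perp$ the transition maps $\Hom_\A(A,B)\to\Hom_\A(A',B/B')$ in the colimit are all bijections (indeed the only admissible $B'$ is $0$, and then the vanishing of $\Hom(A/A',B)$ and $\Ext^1(A/A',B)$ does the rest); then (2)$\Rightarrow$(3) is essentially free, since $Q\s$ is invertible and one just transports $\Hom_{\A/\C}(\s,B)$ along the bijections of~(2). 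Your route places the weight on (3)$\Rightarrow$(2), where you need the explicit Lemma~\ref{le:mor} representation of morphisms together with your key observation that (3) forces $B$ to have no nonzero $\C$-subobject. Both your (1)$\Rightarrow$(3) and the paper's (3)$\Rightarrow$(1) are the same short-exact-sequence computations, just read in opposite directions. The paper's ordering is more economical overall, but your argument has the merit of isolating explicitly the fact that $B\in\C^\perp$ is characterized, under~(3), by the vanishing of all $\C$-subobjects of $B$---a statement the paper uses only implicitly.
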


\begin{proof}
(1) $\Rightarrow$ (2): For each pair of subobjects $A'\subseteq A$ and
$B'\subseteq B$ such that both $A/A'$ and $B'$ lie in $\C$, the map
$\Hom_\A(A,B)\to\Hom_\A(A',B/B')$ is bijective. Thus the quotient
functor induces a bijection $\Hom_\A(A,B)\to\Hom_{\A/\C}(A,B)$.

(2) $\Rightarrow$ (3): The quotient functor sends a morphism $\s$ in
$\A$ with $\Ker\s$ and $\Coker\s$ in $\C$ to an isomorphism in
$\A/\C$. Thus $\Hom_{\A/\C}(\s,B)$ is bijective. Using the bijections
in (2) it follows that $\Hom_{\A}(\s,B)$ is bijective.

(3) $\Rightarrow$ (1): Let $C$ be an object in $\C$. Then $\s\colon
0\to C$ induces a bijection $\Hom_\A(\s,B)$. Thus
$\Hom_\A(C,B)=0$. Let $\xi\colon 0\to B\to E\to C\to 0$ an exact
sequence in $\A$. The morphism $\s\colon B\to E$ induces a bijection
$\Hom_\A(\s,B)$, and therefore $\xi$ splits.  Thus $\Ext^1_\A(C,B)=0$.
\end{proof}

We need the following elementary lemma about pairs of adjoint functors.

\begin{lem}\label{le:adjoint}
Let $F\colon\A\to\B$ and $G\colon\B\to\A$ be a pair of functors such
that $G$ is a right adjoint of $F$.
Denote by $S(F)$ the class of morphisms $\s$ in $\A$ such that $F\s$ is invertible.
Then the following are equivalent:
\begin{enumerate}
\item The functor $F$ induces an equivalence $\A[S(F)^{-1}]\xto{\sim}\B$.
\item The functor $G$ is fully faithful.
\item The adjunction morphism $F(GA)\to A$ is invertible for each $A\in\B$.
\end{enumerate}
\end{lem}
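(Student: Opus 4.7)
The plan is to establish (2) $\iff$ (3) first, as this is the classical criterion for a right adjoint to be fully faithful and makes no reference to (1), and then to prove (3) $\iff$ (1) by exploiting the universal property of the localization $P\colon\A\to\A[S(F)^{-1}]$ together with the triangle identities of the adjunction.

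For (2) $\iff$ (3): for fixed $B,B'\in\B$ I would observe that the map $\Hom_\B(B,B')\to\Hom_\A(GB,GB')$ induced by $G$ becomes, under the adjunction isomorphism $\Hom_\A(GB,GB')\cong\Hom_\B(FGB,B')$, simply precomposition with the counit $\varepsilon_B\colon FGB\to B$; this is a routine check using naturality of $\varepsilon$. By the Yoneda lemma, this composite is bijective for all $B'$ iff $\varepsilon_B$ is invertible, so $G$ is fully faithful iff $\varepsilon$ is a natural isomorphism.

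For (3) $\Rightarrow$ (1): assuming $\varepsilon$ is invertible, the triangle identity $\varepsilon_{FA}\circ F\eta_A=\id_{FA}$ forces $F\eta_A=\varepsilon_{FA}^{-1}$, hence invertible. Thus $\eta_A\in S(F)$ for every $A\in\A$, and $P\eta_A$ is an isomorphism in $\A[S(F)^{-1}]$. Setting $\bar G:=PG$, the natural transformation $P\eta\colon P\to\bar G\bar F\circ P$ is then a natural isomorphism, and the uniqueness part of the universal property of $P$ yields an isomorphism $\id_{\A[S(F)^{-1}]}\cong\bar G\bar F$. Combined with $\bar F\bar G=FG\cong\id_\B$ via $\varepsilon$, this exhibits $\bar F$ as an equivalence with quasi-inverse $\bar G$.

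For (1) $\Rightarrow$ (3), which I expect to be the subtlest direction and the main obstacle: I would let $H$ be a quasi-inverse of $\bar F$, so that $\bar F\dashv H$ with both unit and counit invertible, and then identify $H$ with $\bar G:=PG$; the counit $\bar FH\to\id_\B$ translates across this identification into $\varepsilon\colon FG\to\id_\B$, yielding (3). The crucial step is to exhibit the adjunction $\bar F\dashv PG$, for then uniqueness of right adjoints gives $PG\cong H$. To set this up one notes that the presheaf $\Hom_\A(-,GB)\cong\Hom_\B(F-,B)$ on $\A$ inverts every morphism in $S(F)$, since $F$ does by definition, and hence factors uniquely through $P$; the technical heart of the argument is to identify this factorization with the representable functor $\Hom_{\A[S(F)^{-1}]}(-,PGB)$ on $\A[S(F)^{-1}]$, which requires a careful description of the morphism sets in the localization.
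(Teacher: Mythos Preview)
The paper does not prove this lemma; it simply cites \cite[I.1.3]{GZ}. So there is nothing to compare against beyond checking your argument on its own merits.

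Your (2) $\Leftrightarrow$ (3) and (3) $\Rightarrow$ (1) are correct and cleanly argued. In particular, using the triangle identity $\varepsilon_{FA}\circ F\eta_A=\id_{FA}$ together with (3) to force $\eta_A\in S(F)$, and then invoking the $2$-universal property of $P$ to descend $P\eta$ to an isomorphism $\id\cong \bar G\bar F$, is exactly right.

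For (1) $\Rightarrow$ (3) your strategy---show $\bar F\dashv PG$ with counit $\varepsilon$, then use that adjoints of equivalences have invertible unit and counit---is the correct one, but the route you sketch for establishing the adjunction is circular. You propose to identify the factorization of $\Hom_\A(-,GB)\cong\Hom_\B(F-,B)$ through $P$ with the representable $\Hom_{\A[S(F)^{-1}]}(-,PGB)$. That factorization is precisely $\Hom_\B(\bar F-,B)$, so after applying the equivalence $\bar F$ you are asking that $\Hom_\B(FA,FGB)\cong\Hom_\B(FA,B)$ via $\varepsilon_{B}$, which is (3) again.

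The fix is to transport the \emph{unit} rather than the hom-sets. The transformation $P\eta\colon P\Rightarrow PGF=(PG\bar F)P$ lies between functors $\A\to\A[S(F)^{-1}]$ that both factor through $P$, so by the $2$-universal property it descends uniquely to $\bar\eta\colon\id\Rightarrow PG\bar F$. Now verify the triangle identities for $(\bar F,PG,\bar\eta,\varepsilon)$ directly: the identity $\varepsilon\bar F\circ\bar F\bar\eta=\id_{\bar F}$ holds because its restriction along $P$ is $\varepsilon F\circ F\eta=\id_F$, and the $2$-universal property gives uniqueness; the identity $PG\varepsilon\circ\bar\eta PG=\id_{PG}$ holds componentwise at $B$ since $\bar\eta_{PGB}=P\eta_{GB}$ and $G\varepsilon_B\circ\eta_{GB}=\id_{GB}$. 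This yields $\bar F\dashv PG$ with the original $\varepsilon$ as counit, and since $\bar F$ is an equivalence, $\varepsilon$ is invertible.
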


\begin{proof} See \cite[I.1.3]{GZ}.
\end{proof}

The next result provides a useful criterion for an exact functor to be
a quotient functor. Moreover, it describes the right adjoint of a quotient functor.

\begin{prop}\label{pr:perp}
Let $F\colon\A\to\B$ be an exact functor between abelian categories
and suppose that $F$ admits a right adjoint $G\colon\B\to\A$. Then the
following are equivalent:
\begin{enumerate}
\item The functor $F$ induces an equivalence $\A/\Ker F\xto{\sim}\B$.
\item The functor $F$ induces an equivalence $(\Ker F)^\perp\xto{\sim}\B$.
\item The functor $G$ induces an equivalence $\B\xto{\sim}(\Ker F)^\perp$.
\item The functor $G$ is fully faithful.
\end{enumerate}
Moreover, in that case $(\Ker F)^\perp=\Im G$ and  $\Ker F={^\perp}(\Im G)$.
\end{prop}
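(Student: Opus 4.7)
The plan is to first establish a key lemma—that $\Im G\subseteq(\Ker F)^\perp$ holds automatically from the adjunction and exactness of $F$, without assuming any of (1)--(4)—and then use it as the hub connecting all four conditions. The $\Hom$-vanishing $\Hom_\A(C,GB)\cong\Hom_\B(FC,B)=0$ for $C\in\Ker F$ is immediate by adjunction. For the $\Ext^1$-vanishing, given any extension $0\to GB\xto{\iota}E\to C\to 0$, exactness of $F$ makes $F\iota$ invertible, and the morphism $\sigma\colon E\to GB$ corresponding under adjunction to $\e_B\circ(F\iota)^{-1}$ (where $\e$ is the counit) satisfies $\sigma\iota=\id_{GB}$ by the triangle identity, giving a splitting.

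For (1) $\Leftrightarrow$ (4), I observe that exactness of $F$ gives $S(F)=S(\Ker F)$, so Lemma~\ref{le:Serre} identifies $\A/\Ker F$ with the localization $\A[S(F)^{-1}]$, and Lemma~\ref{le:adjoint} yields the equivalence. The implication (3) $\Rightarrow$ (4) is obvious. For (4) $\Rightarrow$ (3) I show $G$ restricts to an equivalence $\B\to(\Ker F)^\perp$: it factors through $(\Ker F)^\perp$ by the key lemma, it is fully faithful by (4), and it is essentially surjective via the claim that the unit $\eta_A\colon A\to GFA$ is invertible for every $A\in(\Ker F)^\perp$. Given (4), Lemma~\ref{le:adjoint} makes the counit $\e$ invertible, so the triangle identity $\e_{FA}\circ F\eta_A=\id_{FA}$ forces $F\eta_A$ invertible, whence both $\Ker\eta_A$ and $\Coker\eta_A$ lie in $\Ker F$ by exactness. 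Then $A\in(\Ker F)^\perp$ forces $\Ker\eta_A=0$ (from $\Hom$-vanishing) and forces the induced short exact sequence $0\to A\to GFA\to\Coker\eta_A\to 0$ to split (from $\Ext^1$-vanishing). Since $(\Ker F)^\perp$ is closed under direct summands and contains $GFA\in\Im G$, the cokernel lies in $\Ker F\cap(\Ker F)^\perp$, which vanishes since any such object $C$ satisfies $\Hom_\A(C,C)=0$.

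For (3) $\Rightarrow$ (2), under (3) the functor $F|_{(\Ker F)^\perp}$ is a quasi-inverse to $G\colon\B\to(\Ker F)^\perp$, since both the counit and the restricted unit are invertible. For (2) $\Rightarrow$ (4), I show the counit $\e_B$ is invertible for every $B\in\B$. Since $GB\in(\Ker F)^\perp$ by the key lemma, under (2) the functor $F$ induces a bijection $\Hom_\A(A,GB)\cong\Hom_\B(FA,FGB)$ for every $A\in(\Ker F)^\perp$; combined with the adjunction bijection $\Hom_\A(A,GB)\cong\Hom_\B(FA,B)$, one obtains that post-composition with $\e_B$ gives a bijection $\Hom_\B(FA,FGB)\cong\Hom_\B(FA,B)$. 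Since $F|_{(\Ker F)^\perp}$ is essentially surjective, Yoneda then forces $\e_B$ invertible.

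For the moreover part, the equality $(\Ker F)^\perp=\Im G$ follows from combining the key lemma with the unit-iso step of (4) $\Rightarrow$ (3). For $\Ker F={^\perp}(\Im G)$, one inclusion uses the adjunction together with the splitting argument of the key lemma; the other uses that $A\in{^\perp}(\Im G)$ forces $\eta_A=0\in\Hom_\A(A,GFA)$, whence the triangle identity yields $\id_{FA}=\e_{FA}\circ F\eta_A=0$, so $FA=0$. The main obstacle is the unit-iso step in (4) $\Rightarrow$ (3): it requires orchestrating the triangle identities, closure of $(\Ker F)^\perp$ under summands, and the vanishing $\Ker F\cap(\Ker F)^\perp=0$, but once this step is laid out cleanly, the remaining implications fall out naturally.
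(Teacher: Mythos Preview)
Your proof is correct, but it takes a noticeably different route from the paper's. The paper cycles $(1)\Rightarrow(2)\Rightarrow(3)\Rightarrow(4)\Rightarrow(1)$ and relies heavily on Lemma~\ref{le:perp}: the inclusion $\Im G\subseteq(\Ker F)^\perp$ is obtained from condition~(3) of that lemma (since $\Hom_\A(\sigma,GB)\cong\Hom_\B(F\sigma,B)$ is bijective for $\sigma\in S(\Ker F)$), and full faithfulness of $F$ on $(\Ker F)^\perp$ in $(1)\Rightarrow(2)$ comes from condition~(2) of the same lemma. The step $(2)\Rightarrow(3)$ is then immediate, since an adjoint of an equivalence is an equivalence. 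By contrast, you bypass Lemma~\ref{le:perp} entirely: you establish $\Im G\subseteq(\Ker F)^\perp$ by an explicit splitting construction using the triangle identity, and you replace the paper's short $(2)\Rightarrow(3)$ by a more hands-on $(4)\Rightarrow(3)$ that proves the unit is invertible on $(\Ker F)^\perp$ via the observation $\Ker F\cap(\Ker F)^\perp=0$. Your argument is more self-contained and makes the adjunction mechanics visible, while the paper's is shorter because the relevant work has been packaged into Lemma~\ref{le:perp}.
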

\begin{proof}
Let $S(F)$ denote the class of morphisms $\s$ in $\A$ such that $F\s$
is invertible. Then it follows from Lemma~\ref{le:Serre} that
$\A[S(F)^{-1}]=\A/\Ker F$.

(1) $\Rightarrow$ (2): The functor $F$ induces a full and faithful
functor $(\Ker F)^\perp\to\B$ by Lemma~\ref{le:perp}. For each
$B\in\B$, we have $F(G B)\cong B$ by Lemma~\ref{le:adjoint}, and $GB$
belongs to $(\Ker F)^\perp$ by Lemma~\ref{le:perp}. Thus $F$ induces
an equivalence $(\Ker F)^\perp\xto{\sim}\B$.

(2) $\Rightarrow$ (3): Note that $\Im G\subseteq (\Ker F)^\perp$. Thus
$G$ induces a functor $\B\to (\Ker F)^\perp$ which is a right adjoint
of the equivalence $(\Ker F)^\perp\to\B$.  Now one uses that an adjoint of an
equivalence is again an equivalence.

(3) $\Rightarrow$ (4): An equivalence is fully faithful.

(4) $\Rightarrow$ (1): Use Lemma~\ref{le:adjoint}.

Observe that (3) implies $(\Ker F)^\perp=\Im G$. In particular, $\Ker
F\subseteq{^\perp}(\Im G)$. The other inclusion follows from the
isomorphism $\Hom_\B(FA,B)\cong\Hom_\A(A,GB)$.
\end{proof}

The next result characterizes the fact that the quotient functor admits a right adjoint.

\begin{prop}\label{pr:section}
Let $\A$ be an abelian category and $\C$ a Serre subcategory. Then the
following are equivalent:
\begin{enumerate}
\item The quotient functor $\A\to\A/\C$ admits a right adjoint
$\A/\C\to\A$.
\item Every object $A$ in $\A$ fits into an exact sequence
\begin{equation}\label{eq:local}
0\lto A'\lto A\lto \bar A\lto A''\lto 0
\end{equation}
such that $A',A''\in\C$ and $\bar A\in\C^\perp$.
\item The quotient functor induces an equivalence $\C^\perp\xto{\sim}\A/\C$.
\end{enumerate}
In that case the functor $\A\to\C$ sending $A$ to $A'$ is a
right adjoint of the inclusion $\C\to\A$, and
the functor $\A\to\C^\perp$ sending $A$ to $\bar A$ is a left adjoint of
the inclusion $\C^\perp\to\A$.
\end{prop}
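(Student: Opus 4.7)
The plan is to establish a cycle of implications pivoting on condition (3). Since the quotient functor $Q\colon\A\to\A/\C$ is exact with kernel $\C$ by Proposition~\ref{pr:quotient}, Proposition~\ref{pr:perp} supplies the implication (1) $\Rightarrow$ (3) at once. I will then prove (3) $\Rightarrow$ (2), (2) $\Rightarrow$ (3), and (3) $\Rightarrow$ (1) by hand, each time exploiting Lemma~\ref{le:perp} which controls morphisms in $\A$ and $\A/\C$ into objects of $\C^\perp$.

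For (3) $\Rightarrow$ (2), given $A\in\A$ the equivalence provides $\bar A\in\C^\perp$ with $Q\bar A\cong QA$. Lemma~\ref{le:perp} makes the canonical map $\Hom_\A(A,\bar A)\to\Hom_{\A/\C}(A,\bar A)$ bijective, so the isomorphism lifts to a morphism $\varphi\colon A\to\bar A$ with $Q\varphi$ invertible; exactness of $Q$ then places $A':=\Ker\varphi$ and $A'':=\Coker\varphi$ in $\Ker Q=\C$, giving the four-term exact sequence. Conversely, (2) $\Rightarrow$ (3) follows because the sequence shows $A\cong\bar A$ in $\A/\C$, so $Q|_{\C^\perp}$ is essentially surjective onto $\A/\C$, and its full faithfulness is immediate from Lemma~\ref{le:perp}. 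For (3) $\Rightarrow$ (1), I let $H\colon\A/\C\to\C^\perp$ be a quasi-inverse of the equivalence $Q|_{\C^\perp}$ and define $G$ to be the composite of $H$ with the inclusion $\C^\perp\hookrightarrow\A$; then the chain of bijections
$$\Hom_\A(A,GB)\xto{\sim}\Hom_{\A/\C}(A,GB)\xto{\sim}\Hom_{\A/\C}(A,B),$$
natural in $A$ and $B$, uses Lemma~\ref{le:perp} for the first step (as $GB\in\C^\perp$) and the quasi-inverse isomorphism $Q(GB)\cong B$ for the second, exhibiting $G$ as right adjoint to $Q$.

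For the final claims about the adjoints of the inclusions $\C\hookrightarrow\A$ and $\C^\perp\hookrightarrow\A$, I work directly with the sequence in (2). For $C\in\C$, any morphism $C\to A$ has zero composition with $A\to\bar A$ because $\Hom_\A(C,\bar A)=0$, and hence factors uniquely through the mono $A'\hookrightarrow A$; this shows $A\mapsto A'$ is right adjoint to $\C\hookrightarrow\A$. Dually, for $X\in\C^\perp$ and a morphism $A\to X$, its restriction to $A'$ vanishes, so it factors through $I:=\Im(A\to\bar A)$, and the short exact sequence $0\to I\to\bar A\to A''\to 0$ together with $\Hom_\A(A'',X)=0=\Ext^1_\A(A'',X)$ extends this uniquely to a morphism $\bar A\to X$; the same two vanishings give injectivity of the correspondence $\Hom_\A(\bar A,X)\to\Hom_\A(A,X)$.

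The principal pitfall is not any single verification but the organisation of the cycle: attempting to build the right adjoint $G$ in (2) $\Rightarrow$ (1) directly from the four-term sequence forces one to check functoriality and naturality by hand, whereas routing through the equivalence of (3) makes both automatic from the quasi-inverse construction.
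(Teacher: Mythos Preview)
Your proof is correct and essentially follows the paper's approach, relying on the same key ingredients (Proposition~\ref{pr:perp} and Lemma~\ref{le:perp}). The only difference is organisational: the paper proves the cycle $(1)\Rightarrow(2)\Rightarrow(3)\Rightarrow(1)$, obtaining the sequence in $(2)$ directly from the adjunction morphism $\eta_A\colon A\to G(FA)$, whereas you route $(1)\Rightarrow(2)$ through $(3)$ by lifting the isomorphism $QA\cong Q\bar A$ via Lemma~\ref{le:perp}; your treatment of the final adjoint claims is also more explicit about the role of the $\Ext^1$-vanishing than the paper's one-line remark that the induced map $\Hom_\A(\bar A,B)\to\Hom_\A(A,B)$ is bijective.
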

\begin{proof}
(1) $\Rightarrow$ (2): We apply Proposition~\ref{pr:perp}. Suppose
that the quotient functor $F\colon\A\to\A/\C$ admits a right adjoint
$G$. The functor $F$ inverts the adjunction morphism $\eta_A\colon
A\to G(FA)=\bar A$, since $FG\cong\Id_{\A/\C}$ by
Lemma~\ref{le:adjoint}. The exactness of $F$ then implies that
$A'=\Ker\eta_A$ and $A''=\Coker\eta_A$ belong to $\C$. The object
$\bar A$ belongs to $\Im G=\C^\perp$ by construction.

(2) $\Rightarrow$ (3): The quotient functor induces a fully faithful functor
$\C^\perp\to\A/\C$ by Lemma~\ref{le:perp}.  This functor is an
equivalence, because each object $A$ in $\A/\C$ is isomorphic to one
in its image via the isomorphism $A\xto{\sim}\bar A$.

(3) $\Rightarrow$ (1): Choose a quasi-inverse $G\colon
\A/\C\to\C^\perp$ of the equivalence $\C^\perp\to\A\xto{F}\A/\C$.  For
each $A$ in $\A$ and $B$ in $\A/\C$, there are bijections
$$\Hom_\A(A,GB)\xto{\sim}\Hom_{\A/\C}(FA,F(GB))\xto{\sim}\Hom_{\A/\C}(FA,
B).$$ The first map is bijective by Lemma~\ref{le:perp} and the second
is bijective because $FG\cong\Id_{\A/\C}$.  Thus $G$ is right
adjoint to the quotient functor $\A\to\A/\C$.

For any $C$ in $\C$, the induced map $\Hom_\A(C,A')\to\Hom_\A(C,A)$ is
bijective. Therefore sending $A$ to $A'$ provides a right adjoint of
the inclusion $\C\to\A$.  On the other hand, for any $B$ in
$\C^\perp$, the induced map $\Hom_\A(\bar A,B)\to\Hom_\A(A,B)$ is
bijective. Therefore sending $A$ to $\bar A$ provides a left adjoint
of the inclusion $\C^\perp\to\A$.
\end{proof}

\begin{rem}
The objects $A'$ and $A''$ occurring in \eqref{eq:local} represent
certain functors defined on $\C$. We have
$$\Hom_\A(-,A)|_\C\cong\Hom_\C(-,A')\quad\text{and}\quad\Ext^1_\A(-,A/A')|_\C\cong\Hom_\C(-,A''),$$
where  $A'$ is viewed as a subobject of $A$.
\end{rem}

\subsection{Global dimension}

Let $\A$ be an abelian category. For a pair of objects $A,B$ and
$n\geq 1$, let $\Ext^n_\A(A, B)$ denote the group of extensions in the
sense of Yoneda. Set $\Ext^0_\A(A, B)=\Hom_\A(A, B)$ and
$\Ext^{n}_\A(A, B)=0$ for $n<0$. Note that there are composition maps
$$\Ext^n_\A(A,B)\times\Ext^m_\A(B,C)\lto\Ext^{n+m}_\A(A,C)$$ for all
$n,m\in \mathbb{Z}$. The \index{dimension!projective}
\emph{projective dimension} of an object $A$ is by definition
$$\pdim A=\inf\{n\geq 0\mid \Ext_\A^{n+1}(A, -)=0\}.$$ Dually, one
defines the \index{dimension!injective} \emph{injective dimension} $\idim A$.

\begin{lem}\label{lem:hereditary}
Let $\A$ be an abelian category. For $A$ in $\A$ and $n>0$ the
following are equivalent:
\begin{enumerate}
\item $\Ext_\A^n(A, -)$ is right exact.
\item $\Ext_\A^{n+1}(A, -)=0$.
\item $\Ext_\A^{m}(A, -)=0$ for all $m>n$.
\item $\pdim A\leq n$.
\end{enumerate}
\end{lem}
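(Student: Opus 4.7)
The plan is to establish the cycle $(3) \Rightarrow (2) \Rightarrow (1) \Rightarrow (2) \Rightarrow (3)$ and then note that $(4) \Leftrightarrow (2)$ is essentially the definition of projective dimension (combined with the already proven $(2) \Rightarrow (3)$). Throughout, the two basic tools are the long exact sequence
\[
\cdots \lto \Ext^n_\A(A,B') \lto \Ext^n_\A(A,B) \lto \Ext^n_\A(A,B'') \xto{\partial} \Ext^{n+1}_\A(A,B') \lto \cdots
\]
associated to a short exact sequence $0 \to B' \to B \to B'' \to 0$, together with Yoneda's description of $\Ext^{m}_\A(A,B)$ as equivalence classes of exact sequences $0 \to B \to E_{m-1} \to \cdots \to E_0 \to A \to 0$.

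The routine implications are the following. $(3) \Rightarrow (2)$ is immediate. For $(2) \Rightarrow (1)$, if $\Ext^{n+1}_\A(A,B')=0$ then the long exact sequence above forces $\Ext^n_\A(A,B) \to \Ext^n_\A(A,B'')$ to be surjective for every short exact sequence, which is right exactness. For $(2) \Rightarrow (3)$ I would induct on $m \geq n+1$, the base case being the hypothesis. Given $\xi \in \Ext^m_\A(A,B)$ with $m > n+1$, represent $\xi$ by an exact sequence of length $m$ as above, let $K = \Im(E_{m-1} \to E_{m-2})$, and split off a short exact sequence $\s : 0 \to B \to E_{m-1} \to K \to 0$; the remaining sequence $0 \to K \to E_{m-2} \to \cdots \to E_0 \to A \to 0$ represents a class in $\Ext^{m-1}_\A(A,K)$, which vanishes by induction, so Yoneda splicing gives $\xi = 0$. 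Finally, $(4) \Leftrightarrow (2)$ is immediate: by definition $\pdim A \leq n$ means $n$ lies in the set $\{k \geq 0 \mid \Ext^{k+1}_\A(A,-) = 0\}$, so via $(2) \Leftrightarrow (3)$ this is equivalent to $\Ext^{n+1}_\A(A,-) = 0$.

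The main obstacle is the implication $(1) \Rightarrow (2)$. Given an arbitrary $\xi \in \Ext^{n+1}_\A(A,B)$ represented by
\[
0 \lto B \lto E_n \lto E_{n-1} \lto \cdots \lto E_0 \lto A \lto 0,
\]
I would set $K = \Im(E_n \to E_{n-1})$ and observe that $\xi$ is the Yoneda splice of the short exact sequence $\s : 0 \to B \to E_n \to K \to 0$ with the sequence $\eta : 0 \to K \to E_{n-1} \to \cdots \to E_0 \to A \to 0$, whose Yoneda class lies in $\Ext^n_\A(A,K)$. The definition of the connecting homomorphism in Yoneda Ext is precisely that $\partial([\eta]) = \xi$, where $\partial$ is the connecting map associated to $\s$. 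Right exactness of $\Ext^n_\A(A,-)$ applied to $\s$ forces $\partial = 0$, hence $\xi = 0$; since both $\xi$ and $B$ were arbitrary, $\Ext^{n+1}_\A(A,-) = 0$. The subtle point here, and the one I expect to require the most care, is the identification $\xi = \partial([\eta])$: one has to pin down the definition of $\partial$ (via pullback of $\eta$ along the identity, or equivalently via splicing) so as to match it with the Yoneda product, being careful about signs. Once this identification is in hand, the rest of the argument is purely formal.
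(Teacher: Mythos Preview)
Your proposal is correct and follows essentially the same approach as the paper: the key implication $(1)\Rightarrow(2)$ is handled identically, by writing $\xi\in\Ext^{n+1}_\A(A,B)$ as the Yoneda splice of a short exact sequence with a class $\eta\in\Ext^n_\A(A,K)$ and recognizing this splice as the connecting map $\partial(\eta)$, which vanishes by right exactness. The paper dismisses the remaining implications as straightforward, whereas you spell out $(2)\Rightarrow(3)$ via induction and Yoneda splicing; this is fine and the overall argument matches.
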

\begin{proof}
It suffices to show that (1) and (2) are equivalent; the rest is straightforward. We use the long
exact sequence for $\Ext_\A^*(A,-)$.

(1) $\Rightarrow$ (2): Fix an element $\xi\in\Ext^{n+1}_\A(A,B)$ which
is represented by an exact sequence
$$0\lto B\lto E_{n+1}\lto E_n\lto \cdots\lto E_1\lto A\lto 0.$$ Let $C$ be the
image of $E_{n+1}\to E_{n}$ and write $\xi=\xi''\xi'$ as the composite
of extensions $\xi'\in\Ext^n_\A(A,C)$ and $\xi''\in\Ext^1_\A(C,B)$.
For the connecting morphism $\delta\colon
\Ext^n_\A(A,C)\to\Ext^{n+1}_\A(A,B)$ induced by $\xi''$, we have
$\delta=0$ since $\Ext_\A^n(A, -)$ is right exact. Thus
$\xi=\delta(\xi')=0$.

(2) $\Rightarrow$ (1): Clear.
\end{proof}

The \index{dimension!global} \emph{global dimension} of $\A$ is by
definition the smallest integer $n\geq 0$ such that
$\Ext_\A^{n+1}(-,-)=0$. As usual, the dimension is infinite if such a
number $n$ does not exist.  We denote this dimension by $\gldim\A$ and
observe that it is equal to $\sup\{\pdim A\mid A\in\A\}$ and
$\sup\{\idim A\mid A\in\A\}$.  The category $\A$ is called
\index{category!hereditary} \emph{hereditary} provided that
$\Ext_\A^{2}(-, -)=0$.

For a  right noetherian ring $\La$, the global dimension of the
module category $\mod \La$ is called the (right)
\index{dimension!global} \emph{global dimension} of $\La$ and
denoted by $\gldim\La$.

\begin{exm}
(1) Let $\La$ be the ring of integers $\bbZ$ or the polynomial ring
$k[x]$ over a field $k$. Then $\mod\La$ is hereditary. More generally,
$\mod\La$ is hereditary if $\La$ is a Dedekind domain.

(2) For a field $k$ and a quiver $\Ga$, the category of representations
$\rep(\Ga,k)$ is hereditary.

(3) Let $\A$ be a hereditary abelian category and $\C$ a Serre
subcategory.  Then $\C$ and $\A/\C$ are again hereditary.
\end{exm}

\subsection{Length categories}

Let $\A$ be an abelian category.  An object $A$ of $\A$ has
\index{object!finite length} \emph{finite length} if there exists a finite chain of subobjects
$$0=A_0\subseteq A_1\subseteq \dots \subseteq A_{n-1}\subseteq
A_n=A$$ such that each quotient $A_i/A_{i-1}$ is a simple object.
Such a chain is called a \index{composition series} \emph{composition
series} of $A$.  A composition series is not necessarily unique but
its length is an invariant of $A$ by the Jordan-H\"older theorem; it
is called the \index{length} \emph{length} of $A$ and is denoted by
$\ell(A)$.  Note that an object has finite length if and only if it is
both \index{object!artinian} \emph{artinian} (i.e.\ satisfies the
descending chain condition on subobjects) and
\index{object!noetherian} \emph{noetherian} (i.e.\ satisfies the
ascending chain condition on subobjects).

Every object of finite length decomposes essentially uniquely into a
finite direct sum of indecomposable objects with local endomorphism
rings. This follows from the Krull-Remak-Schmidt theorem.

The objects of finite length form a Serre subcategory of $\A$ which is
denoted by $\A_0$.  The abelian category $\A$ is called a
\index{length category} \emph{length category} if $\A=\A_0$.

Let $\A$ be a length category.  The \index{Ext-quiver}
\emph{Ext-quiver} or \index{Gabriel quiver} \emph{Gabriel quiver} of
$\A$ is a valued quiver $\Si=\Si(\A)$ which is defined as follows.
The set $\Si_0$ of vertices is a fixed set of representatives of the
isomorphism classes of simple objects in $\A$. For a simple object
$S$, let $\Delta(S)$ denote its endomorphism ring, which is a division
ring.  Observe that $\Ext_\A^1(S,T)$ carries a natural
$\Delta(T)$-$\Delta(S)$-bimodule structure for each pair $S,T$ in
$\Si_0$. There is an arrow $S\to T$ with valuation $\d_{S,T}=(s,t)$ in
$\Si$ if $\Ext_\A^1(S,T)\neq 0$ with
$s=\dim_{\Delta(S)}\Ext_\A^1(S,T)$ and
$t=\dim_{\Delta(T)^\op}\Ext_\A^1(S,T)$. We write $\d_{S,T}=(0,0)$ if
$\Ext_\A^1(S,T)=0$.

The following observation is easily proved.

\begin{lem}\label{le:comp}
\pushQED{\qed}
A length category is connected if and only if its Ext-quiver is
connected.\qedhere
\end{lem}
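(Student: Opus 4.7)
My plan is to prove both directions by contraposition, since both the definition of connectedness (no proper additive decomposition) and of connectedness of the Ext-quiver (no partition of the simples into two parts with no arrows between) are most naturally negated.

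For the direction \emph{disconnected Ext-quiver implies disconnected category}, suppose $\Sigma_0=\Sigma_1\sqcup\Sigma_2$ is a partition of the simples (both parts non-empty) with $\Ext^1_\A(S,T)=0$ whenever $S$ and $T$ lie on different sides (and $\Hom_\A(S,T)=0$ automatically when $S\not\cong T$). I would let $\A_i$ be the full subcategory of objects all of whose composition factors belong to $\Sigma_i$; this is closed under subobjects, quotients and extensions in the obvious way. The real work is to show that every $A\in\A$ decomposes as $A_1\oplus A_2$ with $A_i\in\A_i$. I would do this by induction on $\ell(A)$: for a simple subobject $S\subseteq A$, say $S\in\Sigma_1$, the inductive hypothesis gives $A/S=B_1\oplus B_2$ with $B_i\in\A_i$, so the extension class of $A$ lives in $\Ext^1_\A(B_1,S)\oplus\Ext^1_\A(B_2,S)$, and it suffices to kill the second summand. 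This in turn is proved by a secondary induction on $\ell(B_2)$, reducing via the long exact Ext-sequence applied to a short exact sequence of smaller objects in $\A_2$ down to the base case $\Ext^1_\A(T,S)=0$ for simple $T\in\Sigma_2$, which is true by hypothesis. The condition $\Hom_\A(A_1,A_2)=0=\Hom_\A(A_2,A_1)$ then follows because the image of any map $\A_1\to\A_2$ is simultaneously a quotient of an object in $\A_1$ and a subobject of one in $\A_2$, hence has no composition factors and so is zero.

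For the converse, suppose $\A=\A_1\amalg\A_2$ is a proper decomposition. Each simple is indecomposable and hence lies in exactly one $\A_i$, which partitions $\Sigma_0$; both parts are non-empty because each $\A_i$ is non-zero and every non-zero object in a length category has a simple subobject. For simples $S\in\A_1$ and $T\in\A_2$, I would pick an extension $0\to T\to E\to S\to 0$ and decompose $E=E_1\oplus E_2$. The vanishing of $\Hom$ between the components forces the inclusion of $T$ to land in $E_2$ and the projection onto $S$ to factor through $E_1$, so $E/T=E_1\oplus(E_2/T)\cong S$; simplicity of $S$ forces one summand to vanish, and using that $\A_i$ is closed under quotients (a short argument analogous to the one above) the case $E=E_2$ contradicts $S\in\A_1$, leaving only the split case. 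Thus $\Ext^1_\A(S,T)=0$ (and symmetrically the other direction), so no arrow in $\Sigma$ crosses the partition, and $\Sigma$ is disconnected.

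I expect the main obstacle to be the double induction in the forward direction: one has to carefully order the inductions (outer on $\ell(A)$, inner on $\ell(B_2)$) and verify that the closure of $\A_1$ and $\A_2$ under subobjects and quotients--needed both to apply the inductive hypothesis and to conclude $\Hom$-vanishing--is valid in this setting. Once that is straightened out, everything else is a routine application of long exact Ext-sequences and the Jordan--H\"older/Krull--Remak--Schmidt machinery already available for length categories.
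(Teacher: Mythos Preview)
Your proposal is correct. The paper itself treats this lemma as an easy observation and provides no proof at all (it is stated with an immediate \texttt{qed}), so there is nothing to compare against; your argument by contraposition in both directions, with the double induction to get $\Ext^1_\A(B_2,S)=0$ and the splitting analysis for the converse, is a valid way to fill in the omitted details.

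One small remark: in the converse direction you can shorten the argument slightly. Once you know $\A_1$ and $\A_2$ are closed under quotients and subobjects (which, as you note, follows immediately from the $\Hom$-orthogonality), each $\A_i$ is a Serre subcategory, and then $\Ext^1_\A(S,T)=0$ for $S\in\A_1$, $T\in\A_2$ follows directly: any extension $E$ decomposes as $E_1\oplus E_2$, the monomorphism $T\to E$ lands in $E_2$ and the epimorphism $E\to S$ kills $E_2$, so the sequence is the direct sum of $0\to 0\to E_1\to S\to 0$ and $0\to T\to E_2\to 0\to 0$, hence split. This avoids the case analysis on which summand of $E/T$ vanishes.
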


\begin{exm}
(1) Let $\La$ be a commutative noetherian local ring with maximal
    ideal $\mathfrak m$. Then $(\mod\La)_0$ equals the category of
    $\mathfrak m$-torsion modules and $\La/\mathfrak m$ is the unique
    simple $\La$-module.

(2) For a field $k$ and a quiver $\Ga$, the category of
representations $\rep(\Ga,k)$ is a length category.  Suppose that
$\Ga$ has no oriented cycle and no pair of parallel arrows. Then the
Ext-quiver of $\rep(\Ga,k)$ is isomorphic to $\Ga$, with valuation
$(1,1)$ for each arrow.
\end{exm}

\subsection{Uniserial categories}

Let $\mathcal{A}$ be a length category. An object $A$ is
\index{object!uniserial} \emph{uniserial} provided it has a unique
composition series. Note that any non-zero uniserial object is
indecomposable. Moreover, subobjects and quotient objects of uniserial
objects are uniserial.  The length category $\A$ is called
\index{category!uniserial} \emph{uniserial} provided that each
indecomposable object is uniserial.

The following result characterizes uniserial categories in terms of
their Ext-quivers.

\begin{thm}[Gabriel]\label{th:Gabriel}
A length category $\A$ is uniserial if and only if for each simple
object $S$, we have
\begin{equation}\label{eq:dim}
\sum_{S'\in \Si_0}\dim_{\Delta(S')}\Ext_\A^1(S', S)\leq 1 \text{ and
} \sum_{S'\in \Si_0}\dim_{\Delta(S')^\op}\Ext_\A^1(S, S')\leq 1.
\end{equation}
\end{thm}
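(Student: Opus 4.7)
The proof has two directions.

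For the forward direction, I argue the contrapositive of the first bound. Fix a simple object $S$, and suppose the bound fails: there exist nonsplit extensions $\xi_i \colon 0 \to S \to E_i \to S_i \to 0$ for $i = 1, 2$ with either $S_1 \not\cong S_2$, or $S_1 \cong S_2$ but $\xi_1,\xi_2$ linearly independent over $\Delta(S_1)$. Form the amalgamated pushout $F = E_1 \amalg_S E_2$ along the two socle inclusions $S \hookrightarrow E_i$. Then $F$ has length $3$; its socle is the common image of $S$ and hence is simple, while $F/\soc F \cong S_1 \oplus S_2$ has length $2$. Simple socle makes $F$ indecomposable, but a uniserial indecomposable must have simple top, so $F$ fails to be uniserial, contradicting the hypothesis. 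The second bound follows from the dual pullback construction, producing a length-$3$ indecomposable with non-simple socle and simple top $S$.

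For the backward direction, I induct on $n = \ell(M)$ to show every indecomposable $M$ is uniserial. The base case $n = 1$ is trivial. For $n \geq 2$, the induction rests on two sub-claims: (I) $\top M$ is simple; and (II) writing $T = \top M$ (so that $\rad M$ is the unique maximal subobject), the radical $\rad M$ is indecomposable. Granted (I) and (II), the inductive hypothesis applied to $\rad M$ (length $n-1$) gives $\rad M$ uniserial, and combining with the extension $0 \to \rad M \to M \to T \to 0$ forces $M$ itself to be uniserial.

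The key work lies in (II). Suppose $\rad M = A \oplus B$ with $A, B \neq 0$. Since indecomposable summands of $\rad M$ are uniserial by the inductive hypothesis, I may assume $A$ is indecomposable with simple top $\top A$. Consider $M/\rad^2 M$: it has simple top $T$, hence is indecomposable, and has radical $\rad M/\rad^2 M = \top A \oplus \top B$, which is semisimple of length $\geq 2$. If $\rad^2 M \neq 0$, then $\ell(M/\rad^2 M) < n$, and by the inductive hypothesis $M/\rad^2 M$ is uniserial, forcing its radical (a subobject) to be uniserial; but a semisimple object of length $\geq 2$ is decomposable and therefore not uniserial, a contradiction. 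If $\rad^2 M = 0$, so $\rad M$ is semisimple, then the second bound allows at most one simple $S$ with $\Ext^1(T,S) \neq 0$, of $\Delta(S)$-dimension $\leq 1$; a $\operatorname{GL}_k(\Delta(S))$-orbit analysis of the extension class in $\Ext^1(T, S^k) \cong \Ext^1(T,S)^k$ then shows that $M$ indecomposable forces $k=1$, i.e., $\rad M$ simple, again a contradiction. Claim (I) follows by the symmetric argument, using the second socle $\soc_2 M$ and the first bound.

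The main obstacle is the coordination of (I) and (II) through the induction without circularity, and the execution of the $\operatorname{GL}_k(\Delta(S))$-orbit analysis in the semisimple-radical case of (II), where the quantitative form of the bounds (dimension $\leq 1$, not merely vanishing) enters the argument essentially.
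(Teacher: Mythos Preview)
Your forward direction is essentially correct (and dual to the paper's); just note that $\soc F = S$ genuinely uses the linear independence of $\xi_1,\xi_2$ --- if $\xi_2$ lies in the $\Delta(S_1)$-span of $\xi_1$ the pushout decomposes --- so this should be checked, not asserted.

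The backward direction has a real gap: Claim~(I) is not proved. Your argument for (II) uses (I) in both cases --- to conclude that $M/\rad^2 M$ is indecomposable, and to have a simple $T=\top M$ when $\rad^2 M=0$. The ``symmetric argument'' you invoke for (I) dualizes the proof of (II), but the dual of ``$\rad M$ indecomposable, given $\top M$ simple'' is ``$M/\soc M$ indecomposable, given $\soc M$ simple'': a statement (II$'$) requiring a hypothesis (I$'$), not a proof of (I). One can deduce (I) from (II$'$) via the inductive hypothesis applied to $M/\soc M$, but (II$'$) needs (I$'$), which is just the dual of (I); the chain $(\mathrm{I}')\Rightarrow(\mathrm{II}')\Rightarrow(\mathrm{I})\Rightarrow(\mathrm{II})$ never gets started. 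You flag this circularity in your last paragraph but do not resolve it. The paper avoids the problem by never isolating $\rad A$: it picks an \emph{arbitrary} simple quotient $A\twoheadrightarrow S$, decomposes $A'=\Ker(A\to S)=\bigoplus A_i$ into uniserials by induction, and invokes a separately proved lemma (Lemma~\ref{le:Gabriel}): if $0\to A'\to E\to S\to 0$ is nonsplit with $A'$ uniserial and $S$ simple then $E$ is uniserial, and every epimorphism from such an $A'$ onto a nonzero object induces an isomorphism on $\Ext^1_\A(S,-)$. The first clause handles the one-summand case directly; the second forces all $\top A_i$ to coincide and then manufactures a splitting when there are several summands. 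That lemma is exactly the missing ingredient in your approach.
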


The proof of this result requires some preparations and we begin with
some notation. Let $A$ be any object in $\A$. We denote by $\rad A$
the intersection of all its maximal subobjects and let $\top A=A/\rad
A$. Analogously, $\soc A$ denotes the sum of all simple subobjects of
$A$.

\begin{lem}\label{le:Gabriel}
Let $\A$ be a length category and suppose that \eqref{eq:dim} holds
for each simple object $S$.  Let $\xi\colon 0\rightarrow A \rightarrow
E\rightarrow S \rightarrow 0$ be a non-split extension such that $A$
is uniserial and $S$ is simple. Then we have the following:
\begin{enumerate}
\item Every epimorphism $A\to B\neq 0$ induces an isomorphism
$$\Ext^1_\A(S, A)\xto{\sim} \Ext^1_\A(S,B).$$
\item The object $E$ is uniserial.
\item Given any non-split extension $\xi'$ in $\Ext_\A^1(S,A)$, there exists an
isomorphism $\tau\colon A\rightarrow A$ such that $\xi'=\tau\xi$.
\end{enumerate}
\end{lem}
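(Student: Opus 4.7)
The plan is to establish (1), (2), and (3) simultaneously by induction on $n = \ell(A)$, with the base case $n = 1$ (where $A$ is simple) handled directly. In the base case, (1) is immediate since any non-zero epimorphism from a simple object is an isomorphism. For (2), any simple subobject $S' \subseteq E$ with $S' \neq A$ would satisfy $S' \cap A = 0$, so $S' \hookrightarrow E/A = S$ is an isomorphism, making $E = A \oplus S'$ and contradicting the non-splitness of $\xi$; hence $\soc E = A$, and the composition series $0 \subset A \subset E$ is unique. For (3), non-splitness of $\xi$ gives $\Ext_\A^1(S, A) \neq 0$, so by the second inequality of \eqref{eq:dim} applied to $S$ the unique simple $S'$ with $\Ext_\A^1(S, S') \neq 0$ must be $A$, and $\Ext_\A^1(S, A)$ is one-dimensional as a left $\Delta(A)$-module; thus $\xi' = d \cdot \xi$ for some non-zero $d \in \Delta(A)$, which is an automorphism $\tau$ of $A$ since $\Delta(A)$ is a division ring.

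For the inductive step, let $A_1 = \soc A$ (simple) and $\bar A = A/A_1$ (uniserial of length $n-1$, with $\top\bar A = \top A$). Since subobjects of $A$ are totally ordered, any nonzero quotient $B = A/K$ with $K \neq 0$ factors through $\bar A$; combined with the inductive (1) applied to $\bar A$, part (1) for $A$ reduces to showing that the pushout map $p_*\colon \Ext_\A^1(S, A) \to \Ext_\A^1(S, \bar A)$ along $p\colon A \twoheadrightarrow \bar A$ is an isomorphism and that $p_*\xi$ is non-split. I would analyze the associated long exact sequence
\[
\Hom_\A(S, \bar A) \to \Ext_\A^1(S, A_1) \to \Ext_\A^1(S, A) \xto{p_*} \Ext_\A^1(S, \bar A) \to \Ext_\A^2(S, A_1),
\]
combining the hypothesis \eqref{eq:dim} (which pins down a unique simple $S^+$ with $\Ext_\A^1(S, S^+) \neq 0$) with the non-splitness of $\xi$: assuming $p_*\xi = 0$, the element $\xi$ lies in the image of $\Ext_\A^1(S, A_1) \to \Ext_\A^1(S, A)$, and a Yoneda-level analysis of the resulting extension as a splice of an extension of $S$ by $A_1$ with the non-split short exact sequence $0 \to A_1 \to A \to \bar A \to 0$, together with the uniqueness of outgoing Ext-arrows from $S$, yields a contradiction; symmetric arguments handle surjectivity of $p_*$ and vanishing of the right-hand connecting map. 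Once this key step is settled, $p_*\xi$ is non-split, and (2) follows by applying the inductive (2) to it (showing $E/A_1$ is uniserial) together with the same simple-subobject argument as in the base case to prove $\soc E = A_1$; these combine to force $E$ to be uniserial.

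For (3) in the inductive step, a non-iso endomorphism of the uniserial indecomposable $A$ factors through $\rad A \hookrightarrow A$ (since $A$ is of finite length and its subobjects are totally ordered, a non-epi endomorphism lands in $\rad A$, and an epi endomorphism is an iso by the descending chain condition); by (1) applied to $A \twoheadrightarrow \top A$, the map $\Ext_\A^1(S, \rad A) \to \Ext_\A^1(S, A)$ is zero, so non-iso endomorphisms act as zero on $\Ext_\A^1(S, A)$. Hence the $\End_\A(A)$-action on $\Ext_\A^1(S, A)$ descends to an action of the residue field $\Delta(\top A) = \End_\A(A)/\rad\End_\A(A)$, and by (1) combined with \eqref{eq:dim} the space $\Ext_\A^1(S, A) \cong \Ext_\A^1(S, \top A)$ is one-dimensional over $\Delta(\top A)$. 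Thus $\xi' = \bar d \cdot \xi$ for a unit $\bar d \in \Delta(\top A)$, lifting to the required automorphism $\tau \in \Aut_\A(A)$. \textbf{The main obstacle} is the key step in proving (1): without a hereditary assumption on $\A$, simultaneously controlling the left- and right-hand connecting maps in the displayed long exact sequence requires careful exploitation of \eqref{eq:dim} and the uniserial structure of $A$, and this is the place where the hypothesis is used in its sharpest form.
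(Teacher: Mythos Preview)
There are two issues, one in (1) and a more serious one in (3).

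For (1), your ``main obstacle'' is self-inflicted. You never need surjectivity of $p_*$, and hence never need to touch $\Ext^2_\A(S,A_1)$. Once $p_*$ is injective (and your socle sequence does give this: when $\Ext^1_\A(S,A_1)\neq 0$ one has $S\cong\soc\bar A$, and the connecting map $\Hom_\A(S,\bar A)\to\Ext^1_\A(S,A_1)$ is then a nonzero map between spaces that are one-dimensional over $\Delta(S)$), the inductive (1) for $\bar A$ gives $\Ext^1_\A(S,\bar A)\cong\Ext^1_\A(S,\top A)$, and the first inequality of \eqref{eq:dim} applied with $\top A$ in the role of $S$ says $\dim_{\Delta(S)}\Ext^1_\A(S,\top A)\le 1$. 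Since the source $\Ext^1_\A(S,A)$ contains $\xi\neq 0$, the injection is forced to be an isomorphism. The paper makes this even cleaner by using $0\to\rad A\to A\to\top A\to 0$ instead of the socle sequence: then one only has to show $\Ext^1_\A(S,A)\to\Ext^1_\A(S,\top A)$ is a monomorphism, the dimension bound does the rest, and $\Ext^2$ never enters the picture.

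For (3), your argument has a genuine gap. You assert $\End_\A(A)/\rad\End_\A(A)=\Delta(\top A)$ and then lift $\bar d$ to $\tau$. The natural map $\End_\A(A)\to\End_\A(\top A)$ does have kernel $\rad\End_\A(A)$, but its \emph{surjectivity} is exactly the statement that every automorphism of $\top A$ lifts to one of $A$, which is not a standard fact and is essentially what (3) is asserting. The paper supplies this lift by an explicit induction: writing $\mu\colon 0\to\rad A\to A\to\top A\to 0$ and given $\bar\tau\in\Aut(\top A)$, one applies the inductive (3) to $\rad A$ (with the simple $\top A$ and the non-split extension $\mu$) to find $\tau'\in\Aut(\rad A)$ with $\tau'_*(\bar\tau^*\mu)=\mu$; this produces the lift $\tau$ with $\pi\tau=\bar\tau\pi$, and then the isomorphism in (1) finishes. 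Your simultaneous-induction framework could absorb this step, but you must actually carry it out; the equality you wrote down is the conclusion, not an available input.

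Your inductive (2) is correct. The paper instead gives a short direct argument using only (1): if $U\subsetneq E$ with $U\not\subseteq A$, one obtains a non-split extension $0\to A\cap U\to U\to S\to 0$ whose pushout along $A\cap U\hookrightarrow A$ is $\xi\neq 0$, yet composing with the isomorphism $\Ext^1_\A(S,A)\xto{\sim}\Ext^1_\A(S,A/(A\cap U))$ from (1) factors through the zero map $A\cap U\to A/(A\cap U)$, a contradiction.
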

\begin{proof}
(1) It is sufficient to consider the case $B=\top A$. Moreover, it is
sufficient to show that the induced map $\Ext^1_\A(S, A)\to
\Ext^1_\A(S,\top A)$ is a monomorphism, since
$\dim_{\Delta(S)}\Ext^1_\A(S,\top A)\leq 1$ by \eqref{eq:dim}. We use
the long exact sequence which is obtained from the short exact
sequence $0\to A'\to A\to\top A\to 0$ by applying $\Hom_\A(S,-)$.

If $\Ext_\A^1(S, A')=0$, then the induced map $\Ext_\A^1(S, A)
\rightarrow \Ext_\A^1(S, \top A)$ is a monomorphism. Now assume that
$\Ext_\A^1(S, A')\neq 0$. Using induction on the length, we have that
$\Ext_\A^1(S, A')\xto{\sim} \Ext_\A^1(S, \top A')\neq 0$.  Next
observe that $\Ext_\A^1(\top A, \top A')\neq 0$ since $A$ is
uniserial. Thus \eqref{eq:dim} implies $S\cong \top A$, and a
dimension argument shows that the connecting morphism $\Hom_\A(S, \top A)
\rightarrow \Ext_\A^1(S, A')$ is an isomorphism. Thus from the long
exact sequence we infer that the natural map $\Ext_\A^1(S,
A)\rightarrow \Ext_\A^1(S, \top A)$ is a monomorphism.

(2) It suffices to show that every proper subobject $U\subseteq E$ is
contained in $A$. Otherwise we have an induced extension $0\rightarrow
A\cap U \rightarrow U\rightarrow S \rightarrow 0$. Thus the inclusion
$A\cap U\to A$ induces a non-zero map $\Ext_\A^1(S, A\cap U)
\rightarrow \Ext_\A^1(S, A)$.  Composing this with the isomorphism
$\Ext_\A^1(S, A)\xto{\sim} \Ext_\A^1(S, A/{A\cap U})$ from (1) gives a
non-zero map $\Ext_\A^1(S, A\cap U) \rightarrow {\rm Ext}_\A^1(S,
A/{A\cap U})$ which is induced by the composite $A\cap U\rightarrow A
\rightarrow A/{A\cap U}$. This is impossible.

(3) We use induction on the length of $A$. The case $\ell(A)=1$
follows from the equality $\dim_{\Delta(A)^\op}\Ext_\A^1(S,A)=1$. If
$\ell(A)>1$, choose a maximal subobject $A'\subseteq A$ and let $\bar
A=A/A'$. It follows from (1) that the canonical morphism $\pi\colon
A\to\bar A$ induces an isomorphism $\Ext^1_\A(S, A)\xto{\sim}
\Ext^1_\A(S,\bar A)$ taking $\xi$ to $\pi\xi$. There is an isomorphism
$\bar\tau\colon\bar A\to\bar A$ such that $\pi\xi'=\bar \tau(\pi\xi)$
since $\ell(\bar A)=1$. We claim that $\bar\tau$ extends to an
isomorphism $\tau\colon A\to A$ satisfying $\pi\tau=\bar\tau
\pi$. This implies $\pi\xi'= \bar\tau\pi\xi=\pi\tau\xi$, and therefore
$\xi'=\tau\xi$. Thus it remains to construct $\tau$. To this end
consider the non-split extension $\mu\colon 0\to A'\to A\to\bar A\to
0$. The induction hypothesis yields an isomorphism $\tau'\colon A'\to
A'$ such that $\tau'(\mu\bar\tau)=\mu$ since $\ell(A')<\ell(A)$. This
gives the isomorphism $\tau$ satisfying $\pi\tau=\bar\tau \pi$.
\end{proof}

\begin{lem}\label{le:uniserial}
Let $\A$ be a length category and suppose that \eqref{eq:dim} holds
for each simple object $S$.  For two uniserial objects $A$ and $B$ the
following are equivalent:
\begin{enumerate}
\item $A\cong B$.
\item $\top A\cong\top B$ and $\ell(A)=\ell(B)$.
\item $\soc A\cong\soc B$ and $\ell(A)=\ell(B)$.
\end{enumerate}
\end{lem}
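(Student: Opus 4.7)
The implications (1) $\Rightarrow$ (2) and (1) $\Rightarrow$ (3) are immediate, since an isomorphism of uniserial objects preserves length and induces isomorphisms on top and socle. The substantive content lies in (2) $\Rightarrow$ (1); the implication (3) $\Rightarrow$ (1) will then follow by dualizing (working in $\A^\op$, whose Ext-quiver is obtained from that of $\A$ by reversing arrows and swapping the two valuations, so the hypothesis \eqref{eq:dim} is self-dual).

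For (2) $\Rightarrow$ (1) my plan is to argue by induction on $n=\ell(A)=\ell(B)$. The base case $n=1$ is trivial since $A\cong \top A\cong \top B\cong B$. For $n>1$, set $S=\top A\cong \top B$ and consider the canonical non-split extensions
\[
\xi\colon 0\lto \rad A\lto A\lto S\lto 0,\qquad \eta\colon 0\lto \rad B\lto B\lto S\lto 0.
\]
Since $A$ and $B$ are uniserial, $\rad A$ and $\rad B$ are uniserial of length $n-1$. To apply the induction hypothesis I need to identify $\top(\rad A)$ with $\top(\rad B)$; the key observation is that Lemma~\ref{le:Gabriel}(1), applied to the epimorphism $\rad A\to\top(\rad A)$, yields an isomorphism $\Ext^1_\A(S,\rad A)\xto{\sim}\Ext^1_\A(S,\top(\rad A))$, and similarly for $B$. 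Since $\xi$ and $\eta$ are non-split, both $\Ext^1_\A(S,\top(\rad A))$ and $\Ext^1_\A(S,\top(\rad B))$ are non-zero. The second inequality in \eqref{eq:dim} forces at most one isomorphism class of simples $S'$ with $\Ext^1_\A(S,S')\neq 0$, so $\top(\rad A)\cong\top(\rad B)$.

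By the induction hypothesis applied to $\rad A$ and $\rad B$, there is an isomorphism $\sigma\colon \rad B\xto{\sim}\rad A$. Using $\sigma$ to transport $\eta$ into $\Ext^1_\A(S,\rad A)$, we obtain two non-split extensions of $S$ by the uniserial object $\rad A$. By Lemma~\ref{le:Gabriel}(3) any two such non-split extensions differ by an automorphism of $\rad A$, hence represent extensions with isomorphic middle terms. It follows that $A\cong B$.

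The main obstacle I anticipate is the identification of $\top(\rad A)$ and $\top(\rad B)$, which is precisely the point where the uniserial structure and the hypothesis \eqref{eq:dim} interact, but the preparatory Lemma~\ref{le:Gabriel} has been tailored exactly for this: part~(1) converts the statement about $\Ext^1$ into one about tops, and part~(3) supplies the required rigidity of non-split extensions, so everything assembles cleanly.
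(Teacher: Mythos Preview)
Your proof is correct and follows exactly the approach the paper indicates: the paper's proof is the single sentence ``This equivalence follows from Lemma~\ref{le:Gabriel} using induction on the length $\ell(A)$,'' together with the remark that \eqref{eq:dim} is self-dual. You have simply unpacked this induction, using parts (1) and (3) of Lemma~\ref{le:Gabriel} precisely as intended.
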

\begin{proof}
The condition \eqref{eq:dim} is self-dual. Thus it suffices to show
the equivalence (1) $\Leftrightarrow$ (2). This equivalence follows
from Lemma~\ref{le:Gabriel} using induction on the length $\ell(A)$.
\end{proof}

\begin{proof}[Proof of Theorem~\ref{th:Gabriel}]
Suppose first that $\A$ is uniserial. Choose a simple object $S$ and
assume that $$\sum_{S'\in
\Si_0}\dim_{\Delta(S')^\op}\Ext_\A^1(S,S')\geq 2.$$ Then there exists
an extension $\xi\colon 0 \rightarrow S'\oplus S'' \rightarrow E
\rightarrow S \rightarrow 0$ with $S',S''\in\Si_0$ such that for each
non-zero morphism $\theta\colon S'\oplus S''\rightarrow T$ with $T\in
\Si_0$, the induced extension $\theta\xi$ does not split. It is not
difficult to check that $E$ is indecomposable and has at least two
different composition series. Thus $E$ is not uniserial which is a
contradiction.

Now assume that \eqref{eq:dim} holds for each simple object $S$ and
fix an indecomposable object $A$. We show by induction on $\ell(A)$
that $A$ is uniserial. The case $\ell(A)=1$ is clear. Thus we choose
an exact sequence $\xi\colon 0\rightarrow A' \rightarrow A \rightarrow
S\rightarrow 0$ with $S$ simple and fix a decomposition
$A'=\bigoplus_{i=1}^lA_i$ into indecomposable objects. Note that each
$A_i$ is uniserial by our hypothesis. If $l=1$, then $A$ is uniserial
by Lemma~\ref{le:Gabriel}. Now assume $l>1$. Denote by $\xi_i$ the
pushout of $\xi$ along the projection $A'\rightarrow A_i$. Note that
$\xi_i\neq 0$; otherwise $A_i$ is isomorphic to a direct summand of
$A$.  Therefore $\Ext_\A^1(S, A_i)\neq 0$ for all $i$, and
Lemma~\ref{le:Gabriel} implies $\top A_i\cong\top A_1$ for all $i$.
Assume that $\ell(A_1)\geq \ell(A_2)$. Then we have an epimorphism
$\pi\colon A_1\rightarrow A_2$ by Lemma~\ref{le:uniserial} which
induces an isomorphism $\Ext_\A^1(S, A_1)\xto{\sim} \Ext_\A^1(S, A_2)$
by Lemma~\ref{le:Gabriel}.  Moreover, there exists an isomorphism
$\tau\colon A_2\to A_2$ such that $\pi\xi_1=\tau\xi_2$. Consider the
morphism $\p\colon A'\to A_2$ with $\p_1=\pi$, $\p_2=-\tau$ and
$\p_i=0$ for $2<i\leq l$. We have $\p\xi=0$ by construction, and
therefore $A_2$ is isomorphic to a direct summand of $A$. This is a
contradiction. Thus $A$ is uniserial.
\end{proof}

Let $\A$ be a uniserial category. Choose a complete set of
representatives of the isomorphism classes of indecomposable objects
in $\A$ and denote it by $\ind\A$. An object in $\ind\A$ with top $S$
and length $n$ is denoted by $S^{[n]}$. Analogously, we write
$S_{[n]}$ for an object in $\ind\A$ with socle $S$ and length $n$.

For each simple object $S$, we have a chain of monomorphisms
$$S=S_{[1]}\rightarrowtail S_{[2]}\rightarrowtail
S_{[3]}\rightarrowtail \cdots$$ which is either finite or
infinite. Dually, there is a chain of epimorphisms
$$\cdots \twoheadrightarrow S^{[3]}\twoheadrightarrow
S^{[2]}\twoheadrightarrow S^{[1]}=S.$$

A morphism $\p\colon A\to B$ in an additive category is called
\index{morphism!irreducible} \emph{irreducible} if $\p$ is neither a
split monomorphism nor a split epimorphism and if for any
factorisation $\p=\p''\p'$ the morphism $\p'$ is a split monomorphism
or $\p''$ is a split epimorphism.

\begin{lem}\label{le:irr}
Let $\A$ be a uniserial category. For a morphism $\p\colon A\to B$
between indecomposable objects, the following are equivalent:
\begin{enumerate}
\item The morphism $\p$ is irreducible.
\item The object $\Ker\p\oplus\Coker\p$ is simple.
\item There exists a simple object $S$ and an integer $n$ such
that  $\p$ is, up to isomorphism, of the form
$S_{[n]}\rightarrowtail S_{[n+1]}$ or $S^{[n+1]}\twoheadrightarrow
S^{[n]}$.
\end{enumerate}
\end{lem}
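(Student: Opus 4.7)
The plan is to prove $(1)\Rightarrow(2)\Rightarrow(3)\Rightarrow(1)$; by duality between subobjects and quotients, I focus on the case where $\p$ turns out of the form $S_{[n]}\rightarrowtail S_{[n+1]}$, the case $S^{[n+1]}\twoheadrightarrow S^{[n]}$ being analogous. For $(1)\Rightarrow(2)$, I start with the canonical factorization $\p=\pi\circ e$ with $e\colon A\twoheadrightarrow\Im\p$ and $\pi\colon\Im\p\rightarrowtail B$. Irreducibility forces either $e$ to be a split monomorphism or $\pi$ to be a split epimorphism; in either case the corresponding factor is an isomorphism, so $\p$ is monic or epic. Assuming $\p$ monic, a non-simple $\Coker\p$ would yield an intermediate subobject $A\subsetneq B_1\subsetneq B$; since $B$ is indecomposable uniserial, so is $B_1$, and neither $A\rightarrowtail B_1$ nor $B_1\rightarrowtail B$ can be split, contradicting irreducibility. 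The epic case is dual, so $\Ker\p\oplus\Coker\p$ is simple.

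For $(2)\Rightarrow(3)$, simplicity of $\Ker\p\oplus\Coker\p$ forces exactly one summand to vanish. In the monic case, $A\subseteq B$ has length $\ell(B)-1$, and because $B$ is uniserial, $\soc B\subseteq A$, so $\soc A=\soc B=:S$; Lemma~\ref{le:uniserial} then supplies compatible isomorphisms $A\cong S_{[n]}$ and $B\cong S_{[n+1]}$ identifying $\p$ with the canonical inclusion. The epic case is dual.

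The main obstacle is $(3)\Rightarrow(1)$, which I prove by contrapositive. Fix the canonical $\iota\colon S_{[n]}\rightarrowtail S_{[n+1]}$ and suppose $\iota=\p''\p'$ through some object $C$. Decompose $C=\bigoplus_i C_i$ into indecomposable uniserial summands by Krull-Remak-Schmidt, and let $\p'_i\colon S_{[n]}\to C_i$ and $\p''_i\colon C_i\to S_{[n+1]}$ be the components. Every morphism $S_{[n]}\to S_{[n+1]}$ has image of length at most $n$ and so lies inside $S_{[n]}=\Im\iota$; hence it factors uniquely as $\iota\bar\psi$ with $\bar\psi\in\End(S_{[n]})$. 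Applying this to $\p''_i\p'_i=\iota\bar\psi_i$ and cancelling the monomorphism $\iota$ in $\iota=\sum_i\p''_i\p'_i$ yields $\sum_i\bar\psi_i=\id_{S_{[n]}}$. Since $\End(S_{[n]})$ is local, some $\bar\psi_{i_0}$ must be a unit.

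A case analysis on $C_{i_0}$ now finishes the argument. Being a unit, $\bar\psi_{i_0}$ makes $\psi_{i_0}=\iota\bar\psi_{i_0}$ monic, so $\p'_{i_0}$ is monic and $C_{i_0}$ contains a copy of $S_{[n]}$; therefore $\soc C_{i_0}=S$ and $C_{i_0}\cong S_{[\ell]}$ with $\ell\geq n$. If $\ell=n$, then $\p'_{i_0}$ is an isomorphism and $\p'$ is split monic. If $\ell\geq n+2$, then $\Ker\p''_{i_0}\subseteq S_{[\ell]}$ has length at least one and so contains $\soc S_{[\ell]}=S\subseteq S_{[n]}$, making $\psi_{i_0}$ not monic, a contradiction. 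Finally, if $\ell=n+1$, writing $\p'_{i_0}=\iota\beta$ and $\p''_{i_0}\iota=\iota\gamma$ with $\beta,\gamma\in\End(S_{[n]})$ gives $\bar\psi_{i_0}=\gamma\beta$, so locality forces $\gamma$ to be a unit; since the kernel of any non-isomorphism in $\End(S_{[n+1]})$ contains $\soc S_{[n+1]}=S\subseteq S_{[n]}$, $\gamma$ being a unit forces $\p''_{i_0}$ to be an automorphism, whence $\p''$ is split epic.
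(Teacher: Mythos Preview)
Your proof is correct and follows the same cycle $(1)\Rightarrow(2)\Rightarrow(3)\Rightarrow(1)$ as the paper; the first two implications match the paper's (which dismisses $(2)\Rightarrow(3)$ as ``clear''). For $(3)\Rightarrow(1)$ both arguments decompose the middle object into indecomposables and use locality of an endomorphism ring to single out a good component, but the paper's version (phrased dually for $S^{[n+1]}\twoheadrightarrow S^{[n]}$) is a bit shorter: once the relevant component is identified as $S^{[m]}$ with $m>n$, the epimorphism $S^{[m]}\to S^{[n]}$ factors through the canonical $S^{[m]}\twoheadrightarrow S^{[n+1]}$ (the latter has the smaller kernel), yielding an endomorphism $S^{[n+1]}\to S^{[n+1]}$ that is forced to be an isomorphism; this single step covers what you split into the cases $\ell=n+1$ and $\ell\ge n+2$. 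Your route via the factorizations $\phi'_{i_0}=\iota\beta$ and $\phi''_{i_0}\iota=\iota\gamma$ is perfectly valid, just slightly more bookkeeping.

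One small wording issue in your $(1)\Rightarrow(2)$: ``non-simple $\Coker\phi$'' also allows $\Coker\phi=0$, in which case there is no intermediate subobject; but then $\phi$ is an isomorphism, hence a split monomorphism, already excluded by the definition of irreducible. You should say this explicitly.
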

\begin{proof}
(1) $\Rightarrow$ (2): An irreducible morphism is either a
monomorphism or an epimorphism. It suffices to discuss the case that
$\p$ is an epimorphism; the other case is dual. If $\ell(\Ker\p)>1$
and $S\subseteq\Ker \p$ is a simple subobject, then $\p$ can be
written as composite $A\to A/S\to B$ of two proper epimorphisms. This
is a contradiction, and therefore $\Ker\p\oplus\Coker\p$ is simple.

(2) $\Rightarrow$ (3): Clear.

(3) $\Rightarrow$ (1): It suffices to consider the morphism $\p\colon
S^{[n+1]}\twoheadrightarrow S^{[n]}$; the dual argument works for
$S_{[n]}\rightarrowtail S_{[n+1]}$. Let $S^{[n+1]}\xto{\a}
X\xto{\b}S^{[n]}$ be a factorization and fix a decomposition
$X=\bigoplus_iX_i$ into indecomposable objects. Then
$\b_{i_0}\a_{i_0}$ is an epimorphism for at least one index $i_0$. It
follows from Lemma~\ref{le:uniserial} that $X_{i_0}=S^{[m]}$ for some
$m\geq n$. If $m=n$, then $\b_{i_0}$ is an isomorphism, and therefore
$\b$ is a split epimorphism. Otherwise, we obtain a factorization
$S^{[n+1]}\xto{\a_{i_0}}
X_{i_0}\xto{\b'_{i_0}}S^{[n+1]}\twoheadrightarrow S^{[n]}$ of the
epimorphism $\b_{i_0}\a_{i_0}$. It follows that $\b'_{i_0}\a_{i_0}$ is
an epimorphism and hence an isomorphism. Thus $\a$ is a split
monomorphism.
\end{proof}

\begin{rem}\label{re:inj}
Let $\A$ be a uniserial category and $S$ a simple object. Suppose
there is a bound $n$ such that $\ell(A)\leq n$ for each indecomposable
object with $\soc A\cong S$. Then each indecomposable object $A$ of
length $n$ with $\soc A\cong S$ is injective, since
$\Ext_\A^1(T,A)=0$ for every simple object $T$ in $\A$ by
Lemma~\ref{le:Gabriel}.
\end{rem}

\begin{exm}
(1) Let $\La$ be a Dedekind domain. The finitely generated torsion
modules over $\La$ form a uniserial category. We denote this category
by $\mod_0\La$ because it coincides with the category $(\mod\La)_0$ of
finite length objects of $\mod\La$.  Let $\Spec \La$ denote the set of
prime ideals.  The functor which takes a $\La$-module $M$ to the
family of localizations $(M_\frp)_{\frp\in\Spec\La}$ induces an
equivalence
$$\mod_0\La\lto[\sim]\coprod_{0\neq\frp\in\Spec\La}\mod_0(\La_\frp).$$

(2) Let $k$ be a field and $P\in k[x]$ an irreducible polynomial. For
each $n>0$, the finitely generated $k[x]/(P^n)$-modules form a
uniserial category with a unique simple object.

(3) Let $k$ be a field and $\Ga$ a quiver having no oriented cycle. The
category of representations $\rep(\Ga,k)$ is uniserial if and only if
for each vertex $x$ of $\Ga$, there is at most one arrow starting at $x$
and at most one arrow ending at $x$.
\end{exm}

\subsection{Serre duality}

Let $k$ be a commutative ring.  A category $\A$ is
\index{category!$k$-linear} \emph{$k$-linear} if each morphism set is
a $k$-module and the composition maps are $k$-bilinear.  A functor
between $k$-linear categories is \index{functor!$k$-linear}
\emph{$k$-linear} provided that the induced maps between the morphism
sets are $k$-linear.  A $k$-linear category is
\index{category!Hom-finite} \emph{Hom-finite} if each morphism set is
a $k$-module of finite length.  Suppose now that $\A$ is a $k$-linear
abelian category.  Then the extension groups are naturally modules
over $k$, and $\A$ is called \index{category!Ext-finite}
\emph{Ext-finite} if $\Ext^n_\A(A,B)$ is of finite length over $k$ for
all $A,B$ in $\A$ and $n\geq 0$.

Fix a field $k$ and a Hom-finite $k$-linear abelian category $\A$.
The category $\A$ is said to satisfy \index{Serre duality} \emph{Serre
  duality}\footnote{This is the appropriate notion of Serre duality
  for hereditary abelian categories. Higher dimensional analogues
  involving $D\Ext^n_\A(A,-)$ appear in algebraic geometry; see also
  \S3.4.} if there exists an equivalence $\tau\colon \A \xto{\sim} \A$
with functorial $k$-linear isomorphisms
$$D\Ext^1_\A(A, B)\xto{\sim} \Hom_\A(B, \tau A)$$ for all $A, B$ in
$\A$, where $D=\Hom_k(-, k)$ denotes the standard $k$-duality. The
functor $\tau$ is called \index{Serre functor} \emph{Serre
functor} or \index{Auslander-Reiten translation}
\emph{Auslander-Reiten translation}. Note that a Serre functor is
$k$-linear and essentially unique provided it exists; this follows
from Yoneda's lemma.

Recall that $\A_0$ denotes the full subcategory consisting of all
finite length objects in $\A$. Denote by $\A_+$ the full subcategory
consisting of all objects $A$ in $\A$ satisfying $\Hom_\A(A_0,A)=0$ for
all $A_0$ in $\A_0$.

\begin{prop}\label{pr:Serreduality}
Let $\A$ be a Hom-finite $k$-linear abelian category and suppose $\A$
admits a Serre functor $\tau$. Then the following holds:
\begin{enumerate}
\item  The category $\A$ is hereditary.
\item The category $\A$ has no non-zero projective or injective
objects.
\item A noetherian object $A$ has a unique maximal subobject $A_0$ of
finite length. Moreover, $A_0$ is a direct summand of $A$ and $A/A_0$
belongs to $\A_+$.
\item For each indecomposable object $A$ in $\A$, there is an almost
split sequence\footnote{For the notion of an almost split sequence, we
refer to \cite{ARS}.} $0\rightarrow \tau A \rightarrow E \rightarrow
A \rightarrow 0$.
\item For each object $A$ in $\A$, we have $A^\perp={^\perp \tau A}$.
\end{enumerate}
\end{prop}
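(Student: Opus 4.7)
My plan is to use Serre duality throughout as a tool for converting questions about $\Ext^1$ into questions about $\Hom$, and to exploit that $\tau$, being an equivalence of abelian categories, is exact and therefore permutes the isomorphism classes of simple objects. Two direct consequences of the defining isomorphism are the natural identifications
$$\Ext^1_\A(A,B)\cong D\Hom_\A(B,\tau A)\quad\text{and}\quad\Hom_\A(A,B)\cong D\Ext^1_\A(B,\tau A),$$
the second obtained by substituting $B$ for $A$ and $\tau A$ for $B$ in the first and identifying $\Hom_\A(\tau A,\tau B)$ with $\Hom_\A(A,B)$ via the fully faithful $\tau$. Reading the first as a functor of $B$ will display $\Ext^1_\A(A,-)$ as right exact (composition of the left exact $\Hom_\A(-,\tau A)$ with the exact $D$), and Lemma~\ref{lem:hereditary} will then give $\Ext^2_\A(A,-)=0$, establishing (1). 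For (2) I plan to plug projectives and injectives into these identifications: if $P$ is projective then $\Hom_\A(-,\tau P)\cong D\Ext^1_\A(P,-)=0$, and evaluation at $\tau P$ forces $\tau P=0$, hence $P=0$; if $I$ is injective then $\Hom_\A(\tau^{-1}I,-)\cong D\Ext^1_\A(-,I)=0$, and evaluation at $\tau^{-1}I$ forces $\tau^{-1}I=0$, hence $I=0$. Part (5) is read off directly from the two displayed isomorphisms: both $A^\perp$ and ${^\perp}\tau A$ are characterized by the simultaneous vanishing of $\Hom_\A(B,\tau A)$ and $\Ext^1_\A(B,\tau A)$.

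For (4), I plan to construct the almost split sequence ending at an indecomposable $A$ by the standard route. By Hom-finiteness $\End_\A(A)$ is a finite-dimensional local $k$-algebra, so I can choose a $k$-linear functional $\phi\colon\End_\A(A)\to k$ that vanishes on $\rad\End_\A(A)$ with $\phi(\id_A)\neq 0$. Under the Serre duality isomorphism $\Ext^1_\A(A,\tau A)\cong D\End_\A(A)$, this $\phi$ corresponds to a non-split extension
$$\eta\colon\quad 0\lto\tau A\lto E\lto A\lto 0.$$
To verify the almost split property I would check that every $f\colon X\to A$ which is not a split epimorphism satisfies $f^*\eta=0$. Naturality of Serre duality identifies $f^*\eta\in\Ext^1_\A(X,\tau A)\cong D\Hom_\A(A,X)$ with the functional $h\mapsto\phi(fh)$ on $\Hom_\A(A,X)$, so it suffices to observe that $fh\in\End_\A(A)$ is never invertible (otherwise $h\circ(fh)^{-1}$ would be a section of $f$) and therefore lies in $\rad\End_\A(A)$. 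A symmetric argument handles morphisms out of $\tau A$.

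The main obstacle will be (3). The finite-length subobjects of $A$ form a directed poset (their sum, being a quotient of their direct sum, is again of finite length), and the noetherian hypothesis provides a maximal element which directedness then promotes to a unique maximum $A_0$. Maximality forces $A/A_0\in\A_+$, because a nonzero finite-length subobject of $A/A_0$ would pull back to a strictly larger finite-length subobject of $A$. The decisive ingredient is that $\tau$, being an equivalence, preserves the property of being simple: if $\tau(A/A_0)$ contained a simple subobject $S$, then $\tau^{-1}S$ would be a simple subobject of $A/A_0$, contradicting $A/A_0\in\A_+$. Hence $\tau(A/A_0)$ has no simple subobject, and since any nonzero map out of the finite-length object $A_0$ would have finite-length image with nonzero socle, we obtain $\Hom_\A(A_0,\tau(A/A_0))=0$. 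Serre duality then gives $\Ext^1_\A(A/A_0,A_0)=0$, so the sequence $0\to A_0\to A\to A/A_0\to 0$ splits, making $A_0$ a direct summand.
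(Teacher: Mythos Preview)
Your proposal is correct and follows essentially the same approach as the paper: parts (1), (2), (4), and (5) are argued identically, and your construction of the almost split sequence via a linear form on $\End_\A(A)$ vanishing on the radical matches the paper's verbatim. The only difference is in (3): the paper simply asserts $\Ext^1_\A(A/A_0,A_0)=0$ ``by Serre duality'' without further justification, whereas you spell out the missing step---that $\tau$, as an equivalence, preserves the property of having no simple subobjects, so $\tau(A/A_0)$ receives no nonzero map from the finite-length object $A_0$---which is exactly what is needed to conclude $D\Hom_\A(A_0,\tau(A/A_0))=0$.
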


\begin{proof}
(1) For each object $A$, the functor $\Ext^1_\A(A, -)$ is right
exact. Thus the category $\A$ is hereditary by
Lemma~\ref{lem:hereditary}.

(2) Let $A$ be a projective object. Then $\Hom_\A(-, \tau A)\cong
D\Ext^1_\A(A, -)=0$. Thus $\tau A=0$ and therefore $A=0$. The dual
argument works for injective objects.

(3) Choose a maximal subobject $A_0$ of finite length. Then $A/A_0$
    belongs to $\A_+$, and every finite length subobject of $A$ is
    contained in $A_0$. In particular, $A_0$ is unique. We have
    $\Ext^1_\A(A/A_0,A_0)=0$ by Serre duality, and therefore $A_0$ is
    a direct summand of $A$.

(4) Let $A$ be an indecomposable object. The endomorphism ring
$\End_\A(A)$ is local and we denote by $\mathfrak m$ its maximal
ideal.  Choose any non-zero $k$-linear map $\omega\colon
\End_\A(A)\rightarrow k$ such that $\omega$ vanishes on $\mathfrak
m$. The map $\omega$ corresponds via Serre duality to a non-split
short exact sequence $\xi\colon 0\rightarrow \tau A \xto{} E \xto{}
A \rightarrow 0$. We claim that $\xi$ is an almost split sequence.
For this one needs to show that each morphism $\a\colon A'\to A$
factors through the morphism $E\to A$, provided that $\a$ is not a
split epimorphism. Thus one needs to show that $\xi\a=0$. The
element $\xi\a$ corresponds via Serre duality to $\omega\a$ which
sends $\p\in\Hom_\A(A,A')$ to $\omega(\a\p)$. Thus $\xi\a=0$, since
$\a\p$ belongs to $\mathfrak m$.

(5) This is clear from the definitions.
\end{proof}


Next observe that a Serre functor $\tau$ on $\A$ restricts to a Serre
functor on the subcategory $\A_0$ of finite length objects. The following
result describes the structure of a length category with Serre
duality. Let us recall the shape of the relevant diagrams.
\begin{align*} \tilde\bbA_{n}&\colon\;\;\; \xymatrix{1
\ar@{-}[r]&2\ar@{-}[r]&3\ar@{-}[r]&{\dots}\ar@{-}[r]&n\ar@{-}[r]&n+1\ar@{-}
@/^1pc/[lllll]}\\[3ex] \bbA_\infty^\infty&\colon\;\;\; \xymatrix{\dots
\ar@{-}[r]&\scriptstyle{\bullet}\ar@{-}[r]&
\scriptstyle{\bullet}\ar@{-}[r]&\scriptstyle{\bullet}\ar@{-}[r]&
\scriptstyle{\bullet}\ar@{-}[r]&{\dots}}
\end{align*}

\begin{prop}\label{pr:serre_finlen}
Let $\A$ be a Hom-finite $k$-linear length category and suppose $\A$
admits a Serre functor $\tau$. Then $\A$ is uniserial. The category
$\A$ admits a unique decomposition $\A=\coprod_{i\in I} \A_i$ into
connected uniserial categories with Serre duality, where the index set
equals the set of $\tau$-orbits of simple objects in $\A$.  The
Ext-quiver of each $\A_i$ is either of type $\bbA_\infty^\infty$ (with
linear orientation) or of type $\tilde\bbA_{n}$ (with cyclic
orientation).
\end{prop}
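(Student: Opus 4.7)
The plan is to apply the characterization of uniserial categories in Theorem~\ref{th:Gabriel} after using Serre duality to control the Ext-groups between simples, and then read off the decomposition and the shape of the Ext-quiver from the $\tau$-action on simples.

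First I would verify that $\A$ satisfies the numerical condition \eqref{eq:dim} for every simple $S$. Because $\tau\colon\A\to\A$ is an autoequivalence, it permutes the isomorphism classes of simple objects, so $\tau S'$ is simple whenever $S'$ is. Hence for any two simples $S,S'$, the space $\Hom_\A(S,\tau S')$ vanishes unless $\tau S'\cong S$, in which case it equals $\End_\A(S)=\Delta(S)$, which is one-dimensional both as a left module over $\Delta(\tau S')\cong\Delta(S')$ and as a left module over $\Delta(S)$. Applying Serre duality $\Ext^1_\A(S',S)\cong D\Hom_\A(S,\tau S')$ and taking $k$-duals (which preserves one-sided dimensions over division rings), I conclude that for fixed $S$, the only simple $S'$ with $\Ext^1_\A(S',S)\neq 0$ is $S'\cong\tau^{-1}S$, and the corresponding $\dim_{\Delta(S')}$ equals $1$. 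The second inequality follows symmetrically from $\Ext^1_\A(S,S')\cong D\Hom_\A(S',\tau S)$, which is nonzero only when $S'\cong\tau S$. Thus \eqref{eq:dim} holds, and Theorem~\ref{th:Gabriel} gives uniseriality of $\A$.

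Next I would analyze the Ext-quiver $\Sigma(\A)$. The computation above shows that every vertex $S$ has exactly one incoming arrow (from $\tau^{-1}S$) and exactly one outgoing arrow (to $\tau S$), both with valuation $(1,1)$, and the arrows $\tau^{-1}S\to S$ are genuinely present because $\Ext^1_\A(\tau^{-1}S,S)\cong D\End_\A(S)\neq 0$. A quiver in which every vertex has exactly one predecessor and one successor decomposes into connected components that are either finite oriented cycles (of type $\tilde\bbA_n$) or bi-infinite linearly-oriented chains (of type $\bbA_\infty^\infty$), and the connected components are precisely the $\tau$-orbits of vertices.

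Finally I would transfer this decomposition to $\A$ itself. By Lemma~\ref{le:comp} the connected components of $\A$ correspond to the connected components of $\Sigma(\A)$, giving $\A=\coprod_{i\in I}\A_i$ indexed by $\tau$-orbits of simples. Each $\A_i$ is a full additive subcategory closed under subobjects, quotients and extensions (being a union of isomorphism classes of objects whose composition factors lie in one orbit), and $\tau$ sends each $\A_i$ to itself because it acts inside a single orbit on simples and hence on all objects of $\A_i$. Thus $\tau$ restricts to an autoequivalence of $\A_i$ and the functorial Serre duality isomorphisms restrict as well, so each $\A_i$ is itself a Hom-finite $k$-linear uniserial length category with Serre duality and connected Ext-quiver of the stated type. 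The only point requiring mild care — which I take to be the main obstacle — is to match the one-sided dimensions of $\Ext^1_\A(S',S)\cong D\Hom_\A(S,\tau S')$ against the $\Delta(S')$-module structure in \eqref{eq:dim}, using that $\tau$ is $k$-linear and identifies $\Delta(S')$ with $\Delta(\tau S')$ compatibly with the action on $\Hom_\A(S,\tau S')$.
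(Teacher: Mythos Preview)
Your proposal is correct and follows essentially the same approach as the paper: use Serre duality to show that for each simple $S$ the only simple $S'$ with $\Ext^1_\A(S,S')\neq 0$ is $S'\cong\tau S$, with the relevant one-sided dimension equal to~$1$, then invoke Theorem~\ref{th:Gabriel} and Lemma~\ref{le:comp}. The paper's proof is terser about the dimension count (it just asserts $\dim_{\Delta(S)}\Ext^1_\A(S,\tau S)=1$), while you spell out the bimodule compatibility via $\tau$; your extra verification that each $\A_i$ inherits the Serre functor is a reasonable addition the paper leaves implicit.
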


\begin{proof}
We apply the criterion of Theorem~\ref{th:Gabriel} to show that
$\A$ is uniserial. To this end fix a simple object $S$. Then
$\Ext_\A^1(S, S')\cong D\Hom_\A(S', \tau S) \neq 0$ for some
$S'\in\Si_0$ if and only if $S'\cong\tau S$. Moreover,
$\dim_{\Delta(S)}\Ext_\A^1(S, \tau S)=1$. Thus the category $\A$ is
uniserial.

The structure of the Ext-quiver of $\A$ follows from the condition
\eqref{eq:dim}. The Serre functor acts on $\Si_0$ and the set of
$\tau$-orbits $I=\Si_0/\tau$ is the index set of the decomposition
$\A=\coprod_{i\in I} \A_i$ into connected components; see
Lemma~\ref{le:comp}. The Ext-quiver of $\A_i$ is of type
$\bbA_\infty^\infty$ if the corresponding $\tau$-orbit is
infinite. Otherwise, the Ext-quiver of $\A_i$ is of type $\tilde
\bbA_n$ where $n+1$ equals the cardinality of the $\tau$-orbit.
\end{proof}

Let $\A$ be a Hom-finite $k$-linear length category and suppose $\A$
admits a Serre functor. Then a complete set of representatives of the
isomorphism classes of indecomposable objects of $\A$ is given by
$\{S^{[n]}\mid S\in\Si_0,\, n\ge 1\}$ and also by $\{S_{[n]}\mid
S\in\Si_0,\, n\ge 1\}$; see Remark~\ref{re:inj}.

\begin{exm}\label{ex:cyclic}
Let $k$ be a field and $\Ga$ a quiver of extended Dynkin type
$\tilde\bbA_n$ with cyclic orientation. Denote by $\A=\rep_0(\Ga,k)$
the full subcategory of $\rep(\Ga,k)$ consisting of all nilpotent
representations. Then $\A$ satisfies Serre duality and the Ext-quiver
of $\A$ is isomorphic to $\Ga$, with valuation $(1,1)$ for each arrow.
Note that the Serre functor on $\A$ has order $n+1$ and every simple
object has endomorphism algebra $k$. In fact, a connected Hom-finite
$k$-linear length category with Serre duality satisfying these
properties is equivalent to $\rep_0(\Ga,k)$.
\end{exm}

\section{Derived categories}

To each abelian category is associated its derived category.  This
section provides a brief introduction. We present the definition and
discuss two cases where one has a convenient description: If the
abelian category is hereditary, then each complex is isomorphic to its
cohomology. On the other hand, if there are enough projective objects,
then one can compute morphisms in the derived category by passing to
the homotopy category of projective objects.

\subsection{Categories of complexes}

Let $\A$ be an additive category.  A \index{cochain complex}
\emph{cochain complex} in $\A$ is a sequence of morphisms
$$\cdots \lto X^{n-1}\lto[d^{n-1} ]X^n\lto[d^n]X^{n+1}\lto\cdots$$
such that $d^{n} d^{n-1}=0$ for all $n\in\bbZ$. We denote by
$\bfC(\A)$ the category of cochain complexes, where a morphism
$\p\colon X\to Y$ between cochain complexes consists of morphisms
$\p^n\colon X^n\to Y^n$ with $d_Y^{n}\p^n=\p^{n+1}d_X^n$ for all
$n\in\bbZ$.

A \index{chain complex} \emph{chain complex} in $\A$ is a sequence of
morphisms $$\cdots \lto X_{n+1}\lto[d_{n+1}
]X_n\lto[d_n]X_{n-1}\lto\cdots$$ such that $d_{n} d_{n+1}=0$ for all
$n\in\bbZ$. Any chain complex may be viewed as a cochain complex by
changing its indices, and vice versa. Thus we often confuse both
concepts and simply use the term \index{complex} \emph{complex}.

A morphism $\p\colon X\to Y$ between complexes is
\index{morphism!null-homotopic} \emph{null-homotopic} if there are
morphisms $\r^n\colon X^n\to Y^{n-1}$ such that $\p^n=d_Y^{n-1}
\r^{n}+\r^{n+1} d_X^n$ for all $n\in\bbZ$. The null-homotopic
morphisms form an \index{ideal} \emph{ideal} $\N$ in $\bfC(\A)$, that
is, for each pair $X,Y$ of complexes a subgroup
$$\N(X,Y)\subseteq\Hom_{\bfC(\A)}(X,Y)$$ such that any composition
$\psi\p$ of morphisms in $\bfC(\A)$ belongs to $\N$ if $\p$ or $\psi$
belongs to $\N$.  The \index{homotopy category}\emph{homotopy
category} $\bfK(\A)$ is the quotient of $\bfC(\A)$ with respect to
this ideal. Thus
$$\Hom_{\bfK(\A)}(X,Y)=\Hom_{\bfC(\A)}(X,Y)/\N(X,Y)$$ for every pair
of complexes $X,Y$.

Now let $\A$ be an abelian category. The \index{cohomology}
\emph{cohomology} of a complex $X$ in degree $n$ is by definition
$H^nX=\Ker d^n/\Im d^{n-1}$, and each morphism $\p\colon X\to Y$ of
complexes induces a morphism $H^n\p\colon H^nX\to H^nY$. The morphism
$\p$ is a \index{quasi-isomorphism} \emph{quasi-isomorphism} if
$H^n\p$ is an isomorphism for all $n\in\bbZ$.  Note that two morphisms
$\p,\psi\colon X\to Y$ induce the same morphism $H^n\p=H^n\psi$, if
$\p-\psi$ is null-homotopic.

The \index{derived category} \emph{derived category} $\bfD(\A)$ of
$\A$ is obtained from $\bfK(\A)$ by formally inverting all
quasi-isomorphisms. To be precise, one defines
$$\bfD(\A)=\bfK(\A)[\qis^{-1}]$$ as the localization of $\bfK(\A)$
with respect to the class of all quasi-isomorphisms.  The full
subcategory consisting of objects that are isomorphic to a complex
$X$ such that $X^n=0$ for almost all $n\in\bbZ$ is denoted by
$\bfD^b(\A)$.

An object $A$ in $\A$ is identified with the complex
$$\cdots\lto 0\lto A\lto 0\lto\cdots$$ concentrated in degree zero,
and this complex is also denoted by $A$.  Given any complex $X$ in $\A$ and
$p\in\bbZ$, we denote by $X[p]$ the shifted complex with
$$X[p]^n=X^{n+p}\quad\textrm{and}\quad d^n_{X[p]}=(-1)^pd^{n+p}_X.$$
This operation induces an isomorphism $\bfD(\A)\xto{\sim}\bfD(\A)$ and
is called \index{shift} \emph{shift}.

The derived category $\bfD(\A)$ is an additive category with some
additional structure: it is a triangulated category in the sense of
Verdier \cite{V}. For instance, any exact sequence $0\to A\to B\to C\to
0$ in $\A$ induces an exact triangle $A\to B\to C\to A[1]$ in
$\bfD(\A)$.

Given two abelian categories $\A$ and $\A'$, a functor $F\colon
\bfD(\A)\to\bfD(\A')$ is by definition a \index{derived
equivalence}\emph{derived equivalence} if it is an \emph{equivalence
of triangulated categories}, that is, $F$ is an equivalence, there is
a functorial isomorphism $(FX)[1]\cong F(X[1])$ for each $X$ in
$\bfD(\A)$, and $F$ preserves exact triangles.

The following statement justifies the study of derived categories.

\begin{prop}\label{pr:ext}
Let $A,B$ be objects in $\A$. Then
$$\Ext^n_\A(A,B)\cong\Hom_{\bfD(\A)}(A,B[n])\quad\text{for all}\quad
n\in \mathbb{Z}.$$
\end{prop}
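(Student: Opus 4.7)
The plan is to construct a natural comparison map
\[
\Psi_n : \Ext^n_\A(A,B) \longrightarrow \Hom_{\bfD(\A)}(A, B[n])
\]
for each $n \geq 0$, and verify it is an isomorphism by induction on $n$. Given an $n$-fold Yoneda extension $\xi : 0 \to B \to E_n \to E_{n-1} \to \cdots \to E_1 \to A \to 0$, I assemble the middle terms into a bounded complex $T_\xi$ by placing $E_i$ in cohomological degree $-(i-1)$ for $1\le i\le n$ and $B$ in degree $-n$, with differentials inherited from $\xi$. Exactness of $\xi$ forces the cohomology of $T_\xi$ to vanish outside degree $0$, where $H^0(T_\xi) \cong A$ via the epimorphism $E_1 \twoheadrightarrow A$; this epimorphism lifts to a quasi-isomorphism $\pi : T_\xi \xto{\sim} A$. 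At the other end, the identity on $B$ in degree $-n$ defines a chain map $\sigma : T_\xi \to B[n]$. I set $\Psi_n(\xi) := \sigma \circ \pi^{-1}$, formed in $\bfD(\A)$.

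For the base case $n = 0$, the map $\Psi_0$ coincides with the canonical map $\Hom_\A(A,B) \to \Hom_{\bfD(\A)}(A,B)$, which is a bijection because the functor $\A \to \bfD(\A)$ sending an object to the complex concentrated in degree $0$ is fully faithful (a standard property of the localization at quasi-isomorphisms). Well-definedness of $\Psi_n$ for $n \geq 1$ requires that equivalent $n$-extensions produce the same morphism in $\bfD(\A)$; this is immediate because any morphism of extensions $\xi \to \xi'$ yields a chain map $T_\xi \to T_{\xi'}$ compatible with both $\pi$ and $\sigma$, so the two roofs represent the same morphism in $\bfD(\A)$.

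For the inductive step, I would exploit the long exact sequences. Any short exact sequence $0 \to B' \to B \to B'' \to 0$ in $\A$ induces both (i) the Yoneda long exact sequence for $\Ext^\ast_\A(A,-)$, and (ii) the long exact sequence obtained by applying $\Hom_{\bfD(\A)}(A,-)$ to the distinguished triangle $B' \to B \to B'' \to B'[1]$ of $\bfD(\A)$. A direct check shows that $\Psi_\ast$ intertwines these two long exact sequences, the compatibility at the connecting morphisms being the case $n=1$ of the statement (the standard identification of $\Ext^1$ with the rotation of a triangle). A five-lemma argument then reduces bijectivity of $\Psi_n$ to bijectivity of $\Psi_{n-1}$ and of $\Psi_1$, dimension-shifting $\xi$ by splitting it into a short exact sequence $0 \to B \to E_n \to K \to 0$ and an $(n-1)$-extension of $A$ by $K$.

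The main obstacle is proving $\Psi_1$ is a bijection directly. Injectivity amounts to showing that if the roof $A \xleftarrow{\sim} T_\xi \to B[1]$ is null-homotopic in the derived sense, then $\xi$ splits; this can be done by unwinding the definition of equivalence of roofs and constructing the splitting. Surjectivity is the more delicate point: given any morphism $A \to B[1]$ in $\bfD(\A)$ represented by a roof $A \xleftarrow{\sim} X \to B[1]$, one has to show that, after replacing $X$ by a suitable good truncation between degrees $-1$ and $0$ (which is legitimate because $X$ has cohomology concentrated in degree $0$), the roof is equivalent to one of the form $T_\xi$ for an honest short exact sequence $\xi$. This truncation-and-reconstruction step, which relies on the calculus of fractions description of $\bfD(\A)$ recalled in \S1.3, is the most technical part of the argument.
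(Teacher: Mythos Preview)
The paper does not give a proof here; it simply cites Weibel for the case of enough projectives or injectives and Verdier's thesis for the general case. Your outline follows the standard direct argument, but the inductive step has a gap.

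The problem is that ``dimension-shifting $\xi$'' only supplies effaceability on the Yoneda side: given $\xi\in\Ext^n_\A(A,B)$ you produce a monomorphism $B\hookrightarrow E_n$ killing $\xi$, and a diagram chase with the two long exact sequences then does yield injectivity of $\Psi_n$ from the inductive hypothesis. Surjectivity, however, starts from an $f\in\Hom_{\bfD(\A)}(A,B[n])$, for which you have no $\xi$ to shift; applying the five lemma to the long exact sequences of $0\to B\to E_n\to K\to 0$ would require $\Psi_n$ already to be an isomorphism at $E_n$ and at $K$, which is circular. What is missing is effaceability on the derived side: for each such $f$ a monomorphism $\iota\colon B\hookrightarrow E$ in $\A$ with $\iota[n]\circ f=0$. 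Your truncation argument for $n=1$ actually provides this for every $n\geq 1$: truncate the roof $A\xleftarrow{\sim}X\to B[n]$ so that $X$ lies in degrees $[-n,0]$, form the pushout $E$ of the monomorphism $d^{-n}\colon X^{-n}\hookrightarrow X^{-n+1}$ along the component $X^{-n}\to B$, and observe that the pushout structure map $X^{-n+1}\to E$ is a null-homotopy for $X\to E[n]$. With this in hand the induction goes through; in fact the truncated $X$ together with the pushout already assembles into an honest $n$-extension $0\to B\to E\to X^{-n+2}\to\cdots\to X^0\to A\to 0$ whose image under $\Psi_n$ is $f$, so the induction is not even needed. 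You should also record the case $n<0$ separately: truncating $X$ to nonpositive degrees forces any chain map $X\to B[n]$ to vanish, so $\Hom_{\bfD(\A)}(A,B[n])=0$.
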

\begin{proof}
For the case that $\A$ has enough injectives or enough projectives,
see \cite[Corollary~10.7.5]{W}. For the general case, see \cite[III.3]{V}.
\end{proof}

\subsection{Hereditary abelian categories}

Let $\A$ be a hereditary abelian category, that is, $\Ext_\A^2(-,-)$
vanishes. In this case, there is an explicit description of all
objects and morphisms in $\bfD^b(\A)$ via the ones in $\A$. Every
complex $X$ is completely determined by its cohomology because there
is an isomorphism between $X$ and the following complex with trivial
differential.
$$\cdots \lto H^{n-1}X\lto[0]H^nX\lto[0]H^{n+1}X\lto\cdots$$ To
construct this isomorphism, note that the vanishing of
$\Ext^2_\A(H^nX,-)$ implies the existence of a commutative diagram
$$
\xymatrix{0\ar[r]&X^{n-1}\ar[r]\ar[d]&E^n\ar[r]\ar[d]&
H^nX\ar[r]\ar@{=}[d]&0\\ 0\ar[r]&\Im d^{n-1}\ar[r]&\Ker
d^n\ar[r]&H^nX\ar[r]&0}$$ with exact rows. We obtain the following
commutative diagram.
$$\xymatrixrowsep{1.5pc} \xymatrixcolsep{1.5pc} \xymatrix{
\cdots\ar[r]&0\ar[r]&0\ar[r]&H^nX\ar[r]&0\ar[r]&\cdots\\
\cdots\ar[r]&0\ar[u]\ar[r]\ar[d]&X^{n-1}\ar[u]\ar[r]\ar@{=}[d]&E^n\ar[u]\ar[r]\ar[d]&
0\ar[u]\ar[r]\ar[d]&\cdots\\
\cdots\ar[r]&X^{n-2}\ar[r]&X^{n-1}\ar[r]&X^{n}\ar[r]&X^{n+1}\ar[r]&\cdots
}$$ The vertical morphisms yield two morphisms in $\bfD^b(\A)$. The
upper one is a quasi-isomorphism, and the lower one induces a
cohomology isomorphism in degree $n$. This yields for each $n\in\bbZ$
a morphism $(H^nX)[-n]\to X$ in $\bfD^b(\A)$ and therefore the
following description of $X$.

\begin{lem}\label{le:der_hereditary}
Let $\A$ be a hereditary abelian category and $X$  a complex in $\A$. In
$\bfD^b(\A)$ there is a (non-canonical) isomorphism
\[\coprod_{n\in\bbZ}(H^nX)[-n]\lto[\sim] X.\]
\end{lem}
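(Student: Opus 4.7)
The plan is to build the isomorphism componentwise from the roof construction sketched just before the lemma, assemble the pieces into a single morphism out of the coproduct, and then check it is a quasi-isomorphism (which in a derived category is the same as being an isomorphism).

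First, for each $n\in\bbZ$ with $H^nX\neq 0$, the construction preceding the lemma exploits the vanishing of $\Ext^2_\A(H^nX,-)$ to lift the short exact sequence $0\to\Im d^{n-1}\to\Ker d^n\to H^nX\to 0$ to a short exact sequence $0\to X^{n-1}\to E^n\to H^nX\to 0$ fitting in the displayed diagram. This produces a two-term complex $Y_n$, concentrated in degrees $n{-}1$ and $n$ as $X^{n-1}\to E^n$, together with a chain map $p_n\colon Y_n\to(H^nX)[-n]$ (the quotient $E^n\twoheadrightarrow H^nX$ in degree $n$) and a chain map $q_n\colon Y_n\to X$ (the identity on $X^{n-1}$ and the map $E^n\to X^n$ factoring through $\Ker d^n$). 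A direct cohomology computation shows that $p_n$ is a quasi-isomorphism and that $q_n$ induces the identity on $H^n$ and is zero on all other cohomology. Inverting $p_n$ in $\bfD^b(\A)$ yields the desired morphism $\alpha_n=q_n\circ p_n^{-1}\colon(H^nX)[-n]\to X$.

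Since $X$ lies in $\bfD^b(\A)$, only finitely many $H^nX$ are non-zero, so the coproduct $\coprod_{n\in\bbZ}(H^nX)[-n]$ is finite and the morphisms $\alpha_n$ assemble into a single morphism
\[
\alpha\colon\coprod_{n\in\bbZ}(H^nX)[-n]\lto X
\]
in $\bfD^b(\A)$. To conclude it is an isomorphism I would apply cohomology. For any fixed $m\in\bbZ$, the functor $H^m$ is additive on $\bfD^b(\A)$, so $H^m(\alpha)=\sum_n H^m(\alpha_n)$. For $n\neq m$ the complex $(H^nX)[-n]$ is concentrated in degree $n$, so $H^m(\alpha_n)=0$. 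For $n=m$ the construction of $\alpha_m$ gives $H^m(\alpha_m)=\id_{H^mX}$. Hence $H^m(\alpha)=\id_{H^mX}$ for every $m$, so $\alpha$ is a quasi-isomorphism; quasi-isomorphisms are invertible in $\bfD^b(\A)$ by construction, which gives the claim.

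The main obstacle is the first step: producing the lift $E^n$ and verifying the two cohomological properties of the roof $(p_n,q_n)$. This is where the hereditary hypothesis is essential, entering through $\Ext^2_\A(H^nX,\Im d^{n-1})=0$, and everything downstream (the assembly into $\alpha$ and the cohomology check) is then straightforward and purely formal. The word ``non-canonical'' in the statement reflects the fact that $\alpha$ depends on the choice of the extensions $E^n$, but this does not affect the existence of an isomorphism.
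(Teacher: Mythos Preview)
Your proof is correct and follows essentially the same approach as the paper. The paper's proof is a single sentence (``The morphism is a quasi-isomorphism by construction''), relying entirely on the roof construction displayed just before the lemma; you spell out precisely that construction, assemble the $\alpha_n$, and verify the cohomology computation, which is exactly the content the paper leaves implicit.
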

\begin{proof}
The morphism is a quasi-isomorphism by construction.
\end{proof}

For an abelian category $\A$ one defines its \index{repetitive
category}\emph{repetitive category} $\bigsqcup_{n\in\bbZ}\A[n]$ as the
additive closure of the union of disjoint copies $\A[n]$ of $\A$ with
morphisms
$$\Hom(A,B)=\Ext_\A^{q-p}(A,B)\quad \text{for}\quad A\in\A[p],\,
B\in\A[q]$$ and composition given by the Yoneda product of
extensions. It follows from Proposition~\ref{pr:ext} that the family
of functors $\A[n]\to\bfD^b(\A)$ ($n\in\bbZ$) sending an object $A$ to
$A[n]$ induces a fully faithful functor
$$\bigsqcup_{n\in\bbZ}\A[n]\lto\bfD^b(\A).$$

\begin{cor}\label{co:der_hereditary}
\pushQED{\qed} The canonical functor $\bigsqcup_{n\in\bbZ}\A[n]\to\bfD^b(\A)$ is
an equivalence for any hereditary abelian category $\A$.  \qedhere
\end{cor}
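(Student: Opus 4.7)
The plan is to split the proof into the two standard halves of an equivalence: full faithfulness and essential surjectivity, and to observe that essentially everything is already in place from the material immediately preceding the corollary.

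For full faithfulness, I would note that the functor $\bigsqcup_{n\in\bbZ}\A[n]\to\bfD^b(\A)$ has already been produced in the paragraph before the corollary, and the paper has already asserted it is fully faithful as a consequence of Proposition~\ref{pr:ext}. So I would simply point to this: on morphism spaces between $A\in\A[p]$ and $B\in\A[q]$ the functor realizes $\Hom(A,B)=\Ext^{q-p}_\A(A,B)$ as $\Hom_{\bfD^b(\A)}(A[p],B[q])=\Hom_{\bfD^b(\A)}(A,B[q-p])$, which is exactly the isomorphism of Proposition~\ref{pr:ext}. Additivity of both source and target then extends the isomorphism to the full additive closure, so the functor is fully faithful.

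For essential surjectivity, I would use Lemma~\ref{le:der_hereditary}. Given any object $X$ in $\bfD^b(\A)$, boundedness means there are only finitely many integers $n$ with $H^nX\neq 0$. Hence the coproduct $\coprod_{n\in\bbZ}(H^nX)[-n]$ is in fact a finite direct sum, which is an honest object of the additive closure $\bigsqcup_{n\in\bbZ}\A[n]$ and lies in the image of our functor. By Lemma~\ref{le:der_hereditary} this object is isomorphic to $X$ in $\bfD^b(\A)$, so the functor hits every isomorphism class.

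I do not expect any serious obstacle here: both ingredients (the Hom computation via Proposition~\ref{pr:ext} and the decomposition lemma) have been set up in the preceding paragraphs, and the only thing to remark on is why the sum in Lemma~\ref{le:der_hereditary} is a legitimate object of the repetitive category, which is handled by the boundedness assumption in $\bfD^b(\A)$. The proof will therefore be essentially a one-line assembly of these facts.
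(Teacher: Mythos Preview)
Your proposal is correct and matches the paper's approach exactly: the paper states the corollary without proof, relying on the immediately preceding observation that the functor is fully faithful (via Proposition~\ref{pr:ext}) together with Lemma~\ref{le:der_hereditary} for essential surjectivity. Your write-up simply makes explicit what the paper leaves implicit.
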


\subsection{Abelian categories with enough projectives}

We describe the derived category of an abelian category $\A$ in terms
of its projective objects. The crucial observation is the following.

\begin{lem}\label{le:der_proj}
Let $X,Y$ be a pair of complexes in $\A$. Suppose that each $X^n$ is
projective and $X^n=0$ for $n\gg 0$. Then the map
$\Hom_{\bfK(\A)}(X,Y)\to\Hom_{\bfD(\A)}(X,Y)$ is bijective.
\end{lem}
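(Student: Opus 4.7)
The plan is to reduce the statement to a single key claim: if $A$ is an acyclic complex and $X$ satisfies the hypotheses of the lemma, then $\Hom_{\bfK(\A)}(X,A)=0$. Once this claim is in hand, the lemma follows formally from the calculus of fractions.

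First I would prove the claim by building a null-homotopy for an arbitrary morphism $\p\colon X\to A$ via descending induction on $n$. Since $X^n=0$ for $n\gg 0$, set $h^n=0$ in that range; this starts the induction. For the step, assume that $h^m\colon X^m\to A^{m-1}$ has been constructed for all $m>n$ so that $\p^m=d_A^{m-1}h^m+h^{m+1}d_X^m$. Form $\psi^n=\p^n-h^{n+1}d_X^n\colon X^n\to A^n$. A direct computation using the inductive relation and $d^2=0$ yields $d_A^n\psi^n=0$, so $\psi^n$ factors through $\Ker d_A^n$. Acyclicity of $A$ means $\Ker d_A^n=\Im d_A^{n-1}$, so $d_A^{n-1}$ is an epimorphism onto $\Ker d_A^n$. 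Projectivity of $X^n$ then lifts $\psi^n$ to a morphism $h^n\colon X^n\to A^{n-1}$ with $d_A^{n-1}h^n=\psi^n$, completing the induction and the claim.

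Next I would deduce from the claim that, for any quasi-isomorphism $t\colon Y\to Y'$, the map $t_*\colon\Hom_{\bfK(\A)}(X,Y)\to\Hom_{\bfK(\A)}(X,Y')$ is bijective. Indeed, $\Cone(t)$ is acyclic, and the exact triangle $Y\to Y'\to\Cone(t)\to Y[1]$ becomes, after applying the cohomological functor $\Hom_{\bfK(\A)}(X,-)$, a long exact sequence whose neighbouring terms $\Hom_{\bfK(\A)}(X,\Cone(t)[-1])$ and $\Hom_{\bfK(\A)}(X,\Cone(t))$ both vanish by the claim (acyclicity is preserved under shift).

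Finally I would invoke the calculus of fractions for $\bfD(\A)=\bfK(\A)[\qis^{-1}]$, representing a morphism $X\to Y$ in $\bfD(\A)$ by a roof $X\xto{f}Y'\xleftarrow{t}Y$ with $t$ a quasi-isomorphism. For surjectivity, use the bijection $t_*$ to find the unique $g\in\Hom_{\bfK(\A)}(X,Y)$ with $tg=f$ in $\bfK(\A)$; the roof and the morphism $g$ then represent the same class in $\bfD(\A)$. For injectivity, suppose $f\in\Hom_{\bfK(\A)}(X,Y)$ maps to zero in $\bfD(\A)$; then some quasi-isomorphism $t$ satisfies $tf=0$ in $\bfK(\A)$, and injectivity of $t_*$ forces $f=0$ in $\bfK(\A)$.

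The main obstacle is the inductive construction of the null-homotopy, where one must verify both that $\psi^n$ really lands in $\Ker d_A^n$ and that the projectivity hypothesis applies to the correct epimorphism; once that step is clean, everything else is a routine application of the triangulated machinery already developed in the excerpt.
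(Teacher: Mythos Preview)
Your argument is correct and is precisely the standard one. The paper itself does not give a proof but simply refers to \cite[Corollary~10.4.7]{W}; the argument there is essentially the one you outline (first showing $\Hom_{\bfK(\A)}(X,A)=0$ for acyclic $A$ by building a null-homotopy via descending induction using projectivity, then deducing the bijection from the calculus of fractions).
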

\begin{proof}
See for example \cite[Corollary~10.4.7]{W}.
\end{proof}

Let $\Proj\A$ denote the full subcategory of $\A$ consisting of all
objects that are projective. Denote by $\bfK^{-,b}(\Proj\A)$ the full
subcategory of complexes $X$ in $\bfK(\Proj\A)$ such that $X^n=0$ for
$n\gg 0$ and $H^nX=0$ for almost all $n\in\bbZ$.  One says that $\A$
has \index{category!with enough projectives}\emph{enough projectives}
if each object in $\A$ is the quotient of some projective object.

\begin{prop}\label{pr:der_equiv_proj}
Let $\A$ be an abelian category having enough projectives.  Then the
canonical functor $\bfK(\A)\to\bfD(\A)$ induces an equivalence
$$\bfK^{-,b}(\Proj\A)\lto[\sim]\bfD^b(\A).$$
\end{prop}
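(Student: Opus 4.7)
The plan is to verify separately that the canonical functor $\bfK^{-,b}(\Proj\A)\to\bfD^b(\A)$ is fully faithful and essentially surjective.

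Full faithfulness is immediate from Lemma~\ref{le:der_proj}. The defining properties of $\bfK^{-,b}(\Proj\A)$ ensure that every object $X$ in it consists of projectives and vanishes in sufficiently high degrees, so for any complex $Y$ the lemma supplies a bijection $\Hom_{\bfK(\A)}(X,Y)\lto[\sim]\Hom_{\bfD(\A)}(X,Y)$. Specialising $Y$ to lie in $\bfK^{-,b}(\Proj\A)$ gives the claim.

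For essential surjectivity I would show that every $Y\in\bfD^b(\A)$ admits a quasi-isomorphism $P\to Y$ with $P\in\bfK^{-,b}(\Proj\A)$. Fix bounds so that $Y^n=0$ unless $a\le n\le b$. Construct $P$ by descending induction on degree. Set $P^n=0$ for $n>b$ and pick a projective $P^b$ together with an epimorphism $P^b\twoheadrightarrow Y^b$, available because $\A$ has enough projectives. At a typical step, suppose a truncation $P^{\ge n}$ and a chain map $\pi\colon P^{\ge n}\to Y^{\ge n}$ have been built so that the induced cohomology map is an isomorphism in all degrees $>n$ and $\pi^n$ induces a surjection $\ker(d_P^n)\twoheadrightarrow H^nY$. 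Form the pullback $Z=Y^{n-1}\times_{Y^n}\ker(d_P^n)$ and choose a projective $P^{n-1}$ with an epimorphism $P^{n-1}\twoheadrightarrow Z$. The two projections out of $Z$ furnish the differential $d_P^{n-1}\colon P^{n-1}\to P^n$ (which lands in $\ker d_P^n$, giving $d_P^n\circ d_P^{n-1}=0$) and the new component $\pi^{n-1}\colon P^{n-1}\to Y^{n-1}$ of the chain map; a short diagram chase using the defining property of $Z$ shows that the inductive hypothesis propagates to degree $n-1$.

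Since $H^nY=0$ for $n<a$, one checks from the construction that the resulting $P$ also has vanishing cohomology below $a$: once $Y^{n-1}=Y^n=0$, the pullback reduces to $\ker(d_P^n)$, so the induced $d_P^{n-1}$ surjects onto $\ker(d_P^n)$ and no cohomology survives. Thus $P\in\bfK^{-,b}(\Proj\A)$ and $\pi\colon P\to Y$ is a quasi-isomorphism, furnishing the required preimage of $Y$ up to isomorphism in $\bfD(\A)$. The main obstacle is the inductive step: one has to choose the pullback $Z$ correctly and verify by diagram chase that both the differential-square condition and the partial cohomology-isomorphism property survive the extension. Once this bookkeeping is in place, the bounded-above shape of $P$, the vanishing of its cohomology outside $[a,b]$, and the full-faithfulness half are all formal.
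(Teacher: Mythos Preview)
Your proof is correct but follows a different route from the paper. Both halves agree on full faithfulness via Lemma~\ref{le:der_proj}. For essential surjectivity, the paper argues by induction on $\ell(X)=\card\{n\mid X^n\neq 0\}$ inside the triangulated category: a bounded complex $X$ fits into an exact triangle $X'\to X''\to X\to X'[1]$ with $\ell(X')=1$ and $\ell(X'')=\ell(X)-1$; the base case is handled by an ordinary projective resolution, and the inductive step uses full faithfulness to lift the connecting morphism and then completes it to a triangle in $\bfK^{-,b}(\Proj\A)$. Your approach instead builds an explicit bounded-above projective resolution of the complex $Y$ degree by degree, in the classical Cartan--Eilenberg style, via the pullback $Z=Y^{n-1}\times_{Y^n}\ker(d_P^n)$. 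Your method is more hands-on and avoids invoking the triangulated structure (cones, exact triangles) altogether, at the cost of the diagram chase you flag; the paper's argument is shorter once that machinery is available and illustrates the general principle that the essential image of a full triangulated functor is closed under cones.
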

\begin{proof}
The functor $F\colon\bfK^{-,b}(\Proj\A)\to\bfD(\A)$ is by definition
the identity on objects, and $F$ is fully faithful by
Lemma~\ref{le:der_proj}.  It is clear that each object in the image of
$F$ is isomorphic to one in $\bfD^b(\A)$.  To show that each complex
$X$ in $\bfD^b(\A)$ is isomorphic to one in the image of $F$, we use
induction on
$$\ell(X)=\card\{n\in\bbZ\mid X^n\neq 0\}.$$ Note that each bounded
complex $X\neq 0$ fits into an exact triangle $X'\to X''\to X\to
X'[1]$ such that $\ell(X')=1$ and $\ell(X'')=\ell(X)-1$.  If
$\ell(X)=1$, say $X^n\neq0$, then $X\cong F(P[-n])$ where $P$ denotes
a projective resolution of $X^n$. Such a resolution exists since $\A$
has enough projectives. If $\ell(X)>1$, then the induction hypothesis
implies that the morphism $X'\to X''$ is up to isomorphism of the form
$F\p$ for some morphism $\p\colon P'\to P''$ in $\bfK^{-,b}(\Proj\A)$.
Completing the morphism $\p$ to an exact triangle $P'\to P''\to P\to
P'[1]$ shows that $X$ belongs to $\Im F$ since $X\cong FP$.
\end{proof}

\section{Tilting theory}

Tilting provides a method to relate a category of coherent sheaves to
a category of linear representations.  For instance, a result of
Beilinson \cite{B} establishes for the category $\coh\bbP^n_k$ of
coherent sheaves on the projective $n$-space over a field $k$ an
equivalence of derived categories
$$\RHom(T,-)\colon\bfD^b(\coh\bbP^n_k)\lto[\sim]\bfD^b(\mod\End(T))$$
via a tilting object $T$ in $\coh\bbP^n_k$.\footnote{Except for $n=1$,
the object $T=\Oc(0)\oplus\dots\oplus\Oc(n)$ is not a tilting object
in the strict sense of these notes; see Proposition~\ref{pr:coh_tilt}.}

In this section let $k$ be a field and $\A$ a $k$-linear abelian
category that is Ext-finite. We show that each tilting object $T$ in $\A$
provides an equivalence of derived categories
$$\RHom_\A(T,-)\colon\bfD^b(\A)\lto[\sim]\bfD^b(\mod\End_\A(T))$$
as in the example above. The principal reference for this result is
\cite{HRS}, even though the proof given here is somewhat more direct,
avoiding the formalism of torsion pairs and t-structures.

\subsection{Tilting objects}

Fix an object $T$ in $\A$. The object $T$ is called \index{tilting
object} \emph{tilting object}, provided that
\begin{enumerate}
\item $\pdim T \leq 1$,
\item $\Ext^1_\A(T,T)=0$, and
\item $\Hom_\A(T,A)=0=\Ext^1_\A(T,A)$ implies $A=0$ for each object $A$ in $\A$.
\end{enumerate}

A morphism $T'\to A$ in $\A$ is called \index{right approximation}
\emph{right $T$-approximation} of $A$ if it induces an epimorphism
$\Hom_\A(T,T')\to\Hom_\A(T,A)$ and $T'$ belongs to $\add T$. An exact
sequence $0\to A\to B\to T'\to 0$ is called \index{universal
  extension} \emph{universal $T$-extension} of $A$ if it induces an
epimorphism $\Hom_\A(T,T')\to\Ext^1_\A(T,A)$ and $T'$ belongs to $\add
T$.  Such approximations and extensions exist for all $A$ in $\A$,
since $\A$ is Ext-finite. Finally, set
$$\T(T)=\{A\in\A\mid\Ext^1_\A(T,A)=0\}.$$

\begin{lem}\label{le:tilt_obj}
Let $T\in\A$ be a tilting object. Then the following holds:
\begin{enumerate}
\item Let $\pi\colon T'\to A$ be a right $T$-approximation. Then
$\Ker\pi$ is in $\T(T)$, $\Hom_\A(T,\Coker\pi)=0$, and
$\Ext^1_\A(T,A)\xto{\sim}\Ext^1_\A(T,\Coker\pi)$.
\item Let $0\to A\to B\to T'\to 0$ be a universal $T$-extension. Then $B\in\T(T)$.
\item The objects in $\T(T)$ are precisely the factor objects of objects in $\add T$.
\end{enumerate}
\end{lem}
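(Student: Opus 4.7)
The plan is to treat the three parts in order, each by applying $\Hom_\A(T,-)$ to carefully chosen short exact sequences and exploiting $\pdim T\leq 1$ together with $\Ext^1_\A(T,T)=0$ (which in particular gives $\Ext^1_\A(T,T')=0$ since $T'\in\add T$).

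For part (1), I factor $\pi\colon T'\to A$ through its image as $T'\twoheadrightarrow \Im\pi\hookrightarrow A$, giving two short exact sequences
\[
0\to\Ker\pi\to T'\to\Im\pi\to 0\quad\text{and}\quad 0\to\Im\pi\to A\to\Coker\pi\to 0.
\]
Applying $\Hom_\A(T,-)$ to the first and using $\Ext^1_\A(T,T')=0$ together with $\Ext^2_\A(T,\Ker\pi)=0$, I conclude that $\Ext^1_\A(T,\Im\pi)=0$ and that the cokernel of $\Hom_\A(T,T')\to\Hom_\A(T,\Im\pi)$ is $\Ext^1_\A(T,\Ker\pi)$. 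Applying $\Hom_\A(T,-)$ to the second sequence then, thanks to $\Ext^1_\A(T,\Im\pi)=0$ and $\Ext^2_\A(T,\Im\pi)=0$, yields a short exact sequence $0\to\Hom_\A(T,\Im\pi)\to\Hom_\A(T,A)\to\Hom_\A(T,\Coker\pi)\to 0$ and an isomorphism $\Ext^1_\A(T,A)\xto{\sim}\Ext^1_\A(T,\Coker\pi)$. The approximation hypothesis that $\Hom_\A(T,T')\twoheadrightarrow\Hom_\A(T,A)$ factors through $\Hom_\A(T,\Im\pi)$, which forces both the maps $\Hom_\A(T,T')\to\Hom_\A(T,\Im\pi)$ and $\Hom_\A(T,\Im\pi)\to\Hom_\A(T,A)$ to be surjective; the first surjectivity gives $\Ext^1_\A(T,\Ker\pi)=0$ and the second gives $\Hom_\A(T,\Coker\pi)=0$.

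For part (2), I apply $\Hom_\A(T,-)$ directly to $0\to A\to B\to T'\to 0$: the segment
\[
\Hom_\A(T,T')\lto\Ext^1_\A(T,A)\lto\Ext^1_\A(T,B)\lto\Ext^1_\A(T,T')=0,
\]
together with the surjectivity of the first map (which is the defining property of a universal $T$-extension), forces $\Ext^1_\A(T,B)=0$. For part (3), one direction is immediate: any factor object $A$ of $T'\in\add T$ fits into $0\to U\to T'\to A\to 0$, and applying $\Hom_\A(T,-)$ with $\Ext^1_\A(T,T')=0$ and $\Ext^2_\A(T,U)=0$ gives $\Ext^1_\A(T,A)=0$. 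Conversely, if $A\in\T(T)$, I pick a right $T$-approximation $\pi\colon T'\to A$; part (1) yields $\Hom_\A(T,\Coker\pi)=0$ and $\Ext^1_\A(T,\Coker\pi)\cong\Ext^1_\A(T,A)=0$, so the third defining axiom of a tilting object forces $\Coker\pi=0$, making $\pi$ an epimorphism as required.

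The main obstacle is the bookkeeping in part (1): the two long exact sequences must be analyzed in the right order, so that $\Ext^1_\A(T,T')=0$ first propagates through the quotient $T'\twoheadrightarrow\Im\pi$ to give $\Ext^1_\A(T,\Im\pi)=0$; only then does the second sequence split off cleanly, allowing the approximation property to be turned into simultaneous vanishing statements for $\Hom_\A(T,\Coker\pi)$ and $\Ext^1_\A(T,\Ker\pi)$.
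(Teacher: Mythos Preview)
Your proof is correct and follows essentially the same approach as the paper: factor $\pi$ through its image to obtain two short exact sequences, then apply $\Hom_\A(T,-)$ to each and read off the conclusions from the long exact sequences, using $\pdim T\le 1$ and $\Ext^1_\A(T,T')=0$. The paper's proof is terser (it merely says ``apply $\Hom_\A(T,-)$ to both sequences'' and ``for the other implication one uses (1)''), but your detailed bookkeeping is exactly what unpacking that instruction requires.
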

\begin{proof}
(1) Write the sequence $0\to A'\to T'\xto{\pi} A\to A''\to 0$ as composite
of two exact sequences $0\to A'\to T'\to \bar A\to 0$ and
$0\to \bar A\to A\to A''\to 0$. Then apply $\Hom_\A(T,-)$ to both
sequences.

(2) Apply $\Hom_\A(T,-)$ to the sequence $0\to A\to B\to T'\to 0$.

(3) Clearly, each factor of an object in $\add T$ belongs to
    $\T(T)$. For the other implication one uses (1).
\end{proof}

\subsection{A derived equivalence}

Let $T$ be an object in $\A$ and $\La=\End_\A(T)$.  We consider the
functor $$\Hom_\A(T,-)\colon\A\lto\mod\La.$$ This functor induces an
equivalence $\add T\xto{\sim}\proj\La$ and admits a left adjoint
$$-\otimes_\La T\colon \mod\La\lto\A.$$ Given a $\La$-module $M$ with
projective presentation $P_1\to P_0\to M\to 0$, the cokernel of the
corresponding morphism $T_1\to T_0$ in $\add T$ is by definition
$M\otimes_\La T$.  For $i>0$, denote by
$$\Tor^\La_i(-,T)\colon\mod\La\lto\A$$ the $i$-th left derived functor of
$-\otimes_\La T$ and set
$$\Y(T)=\{M\in\mod\La\mid \Tor^\La_1(M,T)=0\}.$$

\begin{lem}\label{le:tilt_tor}
Let $T\in\A$ be a tilting object. Then $\Tor^\La_i(-,T)=0$ for $i>1$.
\end{lem}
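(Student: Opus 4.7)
The plan is to resolve a $\La$-module $M$ by finitely generated projectives, transport this resolution to a complex in $\A$ via the equivalence $\Hom_\A(T,-)\colon\add T\xto{\sim}\proj\La$, and then identify $\Tor^\La_n(M,T)$ with the homology of that complex. Concretely, pick a projective resolution $\cdots\to P_2\to P_1\to P_0\to M\to 0$ in $\mod\La$ and let $T_n\in\add T$ be the object corresponding to $P_n$, so that $T_\bullet=(\cdots\to T_2\to T_1\to T_0)$ satisfies $\Hom_\A(T,T_\bullet)=P_\bullet$ and $H_n(T_\bullet)\cong\Tor^\La_n(M,T)$. Write $Z_n=\Ker(T_n\to T_{n-1})$, $B_n=\Im(T_{n+1}\to T_n)$ and $H_n=Z_n/B_n$; the goal is to show $H_n=0$ for $n\ge 2$, which by tilting axiom~(3) reduces to verifying $\Hom_\A(T,H_n)=0=\Ext^1_\A(T,H_n)$.

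The argument is a diagram chase using three short exact sequences, $0\to Z_{n+1}\to T_{n+1}\to B_n\to 0$, $0\to Z_n\to T_n\to B_{n-1}\to 0$ and $0\to B_n\to Z_n\to H_n\to 0$, processed through $\Hom_\A(T,-)$. The two tilting inputs $\Ext^1_\A(T,T)=0$ and $\pdim T\le 1$ make $\Ext^1_\A(T,-)$ vanish on $\add T$ and kill $\Ext^2_\A(T,-)$ altogether, so the long exact sequences truncate conveniently. The first sequence then yields $\Ext^1_\A(T,B_n)=0$ for every $n\ge 0$. From the second, combined with exactness of $P_\bullet$ at $P_{n-1}$ (valid for $n\ge 2$), one identifies the chain of subobjects $\Im(P_n\to P_{n-1})\subseteq\Hom_\A(T,B_{n-1})\subseteq\Hom_\A(T,Z_{n-1})=\Im(P_n\to P_{n-1})$ inside $P_{n-1}$, forcing both inclusions to be equalities and thereby giving $\Hom_\A(T,B_m)=\Hom_\A(T,Z_m)$ for $m\ge 1$ together with $\Ext^1_\A(T,Z_n)=0$ for $n\ge 2$.

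Feeding these vanishings into the third sequence finishes the proof: for $n\ge 2$ the map $\Hom_\A(T,B_n)\to\Hom_\A(T,Z_n)$ is the identity, so $\Hom_\A(T,H_n)$ injects into $\Ext^1_\A(T,B_n)=0$, while $\Ext^1_\A(T,H_n)$ is a quotient of $\Ext^1_\A(T,Z_n)=0$; axiom~(3) then forces $H_n=0$. The main bookkeeping obstacle is the sandwich just described, which requires using both exactness of the resolution and that the differential $P_n\to P_{n-1}$ factors through $\Hom_\A(T,B_{n-1})$ because the differential $T_n\to T_{n-1}$ factors through its own categorical image $B_{n-1}$. It is worth noting that the collapse breaks at $n=1$---here $\Ext^1_\A(T,Z_1)$ need not vanish---which is precisely why $\Tor^\La_1(-,T)$ is generally nonzero while all higher Tors die.
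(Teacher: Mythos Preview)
Your proof is correct and follows essentially the same route as the paper. The paper streamlines the argument by invoking Lemma~\ref{le:tilt_obj} on right $T$-approximations: it observes that $\bar\delta_{i+1}\colon T_{i+1}\to Z_i$ is a right $T$-approximation for $i>0$, whence $Z_{i+1}=\Ker\bar\delta_{i+1}\in\T(T)$ and $\Coker\bar\delta_{i+1}=H_i$ satisfies $\Hom_\A(T,H_i)=0$ and $\Ext^1_\A(T,H_i)\cong\Ext^1_\A(T,Z_i)=0$ for $i>1$; axiom~(3) then kills $H_i$. Your diagram chase with the three short exact sequences unpacks exactly this, so the two arguments coincide once one unwinds the lemma.
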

\begin{proof}
Let $M\in\mod\La$ and choose a projective resolution
$$\cdots \lto P_2\lto[\delta_2] P_1\lto[\delta_1] P_0\lto[\delta_0]
M\lto 0.$$ Apply $-\otimes_\La T$ and set
$Z_i=\Ker(\delta_i\otimes_\La T)$. The induced morphism
$\bar\delta_{i+1}\colon P_{i+1}\otimes_\La T\to Z_i$ is a right
$T$-approximation for $i>0$, and therefore $Z_i$ belongs to $\T(T)$
for $i>1$ by Lemma~\ref{le:tilt_obj}. Thus $\bar\delta_{i+1}$ is an
epimorphism for $i>1$, and this implies $\Tor^\La_i(-,T)=0$.
\end{proof}

\begin{lem}\label{le:tilt_eq}
Let $T\in\A$ be a tilting object. Then $\Hom_A(T,-)$
and $-\otimes_\La T$ restrict to equivalences between $\T(T)$ and $\Y(T)$.
\end{lem}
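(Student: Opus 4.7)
The plan is to leverage the adjunction $-\otimes_\La T \dashv \Hom_\A(T,-)$: I would verify that the counit $\varepsilon_A\colon \Hom_\A(T,A)\otimes_\La T\to A$ is an isomorphism with $\Hom_\A(T,A)\in\Y(T)$ for every $A\in\T(T)$, and dually that the unit $\eta_M\colon M\to \Hom_\A(T,M\otimes_\La T)$ is an isomorphism with $M\otimes_\La T\in\T(T)$ for every $M\in\Y(T)$. Since the two functors already form an adjoint pair, this would yield the desired quasi-inverse equivalences.

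For $A\in\T(T)$, I would iterate Lemma~\ref{le:tilt_obj}(1). A right $T$-approximation $\pi_0\colon T_0\to A$ is epi (its cokernel has trivial $\Hom$ and $\Ext^1$ from $T$, hence vanishes by axiom~(3) of a tilting object) and $K_1:=\Ker\pi_0$ lies in $\T(T)$. A second right $T$-approximation $T_1\twoheadrightarrow K_1$ yields $K_2:=\Ker(T_1\to K_1)\in\T(T)$, producing an exact sequence $T_1\to T_0\to A\to 0$ in $\A$ with $\Ker(T_1\to T_0)=K_2\in\T(T)$. Applying $\Hom_\A(T,-)$ and using $\Ext^1_\A(T,K_i)=0$ gives an exact sequence $\Hom_\A(T,T_1)\to\Hom_\A(T,T_0)\to\Hom_\A(T,A)\to 0$, i.e.\ a projective presentation of $M:=\Hom_\A(T,A)$ over $\La$ via the equivalence $\add T\xto{\sim}\proj\La$. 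Tensoring back with $T$ recovers the original sequence, so $M\otimes_\La T\cong A$ and this iso is the counit $\varepsilon_A$. A third iteration gives a surjection $T_2\twoheadrightarrow K_2$, so $\Im(T_2\to T_1)=K_2=\Ker(T_1\to T_0)$, whence $\Tor_1^\La(M,T)=0$; that is, $M\in\Y(T)$.

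For $M\in\Y(T)$, I would take a projective presentation $P_1\to P_0\to M\to 0$ and lift via $\add T\xto{\sim}\proj\La$ to $T_1\to T_0$ in $\add T$. Right-exactness of $-\otimes_\La T$ gives $T_1\to T_0\to A\to 0$ exact with $A=M\otimes_\La T$. Set $K=\Im(T_1\to T_0)$ and $L=\Ker(T_1\to T_0)$. Two passes of the long exact sequence of $\Ext_\A^*(T,-)$, one for $0\to L\to T_1\to K\to 0$ and one for $0\to K\to T_0\to A\to 0$, combined with $\Ext^1_\A(T,T_i)=0$ and $\Ext^2_\A(T,-)=0$ (from $\pdim T\le 1$), successively give $\Ext^1_\A(T,K)=0$ and $\Ext^1_\A(T,A)=0$, so $A\in\T(T)$. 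The first of these sequences also identifies $\Hom_\A(T,A)=\Hom_\A(T,T_0)/\Hom_\A(T,K)$, and under $P_i\cong\Hom_\A(T,T_i)$ the unit $\eta_M$ becomes the natural map $P_0/\Im(P_1\to P_0)\to P_0/\Hom_\A(T,K)$. This is an isomorphism precisely when $\Hom_\A(T,T_1)\to\Hom_\A(T,K)$ is surjective, equivalently when $\Ext^1_\A(T,L)=0$. To secure this, I would extend the resolution by $P_2\twoheadrightarrow\Ker(P_1\to P_0)$ and tensor with $T$: the hypothesis $\Tor_1^\La(M,T)=0$ forces $T_2\twoheadrightarrow L$, and one more application of $\pdim T\le 1$ through the long exact sequence for $0\to L'\to T_2\to L\to 0$ (with $L'=\Ker(T_2\to L)$) yields $\Ext^1_\A(T,L)=0$.

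The main obstacle is this final chain of inferences in the $\Y(T)$-direction: translating the Tor-vanishing through $-\otimes_\La T$ into the surjectivity $T_2\twoheadrightarrow L$, and then extracting $\Ext^1_\A(T,L)=0$ via one more invocation of $\pdim T\le 1$. Every other step is a mechanical application of Lemma~\ref{le:tilt_obj} and the standard formalism of the adjoint pair.
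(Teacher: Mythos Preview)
Your argument is correct. The treatment of the $\T(T)$-direction is essentially identical to the paper's: both build an $\add T$-resolution of $A$ by iterating right $T$-approximations (the paper phrases this as a single long exact sequence), observe that $\Hom_\A(T,-)$ takes it to a projective resolution, and conclude that the counit is invertible and that $\Hom_\A(T,A)\in\Y(T)$.

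For the $\Y(T)$-direction the two proofs diverge. The paper invokes the preceding Lemma~\ref{le:tilt_tor} (that $\Tor_i^\La(-,T)=0$ for $i>1$), takes a short exact sequence $0\to M'\to P\to M\to 0$ with $P$ projective, notes $M'\in\Y(T)$, and runs a snake-lemma argument on the diagram comparing this sequence with its image under $\Hom_\A(T,-\otimes_\La T)$: since $\eta_P$ is an isomorphism, every $\eta_M$ with $M\in\Y(T)$ is an epimorphism, and then the snake lemma upgrades it to an isomorphism. Your route instead unfolds a three-step resolution and chases long exact sequences, repeatedly using $\pdim T\le 1$ to kill $\Ext^2$-terms; the hypothesis $\Tor_1^\La(M,T)=0$ enters exactly once, to force $T_2\twoheadrightarrow L$. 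Your version is more hands-on and is self-contained in that it does not appeal to Lemma~\ref{le:tilt_tor}; the paper's version is shorter and more conceptual once that lemma is available. Both arguments establish $M\otimes_\La T\in\T(T)$ along the way (the paper implicitly via Lemma~\ref{le:tilt_obj}(3), you via the explicit $\Ext^1$-computation).
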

\begin{proof}
Fix objects $A\in\T(T)$ and $M\in\Y(T)$. We need to show that the
adjunction morphisms
$$\Hom_\A(T,A)\otimes_\La T\lto[\theta_A]
A\quad\text{and}\quad M\lto[\eta_M]\Hom_\A(T,M\otimes_\La T)$$
are invertible.

Choose an exact sequence
\begin{equation*}
\xi\colon\;\cdots \lto T_2\lto[\delta_2] T_1\lto[\delta_1]
T_0\lto[\delta_0] A\lto 0
\end{equation*}
such that the induced morphism $T_i\to\Im\delta_i$ is a right
$T$-approximation of $\Im\delta_i$ for each $i\geq 0$.  Such a
sequence exists by Lemma~\ref{le:tilt_obj}.

The functor $\Hom_\A(T,-)$ sends the sequence $\xi$ to a projective
resolution of the $\La$-module $\Hom_\A(T,A)$. Applying then
$-\otimes_\La T$ gives back $\xi$, that is, the adjunction morphism
$\theta_A$ is invertible. Moreover, $\Hom_\A(T,A)$ belongs to $\Y(T)$.

Now choose an exact sequence $\pi\colon 0\to M'\to P\to M\to 0$ such
that $P$ is projective. Note that $M'$ belongs to $\Y(T)$ by
Lemma~\ref{le:tilt_tor}. The sequence $\pi\otimes_\La T$ is exact
since $M\in\Y(T)$, and the sequence $\Hom_\A(T,\pi\otimes_\La T)$ is
exact since $M'\otimes_\La T$ belongs to $\T(T)$. Thus there is the
following commutative diagram with exact rows.
\[\xymatrix@C=1pc@R=2pc{
0 \ar[r]& M' \ar[r] \ar[d]^{\eta_{M'}} & P \ar[r] \ar[d]^{\eta_P}
& M \ar[r] \ar[d]^{\eta_M} & 0 \\ 0 \ar[r] &\Hom_\A(T, M'\otimes_\La
T) \ar[r] & \Hom_\A(T, P\otimes_\La T) \ar[r] & \Hom_\A(T,
M\otimes_\La T) \ar[r] & 0 }\] The morphism $\eta_P$ is an
isomorphism and it follows that $\eta_M$ is an epimorphism for all
$M$ in $\Y(T)$. In particular, $\eta_{M'}$ is an epimorphism. Using
the snake lemma, it follows that $\eta_M$ is an isomorphism.
\end{proof}

Let $A$ be an object in $\A$.  An \index{$\add T$-resolution}
\emph{$\add T$-resolution} of $A$ is by definition a complex
$$Q\colon\;\cdots \lto Q_2\lto Q_1\lto Q_0\lto 0\lto 0\lto \cdots$$ together
with a quasi-isomorphism $Q\to A$ such that each $Q_n$ belongs to
$\add T$.

\begin{lem}\label{le:tilt_ex}
Let $T\in\A$ be a tilting object and $Q\to A$ an $\add T$-resolution
of an object $A\in \A$. Then
$$H^n\Hom_\A(Q,B)\cong\Ext_\A^n(A,B)$$ for all $B\in\T(T)$ and $n\geq
0$.
\end{lem}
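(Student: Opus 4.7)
The plan is to reduce to dimension shifting along the short exact sequences induced by the resolution $Q\to A$. Set $K_{-1}=A$ and, for $n\ge 0$, let $K_n=\Im(Q_{n+1}\to Q_n)$. Since $Q\to A$ is a quasi-isomorphism, for every $n\ge 0$ there is a short exact sequence
\[
0\lto K_n\lto Q_n\lto K_{n-1}\lto 0.
\]
Each $K_n$ with $n\ge 0$ is a quotient of $Q_{n+1}\in\add T$, hence belongs to $\T(T)$ by Lemma~\ref{le:tilt_obj}(3). Moreover, for every $Q_n\in\add T$ and $B\in\T(T)$ one has $\Ext^i_\A(Q_n,B)=0$ for all $i\ge 1$: the case $i=1$ uses $B\in\T(T)$, and the cases $i\ge 2$ use $\pdim T\le 1$.

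Applying $\Hom_\A(-,B)$ to the short exact sequences and using this vanishing, one obtains
\[
\Ext^{i+1}_\A(K_{n-1},B)\cong\Ext^i_\A(K_n,B)\quad (i\ge 1)
\]
together with
\[
\Ext^1_\A(K_{n-1},B)\cong\Coker\bigl(\Hom_\A(Q_n,B)\to\Hom_\A(K_n,B)\bigr).
\]
Iterating from $K_{-1}=A$ gives
\[
\Ext^{n+1}_\A(A,B)\cong\Ext^1_\A(K_{n-1},B)\quad\text{for all}\quad n\ge 0.
\]

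Next I identify $H^{n+1}\Hom_\A(Q,B)$ with the same cokernel. The key observation is that, because $K_n=\Im(Q_{n+1}\to Q_n)=Q_{n+1}/K_{n+1}$, the surjection $Q_{n+1}\twoheadrightarrow K_n$ induces an injection $\Hom_\A(K_n,B)\hookrightarrow\Hom_\A(Q_{n+1},B)$ whose image is exactly the kernel of $\Hom_\A(Q_{n+1},B)\to\Hom_\A(Q_{n+2},B)$. Under this identification the image of $\Hom_\A(Q_n,B)\to\Hom_\A(Q_{n+1},B)$ corresponds to the image of $\Hom_\A(Q_n,B)\to\Hom_\A(K_n,B)$, so
\[
H^{n+1}\Hom_\A(Q,B)\cong\Coker\bigl(\Hom_\A(Q_n,B)\to\Hom_\A(K_n,B)\bigr)\cong\Ext^{n+1}_\A(A,B).
\]
The case $n=0$ is handled separately by noting that $Q_1\to Q_0\to A\to 0$ is exact, so $\Hom_\A(-,B)$ yields $H^0\Hom_\A(Q,B)=\Hom_\A(A,B)$.

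The main obstacle is bookkeeping: making sure that the natural identifications of kernels and images in $\Hom_\A(Q_\bullet,B)$ match with the connecting maps in the long exact sequences of Ext. Once one tracks this carefully via the factorization $\Hom_\A(Q_n,B)\to\Hom_\A(K_n,B)\hookrightarrow\Hom_\A(Q_{n+1},B)$, the result follows from a clean dimension shift.
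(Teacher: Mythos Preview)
Your proof is correct and is precisely the ``induction on $n$ and dimension shifting'' argument that the paper indicates but does not spell out. One minor remark: the observation that each $K_n$ lies in $\T(T)$ is true but not actually used in your argument---the only vanishing you need is $\Ext^i_\A(Q_n,B)=0$ for $i\ge 1$, which follows directly from $Q_n\in\add T$, $B\in\T(T)$, and $\pdim T\le 1$.
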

\begin{proof}
Use induction on $n$ and dimension shifting.
\end{proof}

\begin{lem}\label{le:tilt_ext}
Let $T\in\A$ be a tilting object. Then the functor $-\otimes_\La T$ induces
an isomorphism
$$\Ext^n_\La(M,N)\lto[\sim]\Ext_\A^n(M\otimes_\La T,N\otimes_\La T)$$ for all
$M,N$ in $\Y(T)$ and $n\geq 0$.
\end{lem}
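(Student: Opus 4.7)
The plan is to compute both Ext groups as cohomologies of a single complex and compare. The case $n=0$ is already part of the equivalence $\T(T)\simeq\Y(T)$ established in Lemma~\ref{le:tilt_eq}, so I focus on $n\ge 1$. I would start by choosing a projective resolution
\[
\cdots\lto P_2\lto P_1\lto P_0\lto M\lto 0
\]
of $M$ over $\La$ and applying $-\otimes_\La T$ to obtain an augmented complex $P_\bullet\otimes_\La T\to M\otimes_\La T$ in $\A$.

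The first key point is that this augmented complex is an $\add T$-resolution of $M\otimes_\La T$. Each term $P_i\otimes_\La T$ belongs to $\add T$ because $-\otimes_\La T$ restricts to an equivalence $\proj\La\xto{\sim}\add T$. Acyclicity amounts to the vanishing of $\Tor^\La_i(M,T)$ for all $i\ge 1$: the case $i=1$ follows from $M\in\Y(T)$ and the cases $i\ge 2$ from Lemma~\ref{le:tilt_tor}.

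Next, since $N\in\Y(T)$, the object $N\otimes_\La T$ is the cokernel of a morphism in $\add T$ arising from a projective presentation of $N$, so by Lemma~\ref{le:tilt_obj}(3) it lies in $\T(T)$. Applying Lemma~\ref{le:tilt_ex} to the $\add T$-resolution $P_\bullet\otimes_\La T\to M\otimes_\La T$ and $B=N\otimes_\La T$ gives
\[
\Ext^n_\A(M\otimes_\La T,\,N\otimes_\La T)\;\cong\;H^n\Hom_\A(P_\bullet\otimes_\La T,\,N\otimes_\La T).
\]
Using the adjunction between $-\otimes_\La T$ and $\Hom_\A(T,-)$ together with the isomorphism $\Hom_\A(T,N\otimes_\La T)\cong N$ from Lemma~\ref{le:tilt_eq}, this complex is identified termwise with $\Hom_\La(P_\bullet,N)$, whose cohomology is $\Ext^n_\La(M,N)$.

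The main obstacle I anticipate is not in producing the isomorphism but in checking that it agrees with the map induced by applying $-\otimes_\La T$ to Yoneda extensions. I would handle this by tracing a class $\xi\in\Ext^n_\La(M,N)$ through the identification: each ingredient in the chain of isomorphisms -- the projective resolution, Lemma~\ref{le:tilt_ex}, and the adjunction -- is natural in both arguments and compatible with splicing of short exact sequences, so the composite coincides with the map sending a representing Yoneda sequence to its image under $-\otimes_\La T$.
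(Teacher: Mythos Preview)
Your argument is correct and follows essentially the same route as the paper's proof: choose a projective resolution $P_\bullet\to M$, observe that $P_\bullet\otimes_\La T\to M\otimes_\La T$ is an $\add T$-resolution, apply Lemma~\ref{le:tilt_ex} with $B=N\otimes_\La T\in\T(T)$, and use the adjunction together with $N\cong\Hom_\A(T,N\otimes_\La T)$ from Lemma~\ref{le:tilt_eq} to identify the resulting Hom complex with $\Hom_\La(P_\bullet,N)$. The paper presents exactly this chain of isomorphisms (in the reverse order) and omits the final paragraph in which you verify compatibility with the Yoneda map; your extra care there is appropriate given the wording of the statement, though the paper leaves it implicit.
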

\begin{proof}
Choose a projective resolution $P\to M$ of $M$. Note that
$N\cong\Hom_\A(T,N\otimes_\La T)$ by Lemma~\ref{le:tilt_eq}, since $N$
belongs to $\Y(T)$.  Then we obtain the following sequence of isomorphisms.
\begin{align*}
\Ext^n_\La(M,N)&\cong H^n\Hom_\La(P,N)\\ &\cong H^n\Hom_\La(P,\Hom_\A(T,N\otimes_\La T))\\
&\cong H^n \Hom_\A(P\otimes_\La T,N\otimes_\La T)\\ &\cong\Ext_\A^n(M\otimes_\La T,N\otimes_\La T)
\end{align*}
The last isomorphism follows from Lemma~\ref{le:tilt_ex}, since
$P\otimes_\La T\to M\otimes_\La T$ is an $\add T$-resolution.
\end{proof}

For a tilting object $T$ in $\A$, let us define the derived functor
$$-\otimes^\bfL_\La
T\colon\bfD^b(\mod\La)\lto[\sim]\bfK^{-,b}(\proj\La)\lto\bfD^b(\A)$$
by taking a complex $P$ of projective $\La$-modules with bounded
cohomology to $P\otimes_\La T$; see
Proposition~\ref{pr:der_equiv_proj}. The cohomology of $P\otimes_\La
T$ is bounded, since $\Tor_i^\La(-,T)=0$ for $i>1$ by
Lemma~\ref{le:tilt_tor}.

\begin{thm}[Happel-Reiten-Smal{\o}]\label{th:tilt}
Let $\A$ be a $k$-linear abelian category that is Ext-finite.  Let $T$
be a tilting object in $\A$ and $\La=\End_\A(T)$. Then the functor
$$-\otimes^\bfL_\La T\colon\bfD^b(\mod\La)\lto\bfD^b(\A)$$ is an
equivalence of triangulated categories and its right adjoint
$\RHom_\A(T,-)$ is a quasi-inverse.
\end{thm}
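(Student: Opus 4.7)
The plan is to prove that $F=-\otimes^{\bfL}_{\La}T$ is a fully faithful, essentially surjective, triangulated functor $\bfD^{b}(\mod\La)\to\bfD^{b}(\A)$, and then identify its right adjoint with $\RHom_{\A}(T,-)$. The construction via $\bfD^{b}(\mod\La)\simeq\bfK^{-,b}(\proj\La)$ makes $F$ triangulated, and Lemma~\ref{le:tilt_tor} ensures that the cohomology of $P^{\bullet}\otimes_{\La}T$ is bounded so that $F$ indeed lands in $\bfD^{b}(\A)$.

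For fully faithfulness I would start on the subcategory $\Y(T)\subseteq\mod\La$. For $M,N\in\Y(T)$ one has $F(M)=M\otimes_{\La}T\in\T(T)$, and combining Lemma~\ref{le:tilt_ext} with Proposition~\ref{pr:ext} yields a bijection
\[
\Hom_{\bfD^{b}(\mod\La)}(M,N[n])\lto\Hom_{\bfD^{b}(\A)}(FM,FN[n])
\]
for $n\geq 0$, while for $n<0$ both sides vanish since $M,N,FM,FN$ are concentrated in degree $0$. To extend to all of $\bfD^{b}(\mod\La)$ I would use a d\'evissage: any $M\in\mod\La$ sits in a short exact sequence $0\to K\to P\to M\to 0$ with $P\in\proj\La$, and $\Tor^{\La}_{2}(M,T)=0$ by Lemma~\ref{le:tilt_tor} forces $K\in\Y(T)$, so $\Y(T)$ generates $\bfD^{b}(\mod\La)$ as a triangulated subcategory. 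The full subcategory of $X$ (respectively, of $Y$) for which $F$ is bijective on $\Hom(X,Y[n])$ in every $n$ is closed under shifts and triangles, so fully faithfulness propagates from $\Y(T)$ first in the first variable, then in the second, to all of $\bfD^{b}(\mod\La)$.

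For essential surjectivity, given $A\in\A$ I would take the universal $T$-extension $0\to A\to B\to T'\to 0$ from Lemma~\ref{le:tilt_obj}(2). Both outer terms lie in $\Im F$: the object $B\in\T(T)$ equals $F(\Hom_{\A}(T,B))$ with $\Hom_{\A}(T,B)\in\Y(T)$ by Lemma~\ref{le:tilt_eq}, and $T'\in\add T$ equals $F(P)$ for the corresponding $P\in\proj\La$. Full faithfulness lifts the morphism $B\to T'$ to $\mod\La$, and completing this lift to a triangle in $\bfD^{b}(\mod\La)$ produces, after a shift, a preimage of $A$. A short induction on the number of nonvanishing cohomology modules, realizing any bounded complex as an iterated extension of its cohomology objects placed in single degrees, then covers all of $\bfD^{b}(\A)$.

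Finally, the right adjoint is $\RHom_{\A}(T,-)$: since $\pdim T\leq 1$, this is a well-defined triangulated functor $\bfD^{b}(\A)\to\bfD^{b}(\mod\La)$ with cohomology concentrated in degrees $0$ and $1$ on each object, and the classical tensor--hom adjunction between a finitely generated projective $P$ over $\La$ and an object of $\A$ extends to the derived level to yield $\Hom_{\bfD^{b}(\A)}(F(-),-)\cong\Hom_{\bfD^{b}(\mod\La)}(-,\RHom_{\A}(T,-))$. Being a right adjoint to the equivalence $F$, the functor $\RHom_{\A}(T,-)$ is automatically a quasi-inverse. I expect the main obstacle to be the d\'evissage in the fully faithfulness step: one must check carefully that the equivalence $\Y(T)\simeq\T(T)$ together with the $\Ext$-preservation of Lemma~\ref{le:tilt_ext} propagates correctly to arbitrary bounded complexes in both variables, without silently invoking the torsion-pair or t-structure formalism that the paper explicitly wishes to avoid.
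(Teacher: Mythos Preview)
Your proposal is correct and follows essentially the same route as the paper: both arguments establish full faithfulness by reducing via triangles to objects of $\Y(T)$ (using that every $\La$-module sits in a short exact sequence with terms in $\Y(T)$, thanks to $\Tor^\La_2(-,T)=0$) and then invoking Lemma~\ref{le:tilt_ext}, and both obtain essential surjectivity from the universal $T$-extension of Lemma~\ref{le:tilt_obj} placing any $A\in\A$ into a triangle with objects of $\T(T)$. The only cosmetic difference is that the paper organizes the d\'evissage as an induction on $\ell(X)+\ell(Y)$ rather than as a two-step triangulated-closure argument, and it does not spell out the adjunction with $\RHom_\A(T,-)$ beyond noting its existence.
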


We do not give the formal definition of the derived functor
$\RHom_\A(T,-)$; all we use is the fact that it is a right adjoint of
$-\otimes^\bfL_\La T$.

\begin{proof}
Set $F_T=-\otimes_\La^\bfL T$. We identify objects in $\mod\La$ and
$\A$ with complexes that are concentrated in degree zero.  For
instance, $F_T M=M\otimes_\La T$ for each $M$ in $\Y(T)$.

We need to show that for each pair of
complexes $X,Y$ in $\mod\La$, the induced map
$$\p_{X,Y}\colon \Hom_{\bfD^b(\mod\La)}(X,Y)\lto \Hom_{\bfD^b(\A)}(F_T
X, F_T Y)$$ is bijective.  Set
$$\ell(X)=\card\{n\in\bbZ\mid X_n\neq 0\}$$ and note that each bounded
complex $X\neq 0$ fits into an exact triangle $X'\to X\to X''\to
X'[1]$ such that $\ell(X')=\ell(X)-1$ and $\ell(X'')=1$.

Using the five lemma and induction on $\ell(X)+\ell(Y)$, one shows
that $\p_{X,Y}$ is bijective. The case $\ell(X)=\ell(Y)=1$ follows
from Lemma~\ref{le:tilt_ext}.  To be precise, one uses that each
$\La$-module $M$ fits into an exact sequence $0\to M'\to P\to M\to 0$
with $M',P$ in $\Y(T)$, which yields an exact triangle $M'\to P\to
M\to M'[1]$ in $\bfD^b(\mod\La)$.

Next we show that each object in $\bfD^b(\A)$ is isomorphic to one in
the image of $F_T$. In fact, it suffices to show that each object in
$\A$ belongs to the essential image $\Im F_T$, since $\Im F_T$ is a
triangulated subcategory and $\bfD^b(\A)$ is generated (as a
triangulated category) by the objects from $\A$.

It follows from Lemma~\ref{le:tilt_obj} that each object $A$ in $\A$
fits into an exact triangle $A\to B\to C\to A[1]$ with $B,C$ in
$\T(T)$.  On the other hand, each $A$ in $\T(T)$ belongs to $\Im F_T$,
since $A\cong F_T(\Hom_\A(T,A))$ by Lemma~\ref{le:tilt_eq}.
\end{proof}

\begin{exm}
(1) Let $T,T'$ be two objects in $\A$ with $\add T=\add T'$. Then $T$
    is a tilting object if and only if $T'$ is a tilting object.

(2) Let $k$ be a field and $\La$ a finite dimensional $k$-algebra. Then
any free $\La$-module of finite rank is a tilting object in $\mod
\La$.

(3) Let $k$ be a field and $\La=k\Ga$ the path algebra of a finite quiver
$\Ga$ without oriented cycles. For each vertex $i\in\Ga_0$ let $e_i$ denote
the corresponding idempotent. Fix a vertex $i_0$ which is not a sink
and consider the following short exact sequence
$$0\lto e_{i_0}\La\lto \bigoplus_{\a\colon i_0\to i}e_i\La\lto
T_{i_0}\lto 0$$ where the direct sum runs over all arrows starting at
$i_0$ and each morphism $e_{i_0}\La\to e_i\La$ is given by
multiplication with the corresponding arrow $\a\colon i_0\to i$. Set
$T_i=e_i\La$ for each vertex $i\neq i_0$. Then
$T=\bigoplus_{i\in\Ga_0}T_i$ is a tilting object of $\mod\La$.
\end{exm}

\subsection{Grothendieck groups}

Let $\A$ be an abelian category. Denote by $F(\A)$ the free abelian
group generated by the isomorphism classes of objects in $\A$. Let
$F_0(\A)$ be the subgroup generated by $[X]-[Y]+[Z]$ for all exact
sequences $0\to X\to Y\to Z\to 0$ in $\A$. The \index{Grothendieck
group} \emph{Grothendieck group} $K_0(\A)$ of $\A$ is by definition
the factor group $F(\A)/F_0(\A)$.

\begin{lem}\label{le:gr_gr1}
Let $\A$ be a length category. Then $K_0(\A)$ is a free
abelian group and the isomorphism classes of simple objects in $\A$
form a basis.
\end{lem}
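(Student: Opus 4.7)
The plan is to produce the basis explicitly (the classes $[S]$ for $S$ ranging over the simples) by showing two things: that these classes span $K_0(\A)$, and that they are $\bbZ$-linearly independent.

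For the spanning part, I would fix any object $A$ and pick a composition series $0=A_0\subseteq A_1\subseteq\cdots\subseteq A_n=A$, which exists because $\A$ is a length category. Each short exact sequence $0\to A_{i-1}\to A_i\to A_i/A_{i-1}\to 0$ gives a relation $[A_i]=[A_{i-1}]+[A_i/A_{i-1}]$ in $K_0(\A)$. Iterating, $[A]=\sum_{i=1}^n[A_i/A_{i-1}]$, which is an integer combination of classes of simple objects. Hence the classes of simples generate $K_0(\A)$.

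For linear independence, the key tool is the Jordan--H\"older theorem: for each simple object $S$ and each object $A$, the number $[A:S]$ of composition factors isomorphic to $S$ in any composition series of $A$ is well defined (independent of the chosen series). Using this, I would define for each simple $S$ (one from each isomorphism class) a group homomorphism $\mu_S\colon F(\A)\to\bbZ$ by $\mu_S([A])=[A:S]$. To descend $\mu_S$ to $K_0(\A)$ I need to check that it kills every generator of $F_0(\A)$, i.e.\ that $[Y:S]=[X:S]+[Z:S]$ for any short exact sequence $0\to X\to Y\to Z\to 0$; this follows by concatenating a composition series of $X$ with the preimage under $Y\to Z$ of a composition series of $Z$, producing a composition series of $Y$ whose simple factors are the disjoint union (with multiplicities) of those of $X$ and $Z$.

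Once the $\mu_S$ are defined on $K_0(\A)$, independence is immediate: for simple objects $S,T$ we have $\mu_S([T])=\delta_{S,T}$, so any relation $\sum_{i} n_i[S_i]=0$ among finitely many distinct simples $S_i$ in $K_0(\A)$ yields $n_j=\mu_{S_j}\!\bigl(\sum_i n_i[S_i]\bigr)=0$ for each $j$. The main obstacle is really just the well-definedness of $\mu_S$ on $K_0(\A)$, which in turn rests on Jordan--H\"older and on the additivity of composition multiplicities along short exact sequences; both are standard but need to be cited or checked carefully.
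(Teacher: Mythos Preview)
Your proof is correct and follows essentially the same approach as the paper: both obtain the spanning by writing $[A]$ as the sum of its composition factors, and both invoke Jordan--H\"older for independence. Your argument simply unpacks the paper's one-line appeal to Jordan--H\"older by constructing the multiplicity homomorphisms $\mu_S$ explicitly, which is a perfectly standard elaboration.
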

\begin{proof}
Let $X$ be an object in $\A$ and $0=X_0\subseteq
X_1\subseteq\dots\subseteq X_n=X$ a composition series. Then
$[X]=[X_1/X_0]+\cdots +[X_n/X_{n-1}]$ in $K_0(\A)$. The
Jordan-H\"older theorem implies the uniqueness of this expression.
\end{proof}

Let $\T$ be a triangulated category. Denote by $F(\T)$ the free
abelian group generated by the isomorphism classes of objects in
$\T$. Let $F_0(\T)$ be the subgroup generated by $[X]-[Y]+[Z]$ for all
exact triangles $X\to Y\to Z\to X[1]$ in $\T$. The \index{Grothendieck
group} \emph{Grothendieck group} $K_0(\T)$ of $\T$ is by definition
the factor group $F(\T)/F_0(\T)$.

\begin{lem}\label{le:gr_gr2}
Let $\A$ be an abelian category. The inclusion $\A\to\bfD^b(\A)$ induces
an isomorphism $K_0(\A)\xto{\sim} K_0(\bfD^b(\A))$.
\end{lem}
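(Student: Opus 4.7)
The plan is to construct an explicit inverse to the homomorphism $\iota\colon K_0(\A)\to K_0(\bfD^b(\A))$ induced by the inclusion. First I would check that $\iota$ is well defined: any short exact sequence $0\to X\to Y\to Z\to 0$ in $\A$ gives rise to an exact triangle $X\to Y\to Z\to X[1]$ in $\bfD^b(\A)$, so the defining relations of $K_0(\A)$ are sent to defining relations of $K_0(\bfD^b(\A))$.

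Next I would define a candidate inverse
\[
\chi\colon K_0(\bfD^b(\A))\lto K_0(\A),\qquad [X]\longmapsto\sum_{n\in\bbZ}(-1)^n[H^nX].
\]
The sum is finite because $X$ has bounded cohomology. To check that $\chi$ respects the triangle relations, I would take an exact triangle $X\to Y\to Z\to X[1]$ and use the associated long exact cohomology sequence in $\A$
\[
\cdots\lto H^{n-1}Z\lto H^nX\lto H^nY\lto H^nZ\lto H^{n+1}X\lto\cdots
\]
Chopping this into short exact sequences in $\A$, summing the resulting relations in $K_0(\A)$ with signs, and telescoping shows that $\sum_n(-1)^n\bigl([H^nX]-[H^nY]+[H^nZ]\bigr)=0$, so $\chi$ is well defined.

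It remains to verify that $\chi$ and $\iota$ are mutually inverse. The composite $\chi\comp\iota$ is the identity because an object $A\in\A$, viewed as a complex in degree zero, has $H^0A=A$ and $H^nA=0$ for $n\neq 0$. For the other composite, the key step (and main obstacle, though it is quite standard) is to show $[X]=\sum_n(-1)^n[H^nX]$ in $K_0(\bfD^b(\A))$ for every bounded complex $X$. I would proceed by induction on $\ell(X)=\card\{n\mid X^n\neq 0\}$ using the standard truncation triangles
\[
\tau_{\leq m-1}X\lto \tau_{\leq m}X\lto (H^mX)[-m]\lto(\tau_{\leq m-1}X)[1],
\]
together with the identity $[A[1]]=-[A]$ in $K_0(\bfD^b(\A))$, which itself follows from the exact triangle $A\to 0\to A[1]\to A[1]$. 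This identity, iterated, gives $[(H^mX)[-m]]=(-1)^m[H^mX]$, and combining with the truncation triangles yields $[X]=\sum_n(-1)^n[H^nX]$ as required. Thus $\iota\comp\chi=\id$, completing the proof.
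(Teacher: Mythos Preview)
Your proposal is correct and follows exactly the same approach as the paper: both define the inverse map via $[X]\mapsto\sum_n(-1)^n[H^nX]$. The paper's proof is a two-line sketch that simply names this inverse and leaves all verifications to the reader, whereas you have supplied the standard details (well-definedness via the long exact cohomology sequence, and $\iota\comp\chi=\id$ via truncation triangles).
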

\begin{proof}
Each exact sequence $0\to X\to Y\to Z\to 0$ in $\A$ induces an exact
triangle $X\to Y\to Z\to X[1]$ in $\bfD^b(\A)$.  This gives a morphism
$K_0(\A)\to K_0(\bfD^b(\A))$.  The inverse map sends the class $[X]$
of a complex $X$ to $\sum_{n\in\bbZ}(-1)^n[H^nX]$.
\end{proof}

\subsection{Serre duality}

Let $k$ be a field and $\La$ a finite dimensional $k$-algebra. We
denote by $D=\Hom_k(-,k)$ the standard $k$-duality.

The \index{Nakayama functor} \emph{Nakayama functor}
$\nu=D\Hom_\La(-,\La)\colon\mod\La\to\mod\La$ identifies the category
of projective $\La$-modules with the category of injective
$\La$-modules. Note that
\begin{equation*}
D\Hom_\La(P,-)\cong\Hom_\La(-,\nu P)
\end{equation*}
for every finitely generated projective $\La$-module $P$, since both
functors are left exact and agree on $\La$. This isomorphism induces
for every bounded complex $X$ of finitely generated projective
$\La$-modules a sequence of natural isomorphisms
\begin{equation}\label{eq:nu}
\begin{split}
D\Hom_{\bfD^b(\mod\La)}(X,-)&\cong D\Hom_{\bfK^b(\mod\La)}(X,-)\\
&\cong\Hom_{\bfK^b(\mod\La)}(-,\nu X)\\
&\cong\Hom_{\bfD^b(\mod\La)}(-,\nu X),
\end{split}
\end{equation}
where the first and the last isomorphism follow from Lemma~\ref{le:der_proj}.

A Hom-finite $k$-linear triangulated category $\T$ is said to satisfy
\index{Serre duality} \emph{Serre duality} if there exists an
equivalence $\tau\colon \T \xto{\sim} \T$ of triangulated categories
with functorial $k$-linear isomorphisms
$$D\Hom_\T(X,Y)\xto{\sim} \Hom_\T(Y, \tau X)$$ for all $X,Y$ in
$\T$. The functor $\tau$ is called a \index{Serre functor} \emph{Serre
functor}.  Note that a Serre functor is $k$-linear and essentially
unique provided it exists; this follows from Yoneda's lemma.

\begin{prop}\label{pr:serre_alg}
Let $\La$ be a finite dimensional $k$-algebra. Then $\bfD^b(\mod\La)$
satisfies Serre duality if and only if $\La$ has finite global
dimension.
\end{prop}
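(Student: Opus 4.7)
The plan is to handle the two directions separately: sufficiency follows from the Nakayama functor via~\eqref{eq:nu}, while necessity is a cohomology computation for $\tau S$ that breaks when $\gldim\La=\infty$.

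Suppose first $\gldim\La=d<\infty$. Every finitely generated $\La$-module then admits a finite projective resolution, and dually a finite injective resolution, so Proposition~\ref{pr:der_equiv_proj} and its dual give equivalences $\bfK^b(\Proj\La)\xto{\sim}\bfD^b(\mod\La)\xto{\sim}\bfK^b(\Inj\La)$. The Nakayama functor $\nu=D\Hom_\La(-,\La)$ is an equivalence $\Proj\La\xto{\sim}\Inj\La$ that extends termwise to $\bfK^b(\Proj\La)\xto{\sim}\bfK^b(\Inj\La)$, hence defines an auto-equivalence of $\bfD^b(\mod\La)$; the isomorphisms~\eqref{eq:nu} then exhibit $\nu$ as a Serre functor.

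For the converse, suppose $\tau$ is a Serre functor on $\bfD^b(\mod\La)$ and let $S$ be any simple $\La$-module. Serre duality with $X=S$ and $Y=\La[-n]$ yields
\[
H^n(\tau S)=\Hom_{\bfD^b}(\La[-n],\tau S)=D\Ext_\La^{-n}(S,\La),
\]
which vanishes for $n>0$. Since $\tau S\in\bfD^b(\mod\La)$ has bounded cohomology, there exists $m\geq 0$ with $H^n(\tau S)=0$ for $n<-m$, so by standard truncation $\tau S$ is represented by a complex $Y^\bullet$ of modules concentrated in degrees $[-m,0]$. For any module $T$ with projective resolution $P^\bullet\to T$, Lemma~\ref{le:der_proj} gives $\Hom_{\bfD^b}(T,\tau S[k])=\Hom_{\bfK}(P^\bullet,Y^\bullet[k])$, and the degree-$0$ term of the Hom-complex is $\prod_i\Hom(P^i,Y^{i+k})$, which vanishes for $k<-m$ since the conditions $i\leq 0$ and $-m\leq i+k\leq 0$ are incompatible. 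Now assume for contradiction that $\gldim\La=\infty$; then some simple $S$ has $\pdim S=\infty$, so for each $n$ a d\'evissage on composition factors produces a simple $T_n$ with $\Ext^n(S,T_n)\neq 0$. Since $\La$ has only finitely many simples, one fixed simple $T$ satisfies $\Ext^{n_i}(S,T)\neq 0$ for a sequence $n_i\to\infty$. Serre duality then gives
\[
\Hom(T,\tau S[-n_i])\cong\Hom(T[n_i],\tau S)=D\Ext^{n_i}(S,T)\neq 0,
\]
but for $n_i>m$ this contradicts the vanishing just established, so $\gldim\La<\infty$.

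The main obstacle is the converse direction: converting the abstract hypothesis ``$\tau$ exists'' into concrete homological information. The key is that Serre duality simultaneously pins down the cohomology of $\tau S$ (forcing it to vanish in positive degrees) and, via $\tau S\in\bfD^b(\mod\La)$, yields a uniform lower bound on the cohomology range; this bounds from above the negative shifts $k$ for which $\Hom(T,\tau S[k])$ can be non-zero, and the unbounded sequence of non-vanishing Exts produced by $\pdim S=\infty$ eventually violates that bound through Serre duality.
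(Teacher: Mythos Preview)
Your proof is correct, and the overall logical skeleton matches the paper's: Serre duality converts the asymptotic vanishing $\Hom(T,\tau S[k])=0$ for $k\ll 0$ into $\Ext^{-k}(S,T)=0$ for $-k\gg 0$, and finitely many simples plus Lemma~\ref{le:gldim} then force finite global dimension.

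The difference is in how that asymptotic vanishing is obtained. The paper simply invokes Lemma~\ref{le:asymp}(1), which is proved by d\'evissage on complex length and applies to arbitrary $X,Y\in\bfD^b(\A)$. You instead give a direct argument tailored to the situation: you compute $H^n(\tau S)\cong D\Ext^{-n}_\La(S,\La)$ via Serre duality against $\La[-n]$, truncate $\tau S$ to a complex in degrees $[-m,0]$, and then read off the vanishing from the degree constraints on chain maps out of a projective resolution. This is more hands-on and avoids the inductive lemma, at the cost of a bit more bookkeeping; note that the computation $H^n(\tau S)=0$ for $n>0$ is not actually needed, since bounded cohomology on both sides already suffices for the argument, but it does no harm.
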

\begin{proof}
If the global dimension of $\La$ is finite, then every bounded complex
in $\mod\La$ is quasi-isomorphic to a bounded complex of finitely
generated projective $\La$-modules. Thus Serre duality for
$\bfD^b(\mod\La)$ follows from the isomorphism \eqref{eq:nu}.  The
converse follows immediately from Lemmas~\ref{le:asymp} and
\ref{le:gldim} below.
\end{proof}

\begin{lem}\label{le:asymp}
Let $\A$ be an abelian category and  $X,Y\in\bfD^b(\A)$. Then the
following holds:
\begin{enumerate}
\item $\Hom_{\bfD^b(\A)}(X,Y[n])=0$ for $n\ll 0$.
\item $\Hom_{\bfD^b(\A)}(X,Y[n])=0$ for $n\gg 0$, if $\A$ has finite
global dimension.
\end{enumerate}
\end{lem}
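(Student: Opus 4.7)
The plan is to prove both statements simultaneously by induction on $\ell(X) + \ell(Y)$, where as in the proof of Proposition~\ref{pr:der_equiv_proj} we set $\ell(Z) = \card\{i \in \bbZ \mid Z^i \neq 0\}$, reducing to the case where both complexes are concentrated in a single degree.

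For the base case, suppose $\ell(X) = \ell(Y) = 1$, so that $X \cong A[-p]$ and $Y \cong B[-q]$ for objects $A, B \in \A$ and integers $p, q \in \bbZ$. Then
\[
\Hom_{\bfD^b(\A)}(X, Y[n]) \cong \Hom_{\bfD^b(\A)}(A, B[n+p-q]) \cong \Ext_\A^{n+p-q}(A,B)
\]
by Proposition~\ref{pr:ext}. By convention $\Ext_\A^m = 0$ for $m < 0$, which establishes (1) for $n < q - p$. If $\gldim \A < \infty$, then $\Ext_\A^m = 0$ for $m > \gldim \A$, which establishes (2) for $n > q - p + \gldim \A$.

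For the inductive step, as noted in the proof of Proposition~\ref{pr:der_equiv_proj}, each bounded complex $X$ with $\ell(X) \geq 2$ fits into an exact triangle $X' \to X'' \to X \to X'[1]$ in $\bfD^b(\A)$ with $\ell(X') = 1$ and $\ell(X'') = \ell(X) - 1$. Applying the cohomological functor $\Hom_{\bfD^b(\A)}(-, Y[n])$ gives an exact sequence
\[
\Hom_{\bfD^b(\A)}(X'', Y[n]) \lto \Hom_{\bfD^b(\A)}(X, Y[n]) \lto \Hom_{\bfD^b(\A)}(X'[1], Y[n]).
\]
Both flanking terms vanish for $n \ll 0$ (respectively $n \gg 0$ when $\gldim \A < \infty$) by the induction hypothesis applied to the pairs $(X'', Y)$ and $(X', Y)$, since $\ell(X'') + \ell(Y)$ and $\ell(X') + \ell(Y)$ are both strictly smaller than $\ell(X) + \ell(Y)$. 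Hence the middle term vanishes in the same range. A symmetric dévissage on $Y$ (or simply applying the same argument after reducing $X$ to a single degree first, then proceeding by induction on $\ell(Y)$ via exact triangles and the long exact sequence of $\Hom_{\bfD^b(\A)}(X, -)$) completes the reduction.

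There is no real obstacle here; the only input beyond elementary derived-category formalism is Proposition~\ref{pr:ext} and the existence of the dévissage triangles, both already used in the text. The argument is independent of whether $\A$ has enough projectives or injectives.
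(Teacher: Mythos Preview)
Your proof is correct and follows essentially the same approach as the paper: induction on $\ell(X)+\ell(Y)$ with the base case handled by Proposition~\ref{pr:ext}. The paper's proof is in fact just a two-sentence sketch of precisely this argument, so your version simply fills in the inductive step explicitly via the d\'evissage triangles and the long exact Hom sequence.
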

\begin{proof}
Use induction on $\ell(X)+\ell(Y)$, where $\ell(Z)=\card\{n\in\bbZ\mid
Z^n\neq 0\}$ for any complex $Z$. The case $\ell(X)=1=\ell(Y)$ is
clear, since $\Hom_{\bfD^b(\A)}(A,B[n])\cong\Ext^n_\A(A,B)$ for all
objects $A,B$ in $\A$; see Proposition~\ref{pr:ext}.
\end{proof}

\begin{lem}\label{le:gldim}
Let $\La$ be a finite dimensional algebra and $S_1,\dots,S_r$ a set
of representatives of the isomorphism classes of simple
$\La$-modules. Then $\gldim\La\leq n$ if and only if
$\Ext^{n+1}_\La(S_i,S_j)=0$ for all $i,j$.
\end{lem}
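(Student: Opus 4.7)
The plan is to prove the non-trivial direction: assuming $\Ext^{n+1}_\La(S_i,S_j)=0$ for all simples, I want to conclude that $\Ext^{n+1}_\La(M,N)=0$ for all finitely generated $\La$-modules $M,N$. By Lemma~\ref{lem:hereditary}, this is equivalent to $\gldim\La\le n$. The converse (if $\gldim\La\le n$ then $\Ext^{n+1}_\La(S_i,S_j)=0$) is immediate from the definition, so I would dispose of it in one line.

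The key structural fact I would invoke is that a finite dimensional algebra $\La$ is right artinian, so $\mod\La$ is a length category and every finitely generated $\La$-module admits a composition series with simple factors among $S_1,\dots,S_r$. This opens the door to a double induction on composition length using the long exact sequence for $\Ext$.

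First I would fix a simple $S_i$ and prove, by induction on $\ell(N)$, that $\Ext^{n+1}_\La(S_i,N)=0$ for every $N\in\mod\La$. The base case $\ell(N)=1$ is exactly the hypothesis. For the inductive step, I would pick a short exact sequence $0\to N'\to N\to N''\to 0$ with $\ell(N'),\ell(N'')<\ell(N)$ and read off from the long exact sequence
\[
\Ext^{n+1}_\La(S_i,N')\lto\Ext^{n+1}_\La(S_i,N)\lto\Ext^{n+1}_\La(S_i,N'')
\]
that the middle term vanishes once the outer ones do by induction. Next, with $N$ fixed, I would run an analogous induction on $\ell(M)$ to conclude $\Ext^{n+1}_\La(M,N)=0$, the base case $\ell(M)=1$ being covered by the previous step. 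Since both $M$ and $N$ range over all of $\mod\La$, this gives $\Ext^{n+1}_\La=0$, hence $\gldim\La\le n$ via Lemma~\ref{lem:hereditary}.

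There is no real obstacle; the only thing to be slightly careful about is that the inductions go through for \emph{all} modules simultaneously (so that the outer terms in each long exact sequence are themselves covered by the inductive hypothesis), which is handled by fixing one argument at a time. The finite dimensionality of $\La$ is used only to guarantee that every finitely generated module has finite length, so that induction on composition length is available.
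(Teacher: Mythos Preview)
Your proposal is correct and follows essentially the same approach as the paper: reduce to simples via filtrations and the long exact sequence for $\Ext$. The paper's one-line hint uses a filtration with semisimple subquotients (the radical series) rather than a composition series, but this is a cosmetic difference---your double induction on composition length is a perfectly good, and slightly more detailed, implementation of the same idea.
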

\begin{proof}
Use that each $\La$-module $M$ has a finite filtration $0=M_0\subseteq
M_1\subseteq\dots\subseteq M_p=M$ such that $M_i/M_{i-1}$ is
semisimple for all $i$.
\end{proof}

\begin{lem}\label{le:tilt_gldim}
Let $\A$ be an Ext-finite $k$-linear abelian category and suppose
there exists a tilting object $T$. If $\A$ has finite global
dimension, then $\End_\A(T)$ has finite global dimension.
\end{lem}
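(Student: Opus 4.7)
The plan is to combine the derived equivalence of Theorem~\ref{th:tilt} with the asymptotic vanishing of Lemma~\ref{le:asymp} and the reduction to simples of Lemma~\ref{le:gldim}. Set $\La=\End_\A(T)$. First I would invoke Theorem~\ref{th:tilt} to obtain an equivalence of triangulated categories
\[
F=-\otimes^\bfL_\La T\colon\bfD^b(\mod\La)\lto[\sim]\bfD^b(\A),
\]
so that for any $\La$-modules $M,N$ and any $n\in\bbZ$ there is a natural isomorphism
\[
\Hom_{\bfD^b(\mod\La)}(M,N[n])\cong\Hom_{\bfD^b(\A)}(FM,FN[n]).
\]
Combining this with Proposition~\ref{pr:ext} yields $\Ext_\La^n(M,N)\cong\Hom_{\bfD^b(\A)}(FM,FN[n])$.

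Next I would use Lemma~\ref{le:gldim}: since $\La$ is a finite dimensional $k$-algebra (as $\End_\A(T)$ is Ext-finite, hence finite dimensional), there are only finitely many isomorphism classes of simple $\La$-modules $S_1,\dots,S_r$, and $\gldim\La\le n$ is equivalent to $\Ext_\La^{n+1}(S_i,S_j)=0$ for all $i,j$. Each $S_i$ is a single module, hence a bounded complex in $\bfD^b(\mod\La)$, so each $FS_i$ is an object of $\bfD^b(\A)$.

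Now I apply Lemma~\ref{le:asymp}(2): since $\A$ has finite global dimension, for any fixed pair $(i,j)$ there exists an integer $N_{ij}$ such that $\Hom_{\bfD^b(\A)}(FS_i,FS_j[n])=0$ for all $n\ge N_{ij}$. Setting $N=\max_{i,j}N_{ij}$, which is finite because there are only finitely many pairs, we obtain $\Ext_\La^n(S_i,S_j)=0$ for all $i,j$ and all $n\ge N$. Lemma~\ref{le:gldim} then gives $\gldim\La\le N-1<\infty$.

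There is no real obstacle; the proof is a straightforward transport-of-structure argument once one observes that the key finiteness input is the finiteness of the set of simples of $\La$, which supplies the uniform bound. The only small point to keep in mind is that Lemma~\ref{le:asymp}(2) yields vanishing for each pair separately, so finiteness of the simples is essential to pass from a pairwise bound to a global one.
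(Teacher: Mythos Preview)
Your proof is correct and follows essentially the same approach as the paper: invoke the derived equivalence from Theorem~\ref{th:tilt}, use Lemma~\ref{le:asymp}(2) to get vanishing of $\Ext_\La^n(S_i,S_j)$ for $n\gg 0$, and conclude via Lemma~\ref{le:gldim}. Your write-up is in fact a bit more careful, making explicit that $\La$ is finite dimensional and that the finiteness of the set of simples is what yields a uniform bound.
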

\begin{proof}
Let $\La=\End_\La(T)$. The functor $\RHom_\A(T,-)$ provides an
equivalence $\bfD^b(\A)\xto{\sim}\bfD^b(\mod\La)$ of triangulated
categories by Theorem~\ref{th:tilt}.  Thus we have $\Ext_\La^n(S,T)=0$ for
$n\gg 0$ and each pair $S,T$ of simple $\La$-modules by
Lemma~\ref{le:asymp}. It follows from Lemma~\ref{le:gldim} that the
global dimension of $\La$ is finite.
\end{proof}

\begin{prop}\label{pr:serre}
Let $\A$ be a $k$-linear abelian category that is Ext-finite and admits
a tilting object. Then the following are equivalent:
\begin{enumerate}
\item The category $\A$ is hereditary and has  no non-zero projective
object.
\item The category $\A$ satisfies Serre duality.
\end{enumerate}
\end{prop}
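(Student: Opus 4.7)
The direction (2) $\Rightarrow$ (1) is essentially already at hand: Proposition~\ref{pr:Serreduality}(1) gives that a Hom-finite category with a Serre functor is hereditary, and Proposition~\ref{pr:Serreduality}(2) gives the absence of non-zero projective objects. So the real content is (1) $\Rightarrow$ (2), which I now sketch.

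Assume $\A$ is hereditary with no non-zero projective object, and let $T$ be a tilting object with $\La=\End_\A(T)$. Theorem~\ref{th:tilt} gives a derived equivalence $\bfD^b(\A)\simeq\bfD^b(\mod\La)$, and Lemma~\ref{le:tilt_gldim} ensures that $\La$ has finite global dimension. Proposition~\ref{pr:serre_alg} then provides a Serre functor $\tau^\bfD$ on $\bfD^b(\mod\La)$, which transports across the equivalence to a Serre functor $\tau^\bfD$ on $\bfD^b(\A)$. The remaining task is to produce an abelian Serre functor $\tau\colon\A\to\A$ out of $\tau^\bfD$.

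The key observation is that since $\A$ is hereditary, Corollary~\ref{co:der_hereditary} allows us to decompose every object of $\bfD^b(\A)$ as a finite direct sum of shifts of objects of $\A$. For $A\in\A$ write $\tau^\bfD A=\bigoplus_j Y_j[-j]$ with $Y_j\in\A$. Applying Serre duality to test objects $B[m]$ with $B\in\A$ yields
\[
\Hom_{\bfD^b(\A)}(B[m],\tau^\bfD A)\;\cong\;D\Ext^m_\A(A,B),
\]
and the right-hand side vanishes for $m\notin\{0,1\}$ by heredity. Computing the left-hand side via the decomposition reduces to $\Hom_\A(B,Y_{-m})\oplus\Ext^1_\A(B,Y_{-m-1})$, so these vanishing conditions force $Y_j=0$ for $j\notin\{-1,0\}$ and show that $Y_0$ is injective in $\A$. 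To kill $Y_0$ I would run the dual argument with $\tau^{\bfD,-1}$ in place of $\tau^\bfD$ and test objects $A$ on the left: this produces constraints of the shape ``$Z_j=0$ for $j\notin\{0,1\}$ and $Z_0$ is projective'', at which point the hypothesis that $\A$ has no non-zero projective object kills $Z_0$. Translating this back via $\tau^\bfD=(\tau^{\bfD,-1})^{-1}$ gives $Y_0=0$, so $\tau^\bfD A=Y_{-1}[1]$ lies in $\A[1]$.

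Thus $\tau^\bfD$ carries $\A$ into $\A[1]$, and setting $\tau A:=(\tau^\bfD A)[-1]$ defines an autoequivalence $\tau$ of $\A$. For any $A,B\in\A$, the chain
\[
D\Ext^1_\A(A,B)\;=\;D\Hom_{\bfD^b(\A)}(A,B[1])\;\cong\;\Hom_{\bfD^b(\A)}(B[1],\tau^\bfD A)\;=\;\Hom_{\bfD^b(\A)}(B,\tau A)\;=\;\Hom_\A(B,\tau A)
\]
shows that $\tau$ is a Serre functor on $\A$, proving (2). The main obstacle is the single-degree concentration of $\tau^\bfD A$ in the decomposition step: heredity alone gives concentration in degrees $\{-1,0\}$, and it is precisely the ``no non-zero projective object'' hypothesis (applied through the dual calculation for $\tau^{\bfD,-1}$) that eliminates the spurious degree-zero summand.
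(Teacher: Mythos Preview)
Your proof is correct and follows the same overall strategy as the paper: transport the Serre functor from $\bfD^b(\mod\La)$ to $\bfD^b(\A)$ via the tilting equivalence, then verify that the derived Serre functor carries $\A$ into $\A[1]$ so that $\tau:=\tau^\bfD[-1]$ restricts to an abelian Serre functor.

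The only difference lies in how the degree restriction is established. The paper reduces to an indecomposable $A$, so that $\nu A[-1]\cong\bar A[d]$ is concentrated in a single degree; then non-projectivity of $A$ forces $d\in\{0,1\}$, and $\Ext^2_\A(A,-)=0$ eliminates $d=1$. Your route instead analyzes the inverse $\tau^{\bfD,-1}$: the same style of test-object computation shows $\tau^{\bfD,-1}C=\bigoplus_j Z_j[-j]$ has $Z_j=0$ for $j\notin\{0,1\}$ and $Z_0$ projective, whence $Z_0=0$ by hypothesis; since $\tau^{\bfD,-1}$ commutes with the shift, this gives $\tau^{\bfD,-1}(\A[n])\subseteq\A[n-1]$ for all $n$, and applying the quasi-inverse $\tau^\bfD$ yields $\tau^\bfD(\A)\subseteq\A[1]$. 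Your argument avoids the reduction to indecomposables at the cost of passing through the inverse functor; the paper's version is slightly more direct. Note that your first computation (showing $Y_0$ is injective) is not actually needed once you have the $\tau^{\bfD,-1}$ argument, and you might streamline by dropping it.
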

\begin{proof}
(1) $\Rightarrow$ (2): Let $T$ be the tilting object and
$\La=\End_\A(T)$. Then $\La$ has finite global dimension by
Lemma~\ref{le:tilt_gldim}, and therefore $\bfD^b(\mod\La)$ has Serre
duality by Proposition~\ref{pr:serre_alg}.  There is an equivalence
$\bfD^b(\A)\xto{\sim}\bfD^b(\mod\La)$ by Theorem~\ref{th:tilt}, and
this yields a Serre functor $\nu\colon\bfD^b(\A)\to\bfD^b(\A)$.

Now let $A,B$ be objects in $\A$ and view them as complexes
concentrated in degree zero.  Then
$$D\Ext^1_\A(A,B)\cong\Hom_{\bfD^b(\A)}(B,\nu A[-1]),$$ and it remains
to show that $H^i(\nu A[-1])$ for all $i\neq 0$. Any complex $X$ in
$\A$ is quasi-isomorphic to $\coprod_{i\in\bbZ} (H^iX)[-i]$ since $\A$
is hereditary; see Lemma~\ref{le:der_hereditary}. Assume that $A$ is
indecomposable. Then $\nu A[-1]\cong\bar A[d]$ for some $d\in\bbZ$ and
some object $\bar A$ in $\A$. We claim that $d=0$. First observe that
$\Hom_{\bfD^b(\A)}(B,\bar A[d])\neq 0$ for some object $B$, since $A$
is non-projective. Thus $d=0$ or $d=1$. The case $d=1$ is impossible
since $\Ext^2_\A(A,-)=0$. Thus $\nu A[-1]$ is concentrated in degree
zero.

(2) $\Rightarrow$ (1): See Proposition~\ref{pr:Serreduality}.
\end{proof}

\subsection{The Euler form}

Let $k$ be a field and $\A$ a $k$-linear abelian category. Suppose
that $\A$ is Ext-finite and of finite global dimension. The
\index{Euler form} \emph{Euler form} associated to $\A$ is by
definition the bilinear form $K_0(\A)\times K_0(\A)\to\bbZ$ with
$$\langle [A],[B]\rangle=\sum_{n\geq 0}(-1)^n\dim_k\Ext_\A^n(A,B).$$

Suppose that $K_0(\A)$ is a free abelian group of finite rank and fix
a basis $b_1,\dots,b_r$. The \index{Euler form!discriminant of}
\emph{discriminant} of the Euler form is then by definition the
determinant of the matrix $(\langle b_i,b_j\rangle)_{i,j}$ and we
denote it by $\disc\langle-,-\rangle$.  Note that this value does not
depend on the choice of the basis since the matrix is defined over
$\bbZ$.

\begin{lem}\label{le:euler_fin}
Let $\La$ be a finite dimensional $k$-algebra of finite global
dimension. Then the Euler form associated to $\mod\La$ is non-degenerate.
\end{lem}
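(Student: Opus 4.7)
The plan is to compute the Gram matrix of the Euler form in the basis of simple modules and show it has nonzero determinant. Since $\La$ is finite-dimensional, $\mod\La$ is a length category, so by Lemma~\ref{le:gr_gr1} the classes $[S_1],\dots,[S_r]$ of the simple $\La$-modules form a $\bbZ$-basis of $K_0(\mod\La)$. First I would check that the Euler form is well-defined: finite global dimension makes the alternating sum finite, the $\Ext$-groups are $k$-finite-dimensional because $\La$ is, and the long exact $\Ext$-sequence attached to a short exact sequence shows that the form descends to $K_0$.

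The key move is to pair simples against projectives rather than against each other. Let $P_i$ be the projective cover of $S_i$ and set $d_i=\dim_k\End_\La(S_i)$. Since $P_i$ is projective, only the degree-zero term of the alternating sum contributes, so $\langle[P_i],[S_j]\rangle=\dim_k\Hom_\La(P_i,S_j)$. Writing $P_i=e_i\La$ for a primitive idempotent $e_i$, this morphism space is $S_je_i$, which vanishes for $i\ne j$ and equals $\End_\La(S_i)$ for $i=j$. Thus the pairing between projectives and simples is the diagonal matrix $D=\operatorname{diag}(d_1,\dots,d_r)$.

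Now I would introduce the Cartan matrix $C=(c_{ij})$ whose entries $c_{ij}=[P_i:S_j]$ are the composition multiplicities (well-defined by Jordan--H\"older), so that $[P_i]=\sum_j c_{ij}[S_j]$ in $K_0(\mod\La)$. Letting $E=(\langle[S_i],[S_j]\rangle)$ be the matrix of the Euler form in the basis of simples, bilinearity gives $CE=D$. Taking determinants over $\bbQ$ yields $\det(C)\cdot\det(E)=d_1\cdots d_r\ne 0$, so $\det(E)\ne 0$; this is exactly non-degeneracy of the Euler form on the free abelian group $K_0(\mod\La)$.

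No genuine obstacle is expected: the only subtle point is the bookkeeping that passes from the diagonal pairing $\langle[P_i],[S_j]\rangle=\delta_{ij}d_i$ (which uses projectivity of $P_i$) to the factorization $CE=D$, and then the purely formal determinant argument. Finite global dimension is used only to guarantee that the Euler form exists; it does not enter the computation of $\langle[P_i],[S_j]\rangle$ itself.
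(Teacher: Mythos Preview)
Your argument is correct and rests on the same key computation as the paper's: the pairing $\langle[P_i],[S_j]\rangle=\delta_{ij}\dim_k\End_\La(S_i)$, which holds because $P_i$ is projective and $\Hom_\La(P_i,S_j)\cong S_je_i$. The packaging differs slightly. The paper takes the $[P_i]$ themselves as a basis of $K_0(\mod\La)$ (this is where finite global dimension enters for the paper, via finite projective resolutions of the simples) and then reads off non-degeneracy directly: given $x=\sum_i\a_i[P_i]\neq 0$, pick $j$ with $\a_j\neq 0$ and note $\langle x,[S_j]\rangle=\a_j\dim_k\Hom_\La(P_j,S_j)\neq 0$. You instead keep the simples as the basis and route the same diagonal pairing through the Cartan matrix identity $CE=D$, deducing $\det E\neq 0$. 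Your version has the pleasant side effect of recovering $\det C\neq 0$ (i.e.\ that the projectives also form a basis) from the argument rather than assuming it; the paper's version is a line shorter. Either way, the substance is the same.
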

\begin{proof}
Set $\A=\mod\La$. Let $S_1,\dots, S_r$ be representatives of
the isomorphism classes of simple $\La$-modules and choose a
projective cover $P_i\to S_i$ for each $i$. Then $[P_1],\dots,[P_r]$
form a basis of $K_0(\A)$, since each $S_i$ has a finite projective
resolution. Let $x=\sum_i\a_i[P_i]$ be a non-zero element of $K_0(\A)$
and pick an index $j$ such that $\a_{j}\neq 0$. Then $\langle
x,[S_{j}]\rangle=\a_{j}\dim_k\Hom_\La(P_{j},S_{j})\neq 0$.  Thus
$\langle -,-\rangle$ is non-degenerate.
\end{proof}

Let $\T$ be a $k$-linear triangulated category. Suppose that $\T$ is
Hom-finite and that $\Hom_\T(X,Y[n])=0$ for each pair of objects $X,Y$
and $|n|\gg 0$. The \index{Euler form} \emph{Euler form} associated to
$\T$ is by definition the bilinear form $K_0(\T)\times K_0(\T)\to\bbZ$
with $$\langle
[X],[Y]\rangle=\sum_{n\in\bbZ}(-1)^n\dim_k\Hom_\T(X,Y[n]).$$

It is clear from these definitions that the isomorphism $\p\colon
K_0(\A)\to K_0(\bfD^b(\A))$ from Lemma~\ref{le:gr_gr2} is an
\index{isometry}\emph{isometry}, that is, $$\langle
\p(x),\p(y)\rangle=\langle x,y\rangle\quad\text{for all}\quad x,y\in K_0(\A).$$

Now let $\A$ and $\B$ be $k$-linear abelian categories that are
Ext-finite and of finite global dimension. A $k$-linear
equivalence $F\colon\bfD^b(\A)\xto{\sim}\bfD^b(\B)$ of triangulated
categories induces an isometry $K_0(\A)\xto{\sim}K_0(\B)$ which is
defined by the commutativity of the following diagram.
$$\xymatrix{ K_0(\A)\ar[rr]^\sim\ar[d]_{\p_\A}&&K_0(\B)\ar[d]^{\p_\B}\\
K_0(\bfD^b(\A))\ar[rr]^{K_0(F)}&&K_0(\bfD^b(\B))}$$
This has the following consequence.

\begin{prop}\label{pr:euler}
Let $\A$ be a $k$-linear abelian category that is Ext-finite and of
finite global dimension.  Suppose that $\A$ has a tilting
object. Then the Grothendieck group $K_0(\A)$ is free of finite rank
and the Euler form associated to $\A$ is non-degenerate.
\end{prop}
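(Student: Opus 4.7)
The plan is to transport the result for finite dimensional algebras (Lemma~\ref{le:euler_fin}) across the derived equivalence provided by a tilting object. Concretely, let $T$ be a tilting object of $\A$ and set $\La=\End_\A(T)$. Since $\A$ has finite global dimension, Lemma~\ref{le:tilt_gldim} yields that $\La$ has finite global dimension as well, so $\mod\La$ satisfies the hypotheses of Lemma~\ref{le:euler_fin}; in particular $K_0(\mod\La)$ is free of finite rank (with basis given by the classes of the indecomposable projectives) and the associated Euler form is non-degenerate.

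Next I would invoke Theorem~\ref{th:tilt} to obtain a $k$-linear equivalence of triangulated categories
\[
-\otimes^\bfL_\La T\colon\bfD^b(\mod\La)\lto[\sim]\bfD^b(\A).
\]
By the discussion immediately preceding the proposition, such an equivalence induces an isomorphism $K_0(\mod\La)\xto{\sim}K_0(\A)$ fitting into the commutative square with the canonical maps $\p_{\mod\La}$ and $\p_\A$ of Lemma~\ref{le:gr_gr2}, each of which is an isometry for the Euler forms on $K_0$ of the abelian category and of its bounded derived category. Hence the induced map $K_0(\mod\La)\xto{\sim}K_0(\A)$ is itself an isometry.

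Combining these two steps, $K_0(\A)$ is isomorphic as a bilinear lattice to $K_0(\mod\La)$, so it is free of finite rank and its Euler form is non-degenerate. The only point that requires a little care is checking that the Euler form is well defined on $K_0(\A)$, i.e.\ that $\sum_n(-1)^n\dim_k\Ext^n_\A(A,B)$ is a finite sum; this follows from finiteness of global dimension together with Ext-finiteness, and likewise ensures the Euler form on $\bfD^b(\A)$ is defined (using Lemma~\ref{le:asymp}), which is what makes $\p_\A$ an isometry in the first place. I do not expect any genuine obstacle: the argument is a routine transfer of structure along the derived equivalence, once one has recorded that all the ingredients (finite global dimension of $\La$, the isometry property of $\p_\A$ and $\p_{\mod\La}$, and non-degeneracy over $\La$) are available from earlier results.
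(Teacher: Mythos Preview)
Your proposal is correct and follows essentially the same route as the paper: identify $K_0(\A)$ isometrically with $K_0(\mod\La)$ via the derived equivalence of Theorem~\ref{th:tilt} and the discussion preceding the proposition, then invoke Lemma~\ref{le:euler_fin} (together with Lemma~\ref{le:gr_gr1}) on the module side. The paper's version is simply terser, absorbing your explicit verification of the isometry into the phrase ``we identify $K_0(\A)$ with $K_0(\mod\La)$'', since that identification was set up just before the statement.
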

\begin{proof}
We identify $K_0(\A)$ with $K_0(\mod \La)$, where $\La=\End_\A(T)$
for a tilting object $T$ in $\A$. Then the Grothendieck group
$K_0(\A)$ is free of finite rank by Lemma~\ref{le:gr_gr1}, and the
Euler form is non-degenerate by Lemma~\ref{le:euler_fin}.
\end{proof}

The following examples show that the existence of a tilting object is
an essential assumption for the Euler form to be non-degenerate.

\begin{exm}\label{ex:degen}
Let $\A$ be a $k$-linear length category that is Hom-finite and
satisfies Serre duality. Suppose that the Grothendieck group has
finite rank, and let $S_1,\dots,S_n$ be a representative set of the
simple objects. Then $\langle x,[S_i]\rangle=0$ for $x=\sum_j[S_j]$
and all $i$. Thus the Euler form is degenerate.
\end{exm}

\begin{exm}
Let $E$ be a smooth elliptic curve over some algebraically closed
field. Then the category of coherent sheaves on $E$ is hereditary and
satisfies Serre duality, but the corresponding Euler form is
degenerate. In fact, any two simple sheaves $S,T$ satisfy
$\langle-,[S]\rangle=\langle-,[T]\rangle$, but $[S]\neq[T]$ if $S$ and
$T$ are concentrated in different points of $E$.  Indeed, $[S]\neq
[T]$ follows from \cite[Chap.~II, Exercise~6.11]{H} and the fact that the
set of closed points is naturally identified with a subset of the
Picard group \cite[Chap.~IV, Example~1.3.7]{H}.
\end{exm}

Next we collect some further properties of the Grothendieck group and
its Euler form.

\begin{lem}\label{le:euler_tilt}
Let $\A$ be a $k$-linear abelian category that is hereditary,
Ext-finite, and has a non-degenerate Euler form. Suppose also
that $[A]\neq 0$  for each non-zero object $A$ in $\A$. Then an object
$T$ is a tilting object if and only if $\Ext^1_\A(T,T)=0$ and the
classes of the indecomposable direct summands of $T$ generate
$K_0(\A)$.
\end{lem}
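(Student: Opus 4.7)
The plan is to prove both implications using the derived equivalence from Theorem~\ref{th:tilt} together with the Euler form machinery. Since $\A$ is hereditary, the condition $\pdim T\le 1$ in the definition of a tilting object is automatic, so the real content of `$T$ is tilting' reduces to (a)~$\Ext^1_\A(T,T)=0$ and (b)~$\Hom_\A(T,A)=0=\Ext^1_\A(T,A)$ implies $A=0$. Condition (a) is a hypothesis on both sides, so the task is to match (b) with the generation condition on $K_0(\A)$.

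For the forward direction, assume $T$ is tilting and set $\La=\End_\A(T)$. By Theorem~\ref{th:tilt} there is a derived equivalence $\bfD^b(\A)\xto{\sim}\bfD^b(\mod\La)$ sending $T$ to $\La$, so the indecomposable summands $T_i$ of $T$ correspond to the indecomposable projectives $e_i\La$. Combining this equivalence with the isomorphism $K_0(\A)\xto{\sim} K_0(\bfD^b(\A))$ from Lemma~\ref{le:gr_gr2} (and its analogue for $\La$) produces an isomorphism $K_0(\A)\xto{\sim} K_0(\mod\La)$ identifying $[T_i]$ with $[e_i\La]$. By Lemma~\ref{le:tilt_gldim}, $\A$ hereditary forces $\La$ to have finite global dimension, so every $\La$-module admits a finite projective resolution; hence the classes $[e_i\La]$ generate $K_0(\mod\La)$. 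Pulling back, the $[T_i]$ generate $K_0(\A)$.

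For the converse, assume $\Ext^1_\A(T,T)=0$ and that the $[T_i]$ generate $K_0(\A)$. To verify condition (b), suppose $A\in\A$ satisfies $\Hom_\A(T,A)=0=\Ext^1_\A(T,A)$. Passing to each indecomposable summand gives $\Hom_\A(T_i,A)=0=\Ext^1_\A(T_i,A)$, and since $\A$ is hereditary the Euler form simplifies to
\[
\langle [T_i],[A]\rangle = \dim_k\Hom_\A(T_i,A)-\dim_k\Ext^1_\A(T_i,A)=0.
\]
Because the $[T_i]$ generate $K_0(\A)$, this yields $\langle x,[A]\rangle=0$ for every $x\in K_0(\A)$. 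Non-degeneracy of the Euler form then forces $[A]=0$, and the standing hypothesis that nonzero objects have nonzero class in $K_0(\A)$ gives $A=0$.

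No step presents a serious obstacle. The only point requiring care is justifying that the indecomposable projectives span $K_0(\mod\La)$, which is exactly where the finite global dimension of $\La$ enters via Lemma~\ref{le:tilt_gldim} and ultimately relies on $\A$ being hereditary.
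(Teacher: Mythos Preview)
Your proof is correct and follows essentially the same approach as the paper's: the forward direction uses the derived equivalence to identify the summands of $T$ with the indecomposable projectives over $\La$ (which generate $K_0(\mod\La)$ by finite global dimension), and the converse uses non-degeneracy of the Euler form together with the hypothesis $[A]\neq 0$ for $A\neq 0$. Your version is slightly more explicit in invoking Lemma~\ref{le:tilt_gldim} to secure finite global dimension of $\La$, whereas the paper points to the proof of Lemma~\ref{le:euler_fin}; and your converse is phrased directly rather than contrapositively, but the content is the same.
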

\begin{proof}
Suppose first that $T$ is a tilting object with $\La=\End_\A(T)$.  The
isomorphism $K_0(\A)\xto{\sim} K_0(\mod\La)$ identifies (the classes
of) the indecomposable direct summands of $T$ with the indecomposable
projective $\La$-modules. Now one uses that the indecomposable
projective $\La$-modules generate $K_0(\mod\La)$; see the proof of
Lemma~\ref{le:euler_fin}

Conversely, suppose that $\Ext^1_\A(T,T)=0$ and that the
indecomposable direct summands of $T$ generate $K_0(\A)$.  Then there
exists for any non-zero object $A$ in $\A$ some indecomposable direct
summand $T'$ of $T$ such that $\langle[T'],[A]\rangle\neq 0$. Thus
$\Ext_\A^*(T,A)\neq 0$, and it follows that $T$ is a tilting object.
\end{proof}

A full subcategory $\B$ of an abelian category $\A$ is called
\index{subcategory!exact abelian} \emph{exact abelian} if $\B$ is an
abelian category and the inclusion functor is exact.

\begin{lem}\label{le:groth_onepoint}
Let $\A$ be an abelian category and $\B$ an exact abelian subcategory
such that the inclusion admits an exact left adjoint. Let
$\C={^\perp\B}$.  Then $K_0(\A)= K_0(\B)\oplus K'_0(\C)$, where
$K'_0(\C)$ denotes the image of the canonical map $K_0(\C)\to K_0(\A)$.
\end{lem}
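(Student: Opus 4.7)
The plan is to leverage the exact left adjoint $L\colon\A\to\B$ of the inclusion $i\colon\B\to\A$. Since $\B$ is a full subcategory, $i$ is fully faithful; applying Proposition~\ref{pr:perp} to the exact functor $L$ with right adjoint $i$ identifies $\Ker L$ with ${^\perp}(\Im i)={^\perp}\B=\C$. Moreover, the fully-faithfulness of $i$ makes the counit $Li\to\id_\B$ invertible, so by the triangle identity the unit $\eta_A\colon A\to iLA$ is sent by $L$ to an isomorphism for every $A\in\A$.

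Next I would pass to Grothendieck groups. The exact functors $i$ and $L$ induce homomorphisms $i_*\colon K_0(\B)\to K_0(\A)$ and $L_*\colon K_0(\A)\to K_0(\B)$, and the isomorphism $Li\cong\id_\B$ gives $L_*\circ i_*=\id_{K_0(\B)}$. Hence $i_*$ is a split injection and
\[
K_0(\A)=i_*K_0(\B)\oplus\Ker L_*,
\]
with $i_*$ identifying the first summand with $K_0(\B)$.

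It remains to show $\Ker L_*=K'_0(\C)$. The inclusion $K'_0(\C)\subseteq\Ker L_*$ is immediate, since $L$ annihilates $\C=\Ker L$. For the reverse, fix $A\in\A$ and form the unit $\eta_A$. Put $K=\Ker\eta_A$ and $Q=\Coker\eta_A$, and factor $\eta_A$ as $A\twoheadrightarrow\Im\eta_A\hookrightarrow iLA$. Applying the exact functor $L$ to the two short exact sequences $0\to K\to A\to\Im\eta_A\to 0$ and $0\to\Im\eta_A\to iLA\to Q\to 0$, and using that $L\eta_A$ is an isomorphism, one reads off $LK=0=LQ$. Therefore $K,Q\in\Ker L=\C$. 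The resulting four-term exact sequence $0\to K\to A\to iLA\to Q\to 0$ in $\A$ yields
\[
[A]-i_*[LA]=[K]-[Q]\in K'_0(\C),
\]
so every element of $\Ker L_*$ lies in $K'_0(\C)$.

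The main obstacle is the third step: one must know that the kernel and cokernel of the unit land in $\C$, not merely in $\Ker L$. This is precisely where Proposition~\ref{pr:perp} is used to identify $\Ker L={^\perp}\B=\C$; once that identification is in place, the remainder is a short diagram chase and a splitting argument on Grothendieck groups.
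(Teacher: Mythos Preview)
Your proof is correct and follows essentially the same approach as the paper: split off $K_0(\B)$ via $L_*\circ i_*=\id$, then identify $\Ker L_*$ with $K'_0(\C)$ using the four-term exact sequence $0\to K\to A\to iLA\to Q\to 0$ with $K,Q\in\C$. The only difference is cosmetic: the paper cites Proposition~\ref{pr:section} for this sequence, whereas you rederive it directly from the triangle identity and Proposition~\ref{pr:perp}.
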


Let $i\colon\B\to\A$ be the inclusion and $i_\la$ its left
adjoint. Observe that $\C=\Ker i_\la$ is a Serre subcategory of $\A$ by
Proposition~\ref{pr:perp}. Thus the inclusion $\C\to\A$ induces a
linear map $K_0(\C)\to K_0(\A)$.

\begin{proof}
We have $i_\la i\cong\Id_\B$ and therefore $K_0(i)$ identifies
$K_0(\B)$ with a direct summand of $K_0(\A)$.  The kernel of $K_0(i_\la)$ equals
$K'_0(\C)$, since there is an exact sequence $0\to A'\to A\to ii_\la
A\to A''\to 0$ for each object $A$ in $\A$ with $A',A''$ in $\C$; see
Proposition~\ref{pr:section}.
\end{proof}

The following lemma describes more specifically the term $ K'_0(\C)$
in the decomposition $K_0(\A)\cong K_0(\B)\oplus K'_0(\C)$.

\begin{lem}\label{le:euler_onepoint}
Let $\A$ be a $k$-linear abelian category that is Ext-finite and of
finite global dimension. Suppose that $K_0(\A)$ is free of
finite rank. Let $\B$ be an exact abelian subcategory such
that the inclusion admits an exact left adjoint and $^\perp\B$ is
equivalent to $\mod\Delta$ for some division ring $\Delta$.  Then
$K_0(\A)\cong K_0(\B)\oplus\bbZ$ and
$$\disc\langle-,-\rangle_\A=\dim_k\Delta\cdot\disc\langle-,-\rangle_\B.$$
\end{lem}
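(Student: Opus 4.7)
The plan is to apply Lemma~\ref{le:groth_onepoint} to decompose $K_0(\A)$, then compute the Euler form in a basis compatible with this decomposition, and read off both conclusions from the resulting block matrix. By Lemma~\ref{le:groth_onepoint}, $K_0(\A) = K_0(\B) \oplus K_0'(\C)$ where $\C = {}^\perp\B$ and $K_0'(\C)$ is the image of $K_0(\C)\to K_0(\A)$. Since $\C \cong \mod\Delta$ and $\Delta$ is a division ring, $\C$ is semisimple with a unique simple object $S$ (corresponding to $\Delta$), so $K_0(\C) = \bbZ\cdot[S]$. Fix a basis $b_1,\dots,b_r$ of $K_0(\B)$.

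I claim that the Euler form matrix of $\A$ in the basis $b_1,\dots,b_r,[S]$ of $K_0(\A)$ has the block upper triangular shape
\[
\begin{pmatrix} M_\B & * \\ 0 & \dim_k\Delta \end{pmatrix},
\]
where $M_\B$ is the Euler form matrix of $\B$ in the basis $b_1,\dots,b_r$. This yields both conclusions: the $(r{+}1,r{+}1)$-entry $\dim_k\Delta\geq 1$ forces $[S]\neq 0$ in $K_0(\A)$, hence $K_0'(\C)\cong\bbZ$ and $K_0(\A)\cong K_0(\B)\oplus\bbZ$; and taking determinants gives $\disc\langle-,-\rangle_\A = \dim_k\Delta\cdot\disc\langle-,-\rangle_\B$.

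The claim reduces to three Ext identities: (i) $\Ext^n_\A(B_1,B_2)=\Ext^n_\B(B_1,B_2)$ for $B_1,B_2\in\B$; (ii) $\Ext^n_\A(S,B)=0$ for all $B\in\B$ and $n\geq 0$; and (iii) $\Ext^n_\A(S,S)=\Delta$ for $n=0$ and vanishes for $n\geq 1$. Let $i_\la\colon\A\to\B$ denote the given exact left adjoint of the inclusion $i\colon\B\to\A$ (which is itself exact, since $\B$ is an exact abelian subcategory). Both functors extend to bounded derived categories, the adjunction $i_\la\dashv i$ lifts, and the isomorphism $i_\la i\cong\Id_\B$ implies that the induced functor $i\colon\bfD^b(\B)\to\bfD^b(\A)$ is fully faithful; this gives (i). Since $S\in\C=\Ker i_\la$, the derived adjunction yields $\Ext^n_\A(S,B)\cong\Hom_{\bfD^b(\B)}(i_\la S,B[n])=0$, proving (ii). For (iii), the $n=0$ case is full faithfulness of $\C\hookrightarrow\A$, and the $n=1$ case holds because $\C$ is a Serre subcategory of $\A$ by Proposition~\ref{pr:perp}, so any self-extension of $S$ in $\A$ lies in $\C\cong\mod\Delta$ and splits.

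The main obstacle is the higher vanishing $\Ext^n_\A(S,S)=0$ for $n\geq 2$ in (iii). The cleanest approach is to upgrade the above adjoint data to a fully faithful embedding $\bfD^b(\C)\hookrightarrow\bfD^b(\A)$, so that $\Ext^n_\A(S,S)=\Ext^n_\C(S,S)=0$ for $n\geq 1$ by semisimplicity of $\mod\Delta$; this follows from the semi-orthogonal-type structure on $\bfD^b(\A)$ determined by the triangle $X_\C\to X\to i\,i_\la X\to X_\C[1]$ arising from the adjunction. Alternatively, one may iterate Yoneda dimension-shifting, using the finite global dimension of $\A$ together with (ii) to force the alternating sum of higher $\dim_k\Ext^n_\A(S,S)$ to reduce to the $n=0$ term.
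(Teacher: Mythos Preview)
Your approach is the paper's: decompose $K_0(\A)$ via Lemma~\ref{le:groth_onepoint}, observe $\langle[S],[B]\rangle=0$ for $B\in\B$, and read off both conclusions from the resulting block-triangular Euler matrix. You are more explicit than the paper, which just asserts $\langle n[S],[S]\rangle\neq 0$ and that the discriminant formula ``follows since $\langle[S],[B]\rangle=0$''.

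Your handling of (iii) for $n\geq 2$ has a gap. The adjunction $\bfD^b(i_\la)\dashv\bfD^b(i)$ yields a semi-orthogonal decomposition whose left-orthogonal piece is $\Ker\bfD^b(i_\la)$, the full subcategory of complexes with cohomology in $\C$; this is already a full subcategory of $\bfD^b(\A)$ and tells you nothing new. What you actually need is full faithfulness of the natural functor $\bfD^b(\C)\to\bfD^b(\A)$, and in degree $n\geq 2$ that is exactly the vanishing $\Ext^n_\A(S,S)=0$ you are after, so the argument is circular. The paper's proof does not address this point either; in every application (Theorem~\ref{th:reduction}) the inclusion is in fact a left expansion, so by Lemma~\ref{le:localizable} one has $\Ext^2_\A(S,-)=0$ and hence $\Ext^n_\A(S,-)=0$ for all $n\geq 2$ by Lemma~\ref{lem:hereditary}, giving (iii) directly.
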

\begin{proof}
Let $i\colon\B\to\A$ be the inclusion and denote by $i_\la$ its left
adjoint. Then $^\perp\B=\Ker i_\la=\add S$ for some simple object $S$
with $\Delta\cong\End_\A(S)$; see Proposition~\ref{pr:perp}.
Lemma~\ref{le:groth_onepoint} implies that $K_0(\A)= K_0(\B)\oplus\bbZ
[S]$. Observe that $n[S]\neq 0$ for $n\neq 0$ in $\bbZ$, since
$\langle n[S],[S]\rangle\neq 0$.  Thus $K_0(\A)\cong
K_0(\B)\oplus\bbZ$. The formula for $\disc\langle-,-\rangle_\A$ follows
since $\langle[S],[B]\rangle=0$ for every object $B$ in $\B$.
\end{proof}

\section{Expansions of abelian categories}\label{se:extensions}

In this section we introduce the concept of expansion and contraction
for abelian categories.\footnote{The authors are indebted to Claus
Michael Ringel for suggesting the terms `expansion' and
`contraction'.}  Roughly speaking, an expansion is a fully faithful
and exact functor $\B\to\A$ between abelian categories that admits an
exact left adjoint and an exact right adjoint. In addition one
requires the existence of simple objects $S_\la$ and $S_\rho$ in $\A$
such that $S_\la^\perp=\B={^\perp S_\rho}$, where $\B$ is viewed as a
full subcategory of $\A$. In fact, these simple objects are related by
an exact sequence $0\to S_\rho\to S\to S_\la\to 0$ in $\A$ such that
$S$ is a simple object in $\B$. In terms of the Ext-quivers of $\A$
and $\B$, the expansion $\B\to\A$ turns the vertex $S$ into an arrow
$S_\la\to S_\rho$. On the other hand, $\B$ is a contraction of $\A$ in
the sense that the left adjoint of $\B\to\A$ identifies $S_\rho$ with
$S$, whereas the right adjoint identifies $S_\la$ with $S$.

In the following we use the term `expansion' but there are interesting
situations where `contraction' yields a more appropriate point of
view. So one should think of a process having two directions that are
opposite to each other.

Further material about expansions can be found in \cite{CK}.

\subsection{Left and right expansions}

Let $\A$ be an abelian category.  Recall that a full subcategory
$\B$ of $\A$ is called exact abelian if $\B$ is an abelian category
and the inclusion functor is exact.

Now let $i\colon \B\to\A$ be a fully faithful and exact functor
between abelian categories. It is convenient to identify $\B$ with the
essential image of $i$, which means that $\B$ is an exact abelian
subcategory of $\A$. We call the functor $i$ a \index{left expansion}
\emph{left expansion} if the following conditions are satisfied:
\begin{enumerate}
\item The functor $\B\to\A$ admits an exact left adjoint.
\item The category $^\perp\B$ is equivalent to $\mod\Delta$ for some division ring $\Delta$.
\item $\Ext^2_\A(A,B)=0$ for all $A,B\in{^\perp\B}$.
\end{enumerate}
The functor $\B\to\A$ is called \index{right expansion} \emph{right
expansion} if the dual conditions are satisfied.

\begin{lem}\label{le:leftmax}
Let $i\colon\B\to\A$ be a left expansion of abelian categories. Denote
by $i_\la$ its left adjoint and set $\C=\Ker i_\la$.
\begin{enumerate}
\item The category $\C$ is a Serre subcategory of $\A$ satisfying
$\C={^\perp\B}$ and $\C^\perp=\B$.
\item The composite $\B\xto{i}\A\xto{\can}\A/\C$ is
an equivalence and the left adjoint $i_\la$
induces a quasi-inverse $\A/\C\xto{\sim}\B$.
\item $\Ext^n_\B(i_\la A,B)\cong\Ext^n_\A(A,iB)$ for all $A\in\A$, $B\in\B$, and $n\geq 0$.
\end{enumerate}
\end{lem}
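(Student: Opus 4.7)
First, I would invoke Proposition~\ref{pr:perp} to handle parts (1) and (2) in one stroke, taking $F = i_\la$ (exact by definition of a left expansion) and its right adjoint $G = i$ (fully faithful by hypothesis). Part (1) drops out immediately: $\C = \Ker i_\la$ is a Serre subcategory as the kernel of an exact functor, and the proposition yields $\C = {^\perp}(\Im i) = {^\perp\B}$ and $\C^\perp = \Im i = \B$. For part (2), the same proposition provides the equivalence $\bar{\imath}_\la\colon \A/\C \xto{\sim} \B$ induced by $i_\la$; I would then verify that $\can \circ i\colon \B \to \A \to \A/\C$ is a quasi-inverse. In one direction $\bar{\imath}_\la \circ (\can \circ i) = i_\la i \cong \Id_\B$, while in the other $(\can \circ i) \circ \bar{\imath}_\la$ sends $A$ to $\can(ii_\la A)$, which is isomorphic to $\can A$ via the unit $\eta_A\colon A \to ii_\la A$ (a short triangle-identity argument, combined with $i_\la i \cong \Id_\B$, shows that $i_\la(\eta_A)$ is invertible, so $\Ker \eta_A$ and $\Coker \eta_A$ lie in $\C$).

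For part (3), the case $n = 0$ is simply the adjunction. For $n \geq 1$, my plan is to lift the adjunction $i_\la \dashv i$ to the bounded derived categories. Since both functors are exact, their termwise extensions preserve shifts, null-homotopies, and quasi-isomorphisms, so they descend to $\bfD^b(\A)$ and $\bfD^b(\B)$, and the degreewise adjunction passes through the localizations. Combining this with Proposition~\ref{pr:ext} gives
\[
\Ext^n_\A(A, iB) \cong \Hom_{\bfD^b(\A)}(A, iB[n]) \cong \Hom_{\bfD^b(\B)}(i_\la A, B[n]) \cong \Ext^n_\B(i_\la A, B).
\]

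The main technical obstacle is verifying that the adjunction survives the localization at quasi-isomorphisms. Should this route feel too indirect, I would fall back on an explicit construction at the level of Yoneda extensions: send an $n$-extension $0 \to iB \to E_{n-1} \to \cdots \to E_0 \to A \to 0$ in $\A$ to its termwise image under $i_\la$ (well defined by exactness), and in the reverse direction send an $n$-extension in $\B$ to its termwise image under $i$ pulled back along $\eta_A\colon A \to ii_\la A$ in the rightmost position. Mutual inversion then uses $i_\la i \cong \Id_\B$, the preservation of pullbacks by the exact functor $i_\la$, and the observation that $\eta\colon \Id \to ii_\la$ itself supplies a chain morphism of Yoneda extensions identifying a class with its double image while fixing both endpoints.
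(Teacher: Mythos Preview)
Your treatment of parts (1) and (2) via Proposition~\ref{pr:perp} is exactly what the paper does; the paper simply cites that proposition without spelling out the quasi-inverse verification, so your extra detail there is fine but not needed.

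For part (3), the paper takes precisely your fallback route: it defines the map $\Ext^n_\B(i_\la A,B)\to\Ext^n_\A(A,iB)$ by $[\xi]\mapsto[(i\xi).\eta_A]$ (apply $i$ termwise, then pull back along the unit $\eta_A$) and stops there, leaving the reader to check it is an isomorphism. Your primary approach via the derived adjunction is a legitimate alternative and arguably cleaner once you accept that an adjoint pair of exact functors descends to $\bfD^b$: the unit and counit, applied degreewise, pass through $\bfK^b$ and then through the localization since both functors preserve quasi-isomorphisms, and the triangle identities persist. This buys you the isomorphism for all $n$ in one line via Proposition~\ref{pr:ext}, whereas the explicit Yoneda argument requires checking well-definedness and mutual inversion by hand. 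Either route is acceptable; the paper's choice keeps the argument self-contained within Yoneda Ext and avoids the derived machinery.
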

\begin{proof}
Part (1) and (2) follow from Proposition~\ref{pr:perp}. It remains to
prove (3). The case $n=0$ is clear. For $n\geq 1$, the isomorphism
sends a class $[\xi]$ in $\Ext^n_\B(i_\la A,B)$ to $[(i\xi).\eta_A]$
in $\Ext_\A^n(A,iB)$, where $\eta_A\colon A\rightarrow ii_\la(A)$ is
the unit of the adjoint pair and $(i\xi).\eta_A$ denotes the pullback
of $i\xi$ along $\eta_A$.
\end{proof}

An object $S$ in $\A$ is called \index{object!localizable}
\emph{localizable} if the following conditions are satisfied:
\begin{enumerate}
\item The object $S$ is simple.
\item $\Hom_\A(S,A)$ and $\Ext^1_\A(S,A)$ are of finite length over
$\End_\A(S)$ for all $A\in\A$.
\item $\Ext^1_\A(S,S)=0$ and $\Ext^2_\A(S,A)=0$ for all $A\in\A$.
\end{enumerate}
An object $S$ is \index{object!colocalizable} \emph{colocalizable} if
the dual conditions are satisfied.

\begin{lem}\label{le:localizable}
Let $\A$ be an abelian category and $\B$ an exact abelian
subcategory. Then the following are equivalent:
\begin{enumerate}
\item The inclusion $\B\to\A$ is a left expansion.
\item There exists a localizable object $S\in\A$ such that $S^\perp=\B$.
\end{enumerate}
\end{lem}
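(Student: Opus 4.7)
The plan is to prove both implications via the identification ${^\perp\B}=\add S$ for the appropriate simple object $S$; throughout, set $\Delta=\End_\A(S)^\op$, so that $\Hom_\A(S,-)$ provides an equivalence $\add S\simeq\mod\Delta$. For (2) $\Rightarrow$ (1), I would first verify that $\C:=\add S$ is a Serre subcategory of $\A$: simplicity of $S$ and induction on length handle subobjects and quotients of $S^n$, while $\Ext^1_\A(S,S)=0$ (part of localizability) makes every extension among such objects split. This yields condition~(2) of the definition, and since any object of $\C$ is a finite direct sum of copies of $S$, also $\C^\perp=S^\perp=\B$ and condition~(3), which reduces to $\Ext^2_\A(S,S)=0$.

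The main step in this direction is to construct an exact left adjoint to $\B\hookrightarrow\A$. By Proposition~\ref{pr:section} it suffices to produce, for each $A\in\A$, a four-term exact sequence $0\to A'\to A\to \bar A\to A''\to 0$ with $A',A''\in\C$ and $\bar A\in\B$. I would take $A'\subseteq A$ to be the maximal subobject lying in $\C$: each candidate $S^k\subseteq A$ embeds via $\Hom_\A(S,-)$ into the finite-dimensional $\Delta$-module $\Hom_\A(S,A)$, so $k$ is bounded, and the sum of all such subobjects stays in $\C$ because $\Ext^1_\A(S,S)=0$. Maximality forces $\Hom_\A(S,A/A')=0$. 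Setting $d=\dim_\Delta\Ext^1_\A(S,A/A')$ (finite by localizability), I would then build the universal extension $0\to A/A'\to \bar A\to S^d\to 0$ classified by a $\Delta$-basis of $\Ext^1_\A(S,A/A')$, arranged so that the connecting homomorphism $\Hom_\A(S,S^d)\to\Ext^1_\A(S,A/A')$ is an isomorphism. Feeding this into the long exact sequence for $\Hom_\A(S,-)$ and using $\Ext^1_\A(S,S)=0$ together with $\Ext^2_\A(S,A/A')=0$, I obtain $\Hom_\A(S,\bar A)=0=\Ext^1_\A(S,\bar A)$, i.e.\ $\bar A\in\B$. Splicing the two exact sequences yields the desired four-term sequence, and the resulting left adjoint, factoring as $\A\to\A/\C\xto{\sim}\B$, is automatically exact.

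For (1) $\Rightarrow$ (2), let $\C={^\perp\B}\simeq\mod\Delta$ and let $S$ correspond to the simple $\Delta$. Then $S$ is simple in $\A$ because $\C$ is a Serre subcategory (Lemma~\ref{le:leftmax}(1)), and $S^\perp=\C^\perp=\B$ since $\C=\add S$. For the finiteness of $\Hom_\A(S,A)$ and $\Ext^1_\A(S,A)$ over $\End_\A(S)$, I would apply $\Hom_\A(S,-)$ to the four-term sequence $0\to A'\to A\to ii_\la A\to A''\to 0$ supplied by Proposition~\ref{pr:section}; since $ii_\la A\in\C^\perp$ kills $\Hom$ and $\Ext^1$, and $A',A''$ are finite direct sums of $S$, the computation reduces to the evident finite dimensionality of $\Hom_\A(S,S^m)\cong\Delta^m$. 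The vanishing $\Ext^1_\A(S,S)=0$ comes from $\C$ being semisimple and a Serre subcategory, so Yoneda extensions in $\C$ and in $\A$ coincide. The remaining condition $\Ext^2_\A(S,A)=0$ is reduced by the same long exact sequence to vanishing of $\Ext^2_\A(S,-)$ on $\C$ (condition~(3) of a left expansion) and on $\B$; the latter is given by Lemma~\ref{le:leftmax}(3), which computes $\Ext^2_\A(S,iB)\cong\Ext^2_\B(i_\la S,B)=\Ext^2_\B(0,B)=0$ because $S\in\Ker i_\la$.

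The principal obstacle is the construction of $\bar A$ in direction (2) $\Rightarrow$ (1): one needs simultaneously the maximal $\add S$-subobject $A'$ (using finiteness of $\Hom_\A(S,-)$) and the universal extension producing $\bar A\in S^\perp$ (using finiteness of $\Ext^1_\A(S,-)$ together with $\Ext^2_\A(S,-)=0$); in this sense all three clauses of the localizability hypothesis enter essentially.
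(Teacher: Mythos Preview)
Your argument is correct and follows the same overall strategy as the paper: both directions rest on the four-term localization sequence of Proposition~\ref{pr:section} after recognizing $\C=\add S$ as a Serre subcategory. Two minor differences are worth noting. In (2)$\Rightarrow$(1) you build $\bar A$ by first killing $\Hom_\A(S,-)$ (passing to $A/A'$) and then killing $\Ext^1_\A(S,-)$ via a universal extension, whereas the paper performs these two steps in the opposite order; both produce the same reflection. In (1)$\Rightarrow$(2) you deduce $\Ext^2_\A(S,A)=0$ by splitting the four-term sequence and invoking the adjunction isomorphism of Lemma~\ref{le:leftmax}(3) on the $\B$-part (together with $\Ext^1_\A(S,S)=0$ to handle the connecting map through $A''$); the paper instead observes that $\Ext^1_\A(S,-)\cong\Hom_\A(S,(-)'')$, checks that $A\mapsto A''$ is right exact via the snake lemma applied to the unit $\eta\colon\Id\to ii_\la$, and then appeals to Lemma~\ref{lem:hereditary}.

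One small point of logical order in your (2)$\Rightarrow$(1): verifying conditions~(2) and~(3) of a left expansion requires knowing ${^\perp\B}=\add S$, not merely that $\add S$ is Serre with $(\add S)^\perp=\B$. That identification becomes available only \emph{after} you have constructed the left adjoint, via Proposition~\ref{pr:perp} (which gives $\Ker i_\la={^\perp}(\Im i)$). So your first paragraph should logically follow your second.
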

\begin{proof}
(1) $\Rightarrow$ (2): Let $S$ be an indecomposable object in
$^\perp\B$.  Then $S$ is a simple object and $\Ext_\A^1(S,S)=0$ since
$^\perp\B=\add S$ is semisimple.  For each object $A$ in $\A$, we use the
natural exact sequence \eqref{eq:local}
$$0\lto A'\lto A\lto[\eta_A] \bar A\lto A''\lto 0$$ with $A',A''\in
{^\perp\B}$ and $\bar A\in\B$. This sequence induces the following
isomorphisms.
\begin{gather*}\Hom_\A(S,A')\lto[\sim]\Hom_\A(S,A)\\
\Ext_\A^1(S,A)\lto[\sim]\Ext_\A^1(S,\Im\eta_A)\rto[\sim]\Hom_\A(S,A'')
\end{gather*}
Here we use the condition $\Ext^2_\A(S,S)=0$.  It follows that
$\Hom_\A(S,A)$ and $\Ext^1_\A(S,A)$ are of finite length over
$\End_\A(S)$. Now observe that the functor sending $A$ to
$\Hom_\A(S,A'')$ is right exact. Thus $\Ext^2_\A(S,A)=0$ by
Lemma~\ref{lem:hereditary}. Finally, $S^\perp=\B$ follows from
Proposition~\ref{pr:perp}.

(2) $\Rightarrow$ (1): A left adjoint $i_\la$ of the inclusion
$\B\to\A$ is constructed as follows.  Fix an object $A$ in $\A$.
There exists an exact sequence $0\to A\to B\to S^n\to 0$ for some
$n\ge 0$ such that $\Ext_\A^1(S,B)=0$ since $\Ext^1_\A(S,A)$ is of
finite length over $\End_\A(S)$. Now choose a morphism $S^m\to B$ such
that the induced map $\Hom_\A(S,S^m)\to\Hom_\A(S,B)$ is surjective and
let $\bar A$ be its cokernel. It is easily checked that the composite
$A\to B\to \bar A$ is the universal morphism into $S^\perp$. Thus we
define $i_\la A=\bar A$.

Next observe that the kernel and cokernel of the adjunction morphism
$A\to i_\la A$ belong to $\C=\add S$ for each object $A$ in $\A$.
Moreover, $\C$ is a Serre subcategory of $\A$ since $S$ is simple
and $\Ext^1_\A(S,S)=0$. Thus we can apply
Proposition~\ref{pr:section} and infer that the composite
$\A\xto{i_\la}\C^\perp\xto{\sim}\A/\C$ is the quotient functor with
respect to $\C$. Therefore the left adjoint $i_\la$ is exact. We
have $^\perp\B=\C$ by Proposition~\ref{pr:perp}, and $\Hom_\A(S,-)$
induces an equivalence $\C\xto{\sim}\mod\End_\A(S)$.  Thus the
inclusion $\B\to\A$ is a left expansion.
\end{proof}

\subsection{Expansions of abelian categories}

A fully faithful and exact functor $\B\to\A$ between abelian
categories is by definition an \index{expansion} \emph{expansion} of
abelian categories if the functor is a left and a right expansion.

Let $i\colon \B\to\A$ be an expansion of abelian categories.
Then we identify $\B$ with the essential image of $i$. We denote by
$i_\la$ the left adjoint of $i$ and by $i_\rho$ the right adjoint of
$i$. We choose an indecomposable object $S_\la$ in $^\perp\B$ and an
indecomposable object $S_\rho$ in $\B^\perp$. Thus $^\perp\B=\add
S_\la$ and $\B^\perp=\add S_\rho$. Finally, set $\bar
S=i_\la(S_\rho)$.

An expansion $i\colon\B\to\A$ is called \index{expansion!split}
\emph{split} if $\B^\perp={^\perp\B}$. If the expansion is non-split,
then the exact sequences \eqref{eq:local} for $S_\la$ and $S_\rho$ are
of the form
\begin{equation}\label{eq:locseq}
0\to S_\rho\to ii_\la(S_\rho)\to S_\la^l\to 0\quad\text{and}\quad
0\to S_\rho^r\to ii_\rho(S_\la)\to S_\la\to 0
\end{equation}
for some integers $l,r\geq 1$. In Lemma~\ref{le:extsimple}, we see that $l=1=r$.

\begin{lem}
Let $\B\to\A$ be an expansion of abelian categories. Then the
following are equivalent:
\begin{enumerate}
\item The expansion $\B\to\A$ is split.
\item $\A=\B\amalg\C$ for some Serre subcategory $\C$ of $\A$.
\item $\B$ is a Serre subcategory of $\A$.
\end{enumerate}
\end{lem}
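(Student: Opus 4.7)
The plan is to prove the cycle (1) $\Rightarrow$ (2) $\Rightarrow$ (3) $\Rightarrow$ (1). Throughout, write $\C = {^\perp\B} = \add S_\la$, which is a Serre subcategory of $\A$: by Lemma~\ref{le:localizable}, $S_\la$ is localizable so in particular $\Ext^1_\A(S_\la,S_\la)=0$, making $\add S_\la$ semisimple and closed under subobjects, quotients, and extensions.

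For (1) $\Rightarrow$ (2), the split assumption forces $\B^\perp={^\perp\B}=\C$. Apply Proposition~\ref{pr:section} to an arbitrary $A\in\A$ to obtain the four-term exact sequence
$$0\lto A'\lto A\lto \bar A\lto A''\lto 0$$
with $A',A''\in\C$ and $\bar A=ii_\la A\in\B$. Split this into two short exact sequences and apply $\Hom_\A(-,A'')$ to $0\to\Im(A\to\bar A)\to\bar A\to A''\to 0$. Since $\bar A\in\B$ and $A''\in\B^\perp$, the group $\Hom_\A(\bar A,A'')$ vanishes, and the exactness of $0\to\Hom_\A(A'',A'')\to\Hom_\A(\bar A,A'')$ forces $\End_\A(A'')=0$, hence $A''=0$. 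The remaining short exact sequence $0\to A'\to A\to\bar A\to 0$ then splits because $A'\in\B^\perp$ gives $\Ext^1_\A(\bar A,A')=0$. Combined with $\Hom(\B,\C)=0=\Hom(\C,\B)$, which is immediate from the split hypothesis, this yields $\A=\B\amalg\C$.

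The implication (2) $\Rightarrow$ (3) is a direct verification. A decomposition $\A=\B\amalg\D$ makes every morphism block-diagonal with respect to the $\B$- and $\D$-components, since $\Hom(\B,\D)=0=\Hom(\D,\B)$. Given any short exact sequence $0\to A_1\to A_2\to A_3\to 0$ with $A_2\in\B$, decomposing $A_1=B_1\oplus D_1$ shows that $D_1\to A_2$ is zero yet injective, so $D_1=0$; an analogous argument handles $A_3$. For extensions $0\to B_1\to A\to B_2\to 0$ with $B_i\in\B$, writing $A=B\oplus D$ places $B_1$ inside $B$ and forces $D\subseteq\Ker(A\to B_2)=B_1\subseteq B$, so $D=0$. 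Thus $\B$ is closed under subobjects, quotients, and extensions.

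The direction (3) $\Rightarrow$ (1) is the main point, but it reduces to a short contradiction using the sequence \eqref{eq:locseq}. Assume the expansion were non-split; then \eqref{eq:locseq} furnishes an exact sequence
$$0\lto S_\rho\lto ii_\la(S_\rho)\lto S_\la^l\lto 0$$
with $l\geq 1$. Here $ii_\la(S_\rho)\in\B$ and, $\B$ being Serre, its subobject $S_\rho$ must also lie in $\B$. But $S_\rho\in\B^\perp$ then yields $\End_\A(S_\rho)=\Hom_\A(S_\rho,S_\rho)=0$, contradicting the simplicity of the nonzero object $S_\rho$. Hence the expansion is split. The only subtle point is that one must invoke the specific form of the localization sequence \eqref{eq:locseq} in the non-split case (so that $S_\rho$ genuinely embeds into $ii_\la(S_\rho)\in\B$ rather than collapsing), but this is exactly the content of the discussion introducing \eqref{eq:locseq}, after which the contradiction is immediate.
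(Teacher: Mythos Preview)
Your proof is correct and follows essentially the same cycle $(1)\Rightarrow(2)\Rightarrow(3)\Rightarrow(1)$ as the paper, using the localization sequence for $(1)\Rightarrow(2)$ and the sequence \eqref{eq:locseq} for the contradiction in $(3)\Rightarrow(1)$. The only minor difference is in $(2)\Rightarrow(3)$: the paper observes that $\B=\{A\in\A\mid\Hom_\A(A,C)=0\text{ for all }C\in\C\}$ (and dually), which immediately gives closure under quotients, extensions, and subobjects, whereas you verify these closures by hand---but this is a stylistic variation, not a different route.
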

\begin{proof}
(1) $\Rightarrow$ (2): Take $\C={^\perp\B}=\B^\perp$.

(2) $\Rightarrow$ (3): An object $A\in\A$ belongs to $\B$ if and only
if $\Hom_\A(A,B)=0$ for all $B\in\C$. Thus $\B$ is closed under taking
quotients and extensions. The dual argument shows that $\B$ is closed
under taking subobjects.

(3) $\Rightarrow$ (1): If the expansion is non-split, then the
sequences in \eqref{eq:locseq} show that $\B$ is not a Serre
subcategory.
\end{proof}

\begin{lem}\label{le:extsimple}
Let $i\colon \B\to\A$ be a non-split expansion of abelian
categories.
\begin{enumerate}
\item The object $\bar S=i_\la(S_\rho)$ is a simple object in $\B$ and
isomorphic to $i_\rho(S_\la)$.
\item The functor $i_\la$ induces an equivalence
 $\B^\perp\xto{\sim}\add\bar S$.
\item The functor $i_\rho$ induces an equivalence
$^\perp\B\xto{\sim}\add\bar S$.
\end{enumerate}
\end{lem}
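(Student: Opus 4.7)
The plan is to establish (1) first, using the identification of $i_\la$ with the quotient functor $\A\to\A/{^\perp\B}$ up to equivalence, then deduce (2) and (3) by symmetric arguments on the endomorphism rings.

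First I would prove simplicity of $\bar S$. By Lemma~\ref{le:leftmax}(2), the exact left adjoint $i_\la$ factors as $\A\xto{\can}\A/{^\perp\B}\xto[\sim]{} \B$. In the non-split case, $S_\rho\not\cong S_\la$: otherwise $\add S_\rho=\add S_\la$, i.e., $\B^\perp={^\perp\B}$, contradicting non-splitness. Since $S_\rho$ is simple and does not lie in ${^\perp\B}=\add S_\la=\Ker i_\la$, Lemma~\ref{le:quotient_simple} applies and $\bar S=i_\la(S_\rho)$ is a simple object of $\B$. The dual argument shows $i_\rho(S_\la)$ is simple in $\B$.

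To identify $\bar S$ with $i_\rho(S_\la)$ and pin down $l=r=1$, I would apply the exact functor $i_\la$ to the second sequence in \eqref{eq:locseq}. Using $i_\la(S_\la)=0$ and $i_\la i\cong\Id_\B$ (true because $i$ is fully faithful), this sequence becomes $0\to \bar S^r\to i_\rho(S_\la)\to 0$, so $i_\rho(S_\la)\cong \bar S^r$. Since $i_\rho(S_\la)$ is simple, $r=1$ and $\bar S\cong i_\rho(S_\la)$. Symmetrically, applying the exact $i_\rho$ to the first sequence in \eqref{eq:locseq}, together with $i_\rho(S_\rho)=0$ and $i_\rho i\cong\Id_\B$, yields $\bar S\cong i_\rho(S_\la)^l$, forcing $l=1$. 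This proves~(1).

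For~(2), the functor $i_\la\colon\add S_\rho\to\add \bar S$ is essentially surjective since $i_\la(S_\rho)=\bar S$ and $i_\la$ is additive, so it remains to show fully faithfulness. This reduces to showing that $i_\la$ induces an isomorphism $\End_\A(S_\rho)\xto{\sim}\End_\B(\bar S)$. Apply $\Hom_\A(S_\rho,-)$ to the first sequence $0\to S_\rho\to ii_\la(S_\rho)\to S_\la\to 0$: since $S_\rho\not\cong S_\la$ are non-isomorphic simples, $\Hom_\A(S_\rho,S_\la)=0$, so composition with the unit $\eta\colon S_\rho\to ii_\la(S_\rho)$ yields an isomorphism $\End_\A(S_\rho)\xto{\sim}\Hom_\A(S_\rho,ii_\la S_\rho)$. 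Combined with the adjunction isomorphism $\Hom_\A(S_\rho,ii_\la S_\rho)\cong\End_\B(\bar S)$, and using naturality of $\eta$ (so that $\eta\circ f$ corresponds under adjunction to $i_\la(f)$), this composite is precisely the map induced by $i_\la$. Part~(3) follows by the dual argument, using the second sequence in~\eqref{eq:locseq} with $l=r=1$ already in hand.

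The main obstacle is keeping the non-splitness hypothesis in view at the right moment: it is what guarantees $S_\rho\not\cong S_\la$, hence the vanishing $\Hom_\A(S_\rho,S_\la)=0$ that drives both simplicity (via Lemma~\ref{le:quotient_simple}) and the endomorphism ring identification. Everything else is routine manipulation with exact adjoints and the identities $i_\la i\cong\Id_\B\cong i_\rho i$.
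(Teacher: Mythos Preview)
Your proposal is correct and follows essentially the same route as the paper. The only minor variation is in establishing simplicity of $\bar S$: you invoke Lemma~\ref{le:quotient_simple} (quotient functors preserve simples not in the kernel), whereas the paper argues directly that any nonzero morphism $i_\la(S_\rho)\to A$ is a monomorphism via adjunction and exactness of $i_\la$; the identification $i_\la(S_\rho)\cong i_\rho(S_\la)$ by applying the exact adjoints to the sequences~\eqref{eq:locseq}, and the endomorphism-ring argument for (2) and (3), are the same as in the paper.
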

\begin{proof}
(1) Let $\p\colon i_\la(S_\rho)\to A$ be a non-zero morphism in
$\B$. Adjunction takes this to a monomorphism $S_\rho\to A$ in
$\A$ since $S_\rho$ is simple. Applying $i_\la$ gives back a morphism which is isomorphic to
$\p$. This is a monomorphism since $i_\la$ is exact. Thus
$i_\la(S_\rho)$ is simple.

Now apply $i_\rho$ to the first and $i_\la$ to the second sequence
in \eqref{eq:locseq}. Note that by adjunction $i_\la i\cong {\rm
Id}_\B\cong i_\rho i$. Then we have
$$i_\la(S_\rho)\cong i_\rho(S_\la)^l \mbox{ and } i_\rho(S_\la)\cong i_\la(S_\rho)^r.$$
This implies $l=1=r$ and therefore $i_\la(S_\rho)\cong i_\rho(S_\la)$.

(2) We have a sequence of isomorphisms
$$\Hom_\A(S_\rho,S_\rho)\lto[\sim]\Hom_\A(S_\rho,ii_\la(S_\rho))\lto[\sim]
\Hom_\B(i_\la(S_\rho),i_\la(S_\rho))$$ which takes a morphism $\p$ to
$i_\la\p$. Thus $i_\la$ induces an equivalence $\add
S_\rho\xto{\sim}\add i_\la(S_\rho)$.

(3) Follows from (2) by duality.
\end{proof}

An expansion $\B\to\A$ of abelian categories determines a division
ring $\Delta$ such that $^\perp\B$ and $\B^\perp$ are equivalent to
$\mod\Delta$; we call $\Delta$ the \index{expansion!division ring of}
\emph{associated division ring}.

Fix an expansion $i\colon\B\to\A$ with associated division ring
$\Delta$. We identify the perpendicular categories of $\B$ with
$\mod\Delta$ via the equivalences
${^\perp\B}\xto{\sim}\mod\Delta\xleftarrow{\sim}\B^\perp$. There are
inclusions $j\colon{^\perp\B}\to\A$ and $k \colon{\B^\perp}\to\A$ with
adjoints $j_\rho$ and $k_\la$. These functors yield the following
diagram.
$$\xymatrixrowsep{3pc} \xymatrixcolsep{3pc}\xymatrix{
\B\,\ar[rr]|-{i}&&\,\A\,
\ar@<1.0ex>[rr]|-{j_\rho}\ar@<-1.0ex>[rr]|-{k_\la}
\ar@<1.2ex>[ll]^-{i_\rho}\ar@<-1.2ex>[ll]_-{i_\la}&&
\,\mod\Delta\ar@<2.2ex>[ll]^-{k}\ar@<-2.2ex>[ll]_-{j}
}$$ Note that this diagram induces a recollement of triangulated
categories \cite{BBD}:
$$\xymatrixrowsep{3pc} \xymatrixcolsep{3pc}\xymatrix{
  \bfD^b(\B)\,\ar[rr]|-{\bfD^b(i)}&&\,\bfD^b(\A)\, \ar[rr]|-{}
  \ar@<1.2ex>[ll]^-{\bfD^b(i_\rho)}\ar@<-1.2ex>[ll]_-{\bfD^b(i_\la)}&&
  \,\bfD^b(\mod\Delta)\ar@<1.2ex>[ll]^-{\bfD^b(k)}\ar@<-1.2ex>[ll]_-{\bfD^b(j)}
}$$ Indeed, the labeled functors are part of a recollement, and
therefore the right adjoint of $\bfD^b(j)$ is isomorphic to
the left adjoint of $\bfD^b(k)$, both of which are isomorphic
to the quotient functor of $\bfD^b(\A)$ with respect to the
triangulated subcategory $\bfD^b(\B)$.

\subsection{Simple objects}
Let $i\colon\B\to\A$ be an expansion. The left adjoint $i_\la$ induces
a bijection between the isomorphism classes of simple objects of $\A$
that are different from $S_\la$, and the isomorphism classes of simple
objects of $\B$.  On the other hand, all simple objects of $\A$
correspond to simple objects of $\B$ via $i$.  All this is made
precise in the next lemma.

\begin{lem}\label{le:simple}
Let $i\colon\B\to\A$ be an expansion of abelian categories.
\begin{enumerate}
\item If $S$ is a simple object in $\B$ and $S\not\cong \bar S$, then
$iS$ is simple in $\A$ and $i_\la iS\cong S$.
\item There is an exact sequence $0\to S_\rho\to i\bar S\to S_\la\to
0$ in $\A$, provided the expansion $\B\to\A$ is non-split.
\item If $S$ is a simple object in $\A$ and $S\not\cong S_\la$, then
$i_\la S$ is simple in $\B$. Moreover, $S\cong i i_\la S$ if
$S\not\cong S_\rho$.
\end{enumerate}
\end{lem}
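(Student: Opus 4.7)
Part (2) follows immediately from Lemma~\ref{le:extsimple}, which gives $l = r = 1$ in the sequences \eqref{eq:locseq} and identifies $i_\la(S_\rho) \cong \bar S$; the first of those sequences is then $0 \to S_\rho \to i\bar S \to S_\la \to 0$. For part (1), my plan is to fix a nonzero subobject $U \subseteq iS$ and examine $i_\la U \subseteq i_\la iS \cong S$. Exactness of $i_\la$ and simplicity of $S$ leave only $i_\la U = 0$ or $i_\la U = S$. The case $i_\la U = 0$ places $U$ in ${}^\perp\B = \add S_\la$; but $iS \in \B$ and the very definition of ${}^\perp\B$ gives $\Hom_\A(U, iS) = 0$, forcing $U = 0$, a contradiction. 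The case $i_\la U = S$ makes $iS/U$ lie in $\add S_\la$; if it were nonzero a projection onto an $S_\la$-summand would furnish a nonzero morphism $iS \to S_\la$, but by adjunction $i \dashv i_\rho$ and Lemma~\ref{le:extsimple} this space is $\Hom_\B(S, \bar S) = 0$, since $S$ and $\bar S$ are non-isomorphic simples. Hence $U = iS$, and the identity $i_\la iS \cong S$ is the counit of $i_\la \dashv i$.

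For part (3), I begin with the localization sequence \eqref{eq:local} for the left adjoint applied to $S$: $0 \to A' \to S \to ii_\la S \to A'' \to 0$ with $A', A'' \in \add S_\la$. Simplicity of $S$ forces $A' = 0$ or $A' = S$; the latter would give $S \cong S_\la$, which is excluded, so $S \hookrightarrow ii_\la S$ and $i_\la S \ne 0$. To show $i_\la S$ is simple I take any $0 \ne U \subseteq i_\la S$ in $\B$ and form $P = iU \cap S$ inside $ii_\la S$. Since $S$ is simple, $P$ is $0$ or $S$: if $P = S$ then $i_\la S \hookrightarrow i_\la iU \cong U$ and hence $U = i_\la S$; if $P = 0$ then $iU$ embeds into $ii_\la S / S \cong A'' \in \add S_\la = {}^\perp\B$. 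Since ${}^\perp\B$ is a Serre subcategory (Lemma~\ref{le:leftmax}), $iU \in {}^\perp\B$; combined with $iU \in \B$ this forces $\id_{iU} = 0$ and thus $iU = 0$, contradicting $U \ne 0$.

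The final assertion $S \cong ii_\la S$ under $S \not\cong S_\rho$ is the place where I expect the main obstacle. The plan is to suppose $A'' \ne 0$; then $ii_\la S$ surjects onto $S_\la$, and by adjunction this corresponds to a nonzero morphism $i_\la S \to i_\rho S_\la = \bar S$ between simples of $\B$, forcing $i_\la S \cong \bar S$ and hence $ii_\la S \cong i\bar S$. Combining with part (2) realises $S$ as a simple subobject of the length-two object $i\bar S$; the delicate step is to verify $\soc(i\bar S) = S_\rho$. Any other simple sub $S' \not\cong S_\rho$ would have $S' \cap S_\rho = 0$, hence embed into $i\bar S/S_\rho = S_\la$ and give $S' \cong S_\la$; then $S_\rho \oplus S_\la \hookrightarrow i\bar S$, and length considerations force this inclusion to be an equality, contradicting the indecomposability of $i\bar S$ (its endomorphism ring equals the division ring $\End_\B(\bar S)$ by full faithfulness of $i$). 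Hence $S \cong S_\rho$, contradicting the hypothesis; so $A'' = 0$ and $S \cong ii_\la S$.
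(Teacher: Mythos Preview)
Your arguments for parts (1) and (2) are correct and match the paper's proof essentially line for line: the paper rules out $i_\la U=0$ by noting $\Hom_\B(i_\la U,S)\cong\Hom_\A(U,iS)\neq 0$, which is just your contradiction repackaged, and then uses exactly your adjunction $\Hom_\A(iS,S_\la)\cong\Hom_\B(S,\bar S)=0$ to finish.

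For part (3) your proof is correct but takes a genuinely different route from the paper. The paper dispatches the simplicity of $i_\la S$ in one line by invoking Lemma~\ref{le:quotient_simple}: $i_\la$ is a quotient functor with kernel $\add S_\la$, and quotient functors send simples outside the kernel to simples. For the final assertion the paper observes that $S\not\cong S_\rho$ forces $i_\la S\not\cong\bar S$ (two non-isomorphic simples outside the Serre kernel remain non-isomorphic in the quotient), then applies part~(1) to conclude $ii_\la S$ is simple, so the nonzero unit $S\to ii_\la S$ is an isomorphism. Your approach is more hands-on: you work directly with the localization sequence $0\to A'\to S\to ii_\la S\to A''\to 0$, prove simplicity of $i_\la S$ by intersecting $iU$ with $S$ inside $ii_\la S$, and for the last claim you argue by contradiction that $A''\neq 0$ forces $i_\la S\cong\bar S$, then analyse the socle of the length-two object $i\bar S$ to conclude $S\cong S_\rho$. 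This is longer but entirely self-contained, avoiding the appeal to the general quotient-functor lemma; the paper's route is shorter and exploits the structural fact that an expansion is, on one side, a localization, together with the bootstrapping from part~(1).
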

\begin{proof}
(1) Let $0\neq U\subseteq i S$ be a subobject. Then $\Hom_\B(i_\la
    U,S)\cong\Hom_\A(U,iS)\neq 0$ shows that $U\not\in\Ker i_\la$.
    Thus $i_\la U=S$, and therefore $iS/U$ belongs to $\Ker i_\la=\add
    S_\la$. On the other hand, $\Hom_\A(iS,S_\la)\cong\Hom_\B(S,\bar
    S)=0$. Thus $iS/U=0$, and it follows that $iS$ is simple. Finally
    observe that $i_\la iA\cong A$ for every object $A$ in $\B$.

(2) Take the exact sequence in \eqref{eq:locseq}.

(3) This is a general fact: A quotient functor $\A\to\A/\C$ takes each
    simple object of $\A$ not belonging to $\C$ to a simple object of
    $\A/\C$; see Lemma \ref{le:quotient_simple}.  Here, we take $\C=\Ker i_\la$ and identify $i_\la$ with
    the corresponding quotient functor.

If $S\not\cong S_\rho$, then $i_\la S\not\cong \bar S$ and therefore
$ii_\la S$ is simple by (1). Thus the canonical morphism $S\to ii_\la
S$ is an isomorphism.
\end{proof}

The Ext-groups of most simple objects in $\A$ can be computed from
appropriate Ext-groups in $\B$. This follows from an adjunction
formula; see Lemma~\ref{le:leftmax}. The remaining cases are treated
in the following lemma.

\begin{lem}\label{le:ext_simple}
Let $i\colon\B\to\A$ be a non-split expansion of abelian categories.
\begin{enumerate}
\item $\Hom_\A(S_\la,S_\la)\cong\Ext_\A^1(S_\la,S_\rho)\cong\Hom_\A(S_\rho,S_\rho)$.
\item $\Ext_\B^n(\bar S,\bar S)\cong\Ext_\A^n(S_\rho,S_\la)$ for $n\geq 1$.
\end{enumerate}
\end{lem}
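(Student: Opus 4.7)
The plan is to reduce both parts to the short exact sequence
\[
0\lto S_\rho\lto i\bar S\lto S_\la\lto 0
\]
supplied by Lemma~\ref{le:simple}(2), and to a single adjunction identity for Ext groups.

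For part (1), I would apply both $\Hom_\A(S_\la,-)$ and $\Hom_\A(-,S_\rho)$ to this sequence. Because $i\bar S\in\B$, the conditions $S_\la\in{^\perp\B}$ and $S_\rho\in\B^\perp$ give at once
\[
\Hom_\A(S_\la,i\bar S)=\Ext^1_\A(S_\la,i\bar S)=0\quad\text{and}\quad \Hom_\A(i\bar S,S_\rho)=\Ext^1_\A(i\bar S,S_\rho)=0.
\]
I also need $\Hom_\A(S_\la,S_\rho)=0$; since both are simple, this reduces to showing $S_\la\not\cong S_\rho$, which follows from the non-split hypothesis: if $S_\la\cong S_\rho$ the common object would lie in ${^\perp\B}\cap\B^\perp$, and then $^\perp\B=\B^\perp$ would make the expansion split. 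With these vanishings the two long exact sequences collapse and both give the middle term $\Ext^1_\A(S_\la,S_\rho)$, yielding
\[
\Hom_\A(S_\la,S_\la)\cong\Ext^1_\A(S_\la,S_\rho)\cong\Hom_\A(S_\rho,S_\rho).
\]

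For part (2), the first step is Lemma~\ref{le:leftmax}(3) applied with $A=S_\rho$ and $B=\bar S=i_\la S_\rho$, which delivers $\Ext^n_\B(\bar S,\bar S)\cong\Ext^n_\A(S_\rho,i\bar S)$ for every $n\geq 0$. The second step is to apply $\Hom_\A(S_\rho,-)$ to the short exact sequence above. Here colocalizability of $S_\rho$ is essential: its axioms provide $\Ext^1_\A(S_\rho,S_\rho)=0$ directly and $\Ext^2_\A(-,S_\rho)=0$, and the dual of Lemma~\ref{lem:hereditary} promotes the latter to $\Ext^n_\A(-,S_\rho)=0$ for every $n\geq 2$. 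Consequently every $\Ext^n_\A(S_\rho,S_\rho)$ vanishes for $n\geq 1$, and together with $\Hom_\A(S_\rho,S_\la)=0$ (same non-isomorphism argument as above) the long exact sequence breaks into isomorphisms $\Ext^n_\A(S_\rho,i\bar S)\xto{\sim}\Ext^n_\A(S_\rho,S_\la)$ for each $n\geq 1$. Composing with step one gives the claim.

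The main delicate point I anticipate is the $n=1$ case of part (2), which requires three independent vanishings to line up around the connecting maps, all traceable either to colocalizability of $S_\rho$ or to the non-isomorphism $S_\la\not\cong S_\rho$ established in part (1). A secondary check is to verify that the dimension-shifting dual of Lemma~\ref{lem:hereditary} really does propagate the single axiom $\Ext^2_\A(-,S_\rho)=0$ to all higher degrees; this follows by the standard contravariant version of the proof given in the text.
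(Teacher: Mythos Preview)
Your proof is correct and follows essentially the same route as the paper: both use the short exact sequence $0\to S_\rho\to i\bar S\to S_\la\to 0$ (which is the first sequence in \eqref{eq:locseq} once $l=1$), apply $\Hom_\A(S_\la,-)$ and its dual for part~(1), and combine the adjunction isomorphism of Lemma~\ref{le:leftmax}(3) with $\Hom_\A(S_\rho,-)$ applied to the same sequence for part~(2). You supply more detail than the paper---in particular the explicit justification that $S_\la\not\cong S_\rho$ and the colocalizability argument for the vanishing of $\Ext^n_\A(S_\rho,S_\rho)$ in all degrees $n\ge 1$---but the underlying strategy is identical; note that the vanishing $\Hom_\A(S_\rho,S_\la)=0$ you record for the $n=1$ case is in fact redundant once $\Ext^1_\A(S_\rho,S_\rho)=0$ is known.
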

\begin{proof}
(1) Applying $\Hom_\A(S_\la,-)$ to the first sequence in
\eqref{eq:locseq} yields the isomorphism
$\Hom_\A(S_\la,S_\la)\cong\Ext_\A^1(S_\la,S_\rho)$. The other
isomorphism is dual.

(2) We have
$$\Ext_\B^n(i_\la(S_\rho),i_\la(S_\rho))\cong\Ext_\A^n(S_\rho,ii_\la(S_\rho))\cong\Ext_\A^n(S_\rho,S_\la),$$
where the first isomorphism follows from Lemma~\ref{le:leftmax} and
the second from the first sequence in \eqref{eq:locseq}.
\end{proof}

\begin{prop}\label{pr:onepoint_length}
Let $i\colon\B\to\A$ be an expansion of abelian
categories.
\begin{enumerate}
\item The functor $i$ and its adjoints $i_\la$ and $i_\rho$
send  finite length objects to finite length objects.
\item The restriction $\B_0\to\A_0$ is an expansion of
abelian categories.
\item The induced functor $\B/\B_0\to\A/\A_0$ is an equivalence.
\end{enumerate}
\end{prop}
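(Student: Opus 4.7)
The plan is to prove the three parts in order, using Lemma~\ref{le:simple} as the main input for (1), deducing (2) from (1) together with the description of the perpendicular categories, and deducing (3) by descending $i$ and $i_\la$ to the quotients.

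For (1), I would first apply Lemma~\ref{le:simple}(1) to see that a simple object $S$ of $\B$ with $S\not\cong\bar S$ maps to a simple object of $\A$; in the non-split case when $S\cong\bar S$, Lemma~\ref{le:simple}(2) expresses $i\bar S$ as an extension of $S_\la$ by $S_\rho$, an object of length two. Exactness of $i$ then propagates a composition series of $B\in\B_0$ to a finite filtration of $iB$ with finite-length factors, giving $i(\B_0)\subseteq\A_0$. For $i_\la$, Lemma~\ref{le:simple}(3) shows that every simple object of $\A$ is sent either to a simple object of $\B$ or to $0$ (the latter only when $S\cong S_\la$), and exactness again yields $i_\la(\A_0)\subseteq\B_0$. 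The argument for $i_\rho$ is dual.

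For (2), the restriction $i|_{\B_0}$ inherits full faithfulness and exactness, and by (1) the functors $i_\la,i_\rho$ restrict to exact adjoints $\A_0\to\B_0$; the adjunction identities survive since hom-sets are unchanged. Since $S_\la$ is simple, $^\perp\B=\add S_\la\subseteq\A_0$ already identifies with $\mod\Delta$. To see that this is actually the left perpendicular of $\B_0$ inside $\A_0$, I would check: if $A\in\A_0$ kills $\Hom_\A(-,B)$ and $\Ext_\A^1(-,B)$ for all $B\in\B_0$, then $i_\la A\in\B_0$ cannot have a simple quotient $T$ (such $T$ would lie in $\B_0$ and give $0\ne\Hom_\B(i_\la A,T)\cong\Hom_\A(A,T)$), so $i_\la A=0$, i.e.\ $A\in{^\perp\B}$. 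The dual argument treats $\B^\perp$. Finally, the $\Ext^2$-vanishing transfers from $\A$ to $\A_0$ because the Serre inclusion $\A_0\hookrightarrow\A$ identifies Yoneda $\Ext$-groups between objects of $\A_0$.

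For (3), composing $i$ with the quotient $\A\to\A/\A_0$ gives an exact functor whose kernel contains $\B_0$, so Proposition~\ref{pr:quotient}(2) induces $\bar i\colon\B/\B_0\to\A/\A_0$; similarly $i_\la$ induces $\bar i_\la\colon\A/\A_0\to\B/\B_0$. The identity $i_\la i=\Id_\B$ forces $\bar i_\la\bar i=\Id$. For the other composite, the exact sequence
\[0\lto A'\lto A\lto ii_\la A\lto A''\lto 0\]
of Proposition~\ref{pr:section} has $A',A''\in{^\perp\B}\subseteq\A_0$, so the unit $A\to ii_\la A$ becomes invertible in $\A/\A_0$; hence $\bar i\bar i_\la\cong\Id$, completing the equivalence. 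The main obstacle is the verification of the perpendicular identification inside $\A_0$ in part (2); the other steps are direct consequences of Lemma~\ref{le:simple} and the universal property of the quotient category.
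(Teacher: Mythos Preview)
Your argument is correct and, for parts (1) and (2), essentially the paper's approach; the paper merely says ``(1) follows from Lemma~\ref{le:simple} and (2) is an immediate consequence'', while you supply the details the paper suppresses. One small caution: your justification of the $\Ext^2$-condition in (2) invokes the blanket assertion that the Serre inclusion $\A_0\hookrightarrow\A$ \emph{identifies} Yoneda Ext-groups. What you actually need (and what is easy to prove) is only the injectivity of $\Ext^2_{\A_0}(S_\la,S_\la)\to\Ext^2_\A(S_\la,S_\la)$: given a $2$-extension in $\A_0$, factor it through its image $C\in\A_0$; vanishing in $\A$ means the $\Ext^1$-class over $C$ lifts to $\Ext^1_\A(E_1,B)=\Ext^1_{\A_0}(E_1,B)$, hence it already vanishes in $\A_0$. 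The full identification of higher Yoneda Ext for Serre subcategories is more delicate and not needed here.

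For part (3) you take a slightly different route. The paper sets $\C=\Ker i_\la$, observes that $\C\subseteq\A_0$ and that $i_\la$ identifies $\A_0/\C$ with $\B_0$, and then invokes the Noether isomorphism of Lemma~\ref{le:noeth} to obtain $\A/\A_0\cong(\A/\C)/(\A_0/\C)\cong\B/\B_0$. You instead induce $\bar i$ and $\bar i_\la$ on the quotients via Proposition~\ref{pr:quotient} and verify directly that they are mutually inverse using the four-term sequence of Proposition~\ref{pr:section}. Both arguments produce the same quasi-inverse $\bar i_\la$; the paper's is a one-line appeal to a general lemma, while yours is a self-contained computation that makes the mechanism explicit.
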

\begin{proof}
(1) follows from Lemma~\ref{le:simple} and (2) is an immediate
    consequence.

(3) Let $\C=\Ker i_\la$. The functor $i_\la$ induces an equivalence
    $\A/\C\xto{\sim}\B$. Moreover, $\C\subseteq\A_0$ and $i_\la$
    identifies $\A_0/\C$ with $\B_0$ by (1).  It follows from
    Lemma~\ref{le:noeth} that $i_\la$ induces an equivalence
    $\A/\A_0\xto{\sim}\B/\B_0$. This is a quasi-inverse for the
    functor $\B/\B_0\to\A/\A_0$ induced by $i$.
\end{proof}

The Ext-quiver $\Si(\A)$ of $\A$ can be computed explicitly from the
Ext-quiver $\Si(\B)$, and vice versa. The following statement makes this precise.

\begin{prop}
Let $i\colon\B\to\A$ be a non-split expansion of abelian
categories. The functor induces a bijection
$$\Si_0(\B)\smallsetminus\{\bar
S\}\lto[\sim]\Si_0(\A)\smallsetminus\{S_\la,S_\rho\},$$ and for each
pair  $U,V\in\Si_0(\B)\smallsetminus\{\bar S\}$ the
following identities:
\[
\d_{iU,iV}=\d_{U,V},\quad \d_{iU,S_\la}=\d_{U,\bar S},\quad
\d_{S_\rho,iV}=\d_{\bar S,V}, \quad \d_{S_\rho,S_\la}=\d_{\bar S,\bar
  S},\quad \d_{S_\la,S_\rho}=(1,1).
\]
\end{prop}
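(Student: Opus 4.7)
The plan is to first establish the bijection on isomorphism classes of simple objects and then deduce each valuation identity from a suitable adjunction formula.

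For the bijection, I would invoke Lemma~\ref{le:simple}. Part~(1) says that $i$ sends each simple $U \in \Si_0(\B) \setminus \{\bar S\}$ to a simple object $iU$ in $\A$ satisfying $i_\la iU \cong U$; hence $iU \not\cong S_\la$ (else $i_\la iU = 0$) and $iU \not\cong S_\rho$ (else $U \cong i_\la S_\rho \cong \bar S$ by Lemma~\ref{le:extsimple}(1)). Part~(3) provides the inverse: for $S \in \Si_0(\A) \setminus \{S_\la,S_\rho\}$, the object $i_\la S$ is simple and $ii_\la S \cong S$, while $i_\la S \not\cong \bar S$ since otherwise $S \cong i\bar S$, which is not simple by Lemma~\ref{le:simple}(2).

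For the valuations, the key tools are the adjunction isomorphisms
\begin{equation*}
\Ext^n_\B(i_\la A, B) \cong \Ext^n_\A(A, iB) \quad\text{and}\quad \Ext^n_\B(B, i_\rho A) \cong \Ext^n_\A(iB, A)
\end{equation*}
for $A\in\A$ and $B\in\B$, the first being Lemma~\ref{le:leftmax}(3) and the second its dual (valid for the right expansion). Together with $i_\la i \cong \Id_\B \cong i_\rho i$ and the identifications $i_\la S_\rho \cong \bar S \cong i_\rho S_\la$ from Lemma~\ref{le:extsimple}(1), these yield
\begin{equation*}
\Ext^1_\A(iU,iV)\cong\Ext^1_\B(U,V),\quad \Ext^1_\A(iU,S_\la)\cong\Ext^1_\B(U,\bar S),\quad \Ext^1_\A(S_\rho,iV)\cong\Ext^1_\B(\bar S,V).
\end{equation*}
The identity $\Ext^1_\A(S_\rho,S_\la) \cong \Ext^1_\B(\bar S,\bar S)$ is Lemma~\ref{le:ext_simple}(2), and $\d_{S_\la,S_\rho}=(1,1)$ follows from Lemma~\ref{le:ext_simple}(1): both $\End_\A(S_\la)$ and $\End_\A(S_\rho)$ are canonically isomorphic to the associated division ring $\Delta$, so $\Ext^1_\A(S_\la,S_\rho)\cong\Delta$ has dimension one as a left $\Delta(S_\la)$-module and as a right $\Delta(S_\rho)$-module.

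The main bookkeeping step, and the only real obstacle, is verifying that each displayed Ext-isomorphism is compatible with the bimodule structures over the endomorphism division rings, so that the numerical pairs $(s,t)$ defining the valuations actually agree. This reduces to the observations that $\End_\A(iW) \cong \End_\B(W)$ via adjunction for each simple $W \in \B$, and that $\Delta(S_\la), \Delta(S_\rho), \Delta(\bar S)$ are all canonically identified with $\Delta$ through the equivalences ${^\perp\B} \xrightarrow{\sim} \mod\Delta$, $\B^\perp \xrightarrow{\sim}\mod\Delta$, and $i_\la\colon \B^\perp \xrightarrow{\sim} \add\bar S$ from Lemma~\ref{le:extsimple}; with these identifications in place, the four Ext-isomorphisms above are $\Delta$-$\Delta$-bilinear and the stated equalities of valuations follow.
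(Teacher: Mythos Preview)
Your proof is correct and follows essentially the same approach as the paper, which simply says ``Combine Lemmas~\ref{le:leftmax}, \ref{le:simple}, and \ref{le:ext_simple}.'' You have unpacked this one-line proof in detail, including the bimodule-compatibility check for the valuations that the paper leaves implicit.
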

\begin{proof}
Combine Lemmas~\ref{le:leftmax}, \ref{le:simple}, and
\ref{le:ext_simple}.
\end{proof}
The following diagram shows how $\Si(\B)$ and $\Si(\A)$ are
related.\footnote{The expansion of the vertex $\bar S$ into an arrow
  linking $S_\la$ with $S_\rho$ justifies the term `expansion of
  abelian categories'.}  \setlength{\unitlength}{1.5pt}
\[
\begin{picture}(120,25)
\put(-55,8){$\Si(\B)$}
\put(-05,8){$\bar S$}
\put(-30,02){\circle*{1}}
\put(-30,10){\circle*{1}}
\put(-30,18){\circle*{1}}
\put(25,02){\circle*{1}}
\put(25,10){\circle*{1}}
\put(25,18){\circle*{1}}
\put(-22,21){\vector(2,-1){15}}
\put(-22,-01){\vector(2,1){15}}
\put(-22,10){\vector(1,0){15}}
\put(02,11){\vector(2,1){15}}
\put(02,09){\vector(2,-1){15}}
\put(55,8){$\Si(\A)$}
\put(105,8){$S_\la$}
\put(137,8){$S_\rho$}
\put(80,02){\circle*{1}}
\put(80,10){\circle*{1}}
\put(80,18){\circle*{1}}
\put(170,02){\circle*{1}}
\put(170,10){\circle*{1}}
\put(170,18){\circle*{1}}
\put(88,21){\vector(2,-1){15}}
\put(88,-01){\vector(2,1){15}}
\put(88,10){\vector(1,0){15}}
\put(115,10){\vector(1,0){20}}
\put(147,11){\vector(2,1){15}}
\put(147,09){\vector(2,-1){15}}
\put(118,13){$\scriptstyle{(1,1)}$}
\end{picture}
\]

\begin{exm}
(1) Let $k$ be a field and $\Ga_n$ a quiver of extended Dynkin type
$\tilde\bbA_n$ with cyclic orientation. Consider the category
$\A=\rep_0(\Ga_n,k)$ of all finite dimensional nilpotent
representations. Fix a simple object $S$. This object is
(co)localizable and $S^\perp={^\perp(\tau S)}$. Thus the inclusion
$S^\perp\to\rep_0(\Ga_n,k)$ is an expansion, and $S^\perp$ is
equivalent to $\rep_0(\Ga_{n-1},k)$.

The following diagram depicts the shape of the \index{Auslander-Reiten
quiver}\emph{Auslander-Reiten quiver} of $\rep_0(\Ga_n,k)$.  Thus the
vertices represent the indecomposable objects, and there is an arrow
between two indecomposable objects if and only if there exists an
irreducible morphism; see Lemma~\ref{le:irr}.
\setlength{\unitlength}{1.5pt}
\[
\begin{picture}(-20,60)
\put(-12,07){$\scriptstyle{\bar S}$}
\put(-02,00){$\scriptstyle{S}$}
\put(-23,00){$\scriptstyle{\tau S}$}


\put(-70, 05){\circle*{.2}}

\put(-70, 10){\circle*{.2}}

\put(-70, 15){\circle*{.2}}

\put(-70, 20){\circle*{.2}}

\put(-70, 25){\circle*{.2}}

\put(-70, 30){\circle*{.2}}

\put(-70, 35){\circle*{.2}}

\put(-70, 40){\circle*{.2}}

\put(-70, 45){\circle*{.2}}

\put(-70, 50){\circle*{.2}}

\put(-70, 55){\circle*{.2}}

\put(50, 05){\circle*{.2}}

\put(50, 10){\circle*{.2}}

\put(50, 15){\circle*{.2}}

\put(50, 20){\circle*{.2}}

\put(50, 25){\circle*{.2}}

\put(50, 30){\circle*{.2}}

\put(50, 35){\circle*{.2}}

\put(50, 40){\circle*{.2}}

\put(50, 45){\circle*{.2}}

\put(50, 50){\circle*{.2}}

\put(50, 55){\circle*{.2}}


\put(-60,05){\circle*{1.2}}
\put(-40,05){\circle*{1.2}}
\put(-20,05){\circle{1.2}}
\put(00,05){\circle{1.2}}
\put(20,05){\circle*{1.2}}
\put(40,05){\circle*{1.2}}

\put(-60,25){\circle*{1.2}}
\put(-40,25){\circle{1.2}}
\put(-20,25){\circle*{1.2}}
\put(00,25){\circle*{1.2}}
\put(20,25){\circle{1.2}}
\put(40,25){\circle*{1.2}}

\put(-60,45){\circle{1.2}}
\put(-40,45){\circle*{1.2}}
\put(-20,45){\circle*{1.2}}
\put(00,45){\circle*{1.2}}
\put(20,45){\circle*{1.2}}
\put(40,45){\circle{1.2}}

\put(-50,15){\circle*{1.2}}
\put(-30,15){\circle{1.2}}
\put(-10,15){\circle*{1.2}}
\put(10,15){\circle{1.2}}
\put(30,15){\circle*{1.2}}

\put(-50,35){\circle{1.2}}
\put(-30,35){\circle*{1.2}}
\put(-10,35){\circle*{1.2}}
\put(10,35){\circle*{1.2}}
\put(30,35){\circle{1.2}}

\put(01,06){\vector(1,1){8}}
\put(01,26){\vector(1,1){8}}
\put(01,46){\vector(1,1){8}}
\put(01,06){\vector(1,1){8}}

\put(-59,24){\vector(1,-1){8}}
\put(-59,44){\vector(1,-1){8}}

\put(-39,24){\vector(1,-1){8}}
\put(-39,44){\vector(1,-1){8}}

\put(-19,24){\vector(1,-1){8}}
\put(-19,44){\vector(1,-1){8}}

\put(01,24){\vector(1,-1){8}}
\put(01,44){\vector(1,-1){8}}

\put(21,24){\vector(1,-1){8}}
\put(21,44){\vector(1,-1){8}}

\put(41,24){\vector(1,-1){8}}
\put(41,44){\vector(1,-1){8}}

\put(-69,14){\vector(1,-1){8}}
\put(-69,34){\vector(1,-1){8}}
\put(-69,54){\vector(1,-1){8}}

\put(-49,14){\vector(1,-1){8}}
\put(-49,34){\vector(1,-1){8}}
\put(-49,54){\vector(1,-1){8}}

\put(-29,14){\vector(1,-1){8}}
\put(-29,34){\vector(1,-1){8}}
\put(-29,54){\vector(1,-1){8}}

\put(-09,14){\vector(1,-1){8}}
\put(-09,34){\vector(1,-1){8}}
\put(-09,54){\vector(1,-1){8}}

\put(11,14){\vector(1,-1){8}}
\put(11,34){\vector(1,-1){8}}
\put(11,54){\vector(1,-1){8}}

\put(31,14){\vector(1,-1){8}}
\put(31,34){\vector(1,-1){8}}
\put(31,54){\vector(1,-1){8}}

\put(-69,16){\vector(1,1){8}}
\put(-69,36){\vector(1,1){8}}

\put(-49,16){\vector(1,1){8}}
\put(-49,36){\vector(1,1){8}}

\put(-29,16){\vector(1,1){8}}
\put(-29,36){\vector(1,1){8}}

\put(-09,16){\vector(1,1){8}}
\put(-09,36){\vector(1,1){8}}

\put(11,16){\vector(1,1){8}}
\put(11,36){\vector(1,1){8}}

\put(31,16){\vector(1,1){8}}
\put(31,36){\vector(1,1){8}}

\put(-59,06){\vector(1,1){8}}
\put(-59,26){\vector(1,1){8}}
\put(-59,46){\vector(1,1){8}}

\put(-39,06){\vector(1,1){8}}
\put(-39,26){\vector(1,1){8}}
\put(-39,46){\vector(1,1){8}}

\put(-19,06){\vector(1,1){8}}
\put(-19,26){\vector(1,1){8}}
\put(-19,46){\vector(1,1){8}}

\put(01,06){\vector(1,1){8}}
\put(01,26){\vector(1,1){8}}
\put(01,46){\vector(1,1){8}}

\put(21,06){\vector(1,1){8}}
\put(21,26){\vector(1,1){8}}
\put(21,46){\vector(1,1){8}}

\put(41,06){\vector(1,1){8}}
\put(41,26){\vector(1,1){8}}
\put(41,46){\vector(1,1){8}}
\end{picture}
\]
Note that the two dotted lines are identified and that there are $n+1$
simple objects which sit at the bottom ($n=5$ in this example). The
Serre functor $\tau$ induces an automorphism of order $n+1$, that is,
$\tau^{n+1}=\Id_\A$. The indecomposables not belonging to $S^\perp$
are represented by circles. Thus $S$ and $\tau S$ disappear, while
$\bar S$ becomes a simple object in $S^\perp$.

Suppose that $n>1$. Then $\bar S$ is a (co)localizable object of
$S^\perp$ and this gives another expansion $\bar S^\perp\to
S^\perp$. Iterating this formation of perpendicular categories
yields a chain $\A^0\to\A^1\to\cdots\to\A^{n+1}=\A$ of expansions
such that $\A^i$ is equivalent to $\rep_0(\Ga_{i},k)$ for each $i$.
The category $\A^0$ has a unique simple object $S_0$ which the
inclusion $\A^0\to\A$ sends to $S^{[n+1]}$. The induced map
$K_0(\A^0)\to K_0(\A)$ sends the class $[S_0]$ to
$\sum_{i=0}^n[\tau^i S]$.\footnote{This expansion of the class
$[S_0]$ in $K_0(\A)$ again explains the term `expansion of abelian
categories'.}

(2) Let $k$ be a field and consider a finite quiver $\Ga$ without
oriented cycles having two vertices $a,b$ that are joined by an
arrow $\xi\colon a\to b$ which is the unique arrow starting at $a$
and the unique arrow terminating at $b$.
\setlength{\unitlength}{1.5pt}
\[
\begin{picture}(80,25)
\put(-20,8){$\Ga$}
\put(28.5,03){$\scriptstyle{a}$}
\put(59,03){$\scriptstyle{b}$}
\put(30,10){\circle*{2.5}}
\put(60,10){\circle*{2.5}}
\put(00,02){\circle*{1}}
\put(00,10){\circle*{1}}
\put(00,18){\circle*{1}}
\put(90,02){\circle*{1}}
\put(90,10){\circle*{1}}
\put(90,18){\circle*{1}}
\put(08,21){\vector(2,-1){15}}
\put(08,-01){\vector(2,1){15}}
\put(08,10){\vector(1,0){15}}
\put(35,10){\vector(1,0){20}}
\put(67,11){\vector(2,1){15}}
\put(67,09){\vector(2,-1){15}}
\put(42,13){$\scriptstyle{\xi}$}
\end{picture}
\]
We obtain a new quiver $\Ga'$ by identifying $a$ and $b$ and removing
$\xi$. Let $\A=\rep(\Ga,k)$ and consider the full subcategory $\B$ of
representations such that $\xi$ is represented by an isomorphism. Note
that $\B$ is equivalent to $\rep(\Ga',k)$.

The simple representations $S_a$ and $S_b$ are (co)localizing objects
of $\A$ and they are joined by an almost split sequence $0\to S_b\to E\to
S_a\to 0$. The Auslander-Reiten formulae
$$D\Ext_\A^1(-,S_b)\cong\Hom_\A(S_a,-)\quad\text{and}\quad
D\Ext_\A^1(S_a,-)\cong\Hom_\A(-,S_b)$$ imply that
$S_a^\perp=\B={^\perp S_b}$. Thus the inclusion functor $\B\to\A$ is
an expansion.
\end{exm}

\subsection{An Auslander-Reiten formula}
Given a non-split expansion $\B\to\A$, the corresponding
simple objects $S_\la$ and $S_\rho$ in $\A$ are related by an
Auslander-Reiten formula.

\begin{prop}\label{pr:ARformula}
Let $\B\to\A$ be a non-split expansion of abelian
categories and $\Delta$ its associated division ring. Then
$$D\Ext_\A^1(-,S_\rho)\cong\Hom_\A(S_\la,-)\quad\text{and}\quad
D\Ext_\A^1(S_\la,-)\cong\Hom_\A(-,S_\rho),$$ where
$D=\Hom_\Delta(-,\Delta)$ denotes the standard duality. In particular, any
non-split extension $0\to S_\rho\to E\to S_\la\to$ is an almost split
sequence.
\end{prop}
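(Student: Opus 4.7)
The plan is to realise both formulae as Yoneda-pairing natural transformations, verify them on simple objects, and propagate to arbitrary objects by a five-lemma devissage based on Proposition~\ref{pr:section}; the almost-split statement then follows from the division-ring hypothesis on $\Delta$ together with the non-degeneracy of the resulting pairing.

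I would first write down the candidate natural transformations
\[
\theta_A\colon\Hom_\A(S_\la,A)\to D\Ext^1_\A(A,S_\rho),\quad \phi\mapsto(\xi\mapsto\xi\phi),
\]
\[
\zeta_A\colon\Hom_\A(A,S_\rho)\to D\Ext^1_\A(S_\la,A),\quad \psi\mapsto(\xi\mapsto\psi_*\xi),
\]
using the identification $\Ext^1_\A(S_\la,S_\rho)\cong\Delta$ from Lemma~\ref{le:ext_simple}(1). For the five-lemma step I also need the auxiliary map $\eta_A\colon\Ext^1_\A(S_\la,A)\to D\Hom_\A(A,S_\rho)$, $\xi\mapsto(\phi\mapsto\phi_*\xi)$, and its counterpart for $\zeta$. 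Both $\theta$ and $\eta$ are natural in $A$, and $\Ext^2_\A(S_\la,-)=0=\Ext^2_\A(-,S_\rho)$ (by (co)localizability) guarantees the long exact sequences on which the five-lemma will act both terminate.

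Next I would check $\theta$ and $\eta$ on simple objects. On $S_\la$, the map $\theta_{S_\la}$ is the canonical identification of $\End_\A(S_\la)$ with the $\Delta$-dual of $\Ext^1_\A(S_\la,S_\rho)$ from Lemma~\ref{le:ext_simple}(1), while $\eta_{S_\la}=0$ since $\Ext^1_\A(S_\la,S_\la)=0$ by localizability and $\Hom_\A(S_\la,S_\rho)=0$ because $S_\rho\in\B=S_\la^\perp$. On $S_\rho$ the roles swap. On any other simple object $S$, Lemma~\ref{le:simple}(3) gives $S\cong iS'$ with $S'$ simple in $\B$, so $S\in\B=S_\la^\perp={}^\perp S_\rho$ and all four groups vanish. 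To extend to arbitrary $A$, I invoke Proposition~\ref{pr:section}: every $A\in\A$ fits into an exact sequence $0\to A'\to A\to\bar A\to A''\to 0$ with $A',A''\in\add S_\la$ and $\bar A\in\B$, on which both sides of each formula also vanish. Splitting this sequence into two short exact sequences and comparing the six-term $\Hom_\A(S_\la,-)$--$\Ext^1_\A(S_\la,-)$ long exact sequence on top with the $D$-dual of the corresponding $\Hom_\A(-,S_\rho)$--$\Ext^1_\A(-,S_\rho)$ sequence on the bottom, joined by alternating $\theta$- and $\eta$-columns, a double application of the five lemma propagates the isomorphism from $\add S_\la$ and $\B$ to $A$. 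The symmetric argument with $\zeta$ yields the second formula.

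For the almost-split property, given a morphism $\alpha\colon X\to S_\la$ that is not a split epimorphism, any $\phi\colon S_\la\to X$ produces $\alpha\phi\in\End_\A(S_\la)=\Delta$; this element cannot be invertible, since otherwise $\phi(\alpha\phi)^{-1}$ would split $\alpha$, and since $\Delta$ is a division ring we conclude $\alpha\phi=0$ for every $\phi$. Hence $\theta_X(\phi)(\alpha^*\xi)=\xi(\alpha\phi)=0$ for all $\phi$, and because $\theta_X$ is an isomorphism the pairing $\Hom_\A(S_\la,X)\times\Ext^1_\A(X,S_\rho)\to\Delta$ is perfect; therefore $\alpha^*\xi=0$ and $\alpha$ factors through $E\to S_\la$. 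The dual argument with $\zeta$ handles morphisms $\beta\colon S_\rho\to Y$ that are not split monomorphisms. The hardest step will be commutativity of the middle square in the six-term five-lemma diagram, where $\theta$ and $\eta$ must intertwine the two connecting homomorphisms; this reduces to the standard compatibility of Yoneda composition with boundary operators, but requires some careful unwinding.
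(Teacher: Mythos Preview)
Your argument is correct and rests on the same structural ingredient as the paper's proof: the canonical four-term sequence $0\to A'\to A\to\bar A\to A''\to 0$ of Proposition~\ref{pr:section}, with $A',A''\in\add S_\la$ and $\bar A\in\B$. The paper, however, bypasses the five-lemma machinery entirely: it observes that the inclusion $A'\hookrightarrow A$ already induces isomorphisms $D\Ext^1_\A(A,S_\rho)\cong D\Ext^1_\A(A',S_\rho)$ and $\Hom_\A(S_\la,A')\cong\Hom_\A(S_\la,A)$ directly (since $A/A'$ embeds in $\bar A\in\B=S_\la^\perp={}^\perp S_\rho$ and the relevant $\Ext^2$ groups vanish by (co)localizability), and on $A'\in\add S_\la$ the two sides match by Lemma~\ref{le:ext_simple}(1). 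Your Yoneda-pairing formulation buys you explicit naturality from the outset, at the cost of verifying the connecting-square commutativity you flag at the end. One small correction: you write ``$S_\rho\in\B=S_\la^\perp$'', but this is false since $\Ext^1_\A(S_\la,S_\rho)\cong\Delta\neq 0$; the vanishing $\Hom_\A(S_\la,S_\rho)=0$ you need holds simply because $S_\la$ and $S_\rho$ are non-isomorphic simple objects in a non-split expansion.
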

\begin{proof}
Recall that ${^\perp\B}=\add S_\la$ and $\B^\perp=\add S_\rho$.  Fix
an object $A$ in $\A$ and consider the corresponding exact sequence
\eqref{eq:local}
$$0\lto A'\lto A\lto \bar A\lto A''\lto 0.$$ with $A',A''$ in
${^\perp\B}$ and $\bar A$ in $\B$.  The morphism $A'\to A$ induces the
first and the third isomorphism in the sequence below, while the second
isomorphism follows from the isomorphism
$\Hom_\A(S_\la,S_\la)\cong\Ext_\A^1(S_\la,S_\rho)$ in
Lemma~\ref{le:ext_simple}.
$$D\Ext_\A^1(A,S_\rho)\cong
D\Ext_\A^1(A',S_\rho)\cong\Hom_\A(S_\la,A')\cong\Hom_\A(S_\la,A).$$
The isomorphism $D\Ext_\A^1(S_\la,-)\cong\Hom_\A(-,S_\rho)$ follows
from the first by duality.

The argument given in the proof of Proposition~\ref{pr:Serreduality}
shows that any non-split extension $0\to S_\rho\to E\to S_\la\to$ is
an almost split sequence.
\end{proof}

\subsection{Decompositions}
Expansions of abelian categories respect decompositions of
abelian categories. The following lemma is a precise formulation of this fact.

\begin{lem}\label{le:decomp}
Let $i\colon\B\to\A$ be a non-split expansion of abelian
categories.
\begin{enumerate}
\item If $\A=\A_1\amalg\A_2$ is a decomposition, then
there exists a decomposition $i=\smatrix{i_1&0\\0&i_2}\colon
\B_1\amalg\B_2\to\A_1\amalg\A_2$ such that one of $i_1$ and $i_2$ is a
non-split expansion and the other is an equivalence.
\item If $\B=\B_1\amalg\B_2$ is a decomposition, then
there exists a decomposition $i=\smatrix{i_1&0\\0&i_2}\colon
\B_1\amalg\B_2\to\A_1\amalg\A_2$ such that one of $i_1$ and $i_2$ is a
non-split expansion and the other is an equivalence.
\item If $i'\colon\B'\to\A'$ is an equivalence of abelian categories,
 then $\smatrix{i&0\\0&i'}\colon \B\amalg\B'\to\A\amalg\A'$
 is a non-split expansion.
\end{enumerate}
Therefore $\A$ is connected if and only if $\B$ is connected.
\end{lem}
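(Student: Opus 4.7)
My plan is to handle (3), (1), (2) in that order and then derive the connectedness statement by contraposition from (1) and (2). Parts (3) and (1) are largely formal; the main work is in (2), where one must construct a decomposition of $\A$ from a given decomposition of $\B$. The principal difficulty in (2) is verifying that the kernel produced on $\A$ is sent to the correct block by the right adjoint.

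For (3), the componentwise adjoints $i_\la\amalg(i')^{-1}$ and $i_\rho\amalg(i')^{-1}$ are exact, the kernels remain $\add S_\la$ and $\add S_\rho$ (living entirely in the $\A$ summand), and the localization and $\Ext^2$ conditions transfer directly; non-splitness survives since $S_\la\not\cong S_\rho$. For (1), the non-split extension $0\to S_\rho\to i\bar S\to S_\la\to 0$ from \eqref{eq:locseq} forces $S_\la$ and $S_\rho$ into one block, say $\A_1$. Put $\B_j=\B\cap\A_j$. For $A_2\in\A_2$, block orthogonality kills $\Hom_\A(S_\la,A_2)$ and splits every extension $0\to A_2\to E\to S_\la\to 0$ (decompose $E=E_1\oplus E_2$ along the blocks and identify $A_2=E_2$), so $A_2\in S_\la^\perp=\B$; hence $\A_2=\B_2$ and $i_2$ is an equivalence. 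Block-preservation of $i_\la$ and $i_\rho$ then shows $i_1\colon\B_1\to\A_1$ is a non-split expansion with simples $S_\la,S_\rho$ and perpendicular conditions inherited from $\A$.

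For (2), by Lemma~\ref{le:extsimple} the simple object $\bar S=i_\la(S_\rho)=i_\rho(S_\la)$ lies in one block of $\B$, say $\B_1$. Define
\[
\A_j:=\{A\in\A\mid i_\la A\in\B_j\text{ and }i_\rho A\in\B_j\},\qquad j=1,2.
\]
The identities $i_\la S_\la=0=i_\rho S_\rho$, $i_\rho S_\la=\bar S=i_\la S_\rho\in\B_1$, and $i_\la i=\Id_\B=i_\rho i$ give $S_\la,S_\rho\in\A_1$, $i(\B_j)\subseteq\A_j$, and $\A_1\cap\A_2=0$. To decompose an arbitrary $A\in\A$, write $i_\la A=B_1\oplus B_2$ along $\B=\B_1\amalg\B_2$ and consider the composite $f\colon A\to ii_\la A\twoheadrightarrow iB_2$. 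Using the sequence $0\to A'\to A\to ii_\la A\to A''\to 0$ with $A',A''\in\add S_\la$, the vanishing $\Hom_\A(iB_2,S_\la)=\Hom_\B(B_2,\bar S)=0$ forces $iB_2\subseteq\Im(A\to ii_\la A)$, so $f$ is surjective. Set $K=\Ker f$. Exactness of $i_\la$ gives $i_\la K=B_1\in\B_1$. Applying the exact right adjoint $i_\rho$ to the defining sequence of $A$, and using $i_\rho(S_\la)=\bar S\in\B_1$, shows $i_\rho A$ differs from $i_\la A$ only by terms in $\add\bar S\subseteq\B_1$, so their $\B_2$-components coincide and equal $B_2$; applying $i_\rho$ to $0\to K\to A\to iB_2\to 0$ then identifies $i_\rho K$ with the $\B_1$-summand of $i_\rho A$, giving $i_\rho K\in\B_1$.

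The dual of Lemma~\ref{le:leftmax}(3) applied to the right expansion structure yields
\[
\Ext^1_\A(iB_2,K)\cong\Ext^1_\B(B_2,i_\rho K)=0,
\]
the final vanishing by orthogonality of $\B_1$ and $\B_2$ in $\B$. Hence $0\to K\to A\to iB_2\to 0$ splits, giving $A\cong K\oplus iB_2\in\A_1\amalg\A_2$. The orthogonality $\Hom_\A(\A_1,\A_2)=0=\Hom_\A(\A_2,\A_1)$ follows at once from the adjunction formulae $\Hom_\A(A_1,iB_2)=\Hom_\B(i_\la A_1,B_2)$ and its dual. Applying (1) to $\A=\A_1\amalg\A_2$ identifies $i_1\colon\B_1\to\A_1$ as a non-split expansion and $i_2\colon\B_2\to\A_2$ as an equivalence. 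The connectedness equivalence is immediate from (1) and (2) by contraposition.
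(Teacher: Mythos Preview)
Your proof is correct. Parts (1) and (3) match the paper's argument closely. For part (2), however, you and the paper take different routes.

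The paper's proof of (2) is much shorter: having placed $\bar S$ in $\B_1$, it observes that $\B_2\subseteq{^\perp S_\la}$ (a one-line computation using $i_\rho S_\la=\bar S\in\B_1$), then simply \emph{declares} $\A_2=\B_2$ and $\A_1=\A_2^\perp={^\perp\A_2}$, asserting that this is a decomposition of $\A$. The verification that $\B_2$ is indeed a two-sided block of $\A$ is left to the reader. Your approach is more constructive: you define $\A_j$ via the two adjoints, then exhibit an explicit splitting $A\cong K\oplus iB_2$ using the Ext-adjunction formula from Lemma~\ref{le:leftmax}(3) (in its dual form for the right expansion). In effect, you are supplying the details the paper omits, and along the way you recover the paper's conclusion $\A_2=i(\B_2)=\B_2$.

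What each approach buys: the paper's route is conceptually cleaner once one sees that $\B_2$, being orthogonal in $\B$ to $\bar S$, is automatically orthogonal in $\A$ to the ``new'' simples $S_\la,S_\rho$, hence sits as an undisturbed block in $\A$. Your route avoids this insight at the cost of a longer computation, but has the virtue of being self-contained and making the splitting mechanism explicit. Both are valid; the paper's is the more elegant formulation, while yours would serve well as a detailed verification of the paper's terse assertion.
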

\begin{proof}
We identify $\B$ with the essential image of $i$ and choose a simple
object $S$ in $\A$ with $S^\perp=\B$.

(1) The decomposition of $\A$ restricts to a decomposition of $\B$ by
taking $\B_\a=\B\cap\A_\a$ for $\a=1,2$. Now suppose without loss of
generality that $S$ belongs to $\A_1$. Then $i_1$ is a non-split
expansion and $i_2$ is an equivalence.

(2) Let $\bar S=i_\rho S$ and suppose without loss of generality
that $\bar S$ belongs to $\B_1$.  Then $\B_2\subseteq {^\perp S}$ and
this yields a decomposition $\A=\A_1\amalg\A_2$ if we set $\A_2=\B_2$
and $\A_1=\A_2^\perp={^\perp\A_2}$. It follows that $i_1$ is a non-split
expansion and $i_2$ is an equivalence.

(3) Clear.
\end{proof}

\subsection{Dimensions}
We compute the global dimension for an expansion of abelian
categories.
\begin{lem}\label{le:cohdim}
Let $i\colon\B\to\A$ be a non-split expansion of abelian
categories.
\begin{enumerate}
\item $\gldim\A=\max\{1,\gldim\B\}$.
\item $\A$ has non-zero projective objects if and only if $\B$ has
non-zero projective objects.
\end{enumerate}
\end{lem}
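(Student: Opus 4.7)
The plan is to prove both statements by combining dévissage via the exact sequences \eqref{eq:local} and \eqref{eq:locseq} with the adjunction formula of Lemma~\ref{le:leftmax}(3) and its dual for the right adjoint.

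For (1), I first establish the lower bound. Applying Lemma~\ref{le:leftmax}(3) with $A=iB'$ for $B'\in\B$, and using $i_\la i\cong\Id_\B$, yields $\Ext^n_\B(B',B)\cong\Ext^n_\A(iB',iB)$ for all $B,B'\in\B$ and $n\geq 0$, whence $\gldim\A\geq\gldim\B$. Moreover, the non-split sequence $0\to S_\rho\to i\bar S\to S_\la\to 0$ from \eqref{eq:locseq} combined with Lemma~\ref{le:ext_simple}(1) gives $\Ext^1_\A(S_\la,S_\rho)\cong\End_\A(S_\la)\neq 0$, so $\gldim\A\geq 1$.

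For the upper bound, the key inputs are $\pdim S_\la\leq 1$ and $\idim S_\rho\leq 1$; both follow from Lemma~\ref{lem:hereditary} together with the (co)localizability of $S_\la$ and $S_\rho$ established via Lemma~\ref{le:localizable} and its dual. Given arbitrary $A,C\in\A$, I consider the left-expansion sequence \eqref{eq:local} for $A$,
\[0\to A'\to A\to \bar A\to A''\to 0,\]
with $A',A''\in\add S_\la$ and $\bar A\in\B$, together with the dual right-expansion sequence for $C$,
\[0\to C'\to \bar C\to C\to C''\to 0,\]
with $C',C''\in\add S_\rho$ and $\bar C\in\B$. Splitting each into two short exact sequences and chasing the long exact sequences for $\Hom_\A(-,-)$, the vanishing of $\Ext^n_\A(S_\la,-)$ and $\Ext^n_\A(-,S_\rho)$ for $n\geq 2$ produces a natural surjection
\[\Ext^n_\A(\bar A,\bar C)\twoheadrightarrow\Ext^n_\A(A,C)\quad\text{for all }n\geq 2.\]
By Lemma~\ref{le:leftmax}(3) the left-hand side equals $\Ext^n_\B(\bar A,\bar C)$, which vanishes whenever $n>\gldim\B$, proving $\gldim\A\leq\max\{1,\gldim\B\}$.

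For (2), I use both adjunctions to shift projectives along $i$ and $i_\la$. If $Q\in\B$ is non-zero and projective, the right-expansion adjunction gives $\Ext^1_\A(iQ,A)\cong\Ext^1_\B(Q,i_\rho A)=0$, so $iQ$ is projective in $\A$, and $iQ\neq 0$ since $i$ is faithful. Conversely, if $P\in\A$ is non-zero and projective, Lemma~\ref{le:leftmax}(3) shows $\Ext^1_\B(i_\la P,B)\cong\Ext^1_\A(P,iB)=0$, so $i_\la P$ is projective in $\B$; moreover $i_\la P\neq 0$, because $i_\la P=0$ would force $P\in\Ker i_\la=\add S_\la$, forcing $S_\la$ to be projective in $\A$, in contradiction to $\Ext^1_\A(S_\la,S_\rho)\neq 0$. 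The main obstacle will be the careful bookkeeping in the dévissage argument for (1), especially the edge case $n=2$ where the long exact sequences yield only surjections rather than isomorphisms; fortunately a surjection is exactly what is needed to transport vanishing from $\gldim\B$ to $\gldim\A$.
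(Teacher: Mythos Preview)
Your proposal is correct and follows essentially the same approach as the paper: part~(2) is identical, and part~(1) uses the same ingredients (the adjunction formula of Lemma~\ref{le:leftmax}(3) and its dual, together with $\pdim S_\la\le 1$ and $\idim S_\rho\le 1$). The only minor difference is that for the upper bound in~(1) you d\'evissage both arguments before applying the adjunction to $\Ext^n_\A(\bar A,\bar C)$, whereas the paper d\'evissages one argument and then applies the dual adjunction $\Ext^n_\A(iB,C)\cong\Ext^n_\B(B,i_\rho C)$ directly; both routes arrive at the same vanishing.
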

\begin{proof}
(1) Use the adjunction formula for $\Ext^n(-,-)$ from
Lemma~\ref{le:leftmax} together with the fact that $\pdim A\leq 1$ for all
$A$ in $^\perp\B$.  Note also that $\Ext_\A^1(-,-)\neq 0$ by
Lemma~\ref{le:ext_simple}.

(2) We use the general fact that a functor between abelian categories
preserves projectivity if it admits an exact right adjoint.  Thus $i$
and its left adjoint $i_\la$ preserve projectivity. Note that there
are no non-zero projectives in the kernel of $i_\la$ by
Lemma~\ref{le:ext_simple}.
\end{proof}

Next we discuss  Ext-finiteness for expansions of abelian categories.

\begin{lem}\label{le:extfin}
Let $k$ be a commutative ring and $i\colon\B\to\A$ a non-split
expansion of $k$-linear abelian categories. Then $\A$ is Ext-finite if
and only if $\B$ is Ext-finite.
\end{lem}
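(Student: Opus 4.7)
The plan is to handle the two directions separately: the easy direction is immediate from the adjunction formula, while the hard direction relies on a dévissage reducing everything to computations with a handful of building blocks. For the easy direction, I would set $A=iB'$ in Lemma~\ref{le:leftmax}(3); since $i_\la i\cong \Id_\B$, this yields $\Ext_\B^n(B',B)\cong \Ext_\A^n(iB',iB)$ for every $B,B'\in\B$ and $n\ge 0$, so Ext-finiteness of $\A$ transfers to $\B$ at once.

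For the converse, suppose $\B$ is Ext-finite. The first step is to verify that the associated division ring $\Delta=\End_\A(S_\la)$ has finite length over $k$: by Lemma~\ref{le:ext_simple}(1) one has $\Delta\cong \End_\A(S_\rho)$, and the equivalence $i_\la\colon \B^\perp\xto{\sim}\add\bar S$ from Lemma~\ref{le:extsimple}(2) identifies $\End_\A(S_\rho)$ with $\End_\B(\bar S)$, which is of finite length by hypothesis. Combined with the localizability of $S_\la$ (Lemma~\ref{le:localizable}), this shows that $\Hom_\A(S_\la,A)$ and $\Ext^1_\A(S_\la,A)$ are of finite length over $\Delta$, and hence over $k$, for every $A\in\A$.

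Next, applying Proposition~\ref{pr:section} to the Serre subcategory ${}^\perp\B=\add S_\la$, every object $A\in\A$ fits into a 4-term exact sequence
\[
0\lto A_1\lto A\lto \bar A\lto A_2\lto 0
\]
with $A_1,A_2\in\add S_\la$ and $\bar A\in\B$; by the previous step together with the remark after Proposition~\ref{pr:section} one has $A_i\cong S_\la^{m_i}$ with $m_i$ finite. Splitting this sequence into two short exact sequences and decomposing $A'$ analogously, the long exact sequences in $\Ext^*_\A$ reduce the problem to showing that $\Ext^n_\A(X,Y)$ is of finite length over $k$ for all $n\ge 0$, when $X,Y$ range over the building blocks $\{S_\la\}\cup i\B$.

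I would verify the four resulting cases as follows: $\Ext^n_\A(iB,iB')\cong \Ext^n_\B(B,B')$ is finite by hypothesis, via Lemma~\ref{le:leftmax}(3); $\Ext^n_\A(S_\la,iB)\cong\Ext^n_\B(i_\la S_\la,B)=0$ since $S_\la\in\Ker i_\la$; $\Ext^n_\A(iB,S_\la)\cong \Ext^n_\B(B,i_\rho S_\la)=\Ext^n_\B(B,\bar S)$ via the dual of Lemma~\ref{le:leftmax}(3) for the right adjoint $i_\rho$ (using $i_\rho S_\la=\bar S$ from Lemma~\ref{le:extsimple}), which is finite by hypothesis; and $\Ext^n_\A(S_\la,S_\la)$ equals $\Delta$ for $n=0$ and vanishes for $n\ge 1$ by the localizability conditions $\Ext^1_\A(S_\la,S_\la)=0$ and $\Ext^2_\A(S_\la,-)=0$ together with Lemma~\ref{lem:hereditary}. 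The main obstacle is the dévissage bookkeeping across the two long exact sequences; the genuine input is the finiteness of $\Delta$ and the localizability of $S_\la$, which together guarantee that all intermediate groups arising in the long exact sequences have finite length over $k$.
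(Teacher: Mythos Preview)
Your proof is correct and follows essentially the same strategy as the paper: both establish that $\Delta$ has finite length over $k$ via $\End_\A(S_\la)\cong\End_\B(\bar S)$, then use the natural 4-term sequence together with the adjunction formulas and the localizability of $S_\la$ to reduce to Ext-groups known to be finite.

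The only organizational difference is that the paper decomposes just the \emph{first} argument. Fixing an arbitrary $C\in\A$, it shows directly that $\Ext^n_\A(S_\la,C)$ has finite length for all $n$ (localizability gives this for $n=0,1$ over $\Delta$, and $\Ext^{\ge 2}_\A(S_\la,-)=0$), and that $\Ext^n_\A(iB,C)\cong\Ext^n_\B(B,i_\rho C)$ has finite length by the right-adjoint formula; then one application of the 4-term sequence in the first variable finishes. Your double d\'evissage is valid but slightly redundant: once you know $\Ext^n_\A(S_\la,-)$ and $\Ext^n_\A(iB,-)$ are finite on all of $\A$, there is no need to decompose the second argument as well.
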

\begin{proof}
We use the adjunction formula for $\Ext^n(-,-)$ from
Lemma~\ref{le:leftmax}. Note that these isomorphisms are $k$-linear
since we assume the functor $i$ to be $k$-linear. It is clear that
$\B$ is Ext-finite if $\A$ is Ext-finite. To prove the converse, fix a
simple object $S$ in $\A$ such that $S^\perp=\B$, and an arbitrary
object $C$ in $\A$. Then $\End_\A(S)\cong\End_\B(\bar S)$ for some
simple object $\bar S$ in $\B$; see Lemma~\ref{le:extsimple}. Thus
$\End_\A(S)$ is of finite length over $k$, and it follows that
$\Ext_\A^n(S,C)$ is of finite length over $k$ for all $n\ge 0$ since
the object $S$ is localizable; see Lemma~\ref{le:localizable}. On the
other hand, $\Ext_\A^n(B,C)$ is of finite length over $k$ for all $B$
in $\B$ by the adjunction formula for $\Ext^n(-,-)$ from
Lemma~\ref{le:leftmax}.  Now choose $A$ in $\A$ and apply
$\Ext^n_\A(-,C)$ to the natural exact sequence \eqref{eq:local}
$$0\lto A'\lto A\lto[\eta_A] \bar A\lto A''\lto 0$$ with $A',A''$ in
${^\perp\B}=\add S$ and $\bar A$ in $\B$. It follows that
$\Ext_\A^n(A,C)$ is of finite length over $k$ for all $n\ge 0$.
\end{proof}

\section{Coherent sheaves on the projective line}\label{se:coh}

In this section we discuss the category of coherent sheaves on the
projective line $\bbP^1_k$ over an arbitrary base field $k$. This is
a hereditary abelian category with finite dimensional Hom and Ext
spaces. Moreover, the category satisfies Serre duality and admits a
tilting object. Various structural properties can be derived from
these basic facts.

The projective line $\bbP^1_k$ is covered by two copies of the affine
line $\bbA^1_k$. Using this fact, we identify sheaves on $\bbP^1_k$
with `triples', that is, pairs of modules over the polynomial ring
$k[x]$ which are glued together by a glueing morphism.  In this way,
basic properties of sheaves are easily derived from properties of
modules over a polynomial ring in one variable.

Every coherent sheaf is the direct sum of a torsion-free sheaf and a
finite length sheaf. An indecomposable torsion-free sheaf is a line
bundle and an indecomposable finite length sheaf is uniserial. The
simple sheaves are parametrized by the closed points of $\bbP^1_k$,
and for each simple sheaf $S$ and $r>0$ there is a unique sheaf with
length $r$ and top $S$.  This yields a complete classification of all
indecomposable sheaves.

A classical theorem of Serre identifies the category of coherent
sheaves on $\bbP^1_k$ with the quotient category of the category of
finitely generated $\bbZ$-graded $k[x_0,x_1]$-modules modulo the Serre
subcategory of finite length modules. We use the concept of
dehomogenization to pass from graded $k[x_0,x_1]$-modules to modules
over $k[x]$. In geometric terms, this reflects the passage from the
projective to an affine line.

\subsection{Coherent sheaves on $\bbA^1_k$}

Let $k$ be a field and $\bbA^1_k$ the affine line over $k$.  The
polynomial ring $k[x]$ is the ring of regular functions and the
category of coherent sheaves $\coh\bbA^1_k$ is equivalent to the
module category $\mod k[x]$ via the global section functor.

Let $\Spec k[x]$ denote the set of prime ideals of $k[x]$. Note that
$k[x]$ is a principal ideal domain. Thus irreducible polynomials
correspond to non-zero prime ideals by taking a polynomial $P$ to the
ideal $(P)$ generated by $P$. A \index{point!closed} \emph{closed
point} of $\bbA^1_k$ is by definition a non-zero prime ideal $\frp$
and the \index{point!generic} \emph{generic point} is the zero ideal.

The following result describes the category $\mod_0 k[x]$ of torsion
modules and  the corresponding quotient category.

\begin{prop}\label{pr:aff}
Let $k[x]$ be the polynomial ring over a field $k$.
\begin{enumerate}
\item The functor which sends a $k[x]$-module $M$ to its family of
localizations $(M_\frp)_{\frp\in\Spec k[x]}$ induces an equivalence
$$\mod_0 k[x]\lto[\sim]\coprod_{0\neq\frp\in\Spec k[x]}\mod_0(k[x]_\frp).$$
\item The localization functor $\mod k[x]\to\mod k(x)$ induces an
equivalence
$$\frac{\mod k[x]}{\mod_0 k[x]}\lto[\sim]\mod k(x).$$
\end{enumerate}
\end{prop}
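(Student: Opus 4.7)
The plan is to reduce both parts to standard facts about finitely generated modules over the PID $k[x]$, combined with the Serre quotient example that follows Lemma~\ref{le:Serre}.

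For part (1), I would invoke primary decomposition: every finitely generated torsion $k[x]$-module $M$ decomposes canonically as a finite direct sum $M \cong \bigoplus_{0 \neq \frp} M[\frp^\infty]$ of its primary components, and the localization map $M \to M_\frp$ identifies $M[\frp^\infty]$ with $M_\frp$ while annihilating the other components (since elements of $M[\frq^\infty]$ with $\frq \neq \frp$ are killed by a polynomial invertible in $k[x]_\frp$). The functor $M \mapsto (M_\frp)_{\frp}$ is then additive, and I would exhibit an explicit quasi-inverse sending a finitely supported family $(N_\frp)$ with $N_\frp \in \mod_0(k[x]_\frp)$ to $\bigoplus_\frp N_\frp$, viewed as a $k[x]$-module by restriction of scalars along each $k[x] \to k[x]_\frp$. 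Full faithfulness reduces to the vanishing $\Hom_{k[x]}(U,V) = 0$ for $U$ primary at $\frp$ and $V$ primary at $\frq \neq \frp$, which is immediate from the disjointness of their annihilators.

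For part (2), I would apply the first example following Lemma~\ref{le:Serre} with $\La = k[x]$ and the prime ideal $(0)$: the localization functor $T \colon \mod k[x] \to \mod k(x)$, $M \mapsto M \otimes_{k[x]} k(x)$, is exact and induces an equivalence $\mod k[x] / \Ker T \xto{\sim} \mod k(x)$. It then suffices to identify $\Ker T$ with $\mod_0 k[x]$: a finitely generated $k[x]$-module $M$ satisfies $M \otimes_{k[x]} k(x) = 0$ if and only if every element of $M$ is annihilated by some non-zero polynomial, i.e.\ $M$ is torsion, and over the PID $k[x]$ a finitely generated torsion module is automatically both artinian and noetherian, hence of finite length.

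Neither part presents a serious obstacle. The most delicate point lies in part (1), where one must match the morphism spaces on the two sides: a morphism $M \to N$ between finite length $k[x]$-modules decomposes uniquely as a family of morphisms $M[\frp^\infty] \to N[\frp^\infty]$ precisely because cross-terms $M[\frp^\infty] \to N[\frq^\infty]$ vanish for $\frp \neq \frq$; this is exactly the Hom-computation in the coproduct category on the right.
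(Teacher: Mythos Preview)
Your proof is correct. For part~(1) your approach coincides with the paper's: both invoke the structure theory of finitely generated modules over the PID $k[x]$ and exhibit the same quasi-inverse $(N_\frp)\mapsto\bigoplus_\frp N_\frp$; you simply spell out the primary decomposition and the vanishing of cross-Homs that the paper leaves implicit.

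For part~(2) your route differs from the paper's. You invoke the general example following Lemma~\ref{le:Serre} (localization of $\mod\La$ at a prime induces an equivalence $\mod\La/\Ker T\xto{\sim}\mod\La_\frp$) with $\frp=(0)$, and then identify $\Ker T=\mod_0 k[x]$. The paper instead argues directly: it checks that the induced functor $\bar T$ is dense (a rank-$r$ $k(x)$-module is $\bar T(k[x]^r)$) and full (since multiplication by any non-zero polynomial $P$ has torsion cokernel, hence becomes invertible in the quotient, so $P^{-1}$ lies in the image of $\Hom_\A(k[x],k[x])\to k(x)$). Your argument is shorter and more conceptual, but it leans on an example that the paper states without proof; the paper's argument is self-contained and makes the mechanism of fullness concrete. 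Both are fine.
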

\begin{proof}
(1) The assertion follows from standard properties of finitely
generated modules over principal ideal domains. Note that the
quasi-inverse functor takes a family of modules
$(N_\frp)_{\frp\in\Spec k[x]}$ to $\bigoplus_\frp N_\frp$.

(2) Set $\A=\mod k[x]/\mod_0 k[x]$. The kernel of the localization
functor $T=-\otimes_{k[x]}k(x)$ is the category $\mod_0 k[x]$. Thus
$T$ induces a faithful functor $\bar T\colon\A\to\mod k(x)$.  The
functor is dense, since a $k(x)$-module of rank $r$ is isomorphic to
$\bar T(k[x]^r)$.  To show that $\bar T$ is full, it suffices to show
that $\bar T$ induces a surjective map
$$f\colon\Hom_\A(k[x],k[x])\lto\Hom_{k(x)}(k(x),k(x))\cong k(x).$$
Given any non-zero polynomial $P\in k[x]$, let $\mu_P\colon k[x]\to
k[x]$ denote the multiplication by $P$. The kernel and cokernel of
$\mu_P$ belong to $\mod_0 k[x]$, and therefore $\mu_P$ becomes
invertible in $\A$. Thus $f(\mu_P^{-1})=P^{-1}$. It follows that $f$
is surjective.
\end{proof}

\subsection{Coherent sheaves on $\bbP^1_k$}

Let $k$ be a field and $\bbP^1_k$ the projective line over $k$.  We
view $\bbP^1_k$ as a scheme and begin with a description of the
underlying set of points.

Let $k[x_0,x_1]$ be the polynomial ring in two variables with the
usual $\bbZ$-grading by total degree.  Denote by $\Proj k[x_0,x_1]$
the set of homogeneous prime ideals of $k[x_0,x_1]$ that are different
from the unique maximal ideal consisting of positive degree
elements. Note that $k[x_0,x_1]$ is a two-dimensional graded factorial
domain. Thus homogeneous irreducible polynomials correspond to
non-zero homogeneous prime ideals by taking a polynomial $P$ to the
ideal $(P)$ generated by $P$. A \index{point!closed} \emph{closed
point} of $\bbP^1_k$ is by definition an element $\frp\neq 0$ in
$\Proj k[x_0,x_1]$, and the \index{point!generic} \emph{generic point}
is the zero ideal. Using homogeneous coordinates, a
\index{point!rational}\emph{rational point} of $\bbP^1_k$ is a pair
$[\la_0:\la_1]$ of elements of $k$ subject to the relation
$[\la_0:\la_1]=[\alpha \la_0:\alpha \la_1]$ for all $\alpha\in k$,
$\alpha\neq 0$. We identify each rational point $[\la_0:\la_1]$ with
the prime ideal $(\la_1 x_0-\la_0 x_1)$ of $k[x_0,x_1]$.

Using the identification $y=x_1/x_0$, we cover $\bbP^1_k$ by two
copies $U'=\Spec k[y]$ and $U''=\Spec k[y^{-1}]$ of the affine line,
with $U'\cap U''=\Spec k[y,y^{-1}]$. More precisely, the
morphism $k[x_0,x_1]\to k[y]$ which sends a polynomial $P$
to $P(1,y)$ induces a bijection
\begin{equation*}
\Proj k[x_0,x_1]\smallsetminus\{(x_0)\}\lto[\sim]\Spec k[y];
\end{equation*}
see \S\ref{se:dehom} for details. Analogously, the morphism
$k[x_0,x_1]\to k[y^{-1}]$ which sends a polynomial $P$ to
$P(y^{-1},1)$ induces a bijection
\begin{equation*}
\Proj k[x_0,x_1]\smallsetminus\{(x_1)\}\lto[\sim]\Spec k[y^{-1}].
\end{equation*}

Based on the covering $\bbP^1_k=U'\cup U''$, the category
$\coh\bbP^1_k$ of coherent sheaves admits a description in
terms of the following pullback of abelian categories
$$\xymatrix{\coh\bbP^1_k\ar[r]\ar[d]&\coh U'\ar[d]\\\coh
U''\ar[r]&\coh U'\cap U'' }$$ where each functor is given by
restricting a sheaf to the appropriate open subset; see
\cite[Chap.~VI, Prop.~2]{G}. More concretely, this pullback diagram
has, up to  equivalence, the form
$$\xymatrix{\A\ar[r]\ar[d]&\mod k[y]\ar[d]\\ \mod k[y^{-1}]\ar[r]&\mod
k[y,y^{-1}]}$$ where the category $\A$ is defined as follows. The
objects of $\A$ are triples $(M',M'',\mu)$, where $M'$
is a finitely generated $k[y]$-module, $M''$ is a finitely generated
$k[y^{-1}]$-module, and $\mu\colon M_y'\xto{\sim} M_{y^{-1}}''$ is an
isomorphism of $k[y,y^{-1}]$-modules. Here, we use for any $R$-module
$M$ the notation $M_x$ to denote the localization with respect to an
element $x\in R$.  A morphism from $(M',M'',\mu)$ to $(N',N'',\nu)$ in
$\A$ is a pair $(\p',\p'')$ of morphisms, where $\p'\colon M'\to N'$
is $k[y]$-linear and $\p''\colon M''\to N''$ is $k[y^{-1}]$-linear
such that $\nu\p'_y=\p''_{y^{-1}}\mu$.

Given a sheaf $\F$ on $\bbP^1_k$, we denote for any open subset
$U\subseteq\bbP^1_k$ by $\Ga(U,\F)$ the \index{sheaf!sections of}
\emph{sections} over $U$.
\begin{lem}
The functor which sends a coherent sheaf $\F$ on $\bbP^1_k$ to the
triple $(\Ga(U',\F),\Ga(U'',\F),\id_{\Ga(U'\cap U'',\F)})$ gives an
equivalence $\coh\bbP^1_k\xto{\sim}\A$.
\end{lem}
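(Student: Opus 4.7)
The plan is to reduce the statement to the pullback description of $\coh\bbP^1_k$ already indicated in the excerpt, together with the standard equivalences between categories of coherent sheaves on affine schemes and module categories.

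First I would invoke the cited result from \cite[Chap.~VI, Prop.~2]{G}, which realises $\coh\bbP^1_k$ as the pullback (a.k.a.\ $2$-fibre product) of the restriction functors
\[
\coh U' \lto \coh(U'\cap U'') \rto \coh U''.
\]
Concretely, this says that a coherent sheaf on $\bbP^1_k$ is equivalent data to a pair of coherent sheaves $(\F',\F'')$ on $U'$ and $U''$ together with an isomorphism $\F'|_{U'\cap U''} \xto{\sim} \F''|_{U'\cap U''}$, and similarly for morphisms. This is really just the sheaf condition applied to the open cover $\{U',U''\}$: the sections over $\bbP^1_k$ are determined by those over $U'$ and $U''$ which agree on $U'\cap U''$.

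Next I would translate this via the equivalences $\coh U' \simeq \mod k[y]$, $\coh U'' \simeq \mod k[y^{-1}]$, and $\coh(U'\cap U'') \simeq \mod k[y,y^{-1}]$ coming from the global section functor on an affine scheme. The two restriction functors correspond, under these equivalences, to the localisation functors $M'\mapsto M'_y$ and $M''\mapsto M''_{y^{-1}}$. A pullback datum therefore becomes exactly a triple $(M',M'',\mu)$ with $\mu\colon M'_y \xto{\sim} M''_{y^{-1}}$ a $k[y,y^{-1}]$-linear isomorphism, and morphisms of such triples correspond exactly to morphisms in $\A$. Thus the pullback of module categories is equivalent to $\A$ by construction.

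Finally I would identify the resulting functor $\coh\bbP^1_k \to \A$: for a coherent sheaf $\F$, its image under the pullback equivalence is $(\Ga(U',\F),\Ga(U'',\F))$ together with the tautological identification of their common restriction $\Ga(U'\cap U'',\F)$, which after the localisation identifications $\Ga(U',\F)_y = \Ga(U'\cap U'',\F) = \Ga(U'',\F)_{y^{-1}}$ is precisely the identity morphism. This is the functor in the statement, so it is an equivalence. The only step needing mild care is the last one, namely keeping track of the canonical isomorphisms $\Ga(U',\F)_y \cong \Ga(U'\cap U'',\F)$ coming from the affine equivalence $\coh U'\simeq\mod k[y]$ and checking that the glueing map is indeed the identity under these identifications; everything else is a direct translation via already quoted equivalences.
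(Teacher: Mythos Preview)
Your proposal is correct and follows essentially the same approach as the paper: cite the classical pullback description of $\coh\bbP^1_k$ from \cite[Chap.~VI, Prop.~2]{G}, and then translate it into module categories via the affine equivalences $\coh U'\simeq\mod k[y]$, $\coh U''\simeq\mod k[y^{-1}]$, $\coh(U'\cap U'')\simeq\mod k[y,y^{-1}]$. The paper's proof is simply a two-sentence version of what you wrote; your extra care in tracking the glueing isomorphism is a welcome elaboration but not a different argument.
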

\begin{proof}
The description of a sheaf $\F$ on $\bbP^1_k=U'\cup U''$ in
terms of its restrictions $\F|_{U'}$, $\F|_{U''}$, and $\F|_{U'\cap
U''}$ is classical; see \cite[Chap.~VI, Prop.~2]{G}. Thus it remains
to observe that taking global sections identifies $\coh U'=\mod k[y]$,
$\coh U''=\mod k[y^{-1}]$, and $\coh U'\cap U''=\mod k[y,y^{-1}]$.
\end{proof}

From now on we identify the categories $\coh\bbP^1_k$ and $\A$ via
the above equivalence.

\subsection{Serre's theorem and Serre duality}

Let $\mod^\bbZ k[x_0,x_1]$ denote the category of finitely generated
$\bbZ$-graded $k[x_0,x_1]$-modules and let $\mod_0^\bbZ k[x_0,x_1]$ be
the Serre subcategory consisting of all finite length objects.

There is a functor
\begin{equation}\label{eq:serre}
\mod^\bbZ k[x_0,x_1]\lto\coh\bbP^1_k
\end{equation}
that takes each graded $k[x_0,x_1]$-module $M$ to the triple
$((M_{x_0})_0, (M_{x_1})_0,\s_M)$, where $y$ acts on the degree zero
part of $M_{x_0}$ via the identification $y=x_1/x_0$, the variable
$y^{-1}$ acts on the degree zero part of $M_{x_1}$ via the
identification $y^{-1}=x_0/x_1$, and the isomorphism $\s_M$ equals the
obvious identification
$[(M_{x_0})_0]_{x_1/x_0}=[(M_{x_1})_0]_{x_0/x_1}$.

\begin{prop}[Serre]\label{pr:Serre}
The functor \eqref{eq:serre} induces an equivalence
\begin{equation*}
\frac{\mod^\bbZ k[x_0,x_1]}{\mod_0^\bbZ
k[x_0,x_1]}\lto[\sim]\coh\bbP^1_k.
\end{equation*}
\end{prop}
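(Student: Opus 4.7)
The plan is to apply Proposition~\ref{pr:quotient}. First I would check that the functor in \eqref{eq:serre}, call it $F$, is exact with kernel $\mod_0^\bbZ k[x_0,x_1]$; this produces an induced exact functor $\bar F$ from the quotient, and the remaining work is to verify that $\bar F$ is fully faithful and essentially surjective.

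Exactness of $F$ is immediate, since localization at $x_0$ or $x_1$ is exact and taking the degree-zero part of a $\bbZ$-graded module is exact. For the kernel: every simple graded $k[x_0,x_1]$-module is of the form $k(n)$ with $x_0,x_1$ acting as zero, hence vanishes after localizing at either variable, so $\mod_0^\bbZ k[x_0,x_1]\subseteq\Ker F$. Conversely, if $F(M)=0$ then $(M_{x_0})_0=0$, and since $x_0$ acts invertibly on $M_{x_0}$ raising the degree by one, all homogeneous components vanish, so $M_{x_0}=0$; analogously $M_{x_1}=0$. Because $M$ is finitely generated, there are uniform $N_0,N_1$ with $x_0^{N_0}M=0=x_1^{N_1}M$, hence $M$ is annihilated by $(x_0,x_1)^{N_0+N_1}$, so each graded component is a finite-dimensional $k$-space and only finitely many are non-zero; thus $M$ has finite length.

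For essential surjectivity, starting from a sheaf $\F$ corresponding to a triple $(M',M'',\mu)$, I would consider the graded module $\Gamma_*(\F)=\bigoplus_{n\in\bbZ}\Gamma(\bbP^1_k,\F(n))$, whose degree $n$ component, by unraveling the covering $\bbP^1_k=U'\cup U''$, can be identified inside $V=M'_y=M''_{y^{-1}}$ as $\{v\in V\mid v\in M'\text{ and }y^{-n}v\in M''\}$. For $n_0$ sufficiently large, $\F(n_0)$ is generated by its global sections; this is verified directly by clearing $y$-denominators of a finite generating set of $M'$ and symmetrically clearing $y^{-1}$-denominators for $M''$. The graded submodule $M\subseteq\Gamma_*(\F)$ generated by the finite-dimensional $k$-space $\Gamma(\bbP^1_k,\F(n_0))$ is then finitely generated, and a direct comparison shows $F(M)\cong\F$.

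For fully faithfulness I would use Lemma~\ref{le:mor}: a morphism $F(M)\to F(N)$ is a compatible pair $(\p',\p'')$ with $\p'\colon(M_{x_0})_0\to(N_{x_0})_0$ and $\p''\colon(M_{x_1})_0\to(N_{x_1})_0$. Given a finite homogeneous generating set of $M$, each generator's image under $\p'$ has denominator bounded by some $x_0^k$; taking $k$ large enough to work uniformly on both charts yields a graded morphism $\tilde\p\colon(x_0^k,x_1^k)M\to N$ inducing $(\p',\p'')$. The inclusion $(x_0^k,x_1^k)M\hookrightarrow M$ has cokernel annihilated by $(x_0,x_1)^{2k}$, hence of finite length, so $\tilde\p$ represents a morphism in the quotient mapping to $(\p',\p'')$; this is fullness, and faithfulness follows because a lifted graded morphism has $F$-image zero precisely when its image lies in $\Ker F$. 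The main technical obstacle is this uniform clearing of denominators: one needs to simultaneously control the $x_0$-side and the $x_1$-side so that the subobject restricted to (which must contain $(x_0^k,x_1^k)M$) has finite-length quotient in $M$, and this is precisely what forces the exponents of both variables into the same power.
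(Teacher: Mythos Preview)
The paper does not prove this statement: it refers to Serre's original paper \cite{S} and only records that the functor is exact with kernel $\mod_0^\bbZ k[x_0,x_1]$, hence the induced functor is faithful. Your proposal supplies exactly what the paper omits, and it is the standard argument (via $\Gamma_*$ and denominator-clearing) one finds in expositions of Serre's theorem. So there is nothing to compare against; you have simply filled in the reference.

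Your outline is correct, but one technical point in the fullness argument deserves a remark. You construct $\tilde\p$ on $(x_0^k,x_1^k)M$ by using the extension of $\p'$ on $x_0^kM$ and of $\p''$ on $x_1^kM$. For this to be well-defined you need the two maps to agree on $x_0^kM\cap x_1^kM$ as maps into $N$, not merely into $N_{x_0x_1}$. The compatibility $\nu\p'_y=\p''_{y^{-1}}\mu$ only gives agreement after localizing at $x_0x_1$; if $N$ has nonzero $(x_0,x_1)$-torsion the two lifts could differ by such a torsion element. The standard fix is to first replace $N$ by $N$ modulo its largest finite-length submodule---this does not change the object in the quotient category and makes $N\to N_{x_0}$ and $N\to N_{x_1}$ injective, after which agreement in $N_{x_0x_1}$ forces agreement in $N$. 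With that adjustment your argument goes through.
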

\begin{proof}
We refer to \cite{S} for the proof. It is clear from the definition
that the functor \eqref{eq:serre} is exact having kernel ${\mod_0^\bbZ
  k[x_0,x_1]}$.  This fact yields the induced functor which is
faithful by construction.
\end{proof}

For any $n\in\bbZ$ and $\F=(M',M'',\mu)$ in $\coh\bbP^1_k$, denote by
$\F(n)$ the \index{sheaf!twisted} \emph{twisted sheaf}
$(M',M'',\mu^{(n)})$, where $\mu^{(n)}$ is the map $\mu$ followed by
multiplication with $y^{-n}$.  Given a module $M$ in $\mod^\bbZ
k[x_0,x_1]$, the \index{module!twisted} \emph{twisted module} $M(n)$
is obtained by shifting the grading, that is, $M(n)_i=M_{i+n}$ for
$i\in\bbZ$. Note that the functor \eqref{eq:serre} is compatible with
the twist functors defined on $\mod^\bbZ k[x_0,x_1]$ and
$\coh\bbP^1_k$.

\begin{prop}[Serre]
The category $\coh\bbP^1_k$ is a Hom-finite $k$-linear abelian
category satisfying Serre duality. More precisely, there is a
functorial $k$-linear isomorphism
$$D\Ext^1(\F,\G)\cong\Hom(\G,\F(-2))\quad\text{for all}\quad \F,\G\in\coh\bbP^1_k.$$
\end{prop}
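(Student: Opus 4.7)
The plan is to combine the tilting theory developed above with a direct cohomological computation on line bundles.

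First I would show that $T = \Oc \oplus \Oc(1)$ is a tilting object in $\coh\bbP^1_k$ whose endomorphism algebra $\La := \End(T)$ is isomorphic to the Kronecker algebra (the path algebra of $\bullet \rightrightarrows \bullet$, with the two arrows given by multiplication by $x_0$ and $x_1$). Via the \v{C}ech complex for the standard cover $\bbP^1_k = U' \cup U''$, or equivalently through Proposition~\ref{pr:Serre}, one computes $\Hom(\Oc(m), \Oc(n)) \cong H^0(\bbP^1_k, \Oc(n-m)) \cong k[x_0,x_1]_{n-m}$ and $\Ext^i(\Oc(m), \Oc(n)) \cong H^i(\bbP^1_k, \Oc(n-m))$. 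The latter vanishes for $i \ge 2$ (cohomological dimension of $\bbP^1_k$), giving $\pdim T \le 1$, and vanishes for $i = 1$ when $n \ge m-1$, giving $\Ext^1(T,T) = 0$. The generator axiom reduces to a standard vanishing argument: $\Hom(T,\F) = \Ext^1(T,\F) = 0$ yields $H^i(\F(-j)) = 0$ for $i,j \in \{0,1\}$, and an Euler characteristic calculation forces $\F = 0$.

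Since $\La$ is hereditary, Proposition~\ref{pr:serre_alg} gives Serre duality on $\bfD^b(\mod \La)$, which transports via the derived equivalence of Theorem~\ref{th:tilt} to $\bfD^b(\coh\bbP^1_k)$. To descend to the abelian level I would appeal to Proposition~\ref{pr:serre}: $\coh\bbP^1_k$ is Ext-finite, hereditary (since $\bbP^1_k$ is a smooth one-dimensional scheme, hence has cohomological dimension one), and has no nonzero projective object (any projective would, via Proposition~\ref{pr:Serre}, be a summand of a direct sum of line bundles and thus contain a line bundle summand, but no line bundle $\Oc(n)$ is projective since $\Ext^1(\Oc(n),\Oc(n-2)) \ne 0$). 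We thus obtain a Serre functor $\tau\colon \coh\bbP^1_k \to \coh\bbP^1_k$ with functorial isomorphisms $D\Ext^1(\F,\G) \cong \Hom(\G, \tau \F)$.

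The main obstacle is identifying $\tau$ with the twist $-(-2)$. By Yoneda's lemma it suffices to exhibit a bifunctorial isomorphism $D\Ext^1(\F,\G) \cong \Hom(\G, \F(-2))$. On pairs of line bundles this is a direct \v{C}ech calculation: $\dim \Ext^1(\Oc(m),\Oc(n)) = \dim H^1(\bbP^1_k, \Oc(n-m)) = \max(m-n-1, 0)$ matches $\dim \Hom(\Oc(n), \Oc(m-2))$, and an explicit natural pairing is supplied by the residue map on \v{C}ech $1$-cocycles. To extend to arbitrary coherent sheaves, every $\F \in \coh\bbP^1_k$ admits a two-term resolution by finite direct sums of line bundles (using Proposition~\ref{pr:Serre} and hereditariness); the five lemma applied to the long exact sequences of $\Ext^1(-, \G)$ and $\Hom(\G, -(-2))$ transports the isomorphism to all $\F$, and a symmetric argument in $\G$ completes the proof.
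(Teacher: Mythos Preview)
Your approach is logically sound but substantially more elaborate than the paper's, which simply cites \cite[III.7]{H} and gives no argument of its own.

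The \v{C}ech computation on line bundles, the residue pairing, and the extension via two-term resolutions that you outline in your final paragraph \emph{is} the classical proof that Hartshorne gives (specialized to dimension one). So your second half already proves the proposition in full. The preceding detour through tilting theory---establishing $T=\Oc\oplus\Oc(1)$ as a tilting object, transporting Serre duality from $\bfD^b(\mod\La)$, and invoking Proposition~\ref{pr:serre} to obtain an abstract Serre functor $\tau$---is redundant: once you exhibit the bifunctorial isomorphism $D\Ext^1(\F,\G)\cong\Hom(\G,\F(-2))$ directly, you have both the existence of $\tau$ and its identification with the twist in one stroke. The sentence ``By Yoneda's lemma it suffices to exhibit a bifunctorial isomorphism $D\Ext^1(\F,\G)\cong\Hom(\G,\F(-2))$'' makes this visible: that \emph{is} the statement, not a reduction of it.

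Two small remarks on logic. First, to invoke Theorem~\ref{th:tilt} and Proposition~\ref{pr:serre} you need Ext-finiteness of $\coh\bbP^1_k$ as a hypothesis, which you do not state explicitly but which follows from your \v{C}ech description together with hereditariness. Second, your argument reverses the paper's order of dependencies: the paper later deduces that $\Oc\oplus\Oc(1)$ is tilting (Proposition~\ref{pr:coh_tilt}) \emph{from} the Serre duality being proved here. Your direct \v{C}ech verification of the tilting axioms avoids any circularity, but at that point you have already done all the cohomological work the classical proof requires.
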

\begin{proof}
See \cite[III.7]{H}.
\end{proof}

\subsection{Locally free and torsion sheaves}

A sheaf $(M',M'',\mu)$ in $\coh\bbP^1_k$ is called
\index{sheaf!locally free} \emph{locally free} or \index{vector
bundle} \emph{vector bundle} if $M'$ and $M''$ are free modules over
$k[y]$ and $k[y^{-1}]$ respectively. We denote the full subcategory of
vector bundles in $\coh \bbP^1_k$ by $\vect \bbP^1_k$.

The \index{structure sheaf} \emph{structure sheaf} is the sheaf $\Oc=(k[y],k[y^{-1}],
\id_{k[y,y^{-1}]})$. For any pair $m,n\in\bbZ$, we have a natural
bijection
\begin{equation}\label{eq:bij_pol}
k[x_0,x_1]_{n-m}\lto[\sim]\Hom(\Oc(m),\Oc(n)).
\end{equation}
The map sends a homogeneous polynomial $P$ of degree $n-m$ to the
morphism $(\p',\p'')$, where $\p'\colon k[y]\to k[y]$ is
multiplication by $P(1,y)$ and $\p''\colon k[y^{-1}]\to k[y^{-1}]$ is
multiplication by $P(y^{-1},1)$. This bijection is a special case of
the next result.

Let $R=k[x_0,x_1]$ and denote by $\proj^\bbZ R$ the category of
finitely generated projective $\bbZ$-graded $R$-modules. Note that $R$
is a graded local ring since the homogeneous elements of positive
degree form the unique maximal homogeneous ideal. Thus finitely
generated projective $R$-modules are up to isomorphism of the form
$R(n_1)\oplus\dots\oplus R(n_r)$.

\begin{prop}[Grothendieck]\label{pr:Grothendieck}
The functor \eqref{eq:serre} induces an equivalence
$$\proj^\bbZ k[x_0,x_1]\lto[\sim]\vect \bbP^1_k.$$ In particular,
each locally free coherent sheaf on $\bbP^1_k$ is isomorphic to a sheaf of the form
$\Oc(n_1)\oplus \dots\oplus\Oc(n_r)$.
\end{prop}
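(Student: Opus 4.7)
The plan is to verify the equivalence in two parts. First, fully faithfulness on $\proj^{\bbZ}R$ (with $R=k[x_0,x_1]$) is immediate from \eqref{eq:bij_pol} by additivity, once one observes that $R$ is graded local, so every object of $\proj^{\bbZ}R$ is a finite direct sum of twists $R(n_i)$, and that these are sent to $\Oc(n_i)$ by \eqref{eq:serre}. Second, essential surjectivity is the substantive content and is equivalent to the ``in particular'' clause: every vector bundle $\F$ on $\bbP^1_k$ decomposes as $\Oc(n_1)\oplus\cdots\oplus\Oc(n_r)$. This is Grothendieck's splitting theorem, which I would prove by induction on the rank $r$.

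The base case $r=1$ is computational: a line bundle as a triple $(k[y],k[y^{-1}],\mu)$ has glueing $\mu$ given by multiplication by a unit of $k[y,y^{-1}]$, necessarily of the form $\alpha y^m$ with $\alpha\in k^\times$ and $m\in\bbZ$. After absorbing $\alpha$ into an automorphism of $M'$, the bundle is identified with $\Oc(-m)$.

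For $r\ge 2$, the idea is to split off a single direct summand. I would select $n$ maximal with $\Hom(\Oc(n),\F)\neq 0$ and fix a nonzero $\varphi\colon\Oc(n)\to\F$. A saturation argument inside $\F$ produces a rank-one torsion-free subsheaf $\L\subseteq\F$ containing the image of $\varphi$ with $\F/\L$ torsion-free; the base case identifies $\L$ with some $\Oc(m)$, and $m\ge n$ (nonzero composite $\Oc(n)\to\L=\Oc(m)$) together with $m\le n$ (maximality of $n$ applied to the inclusion $\L\hookrightarrow\F$) forces $m=n$ and hence $\varphi$ is a monomorphism with cokernel $\G$ torsion-free of rank $r-1$, hence locally free on the two affine charts since torsion-free finitely generated modules over $k[y]$ are free. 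By induction $\G\cong\bigoplus_i\Oc(n_i)$. If some $n_i>n$, pulling back $\F\twoheadrightarrow\G$ along the summand inclusion $\Oc(n_i)\hookrightarrow\G$ yields $\F_i\hookrightarrow\F$ fitting into $0\to\Oc(n)\to\F_i\to\Oc(n_i)\to 0$; the vanishings $\Hom(\Oc(n+1),\Oc(n))=0$ and $\Ext^1(\Oc(n+1),\Oc(n))\cong D\Hom(\Oc(n),\Oc(n-1))=Dk[x_0,x_1]_{-1}=0$ (Serre duality and \eqref{eq:bij_pol}) make the long exact sequence produce a nonzero $\Oc(n+1)\to\F_i\hookrightarrow\F$, contradicting maximality. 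Hence $n_i\le n$ for all $i$, so Serre duality gives $\Ext^1(\Oc(n_i),\Oc(n))\cong Dk[x_0,x_1]_{n_i-n-2}=0$, and the extension $0\to\Oc(n)\to\F\to\G\to 0$ splits to yield $\F\cong\Oc(n)\oplus\bigoplus_i\Oc(n_i)$.

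The main obstacle is the existence of the maximal $n$. Nonemptiness of $\{n:\Hom(\Oc(n),\F)\neq 0\}$ is the easy direction (for $n$ sufficiently negative, $\F(-n)$ acquires global sections, which one can see concretely using that $M'$ and $M''$ in the triple are generated by finitely many elements and the twists absorb denominators in $y$ and $y^{-1}$). Boundedness above is the harder half; one route is a degree/Hilbert-polynomial argument bounding the twist of any line subbundle of a fixed vector bundle, while an alternative is a direct analysis in the triples description showing that any morphism $\Oc(n)\to\F$ must vanish once $n$ exceeds a constant determined by the sizes of $M'$, $M''$ and the glueing $\mu$. With this in hand the induction closes.
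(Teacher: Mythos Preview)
Your argument is correct and complete in outline; the boundedness of $\{n:\Hom(\Oc(n),\F)\neq 0\}$ can indeed be read off directly from the triple description as you suggest (if $A$ is the glueing matrix and $d$ is the maximal $y$-degree of an entry of $A^{-1}$, then a nonzero morphism $\Oc(n)\to\F$ forces $n\le d$), so there is no real gap.

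However, the paper takes a genuinely different route for essential surjectivity. Rather than inducting on rank and invoking Serre duality to split off a maximal line bundle, the paper works entirely inside the triple description and appeals to Birkhoff's factorization: an invertible matrix over $k[y,y^{-1}]$ can be brought to a diagonal matrix $\operatorname{diag}(y^{-n_1},\dots,y^{-n_r})$ by multiplying on the left by an invertible matrix over $k[y^{-1}]$ and on the right by an invertible matrix over $k[y]$. This immediately exhibits the isomorphism $\F\cong\bigoplus_i\Oc(n_i)$. Your cohomological approach has the virtue of being intrinsic to the abelian category and of illustrating exactly how Serre duality drives the splitting (a template that generalizes to weighted projective lines later in the paper); the Birkhoff approach is more elementary in that it uses nothing beyond linear algebra over Laurent polynomial rings and in particular does not need Serre duality or any $\Ext$-computations as input. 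Both are standard and each is self-contained given what has been established by this point in the paper.
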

\begin{proof}
We need to show that the functor \eqref{eq:serre} is fully faithful
when it is restricted to $\proj^\bbZ R$, where $R= k[x_0,x_1]$. Every
finitely generated projective $R$-module is up to isomorphism of the
form $R(n_1)\oplus\dots\oplus R(n_r)$. Thus it suffices to show that
\eqref{eq:serre} induces a bijection
$\Hom_R(R(m),R(n))\to\Hom(\Oc(m),\Oc(n))$ for each pair $m,n\in\bbZ$.
But this is clear since the map coincides with the bijection \eqref{eq:bij_pol}.

It remains to show that the functor is dense, that is, each locally
free coherent sheaf is isomorphic to one of the form $\Oc(n_1)\oplus
\dots\oplus\Oc(n_r)$. For this we refer to \cite{Gr}. An elementary
proof is based on an argument due to Birkhoff \cite{Bi,EET}. A vector bundle
$\F=(M',M'',\mu)$ is basically determined by an invertible matrix over
$k[y,y^{-1}]$, which represents the isomorphism $\mu$. Now one uses
the fact that such a matrix can be transformed into a diagonal matrix
with entries $(y^{-n_1},\dots,y^{-n_r})$ by multiplying it with an
invertible matrix over $k[y]$ from the right and an invertible matrix
over $k[y^{-1}]$ from the left. This yields an isomorphism
$\F\xto{\sim}\Oc(n_1)\oplus \dots\oplus\Oc(n_r)$.
\end{proof}

A sheaf $(M',M'',\mu)$ in $\coh\bbP^1_k$ is called
\index{sheaf!torsion} \emph{torsion} if $M'$ and $M''$ are torsion
modules over $k[y]$ and $k[y^{-1}]$ respectively. We denote the full
subcategory of torsion sheaves in $\coh \bbP^1_k$ by $\coh_0
\bbP^1_k$. Note that each sheaf $\F=(M',M'',\mu)$ admits a unique
maximal subobject $\tor\F$ in $\coh\bbP^1_k$ that is torsion. One
obtains $\tor\F$ by taking the torsion parts of $M'$ and $M''$
respectively. Clearly, $\F/\tor\F$ is locally free.

\begin{prop}
Each coherent sheaf $\F$ on $\bbP^1_k$ admits an essentially unique
decomposition $\F=\F'\oplus\F''$ such that $\F'$ is torsion and $\F''$
is locally free. The torsion sheaves are precisely the objects of
finite length in $\coh\bbP^1_k$.
\end{prop}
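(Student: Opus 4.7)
The statement has two parts, the direct sum decomposition of $\F$ and the identification of torsion sheaves with objects of finite length, and I would prove them separately.

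For the decomposition, the canonical short exact sequence
\[
0\lto\tor\F\lto\F\lto\F/\tor\F\lto 0,
\]
with $\F/\tor\F$ locally free, is already available from the discussion preceding the statement. I would show it splits by verifying $\Ext^1(\F/\tor\F,\tor\F)=0$. By Grothendieck's theorem (Proposition~\ref{pr:Grothendieck}), $\F/\tor\F$ is a finite direct sum of line bundles $\Oc(n_i)$, so it suffices to verify $\Ext^1(\Oc(n),T)=0$ for every $n\in\bbZ$ and every torsion sheaf $T$. Serre duality rewrites this as $D\Hom(T,\Oc(n-2))$, which vanishes since any morphism from a torsion sheaf into a torsion-free sheaf is zero. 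Uniqueness is then a short diagram chase: for any splitting $\F=\F'\oplus\F''$ with $\F'$ torsion and $\F''$ locally free, $\F'\subseteq\tor\F$ holds by maximality of $\tor\F$, and the composite $\tor\F\hookrightarrow\F\twoheadrightarrow\F''$ vanishes because it sends a torsion sheaf to a torsion-free one; hence $\tor\F\subseteq\F'$, and the two summands are forced to be $\tor\F$ and $\F/\tor\F$.

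For the equivalence of torsion with finite length, I would argue both directions. If $\F=(M',M'',\mu)$ is torsion then $M'$ and $M''$ are finitely generated torsion modules over the PIDs $k[y]$ and $k[y^{-1}]$, hence both of finite length. Since subobjects of the triple correspond to compatible pairs of submodules, any strictly ascending chain in $\F$ produces chains in $M'$ and $M''$ in which at least one factor is strict at each step, giving length at most $\ell(M')+\ell(M'')$. Conversely, if $\F$ has finite length, the summand $\F''$ in the decomposition above is a finite length locally free sheaf, but any nonzero line bundle $\Oc(n)$ admits the infinite strictly descending chain $\Oc(n)\supsetneq\Oc(n-1)\supsetneq\Oc(n-2)\supsetneq\cdots$ obtained from multiplication by $x_0$ via the bijection \eqref{eq:bij_pol}; this forces $\F''=0$, so $\F=\tor\F$.

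The cleanest part is the splitting, since Serre duality reduces it to a one-line observation about morphisms between torsion and torsion-free sheaves. The main obstacle I expect is the bookkeeping in the torsion-implies-finite-length direction, which requires carefully identifying subobjects of a triple with compatible pairs of submodules so that restriction to each affine chart reflects strict inclusions of subsheaves; this pullback description of $\coh\bbP^1_k$ makes the argument clean once the definitions are unwound.
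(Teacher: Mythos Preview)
Your proof is correct and follows essentially the same route as the paper: split the canonical torsion sequence via Serre duality and the vanishing of $\Hom$ from torsion to torsion-free, then characterize finite length by observing that torsion modules over a PID have finite length while line bundles do not. The paper omits your uniqueness argument and applies Serre duality directly to $\F''$ rather than first decomposing it into line bundles via Grothendieck's theorem (which is unnecessary, since $\F''(-2)$ is locally free regardless), but these are cosmetic differences.
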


\begin{proof}
We use elementary facts about finitely generated modules over $k[y]$
and $k[y^{-1}]$ respectively.  Take $\F'=\tor\F$ and
$\F''=\F/\tor\F$. Serre duality implies
$\Ext^1(\F'',\F')=0$, since there are no non-zero morphisms
from torsion to locally free sheaves. Thus the canonical exact
sequence $0\to\F'\to\F\to\F''\to 0$ splits.

Let $\F=(M',M'',\mu)$ be a coherent sheaf. If $\F$ is torsion, then it
is a finite length object in $\coh\bbP^1_k$, since the corresponding
torsion modules $M'$ and $M''$ have finite length. On the other hand,
the structure sheaf has infinite length, since it admits factor
objects of arbitrary length. Thus each non-zero locally free sheaf is
of infinite length, by Proposition~\ref{pr:Grothendieck}.
\end{proof}

\subsection{Dehomogenization}\label{se:dehom}

Let $R$ be a $\bbZ$-graded commutative ring and $x\in R$ a
non-nilpotent element of degree $1$. The \index{dehomogenization}
\emph{dehomogenization} of $R$ with respect to $x$ is the ring
$R/(x-1)$.

\begin{lem}\label{le:dehom}
The canonical morphism $\pi\colon R\to R/(x-1)$ induces an isomorphism
$(R_x)_0\xto{\sim}R/(x-1)$. Moreover, $\pi$ induces a bijection
between the set of homogeneous (prime) ideals of $R$ modulo which the
element $x$ is regular and the set of all (prime) ideals of $R/(x-1)$.
\end{lem}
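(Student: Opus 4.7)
The proof has two parts. For the isomorphism $(R_x)_0 \xto{\sim} R/(x-1)$, my plan is to argue conceptually: since $R_x$ is $\bbZ$-graded with $x$ a unit of degree $1$, there is an isomorphism of graded rings $R_x \cong (R_x)_0[t, t^{-1}]$ sending $t \mapsto x$, which gives $R_x/(x-1)R_x \cong (R_x)_0$. On the other hand, in $R/(x-1)$ the element $x$ is already the unit $1$, so localizing $R/(x-1)$ at $x$ does nothing, and by exactness of localization $R_x/(x-1)R_x \cong (R/(x-1))_x = R/(x-1)$. Composing these isomorphisms yields the claim, and by construction it is induced by $\pi$. (A direct alternative is to define $\phi\colon R \to (R_x)_0$ by $\phi(r_i) = r_i/x^i$ on homogeneous components, verify multiplicativity, and show $\Ker \phi = (x-1)$ using the identity $1 - x^m = -(x-1)(1 + x + \dots + x^{m-1})$ applied to the homogeneous pieces of $r$.)

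For the bijection of ideals, I would factor it through the homogeneous (prime) ideals of $R_x$. First, I would establish a bijection between homogeneous ideals $I \subseteq R$ modulo which $x$ is regular and homogeneous ideals $\tilde I \subseteq R_x$, given by $I \mapsto IR_x$ and $\tilde I \mapsto \tilde I \cap R$. The regularity of $x$ modulo $I$ is exactly the injectivity of $R/I \to R_x/IR_x$, which forces $IR_x \cap R = I$; conversely, for any homogeneous $\tilde I$ the element $x$ is a unit in $R_x/\tilde I$, hence regular on the subring $R/(\tilde I \cap R)$. Second, the identification $R_x \cong (R_x)_0[t, t^{-1}]$ yields the standard bijection between homogeneous ideals of $R_x$ and ideals of $(R_x)_0$ via contraction and extension. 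Both steps preserve primeness: contraction to a subring preserves primes, and a Laurent polynomial ring over a domain is a domain.

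Composing with the isomorphism $(R_x)_0 \cong R/(x-1)$ and chasing definitions will then show the composite bijection is exactly $I \mapsto \pi(I)$: the degree-zero part $(IR_x)_0$ consists of sums $\sum r_i/x^i$ with $r_i \in I$ homogeneous of degree $i$, which under the isomorphism correspond to the classes $\bar r$ spanning $\pi(I)$. The main obstacle is not conceptual but a matter of careful bookkeeping between homogeneous decompositions in $R$, fractions of homogeneous elements in $(R_x)_0$, and their classes in $R/(x-1)$; once this correspondence is set up cleanly, everything else follows from standard facts about graded rings and localization.
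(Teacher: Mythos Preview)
Your proposal is correct and follows essentially the same route as the paper. The paper argues that $\pi$ extends to $R_x\to R/(x-1)$ with kernel $(x-1)$ and then observes $S_0\cong S/(x-1)$ because $x$ is a unit of degree $1$ in $S=R_x$; your Laurent-polynomial description $R_x\cong (R_x)_0[t,t^{-1}]$ is just an explicit name for that observation. For the ideal bijection the paper states the maps $\fra\mapsto\pi(\fra)$ and $\frb\mapsto\frb R_x\cap R$ directly, whereas you factor the same correspondence through the homogeneous ideals of $R_x$; the composite of your two steps reproduces exactly the paper's inverse map, so the arguments coincide, with yours supplying more of the routine verification.
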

\begin{proof}
The morphism $\pi$ induces a morphism $S=R_x\to R/(x-1)$ and its
kernel is the ideal generated by $x-1$. Now observe that $S_0\cong
S/(x-1)$, since $x$ is in $S$ a unit of degree $1$.

The bijective correspondence between ideals of $R$ and $R/(x-1)$ sends
an ideal $\fra\subseteq R$ to $\pi(\fra)$, and its inverse sends
an ideal $\frb\subseteq (R_x)_0=R/(x-1)$ to $\frb R_x\cap R$.
\end{proof}

Let us consider the dehomogenization of the polynomial ring
$k[x_0,x_1]$ with respect to the variable $x_0$. Then
Lemma~\ref{le:dehom} implies an isomorphism $$k[x_0,x_1]/(x_0-1)\cong
(k[x_0,x_1]_{x_0})_0=k[y]$$ via the identification $y=x_1/x_0$.
Denote by $\pi\colon k[x_0,x_1]\to k[y]$ the canonical morphism which
sends a polynomial $P$ to $P(1,y)$.  The morphism $\pi$ induces a bijection
\begin{equation}\label{eq:dehom_bij}
\Proj k[x_0,x_1]\smallsetminus\{(x_0)\}\lto[\sim]\Spec k[y]
\end{equation}
and for any prime ideal $\frp\neq (x_0)$ in $\Proj k[x_0,x_1]$ an
isomorphism
\begin{equation}\label{eq:dehom}
(k[x_0,x_1]_\frp)_0\lto[\sim]k[y]_{\pi(\frp)}.
\end{equation}
Here, $k[x_0,x_1]_\frp$ denotes the homogeneous localization with
respect to $\frp$ which inverts all homogeneous elements not lying in $\frp$.

The following lemma describes the dehomogenization for graded modules
over $k[x_0,x_1]$ with respect to $x_0$. This functor induces an
equivalence if one passes to the localization with respect to a prime
ideal $\frp\neq (x_0)$.

\begin{lem}\label{le:dehom_mod}
Let $\frp\neq (x_0)$ be  in $\Proj k[x_0,x_1]$.  The
functor sending a graded $k[x_0,x_1]$-module $M$ to $FM=(M_{x_0})_0$
induces the following commutative diagram of exact functors
$$\xymatrix{\mod^\bbZ k[x_0,x_1]\ar[rr]^-F\ar[d]&&\mod k[y]\ar[d]\\
\mod^\bbZ k[x_0,x_1]_\frp\ar[r]^-{F'_\frp}&\mod
(k[x_0,x_1]_\frp)_0\ar[r]^-{F''_\frp}&\mod k[y]_{\pi(\frp)} }$$ where
the vertical functors are the localization functors with respect to
$\frp$ and $\pi(\frp)$ respectively. Moreover, the functors $F'_\frp$
and $F''_\frp$ are equivalences.
\end{lem}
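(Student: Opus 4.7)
The plan is to verify the three claims in turn: commutativity of the diagram, exactness of all four functors, and equivalence of $F'_\frp$ and $F''_\frp$.

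First I would record the commutativity. Both composites $M \mapsto ((M_{x_0})_0)_{\pi(\frp)}$ (top path then vertical) and $M \mapsto ((M_\frp)_0)$ identified via \eqref{eq:dehom} (vertical then bottom path) amount to inverting the same set of elements: namely the homogeneous elements of $k[x_0,x_1]$ lying outside $\frp$, which includes $x_0$ since $\frp \neq (x_0)$. Under the bijection \eqref{eq:dehom_bij} and its module-theoretic refinement \eqref{eq:dehom}, these two constructions give canonically isomorphic $k[y]_{\pi(\frp)}$-modules. Exactness is then immediate: the (homogeneous) localization functors are exact, and for a graded ring $R$ with a unit of degree one, the degree-zero functor $M \mapsto M_0$ is exact because $M_0$ is naturally a direct summand of $M$ as an $R_0$-module.

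Next I would show $F''_\frp$ is an equivalence. This is essentially formal: by \eqref{eq:dehom}, the canonical morphism $(k[x_0,x_1]_\frp)_0 \to k[y]_{\pi(\frp)}$ is a ring isomorphism, and restriction along a ring isomorphism induces an equivalence of module categories. The functor described in the diagram is precisely this restriction (combined with the fact that the induced map on prime ideals agrees with $\frp \mapsto \pi(\frp)$ under \eqref{eq:dehom_bij}).

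The main step — and the one I expect to require a bit of care — is showing that $F'_\frp$ is an equivalence. The key observation is that in $R := k[x_0,x_1]_\frp$ the element $x_0$ is a homogeneous \emph{unit} of degree $1$, since $x_0 \notin \frp$. Consequently, for any graded $R$-module $N$, multiplication by $x_0$ yields an isomorphism $N_n \xto{\sim} N_{n+1}$ for every $n\in\bbZ$, so $N$ is completely determined by $N_0$ together with its $R_0$-module structure. Concretely, the quasi-inverse to $F'_\frp = (-)_0$ is the functor
\[
\Psi\colon \mod R_0 \lto \mod^\bbZ R, \qquad \Psi(L) = L \otimes_{R_0} R,
\]
where $R$ is viewed as a graded $R_0$-algebra via $R \cong R_0[x_0, x_0^{-1}]$. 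One then checks that the unit $L \to (L \otimes_{R_0} R)_0$ and counit $N_0 \otimes_{R_0} R \to N$ of the adjunction are isomorphisms, the former by the isomorphism $R_0 \xto{\sim} R$ in degree zero, and the latter because both sides coincide degree-wise via the $x_0^n$-action. This yields the required equivalence, and combined with the other two points completes the proof.
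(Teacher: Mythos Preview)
Your proof is correct and follows essentially the same approach as the paper. The paper's proof is terser: it simply notes that $k[x_0,x_1]_\frp$ is \emph{strongly graded} (which is precisely your observation that $x_0$ becomes a homogeneous unit of degree~$1$) to conclude that $F'_\frp$ is an equivalence, and invokes the isomorphism \eqref{eq:dehom} for $F''_\frp$, whereas you spell out the quasi-inverse $L\mapsto L\otimes_{R_0}R$ explicitly.
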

\begin{proof}
The composite $k[x_0,x_1]\to k[y]\to k[y]_{\pi(\frp)}$ induces a
morphism $k[x_0,x_1]_\frp\to k[y]_{\pi(\frp)}$ and therefore $F$
composed with localization at $\pi(\frp)$ induces a functor
$$F_\frp\colon\mod^\bbZ k[x_0,x_1]_\frp\lto\mod k[y]_{\pi(\frp)}.$$
This functor can be written as composite $F''_\frp F'_\frp$. The first
functor $F'_\frp$ takes a graded $k[x_0,x_1]_\frp$-module to its
degree zero part; it is an equivalence since $k[x_0,x_1]_\frp$ is
strongly graded. The second functor $F''_\frp$ is an equivalence
thanks to the isomorphism \eqref{eq:dehom}.
\end{proof}

\begin{rem}
There are analogous results for the dehomogenization of $k[x_0,x_1]$
with respect to $x_1$ which is denoted by $k[y^{-1}]$.
\end{rem}

Next we describe the category $\coh_0\bbP^1_k$ of torsion sheaves
more explicitly. Note that $\coh_0\bbP^1_k$ is uniserial because it
is a length category with Serre duality; see
Proposition~\ref{pr:serre_finlen}. The category decomposes into
connected abelian categories, and each component has a unique simple
object since it is equivalent to the category of finite length
modules over a local ring.

\begin{prop}\label{pr:coh_P}
\begin{enumerate}
\item The functor \eqref{eq:serre} induces an equivalence
$$\coprod_{0\neq\frp\in\Proj
k[x_0,x_1]}\mod^\bbZ_0 k[x_0,x_1]_\frp\lto[\sim]\coh_0\bbP^1_k$$
\item The functor taking a sheaf $(M',M'',\mu)$ to $M'\otimes_{k[y]}
k(y)\cong M''\otimes_{k[y^{-1}]} k(y^{-1})$ induces an equivalence
$$\frac{\coh \bbP^1_k}{\coh_0 \bbP^1_k}\lto[\sim]\mod k(y).$$
\end{enumerate}
\end{prop}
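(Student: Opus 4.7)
My plan is to prove each part by reducing to the two affine charts $U'=\Spec k[y]$ and $U''=\Spec k[y^{-1}]$, combining Proposition~\ref{pr:aff} with the dehomogenization identifications from Lemma~\ref{le:dehom_mod} and \eqref{eq:dehom_bij}.

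For part (1), a torsion sheaf on $\bbP^1_k$ is a triple $(M',M'',\mu)$ with $M'\in\mod_0 k[y]$ and $M''\in\mod_0 k[y^{-1}]$ glued by an isomorphism $\mu$ over $k[y,y^{-1}]$. Applying Proposition~\ref{pr:aff}(1) on each chart yields coproduct decompositions $M'=\coprod_{\frq\neq 0}M'_\frq$ and $M''=\coprod_{\frq'\neq 0}M''_{\frq'}$, and $\mu$ forces matching of the $\frq$- and $\frq'$-components whenever both are defined. The bijection \eqref{eq:dehom_bij} identifies non-zero primes of $\Spec k[y]$ with closed points $\frp\neq(x_0)$ of $\bbP^1_k$, and analogously for $U''$; for $\frp\neq(x_0),(x_1)$ the two affine descriptions agree via the isomorphism \eqref{eq:dehom}, while $(x_0)$ (resp.\ $(x_1)$) is visible only in the $U''$ (resp.\ $U'$) chart. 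Finally, Lemma~\ref{le:dehom_mod} identifies each local piece $\mod_0 k[y]_{\pi(\frp)}$ with $\mod^\bbZ_0 k[x_0,x_1]_\frp$, and the functor \eqref{eq:serre} is precisely the assembly of these local pieces.

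For part (2), let $T$ denote the functor in question; it is clearly exact. Its kernel equals $\coh_0\bbP^1_k$: if $M'\otimes_{k[y]}k(y)=0$ then $M'$ is $k[y]$-torsion, hence $M'_y=0$, so via $\mu$ also $M''_{y^{-1}}=0$, forcing $M''$ to be $k[y^{-1}]$-torsion. Proposition~\ref{pr:quotient}(2) yields an induced exact functor $\bar T\colon\coh\bbP^1_k/\coh_0\bbP^1_k\to\mod k(y)$, and density is immediate since $T(\Oc)=k(y)$ and every finite-dimensional $k(y)$-module is free. For full faithfulness, I observe that multiplication by any non-zero degree-one form gives a monomorphism $\Oc(n-1)\hookrightarrow\Oc(n)$ with simple torsion cokernel, so all line bundles become isomorphic in the quotient; combined with the torsion/locally-free splitting and Proposition~\ref{pr:Grothendieck}, every object of the quotient is isomorphic to some $\Oc^r$, and it suffices to verify $\bar T$ is bijective on $\End(\Oc)$. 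By Lemma~\ref{le:mor}, since the only torsion subsheaf of $\Oc$ is zero and the line sub-bundles of $\Oc$ are the principal ones $\Oc\cdot P$ for non-zero homogeneous $P$, the quotient endomorphism ring equals
\[
\colim_{0\neq P}\Hom(\Oc\cdot P,\Oc)=\colim_{P}k[x_0,x_1]_{\deg P},
\]
where $P$ ranges over non-zero homogeneous polynomials ordered by divisibility with transition maps given by multiplication. This colimit is the degree-zero part of the graded fraction field of $k[x_0,x_1]$, namely $k(y)$, and $\bar T$ realizes the tautological identification.

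The main obstacle is the full faithfulness step in part (2): identifying the subsheaves of $\Oc$ with torsion cokernel as precisely the principal sub-line-bundles, and then carrying out the colimit computation cleanly. A more conceptual alternative is to factor $T$ through the restriction $\rho'\colon\coh\bbP^1_k\to\mod k[y]$ and apply the Noether isomorphism Lemma~\ref{le:noeth} together with Proposition~\ref{pr:aff}(2); there the difficulty moves to verifying that $\rho'$ is itself a Serre quotient functor with kernel the sheaves supported at $[0\!:\!1]$, which requires a glueing argument for essential surjectivity. Part (1), by contrast, is largely bookkeeping once the support decomposition is in place.
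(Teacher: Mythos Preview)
Your proposal is correct and follows essentially the same route as the paper. For part (1) the paper packages the chart-by-chart reduction you describe into a commutative cube whose front and back faces are the pullback squares defining $\coh_0\bbP^1_k$ and $\coprod_\frp\mod^\bbZ_0 k[x_0,x_1]_\frp$, with the three affine edges being the equivalences $F_0,F_1,F_{0,1}$ coming from Proposition~\ref{pr:aff} and Lemma~\ref{le:dehom_mod}; your verbal account of matching local pieces under $\mu$ is the same content. For part (2) the paper simply notes exactness, identifies the kernel, observes that $\Oc$ is the unique indecomposable in the quotient, and then invokes the fullness argument already given in the proof of Proposition~\ref{pr:aff} (inverting multiplication-by-$P$ morphisms); your explicit colimit computation of $\End(\Oc)$ in the quotient is a more detailed version of that same argument, and your identification of the non-zero subsheaves of $\Oc$ with the principal ones is exactly what makes that colimit equal to the degree-zero graded fraction field $k(y)$.
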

\begin{proof}
(1) We denote the functor \eqref{eq:serre} by $F$. Dehomogenization
with respect to $x_0$ and $x_1$, respectively, induces the functors
$F_0$, $F_1$, and $F_{0,1}$, which make the following diagram
commutative.
$$
\xymatrix@R=1pc@C=-0.2pc{
\underset{\frp}{\coprod}\mod^\bbZ_0 k[x_0,x_1]_\frp
\ar[rrr]\ar[ddd]\ar[rd]^-{F}&&& \underset{\frp\neq
(x_0)}{\coprod}\mod^\bbZ_0 k[x_0,x_1]_\frp \ar@{-}[d]\ar[rd]^-{F_0}\\
&\coh_0\bbP^1_k\ar[rrr]\ar[ddd]&&{}\ar[dd]&\mod_0 k[y]\ar[ddd]\\ \\
\underset{\frp\neq (x_1)}{\coprod}\mod^\bbZ_0 k[x_0,x_1]_\frp
\ar@{-}[r]\ar[rd]^-{F_1}&{}\ar[rr]&& \underset{(x_0)\neq\frp\neq
(x_1)}{\coprod}\mod^\bbZ_0 k[x_0,x_1]_\frp \ar[rd]^-{F_{0,1}}\\
&\mod_0 k[y^{-1}]\ar[rrr]&&&\mod_0 k[y,y^{-1}]}$$ Here, $\frp$ runs
through all non-zero prime ideals in $\Proj k[x_0,x_1]$.  Note that
the front square and the back square are pullback diagrams of abelian
categories. The functors $F_0$, $F_1$, and $F_{0,1}$ are
equivalences. This follows from Proposition~\ref{pr:aff} and
Lemma~\ref{le:dehom_mod} in combination with the bijection
\eqref{eq:dehom_bij}. Thus $F$ is an equivalence.

(2) The functor $\coh\bbP^1_k\to\mod k(y)$ is exact and its kernel
is the category $\coh_0\bbP^1_k$ of torsion sheaves. Thus there is
an induced functor $\frac{\coh \bbP^1_k}{\coh_0 \bbP^1_k}\to\mod
k(y)$ which is faithful. The structure sheaf is the unique
indecomposable object, and the argument given in the proof of
Proposition~\ref{pr:aff} shows that the functor is an equivalence.
\end{proof}

\subsection{Support}\label{se:support}

Given a point $\frp\in\Proj k[x_0,x_1]$ of $\bbP^1_k$, the
\index{point!local ring at} \emph{associated local ring}
$\Oc_{\bbP^1_k,\frp}$ is by definition $(k[x_0,x_1]_\frp)_0$. Denote
by $k(\frp)$ the \index{point!residue field of} \emph{residue field} at $\frp$
which is by definition the residue field of the local ring
$\Oc_{\bbP^1_k,\frp}$.

 Note that dehomogenization induces
isomorphisms
$$\Oc_{\bbP^1_k,\frp}\cong\begin{cases}
k[y]_{\frp'}&\text{if }\frp\neq (x_0),\\
k[y^{-1}]_{\frp''}&\text{if }\frp\neq (x_1),
\end{cases}$$
where
$$\frp'=\{P(1,y)\in k[y]\mid P\in
\frp\}\quad\text{and}\quad\frp''=\{P(y^{-1},1)\in k[y^{-1}]\mid P\in
\frp\}.$$ The \index{stalk} \emph{stalk} of a sheaf $\F=(M',M'',\mu)$ at $\frp$ is
the $\Oc_{\bbP^1_k,\frp}$-module
$$\F_\frp=\begin{cases}
M'_{\frp'}&\text{if }\frp\neq (x_0),\\
M''_{\frp''}&\text{if }\frp\neq (x_1),
\end{cases}$$
where $M'_{\frp'}$ and $M''_{\frp''}$ are identified via $\mu$, if
$\frp\neq (x_i)$ for $i=0,1$.  The \index{support} \emph{support} of $\F$ is by
definition
$$\Supp\F=\{\frp\in\Proj k[x_0,x_1]\mid\F_\frp\neq 0\}.$$ Note that
a torsion sheaf $\F$ admits a unique decomposition
$$\F=\bigoplus_{0\neq\frp\in\Proj k[x_0,x_1]}\F_{\{\frp\}}$$ such that
each $\F_{\{\frp\}}$ is a sheaf supported at $\frp$.  This follows directly
from properties of torsion modules over a polynomial ring in one
variable.

The functor sending a sheaf $\F$ to the
family $(\F_\frp)_{\frp\in\Proj k[x_0,x_1]}$ provides an equivalence
\begin{equation}\label{eq:equiv_tor}
\coh_0\bbP^1_k\lto[\sim]\coprod_{0\neq\frp\in\Proj
k[x_0,x_1]}\mod_0\Oc_{\bbP^1_k,\frp}.
\end{equation}
Note that this yields a quasi-inverse for the equivalence
from Proposition~\ref{pr:coh_P} if the functor is composed with the family of equivalences
$\mod_0\Oc_{\bbP^1_k,\frp}\xto{\sim}\mod^\bbZ k[x_0,x_1]_\frp$ from
Lemma~\ref{le:dehom_mod}.

Let $\frp$ be a closed point and choose a homogeneous irreducible
polynomial $P$ of degree $d$ that generates $\frp$. The bijection
\eqref{eq:bij_pol} identifies for each integer $r>0$ the polynomial
$P^r$ with a morphism $\p\colon \Oc(-rd)\to\Oc(0)=\Oc$.  Denote by
$\Oc_{\frp,r}$ the cokernel of this morphism. Thus there is an exact
sequence
\begin{equation}\label{eq:tor}
0\lto \Oc(-rd)\lto[\p]\Oc\lto \Oc_{\frp,r}\lto 0.
\end{equation}

\begin{prop}
Let $\frp$ be a closed point of $\bbP^1_k$ and $r>0$ an integer.  An
indecomposable sheaf in $\coh\bbP^1_k$ has support $\{\frp\}$ and
length $r$ if and only if it is isomorphic to $\Oc_{\frp,r}$.
\end{prop}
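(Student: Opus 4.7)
The plan is to transfer the problem via the equivalence \eqref{eq:equiv_tor} (see Proposition~\ref{pr:coh_P}(1)) to the local setting at $\frp$, where indecomposables can be classified directly. Under this decomposition a torsion sheaf supported at $\{\frp\}$ corresponds to a single finite length $\Oc_{\bbP^1_k,\frp}$-module, so it will be enough to check that $\Oc_{\frp,r}$ is sent to the unique indecomposable of length $r$.

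First I would verify that $\Oc_{\bbP^1_k,\frp}$ is a discrete valuation ring. Choose $i\in\{0,1\}$ with $\frp\neq (x_i)$; by Lemma~\ref{le:dehom} dehomogenization yields $\Oc_{\bbP^1_k,\frp}\cong k[y]_{\frp'}$ (or $k[y^{-1}]_{\frp''}$), which is the localization of a principal ideal domain at a non-zero prime. Its maximal ideal $\frm_\frp$ is generated by the image of $P$ under dehomogenization. Hence $\mod_0\Oc_{\bbP^1_k,\frp}$ is uniserial with a unique simple object, and for each $r\geq 1$ the module $\Oc_{\bbP^1_k,\frp}/\frm_\frp^r$ is, up to isomorphism, its unique indecomposable of length $r$. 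Together with \eqref{eq:equiv_tor} this already gives the uniqueness half of the claim: any indecomposable torsion sheaf with support $\{\frp\}$ and length $r$ is determined up to isomorphism.

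To finish, I would identify $\Oc_{\frp,r}$ by computing its stalks from the defining sequence \eqref{eq:tor}. For any point $\frq\neq \frp$ (closed or generic), $P$ is a unit in $\Oc_{\bbP^1_k,\frq}$, so localizing \eqref{eq:tor} at $\frq$ makes the morphism induced by $P^r$ an isomorphism, whence $(\Oc_{\frp,r})_\frq=0$. At $\frp$, each twist $\Oc(n)$ has stalk isomorphic to $\Oc_{\bbP^1_k,\frp}$ (since $x_i$ is a unit there), and the map corresponding to $P^r$ becomes multiplication by the $r$-th power of a uniformizer. Therefore $(\Oc_{\frp,r})_\frp\cong \Oc_{\bbP^1_k,\frp}/\frm_\frp^r$, so $\Oc_{\frp,r}$ has support $\{\frp\}$, length $r$, and is indecomposable, and it matches the distinguished representative singled out above.

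The main obstacle I anticipate is bookkeeping at the boundary between the graded module description underlying Proposition~\ref{pr:coh_P} and the explicit triple model used to define $\Oc_{\frp,r}$. These are reconciled through the dehomogenization equivalences of Lemma~\ref{le:dehom_mod}, but one must verify carefully that the degree $-rd$ element $P^r$, viewed as a morphism $\Oc(-rd)\to\Oc$ via \eqref{eq:bij_pol}, really does correspond on the stalk at $\frp$ to the $r$-th power of a generator of $\frm_\frp$, so that the stalk calculation matches the classification over the DVR.
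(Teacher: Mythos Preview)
Your proposal is correct and follows essentially the same route as the paper: compute the stalks of $\Oc_{\frp,r}$ from the sequence \eqref{eq:tor} to see that they vanish away from $\frp$ and equal $\Oc_{\bbP^1_k,\frp}/\frm^r$ at $\frp$, then invoke the equivalence \eqref{eq:equiv_tor} to deduce that an indecomposable torsion sheaf is determined by its support and length. The paper handles your anticipated bookkeeping obstacle directly by noting that in the triple model the morphism $\p$ is multiplication by $P^r(1,y)$ and $P^r(y^{-1},1)$, which is exactly the dehomogenization you describe.
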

\begin{proof}
We compute the support and the length of $\Oc_{\frp,r}$. We may assume
that $\frp\neq (x_0)$.  The morphism $\p$ is given by multiplication
with $P^r(1,y)$ and $P^r(y^{-1},1)$ respectively. Thus for each point
$\frq\neq\frp$, the stalk morphism $\p_\frq$ is an isomorphism, and
therefore $(\Oc_{\frp,r})_\frq=0$. On the other hand, the
$\Oc_{\bbP^1_k,\frp}$-module $(\Oc_{\frp,r})_\frp$ is isomorphic to
$\Oc_{\bbP^1_k,\frp}/\frm^r$, where $\frm$ denotes the maximal ideal
of $\Oc_{\bbP^1_k,\frp}$. Note that this module is
indecomposable. Thus $\Oc_{\frp,r}$ has length $r$ and is indecomposable.

The equivalence \eqref{eq:equiv_tor} shows that an indecomposable
torsion sheaf is uniquely determined by its support and its length.
\end{proof}

\begin{rem}\label{re:hom-simple}
Let $S_\frp=\Oc_{\frp,1}$ be the simple sheaf concentrated at
$\frp$. The sequence \eqref{eq:tor} induces an isomorphism
$\End(S_\frp)\xto{\sim}\Hom(\Oc,S_\frp)$. Moreover, we have an
isomorphism $\End(S_\frp)\cong k(\frp)$ of algebras.
\end{rem}

\subsection{Automorphisms}

Let $\PGL(2,k)$ denote the \index{projective linear
group}\emph{projective linear group}, that is, the group of invertible
$2\times 2$ matrices over $k$ modulo the subgroup of matrices of the
form $\smatrix{a&0\\0&a}$. Any element $\s=\smatrix{\s_{00}&\s_{01}\\
\s_{10}&\s_{11}}$ in $\PGL(2,k)$ induces an automorphism
$k[x_0,x_1]\xto{\sim}k[x_0,x_1]$ by sending  $x_i$ to
$\s_{i0}x_0+\s_{i1}x_1$ ($i=0, 1$). This yields a map
$\PGL(2,k)\to\Aut\bbP^1_k$ into the automorphism group of the
projective line. Recall that a rational point $[\la_0:\la_1]$ is
identified with the prime ideal $(\la_1x_0-\la_0x_1)$.  Then the
automorphism corresponding to $\s$ sends a rational point
$[\la_0:\la_1]$ to
$[\s_{00}\la_0+\s_{01}\la_1:\s_{10}\la_0+\s_{11}\la_1]$.

\begin{prop}\label{pr:aut}
The map $\PGL(2,k)\to\Aut\bbP^1_k$ is an isomorphism.
\end{prop}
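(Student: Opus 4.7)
The plan is to prove injectivity and surjectivity separately, exploiting the three rational points $[1:0]$, $[0:1]$, $[1:1]$, which exist on $\bbP^1_k$ over any field, together with the classical sharply $3$-transitive action of $\PGL(2,k)$ on $\bbP^1(k)$.

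For injectivity I would suppose that $\s\in\PGL(2,k)$ acts as the identity on $\bbP^1_k$, so in particular it fixes $[1:0]$, $[0:1]$, $[1:1]$. The action formula stated just above the proposition then forces, respectively, $\s_{10}=0$ (from $[\s_{00}:\s_{10}]=[1:0]$), $\s_{01}=0$ (from $[\s_{01}:\s_{11}]=[0:1]$), and $\s_{00}=\s_{11}$ (from fixing $[1:1]$). Hence $\s$ is a scalar matrix and thus the identity of $\PGL(2,k)$.

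For surjectivity I would fix $\p\in\Aut\bbP^1_k$ and use that, as a $k$-scheme automorphism, $\p$ permutes the set $\bbP^1(k)$ of rational points. Invoking the sharp $3$-transitivity of $\PGL(2,k)$ on $\bbP^1(k)$ (which follows immediately from the action formula by solving a $2\times 2$ linear system), I would choose $\s\in\PGL(2,k)$ such that $\s\p$ fixes $[1:0]$, $[0:1]$, and $[1:1]$. It therefore remains to show that any $\psi\in\Aut\bbP^1_k$ fixing these three rational points is the identity.

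For this final step, the hypothesis that $\psi$ fixes the closed point $[0:1]$ implies that $\psi$ preserves the affine open $U'=\bbP^1_k\smallsetminus\{[0:1]\}=\Spec k[y]$ with $y=x_1/x_0$, and its restriction corresponds to a $k$-algebra automorphism of $k[y]$, necessarily of the form $y\mapsto ay+b$ with $a\in k^\times$ and $b\in k$. Under the dehomogenization $P\mapsto P(1,y)$, the rational points $[1:0]$ and $[1:1]$ become the closed points $y=0$ and $y=1$ of $\Spec k[y]$; requiring $\psi$ to fix them forces $b=0$ and $a=1$, so $\psi|_{U'}=\id$. The symmetric argument on $U''=\bbP^1_k\smallsetminus\{[1:0]\}=\Spec k[y^{-1}]$ gives $\psi|_{U''}=\id$, and since $\bbP^1_k=U'\cup U''$ we conclude $\psi=\id$. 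The argument is essentially routine; the only input beyond what the excerpt already provides is the classical sharp $3$-transitivity of $\PGL(2,k)$ on $\bbP^1(k)$, and this is the only step that is slightly non-formal.
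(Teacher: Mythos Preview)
Your proof is correct, but it is organized differently from the paper's. The paper does not reduce via $3$-transitivity; instead, given an arbitrary $\p\in\Aut\bbP^1_k$, it reads off the matrix directly: the images $\p^{-1}((x_i))$ are rational points $(P_i)$ with $P_i=\s_{i0}x_0+\s_{i1}x_1$, and then the induced automorphisms of the two affine charts $\Spec k[y]$ and $\Spec k[y^{-1}]$ (each preserving the origin) are $y\mapsto a\,P_1/P_0$ and $y^{-1}\mapsto b\,P_0/P_1$; compatibility on the overlap forces $b=a^{-1}$, and the resulting element of $\PGL(2,k)$ is $\smatrix{\s_{00}&\s_{01}\\ a\s_{10}&a\s_{11}}$. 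Your route---compose with a suitable $\s$ to fix $[1:0],[0:1],[1:1]$ and then show such an automorphism is the identity---is the classical ``three points determine a projectivity'' argument and is slightly more elementary, since it avoids tracking the scalar $a$ across the overlap. The paper's route, on the other hand, exhibits the inverse map $\Aut\bbP^1_k\to\PGL(2,k)$ explicitly rather than merely proving its existence. Both arguments ultimately rest on the same ingredient, namely that a $k$-algebra automorphism of $k[y]$ is affine-linear.
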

\begin{proof}
It is clear that the map is injective.  We provide an inverse map
as follows. Let $\p\colon\bbP^1_k\xto{\sim}\bbP^1_k$ be an
automorphism. This morphism sends rational points to rational
points. In particular for $i=0, 1$ the inverse $\phi^{-1}$ sends the
prime ideal $(x_i)$ to an ideal of the form $(P_i)$  for some
homogenous irreducible polynomial $P_i=\s_{i0}x_0+\s_{i1}x_1$ in
$k[x_0, x_1]$. Let $U_i=\bbP^1_k\smallsetminus\{(x_i)\}$ and denote
by $U'_i$ its image under $\p$. Then $\p$ induces isomorphisms of
affine lines
$$\p_0\colon \Spec k[P_1/P_0]=U'_0 \lto[\sim] U_0=\Spec k[x_1/x_0]$$ and
$$\p_1\colon  \Spec k[P_0/P_1]=U'_1\lto[\sim] U_1=\Spec k[x_0/x_1]$$
which preserve the origins. Thus there are non-zero scalars $a,b$
such that
$$\p_0^*(x_1/x_0)=a (P_1/P_0) \quad\text{and}\quad
\p_1^*(x_0/x_1)=b(P_0/P_1).$$ Here, $\phi_i^*$ denotes the induced
morphism between the rings of regular functions ($i=0,1$). The
morphisms $\p_0$ and $\p_1$ agree on $U_0\cap U_1$, and therefore
$b=a^{-1}$. It follows that $\p$ is
given by the linear transformation $\smatrix{\s_{00}&\s_{01}\\
a\s_{10}&a\s_{11}}$ in $\PGL(2,k)$.
\end{proof}

\subsection{Tilting objects}
The category $\coh\bbP^1_k$ admits a tilting object which is actually
unique up to a shift and up to multiplicities of its indecomposable
direct summands.

\begin{prop}\label{pr:coh_tilt}
An object $T$ in $\coh\bbP^1_k$ is a tilting object if and only if
\begin{equation}\label{eq:addT}
\add T=\add(\Oc(n)\oplus\Oc(n+1))\quad\text{for some}\quad n\in\bbZ.
\end{equation}
For each $n\in\bbZ$, the endomorphism algebra of the tilting object
$\Oc(n)\oplus\Oc(n+1)$ is isomorphic to the Kronecker algebra $\La$
(i.e.\ the path algebra of the quiver $\xymatrix@=15pt{\cdot
\ar@<.5ex>[r]\ar@<-.5ex>[r]&\cdot}$), and this yields a derived
equivalence $\bfD^b(\coh\bbP^1_k)\xto{\sim}\bfD^b(\mod\La)$.
\end{prop}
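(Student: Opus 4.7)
The plan is to verify the three tilting axioms directly for $T_n := \Oc(n) \oplus \Oc(n+1)$, compute its endomorphism algebra, and then show that any tilting object agrees with some $T_n$ up to additive closure.

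For the tilting axioms on $T_n$, axiom (1) $\pdim T_n \leq 1$ follows from hereditariness of $\coh\bbP^1_k$, which is Proposition~\ref{pr:Serreduality} applied to the Serre duality established above. Axiom (2) $\Ext^1(T_n, T_n) = 0$ follows by combining Serre duality $\Ext^1(\Oc(i), \Oc(j)) \cong D\Hom(\Oc(j), \Oc(i-2))$ with \eqref{eq:bij_pol}, which identifies $\Hom(\Oc(m), \Oc(m'))$ with $k[x_0,x_1]_{m'-m}$; this vanishes iff $m' < m$, so $\Ext^1(\Oc(i), \Oc(j)) = 0$ iff $j \geq i-1$, which is automatic for $i,j \in \{n, n+1\}$. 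For axiom (3), suppose $A \in \coh\bbP^1_k$ satisfies $\Hom(T_n, A) = \Ext^1(T_n, A) = 0$ and decompose $A = A_{\rm tor} \oplus A_{\rm lf}$. If $A_{\rm tor} \neq 0$, choose a simple subsheaf $S_\frp$; since torsion sheaves are determined by their stalks via \eqref{eq:equiv_tor} and twisting does not alter these stalks, $S_\frp(n) \cong S_\frp$, so the twist of \eqref{eq:tor} with $r = 1$ yields a surjection $\Oc(n) \to S_\frp$, contradicting $\Hom(\Oc(n), A) = 0$. If $A_{\rm lf} \neq 0$, Proposition~\ref{pr:Grothendieck} writes $A_{\rm lf} \cong \bigoplus \Oc(m_i)$; then $\Hom(\Oc(n), A) = 0$ forces each $m_i < n$, while $\Ext^1(\Oc(n+1), A) \cong D\Hom(A, \Oc(n-1)) = 0$ forces each $m_i \geq n$, a contradiction.

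The endomorphism algebra is computed directly from \eqref{eq:bij_pol}: $\End(\Oc(n)) = \End(\Oc(n+1)) = k$, the space $\Hom(\Oc(n), \Oc(n+1)) = k[x_0,x_1]_1$ is $2$-dimensional with basis $x_0, x_1$, and $\Hom(\Oc(n+1), \Oc(n)) = 0$. Hence $\End(T_n)$ is $4$-dimensional, and its Gabriel quiver is the Kronecker quiver with two arrows given by multiplication by $x_0$ and $x_1$. The derived equivalence $\bfD^b(\coh\bbP^1_k) \xto{\sim} \bfD^b(\mod\La)$ is then immediate from Theorem~\ref{th:tilt}.

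For the converse, let $T = \bigoplus T_i$ be a tilting object with $T_i$ indecomposable. An indecomposable torsion sheaf $\Oc_{\frp,r}$ cannot occur as a summand, because $\Oc_{\frp,r}(-2) \cong \Oc_{\frp,r}$ (by twist-invariance of torsion sheaves) implies $\Ext^1(\Oc_{\frp,r}, \Oc_{\frp,r}) \cong D\End(\Oc_{\frp,r}) \neq 0$, violating axiom (2). So each $T_i$ is a line bundle $\Oc(m_i)$, and the computation used for axiom (2) above shows that $\Ext^1(T,T) = 0$ forces $|m_i - m_j| \leq 1$ for all $i,j$; thus all $m_i$ lie in some two-element set $\{n, n+1\}$. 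If only $\Oc(n)$ appeared, axiom (3) would fail for $A = \Oc(n-1)$, since both $\Hom$ and $\Ext^1$ vanish by the degree calculation; symmetrically, if only $\Oc(n+1)$ appeared, axiom (3) would fail for $A = \Oc(n)$. Hence both values occur, giving $\add T = \add(\Oc(n) \oplus \Oc(n+1))$. The main subtlety is in the converse step: one must handle simple torsion sheaves over an arbitrary (not necessarily algebraically closed) base field, but the twist-invariance $S_\frp(n) \cong S_\frp$ together with the uniform degree bookkeeping via \eqref{eq:bij_pol} makes the argument work without any field hypothesis.
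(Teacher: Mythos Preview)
Your proof is correct and follows essentially the same route as the paper's: both use the bijection \eqref{eq:bij_pol} together with Serre duality to compute all relevant $\Hom$ and $\Ext^1$ groups between line bundles, exclude torsion summands via $\Ext^1(\F,\F)\neq 0$, and invoke Theorem~\ref{th:tilt} for the derived equivalence. Your treatment of axiom~(3) and of the converse is more explicit than the paper's terse references, but the underlying argument is the same.
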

\begin{proof}
Consider $T=\Oc\oplus\Oc(1)$. We apply the bijection
\eqref{eq:bij_pol} and Serre duality. This yields $\Ext^1(T,T)=0$.
Let $\F$ be an indecomposable sheaf. If $\F$ is torsion, then
$\Hom(\Oc,\F)\neq 0$; see \S\ref{se:support}. If $\F$ is locally
free, say $\F\cong\Oc(n)$, then $\Hom(\Oc,\Oc(n))\neq 0$ if $n\geq
0$, and $\Ext^1(\Oc(1),\Oc(n))\neq 0$ if $n<0$. Thus
$T=\Oc\oplus\Oc(1)$ is a tilting object, and its endomorphism
algebra equals the Kronecker algebra. From this it follows that any
object $T$ in $\coh\bbP^1_k$ satisfying \eqref{eq:addT} is a tilting
object.

Now let $T$ be any tilting object in $\coh\bbP^1_k$. Then $T$ is
locally free since any non-zero torsion sheaf $\F$ has
$\Ext^1(\F,\F)\neq 0$. Another application of the bijection
\eqref{eq:bij_pol} and Serre duality yields the condition
\eqref{eq:addT}.

The derived equivalence is a consequence of  Theorem~\ref{th:tilt}.
\end{proof}

\begin{cor}
\pushQED{\qed} The Grothendieck group of $\coh\bbP^1_k$ is free of
rank two and the corresponding Euler form is non-degenerate. \qedhere
\end{cor}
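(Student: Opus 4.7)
The plan is to invoke Proposition~\ref{pr:euler}, which simultaneously yields that the Grothendieck group is free of finite rank and that the Euler form is non-degenerate, once one verifies that $\coh\bbP^1_k$ is a $k$-linear, Ext-finite abelian category of finite global dimension that admits a tilting object.

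First I would collect the required properties from earlier in this section. The category $\coh\bbP^1_k$ is $k$-linear by construction, it is Hom-finite, and it satisfies Serre duality via the functorial isomorphism $D\Ext^1(\F,\G)\cong\Hom(\G,\F(-2))$. Proposition~\ref{pr:Serreduality}(1) then forces $\coh\bbP^1_k$ to be hereditary, so its global dimension is bounded by $1$, in particular finite. Combining hereditariness with Hom-finiteness and the Serre duality isomorphism shows that every $\Ext^n(\F,\G)$ is finite-dimensional, so the category is Ext-finite. Finally, Proposition~\ref{pr:coh_tilt} produces the tilting object $T=\Oc\oplus\Oc(1)$. With these four ingredients in hand, Proposition~\ref{pr:euler} immediately delivers freeness of finite rank and non-degeneracy of the Euler form.

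It then remains to pin down the rank as $2$. Since $\End(T)$ is isomorphic to the Kronecker algebra $\La$, Theorem~\ref{th:tilt} provides a derived equivalence $\bfD^b(\coh\bbP^1_k)\xto{\sim}\bfD^b(\mod\La)$. Applying Lemma~\ref{le:gr_gr2} on both sides (which identifies $K_0$ of an abelian category with that of its bounded derived category), this derived equivalence descends to an isomorphism $K_0(\coh\bbP^1_k)\cong K_0(\mod\La)$. The Kronecker algebra is a length category with exactly two isomorphism classes of simple modules, so Lemma~\ref{le:gr_gr1} gives $K_0(\mod\La)\cong\bbZ^2$.

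The argument is essentially a direct assembly of previously established results; there is no real obstacle, and the only point requiring attention is the careful bookkeeping of the hypotheses of Proposition~\ref{pr:euler} and the chain of isomorphisms used to identify the rank.
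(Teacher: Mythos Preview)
Your proposal is correct and matches the paper's intended argument: the corollary is stated with an immediate \qed, and the implied proof is exactly to apply Proposition~\ref{pr:euler} to the tilting object from Proposition~\ref{pr:coh_tilt}, then read off the rank from $K_0(\mod\La)\cong\bbZ^2$ for the Kronecker algebra. The only trivial slip is phrasing (``the Kronecker algebra is a length category'' should be ``$\mod\La$ is a length category''), but the mathematics is fine.
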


\section{Coherent sheaves on weighted projective lines}\label{se:cohX}

Following work of Lenzing \cite{L}, we describe the abelian categories
that arise as categories of coherent sheaves on weighted projective
lines. We provide two different approaches: a list of axioms and a
description in terms of expansions of abelian categories.

The axioms basically say that these abelian categories are hereditary
and noetherian, admit a tilting object, and have no non-zero
projective objects. We collect some direct consequences of these
axioms. In particular, we investigate the quotient category modulo the
Serre subcategory of finite length objects; it is a length category
with a unique simple object.

An abelian category satisfying these axioms has a Grothendieck group
that is free of finite rank.  We show that the rank is minimal if and
only if the category is equivalent to the category of coherent sheaves
on the projective line.

The axioms are invariant under forming expansions.  Moreover,
an expansion increases the rank of the Grothendieck group by
one. Thus the formation of expansions reflects the insertion
of weights for specific points of the projective line. These
observations provide the basis for the final description of categories of
coherent sheaves on weighted projective lines.

Throughout this section we fix an arbitrary field $k$.

\subsection{Weighted projective lines}

A \index{weighted projective line} \emph{weighted projective line}
over a field $k$ is by definition a triple $\bbX =
(\bbP^1_k,\bflambda,\bfp)$, where $\bflambda=(\la_1,\dots,\la_n)$ is
a (possibly empty) collection of distinct closed points of the
projective line $\bbP^1_k$, and $\bfp=(p_1,\dots,p_n)$ is a
\index{weight sequence} \emph{weight sequence}, that is, a sequence
of positive integers. In this work we make the additional assumption
that the closed points of $\bflambda$ are supposed to be
\emph{rational}. This assumption simplifies our exposition. In fact,
there is no substantial difference between this case and the case
where the field $k$ is algebraically closed.

We refer to the introduction for the definition of the category
$\coh\bbX$ of coherent sheaves on a weighted projective line $\bbX$.

Let us remark that since the field $k$ is not necessarily
algebraically closed, the $\bfL(\bfp)$-graded algebra
$S(\bfp,\bflambda)$, even up to isomorphism, might depend on the
choice of the homogeneous coordinates $\la_{i0}, \la_{i1}$ for each
$\lambda_i=[\la_{i0}: \la_{i1}]$. However, up to equivalence the
associated category $\coh\bbX$ of coherent sheaves is independent of
this choice; see Corollary \ref{co:wpl_wf}.

\subsection{Hereditary noetherian categories with tilting object}

Let $\A$ be a $k$-linear abelian category. We consider the following
set of axioms:

\begin{enumerate}
\item[(H1)] The category $\A$ is skeletally small, connected, and Ext-finite.
\item[(H2)] The category $\A$ is noetherian, that is, each object of $\A$ is noetherian.
\item[(H3)] The category $\A$ is hereditary and has no non-zero projective object.
\item[(H4)] The category $\A$ has a tilting object.
\item[(H5)] The Euler form associated to $\A$ is non-degenerate and has discriminant $\pm 1$.
\end{enumerate}

Let us collect the basic properties of a category satisfying
(H1)--(H4) so that we can use them from now on freely without any
further reference.

Recall that $\A_0$ denotes the full subcategory consisting of all
finite length objects in $\A$ and that $\A_+$ is the full subcategory
consisting of all objects $A$ in $\A$ satisfying $\Hom_\A(A_0,A)=0$ for
all $A_0$ in $\A_0$.

One should think of objects in $\A_0$ as \index{object!torsion}
\emph{torsion objects}, whereas the objects in $\A_+$ are
\index{object!torsion-free} \emph{torsion-free} or \index{vector
bundle} \emph{vector bundles}.

\begin{prop}
A $k$-linear abelian category $\A$ satisfying \emph{(H1)--(H4)}
has the following properties:
\begin{enumerate}
\item The category $\A$ admits a Serre functor $\tau\colon\A\to\A$.
\item The Grothendieck group $K_0(\A)$ is free of finite rank and the Euler
form associated to $\A$ is non-degenerate.
\item Every object in $\A$ is a direct sum of an object in $\A_0$ and
an object in $\A_+$.
\item The category of finite length objects admits a decomposition
$\A_0=\coprod_{x\in\bfX}\A_x$ into connected uniserial categories.
\end{enumerate}
\end{prop}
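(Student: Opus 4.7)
The proof of this proposition is almost entirely a matter of matching the axioms (H1)--(H4) against previously established propositions; I do not expect any serious obstacle, only careful bookkeeping of hypotheses.

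For part (1), the plan is to apply Proposition~\ref{pr:serre}. That proposition asserts that a $k$-linear Ext-finite abelian category admitting a tilting object satisfies Serre duality if and only if it is hereditary with no non-zero projective object. Hypotheses (H1), (H3) and (H4) furnish precisely these conditions, so $\A$ admits a Serre functor $\tau\colon\A\to\A$.

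For part (2), I would invoke Proposition~\ref{pr:euler}, which states that if $\A$ is a $k$-linear Ext-finite abelian category of finite global dimension admitting a tilting object, then $K_0(\A)$ is free of finite rank and the Euler form is non-degenerate. Finite global dimension is immediate from the hereditary hypothesis in (H3), and the remaining assumptions are (H1) and (H4). Note that this part of the proposition does not need (H5); axiom (H5) only strengthens the statement by pinning down the discriminant.

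For part (3), I would apply Proposition~\ref{pr:Serreduality}(3), which requires only that $\A$ be Hom-finite and admit a Serre functor (supplied by (1)) and that the object under consideration be noetherian. Since every object of $\A$ is noetherian by (H2), each $A\in\A$ admits a unique maximal finite-length subobject $A_0$, and the proposition asserts that $A_0$ is a direct summand of $A$ with $A/A_0\in\A_+$. This is exactly the required decomposition.

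For part (4), I would first note, as observed just before Proposition~\ref{pr:serre_finlen}, that the Serre functor $\tau$ restricts to a Serre functor on the Serre subcategory $\A_0$ of finite length objects. Thus $\A_0$ is itself a Hom-finite $k$-linear length category admitting Serre duality, so Proposition~\ref{pr:serre_finlen} applies and yields a decomposition $\A_0=\coprod_{i\in I}\A_i$ into connected uniserial categories indexed by the $\tau$-orbits of simple objects. Defining $\bfX$ to be this index set of $\tau$-orbits (which will later be identified with the underlying point set of the weighted projective line) delivers the claim. The only small subtlety worth flagging in the write-up is that Proposition~\ref{pr:serre_finlen} takes the decomposition index set to be $\Sigma_0/\tau$, so one must remark that this is the same as the parameter set $\bfX$ used in the statement.
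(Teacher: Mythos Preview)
Your proposal is correct and matches the paper's own proof essentially line for line: the paper simply cites Proposition~\ref{pr:serre} for (1), Proposition~\ref{pr:euler} for (2), Proposition~\ref{pr:Serreduality} for (3), and Proposition~\ref{pr:serre_finlen} for (4). Your added remarks about checking the hypotheses and about identifying $\bfX$ with $\Sigma_0/\tau$ are apt, and indeed the paper makes exactly that identification immediately after the proposition.
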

\begin{proof}
(1) follows from Proposition~\ref{pr:serre}, (2) from
Proposition~\ref{pr:euler}, (3) from
Proposition~\ref{pr:Serreduality}, and (4) from
Proposition~\ref{pr:serre_finlen}.
\end{proof}

The noetherianness of $\A$ implies that $\A_0$ is non-trivial. On the
other hand, $\A\neq\A_0$ because a length category with Serre duality
and a Grothendieck group of finite rank has a degenerate Euler form;
see Example~\ref{ex:degen}.

Denote by $\Si_0$ a set of representatives of the isomorphism classes
of simple objects of $\A$. We identify the set $\Si_0/\tau$ of
$\tau$-orbits with the index set $\bfX$ of the decomposition
$\A_0=\coprod_{x\in\bfX}\A_x$.  For each $x\in\bfX$, let $p(x)$ denote
the number of isomorphism classes of simple objects of $\A_x$.

\begin{lem}
Each $p(x)$ is finite. More precisely, $\sum_{x\in\bfX}(p(x)-1)$ is
bounded by the rank of $K_0(\A)$.
\end{lem}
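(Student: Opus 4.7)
My strategy will be to use the Euler form on $K_0(\A)$ to exhibit a linearly independent family of classes coming from simple objects, and then to invoke the finiteness of $\rank K_0(\A)$ (Proposition~\ref{pr:euler}) to obtain the bound. By Proposition~\ref{pr:serre_finlen}, each $\A_x$ is a connected uniserial length category with Serre duality whose Ext-quiver is either $\bbA_\infty^\infty$ (if $p(x)$ is infinite) or $\tilde\bbA_{p(x)-1}$ with cyclic orientation (if $p(x)$ is finite). I would fix representatives $S_{x,i}$ of the isomorphism classes of simples in $\A_x$ so that $\tau S_{x,i}\cong S_{x,i-1}$, where $i$ runs over $\bbZ/p(x)\bbZ$ in the cyclic case and over $\bbZ$ in the linear case, and set $\Delta_x=\End_\A(S_{x,i})$ (independent of $i$ since $\tau$ is an equivalence) together with $d_x=\dim_k\Delta_x$, which is finite by Ext-finiteness.

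Next, I would compute the pairwise Euler pairings using Serre duality $\Ext^1_\A(A,B)\cong D\Hom_\A(B,\tau A)$. This yields $\Ext^1_\A(S_{x,i},S_{y,j})=0$ unless $y=x$ and $j=i-1$, in which case the $k$-dimension equals $d_x$. Consequently $\langle[S_{x,i}],[S_{y,j}]\rangle=0$ whenever $x\neq y$, while inside a single $\A_x$ the diagonal entries equal $d_x$, the subdiagonal entries $\langle[S_{x,i}],[S_{x,i-1}]\rangle$ equal $-d_x$, and all other entries vanish.

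Then for each $x\in\bfX$ I would choose $T_x=\{S_{x,1},\dots,S_{x,n_x}\}$, with $n_x=p(x)-1$ when $p(x)$ is finite and $n_x$ an arbitrary positive integer otherwise. By the Euler-form computation, the Gram matrix of $\bigcup_x T_x$ is block diagonal across $\bfX$, and each block equals $d_x$ times a lower triangular matrix with $1$'s on the diagonal; so each block has nonzero determinant and the whole Gram matrix is non-singular. Hence the classes $\{[S]\mid S\in T_x,\,x\in\bfX\}$ are linearly independent in $K_0(\A)$, which forces $\sum_x n_x\leq\rank K_0(\A)$. In the linear case $n_x$ could be taken arbitrarily large, contradicting the finite bound; hence every $p(x)$ is finite, and choosing $n_x=p(x)-1$ yields the desired inequality.

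The main subtlety will be the need to drop exactly one simple per finite cyclic orbit. Keeping all $p(x)$ simples in $\A_x$ produces a Gram matrix whose kernel is spanned by $(1,1,\dots,1)$, reflecting the possibility that $\sum_i[S_{x,i}]$ vanishes in $K_0(\A)$. Removing one index breaks the cyclic closure relation and leaves the unipotent lower triangular block responsible for the linear independence; without this trimming the naive counting argument fails.
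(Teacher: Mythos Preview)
Your proof is correct and follows essentially the same approach as the paper. Both arguments drop one simple object from each $\tau$-orbit, observe that the remaining simples have a triangular Gram matrix with respect to the Euler form (the paper packages this via a linear ordering and the abstract Lemma~\ref{le:lin}, while you compute the block-diagonal lower-triangular structure directly), and conclude linear independence in the finite-rank group $K_0(\A)$.
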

\begin{proof}
First observe that for two simple objects $S,T\in\A$, we have
$\Ext^1_\A(S,T)\neq 0$ if and only if $T\cong\tau S$; see
Theorem~\ref{th:Gabriel} and Proposition~\ref{pr:Serreduality}. Now
choose for each $x\in\bfX$ a simple object $S_x\in\A_x\cap\Si_0$ and
let $\Si'_0=\Si_0\smallsetminus\{S_x\mid x\in\bfX\}$.  The set
$\Si'_0$ admits a linear ordering such that $S>T$ implies $\langle
[S],[T]\rangle =0$. It follows that the corresponding classes $[S]$,
$S\in\Si'_0$, are linearly independent in $K_0(\A)$; see
Lemma~\ref{le:lin} below. Thus $\card \Si'_0=\sum_{x\in\bfX}(p(x)-1)$
is bounded by the rank of $K_0(\A)$.
\end{proof}

\begin{lem}\label{le:lin}
Let $G$ be an abelian group and $X\subseteq G$ a subset. Suppose there
is a non-degenerate bilinear form $\p$ on $G$ and a linear ordering on
$X$ such that $\p(x,x)\neq 0$ for all $x\in X$ and $\p(x,y)=0$ for all
$x>y$ in $X$. Then $X$ is linearly independent.
\end{lem}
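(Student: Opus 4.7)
The plan is to prove the lemma by a direct pairing argument, exploiting the upper-triangular structure of $\phi$ on $X$ with respect to the given linear ordering. Linear independence is a statement about finite subsets, so I consider an arbitrary finite relation $\sum_{i=1}^n a_i x_i = 0$ with $a_i \in \mathbb{Z}$ and $x_1 < x_2 < \cdots < x_n$ in $X$, and aim to show that every coefficient $a_i$ vanishes.

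The key step is to pair the relation on the left with the largest element $x_n$. Applying bilinearity gives $\sum_{i=1}^n a_i\,\phi(x_n,x_i)=0$. For $i<n$ one has $x_n > x_i$, so $\phi(x_n,x_i)=0$ by hypothesis, and the relation collapses to $a_n\,\phi(x_n,x_n)=0$. Since $\phi(x_n,x_n)\neq 0$ and the target of $\phi$ is torsion-free (in the intended application $\phi$ is $\mathbb{Z}$-valued, cf.\ the discriminant condition (H5) and the Euler form discussed above), this forces $a_n=0$. Now one either iterates the argument on the shortened relation $\sum_{i=1}^{n-1} a_i x_i = 0$, or phrases it formally as an induction on $n$, to conclude that all $a_i$ vanish.

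The only subtlety, which I would explicitly flag in the proof, is the passage from $a_n \phi(x_n,x_n)=0$ to $a_n=0$: this uses that the codomain of $\phi$ is torsion-free. In the context of the paper this is automatic since the bilinear forms under consideration (the Euler form on $K_0(\A)$) take values in $\mathbb{Z}$. Apart from this, the proof uses only the upper-triangular vanishing and the non-vanishing on the diagonal; the non-degeneracy of $\phi$, although part of the hypotheses, is not actually invoked. This is the main—indeed essentially the only—conceptual point, and the rest is a one-line computation.
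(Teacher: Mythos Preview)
Your proof is correct and is exactly the argument the paper has in mind; the paper's own proof consists of the single word ``Straightforward.'' Your remarks that the codomain of $\phi$ must be torsion-free (which holds since the Euler form is $\bbZ$-valued) and that the non-degeneracy hypothesis is never used are accurate and go beyond what the paper spells out.
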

\begin{proof}
Straightforward.
\end{proof}

\begin{lem}\label{le:linear-rank}
There exists a linear map $\rho\colon K_0(\A)\to\bbZ$ such that
\begin{enumerate}
\item $\rho([A])\geq 0$ for all $A\in\A$;
\item $\rho([A])=0$ if and only if $A\in\A_0$;
\item $\rho([\tau A])=\rho ([A])$ for all  $A\in\A$.
\end{enumerate}
\end{lem}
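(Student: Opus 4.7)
The approach is to realise the Serre quotient $\A/\A_0$ as the module category $\mod D$ over a division ring $D$ of finite dimension over $k$, and then to set
\[
\rho(A):=\dim_D Q(A),
\]
where $Q\colon \A\to \A/\A_0$ is the quotient functor. Granted this realisation, the three properties follow formally: $\dim_D$ is nonnegative, so (1) holds; $\rho(A)=0$ iff $Q(A)=0$ iff $A\in\Ker Q=\A_0$ by Proposition~\ref{pr:quotient}, so (2) holds; and (3) holds because $\tau$ preserves $\A_0$ (permuting simples within each tube $\A_x$) and hence descends to an autoequivalence $\bar\tau$ of $\A/\A_0\simeq\mod D$, which necessarily preserves the (essentially unique) simple object and thus the $D$-dimension of every object.

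The preparatory step is to verify that $\A/\A_0$ inherits from $\A$ the structure of a $k$-linear abelian category which is hereditary (by the third example following Lemma~\ref{lem:hereditary}), noetherian (Lemma~\ref{le:noetherian}), and Ext-finite (via exactness of $Q$). The image $\bar T:=Q(T)$ of a tilting object $T\in\A$ inherits $\Ext^1_{\A/\A_0}(\bar T,\bar T)=0$ from the vanishing of $\Ext^1_\A(T,T)$.

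The crucial step is to show that $\A/\A_0$ is semisimple with a unique simple object $\bar L$ up to isomorphism, so that
\[
\A/\A_0\simeq\mod D,\qquad D:=\End_{\A/\A_0}(\bar L)^{\op},
\]
with $D$ finite-dimensional over $k$ by Ext-finiteness. I would establish semisimplicity by representing morphisms in $\A/\A_0$ via zigzags in $\A$ with finite-length kernels and cokernels (Lemma~\ref{le:mor}), and then combining Serre duality on $\A$ with the uniserial tube structure of $\A_0$ from Proposition~\ref{pr:serre_finlen} to show that every extension in $\A/\A_0$ splits after a suitable refinement of its representing data. Uniqueness of the simple object should then follow from the connectedness of $\A$ together with the fact that $\bar T$ generates $\A/\A_0$.

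The main obstacle is the vanishing of $\Ext^1$ in $\A/\A_0$: this does not follow formally from inherited properties, and requires an explicit splitting construction using Serre duality and the tube structure of $\A_0$. Once this is in hand, the definition $\rho(A):=\dim_D Q(A)$ delivers the required linear map.
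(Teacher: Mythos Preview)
Your endpoint is the right one --- the paper eventually defines $\rank A$ as the length of $A$ in $\A/\A_0$, and this agrees with $\rho$ --- but your route to it has genuine gaps, and within the paper's development it is circular.

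First, two concrete errors in the preparatory step. The quotient $\A/\A_0$ is \emph{not} Hom-finite (hence not Ext-finite) over $k$: already for $\coh\bbP^1_k$ one has $\End_{\A/\A_0}(\Oc)\cong k(y)$, which is infinite-dimensional over $k$; exactness of $Q$ says nothing about the size of the colimits defining the Hom-sets. Relatedly, the inference ``$\Ext^1_\A(T,T)=0\Rightarrow\Ext^1_{\A/\A_0}(\bar T,\bar T)=0$'' is not valid: an exact functor induces a map on $\Ext^1$, but extensions in a Serre quotient are represented by extensions in $\A$ between objects that differ from $T$ by finite-length modifications (Lemma~\ref{le:mor}), so the induced map need not be surjective.

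Second, and more structurally, the statement you are trying to use --- that $\A/\A_0$ is a length category with a unique simple --- is proved in the paper \emph{after} Lemma~\ref{le:linear-rank} and \emph{using} it: the very next proposition shows $\A/\A_0$ is a length category by induction on $\rho$, and uniqueness of the simple (Proposition~\ref{pr:K-split}) comes later still, via the line-bundle theory that depends on the rank function. Your sketch of an independent proof (splitting via Serre duality and the tube structure; uniqueness via connectedness and $\bar T$ generating) does not supply the missing arguments.

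The paper's proof avoids $\A/\A_0$ entirely. It works inside $K_0(\A)$ with the Euler form: for each $x\in\bfX$ set $w_x=\sum_{i=1}^{p(x)}[\tau^i S_x]$, choose finitely many $w_{x_1},\dots,w_{x_r}$ whose span contains every $w_x$ (possible since $K_0(\A)$ has finite rank), put $w=\sum w_{x_j}$, and define $\rho=\langle-,w\rangle$. Serre duality gives $\langle[A],w\rangle=0$ for $A\in\A_0$; for $0\neq A\in\A_+$, noetherianness produces a simple quotient in some $\A_x$, and then $\Ext^1_\A(A,\A_0)=0$ forces $\langle[A],w_x\rangle>0$. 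This is short and uses only what is already available at this point in the paper.
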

\begin{proof}
For each $x\in\bfX$ choose a simple object $S_x\in\A_x$ and let
$w_x=\sum_{i=1}^{p(x)}[\tau^i S_x]$. Choose elements $x_1,\dots,x_r$ from
$\bfX$ such that the subgroup of $K_0(\A)$ generated by
$w_{x_1},\dots,w_{x_r}$ contains each $w_x$ and set $w=w_{x_1}+\dots
+w_{x_r}$.

Serre duality implies that $\langle [A],w\rangle=0$ for all $A$ in
$\A_0$. If $A$ is a non-zero object in $\A_+$, then $A$ has a simple
quotient, say $S_x$, by noetherianness. Using that
$\Ext_\A^1(A,A_0)=0$ for all $A_0$ in $\A_0$ by Serre duality, this
yields $\langle [A],w_x\rangle>0$ and therefore $\langle
[A],w\rangle>0$. Thus the map $\rho=\langle-,w\rangle$ has the desired
properties.
\end{proof}

\begin{prop}
The abelian category $\A/\A_0$ is a length category.
\end{prop}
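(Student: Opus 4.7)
The plan is to use the linear map $\rho\colon K_0(\A)\to\bbZ$ provided by Lemma~\ref{le:linear-rank} as a length function on $\A/\A_0$. The point is that $\rho$ vanishes precisely on the classes of finite length objects, so it descends to $K_0(\A/\A_0)$ and detects non-zero objects there.

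First I would check that $\rho$ factors through the canonical surjection $K_0(\A)\twoheadrightarrow K_0(\bfD^b(\A)/\bfD^b(\A_0))\cong K_0(\A/\A_0)$ (or more directly, through $K_0(Q)$ where $Q\colon\A\to\A/\A_0$): indeed, the kernel of this map is generated by the classes of objects in $\A_0$, and $\rho$ vanishes on these by property~(2) of Lemma~\ref{le:linear-rank}. This yields a well-defined additive function $\bar\rho\colon K_0(\A/\A_0)\to\bbZ$, and via the quotient functor $Q$ (which is the identity on objects) a non-negative integer $\bar\rho([QA])=\rho([A])$ attached to every object of $\A/\A_0$.

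Next I would record the two key properties of $\bar\rho$. Additivity on short exact sequences is immediate from the definition. The essential observation is that $\bar\rho([QA])=0$ if and only if $QA=0$ in $\A/\A_0$: by Proposition~\ref{pr:quotient}(1), $\Ker Q=\A_0$, so $QA=0$ is equivalent to $A\in\A_0$, and by property~(2) of Lemma~\ref{le:linear-rank} this is in turn equivalent to $\rho([A])=0$. Consequently, for every strict inclusion of subobjects $\bar B\subsetneq \bar B'$ in $\A/\A_0$ one has $\bar\rho([\bar B'/\bar B])\geq 1$.

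Combining these, given any object $\bar A=QA$ in $\A/\A_0$ and any chain of proper subobjects $0=\bar B_0\subsetneq \bar B_1\subsetneq\cdots\subsetneq\bar B_m=\bar A$, additivity gives
\[
\rho([A])=\bar\rho([\bar A])=\sum_{i=1}^m\bar\rho([\bar B_i/\bar B_{i-1}])\geq m.
\]
Hence every such chain has length at most $\rho([A])$, so $\bar A$ is both artinian and noetherian and admits a composition series. This means $\A/\A_0=(\A/\A_0)_0$, proving that $\A/\A_0$ is a length category. I do not foresee a real obstacle here beyond checking that $\rho$ genuinely descends, which is a routine consequence of the construction of $K_0$ and of $\A_0=\Ker Q$.
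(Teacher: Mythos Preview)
Your argument is correct and rests on the same key input as the paper's proof, namely the map $\rho$ of Lemma~\ref{le:linear-rank}. The organization differs slightly: the paper first invokes that $\A/\A_0$ is noetherian (inherited from $\A$ via Lemma~\ref{le:noetherian}), then uses induction on $\rho([A])$ together with the existence of simple quotients to conclude finite length; you instead use $\rho$ to bound the length of arbitrary chains directly, which yields both chain conditions at once without a separate appeal to noetherianness. The only point to watch in your write-up is the passage through $K_0(\A/\A_0)$: your claim that $\ker K_0(Q)$ is generated by classes from $\A_0$ is the standard localization sequence $K_0(\A_0)\to K_0(\A)\to K_0(\A/\A_0)\to 0$, which is true but not stated in the paper. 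You can sidestep it entirely by lifting a chain $\bar B_0\subsetneq\cdots\subsetneq\bar B_m$ in $\A/\A_0$ to a chain $B_0\subseteq\cdots\subseteq B_m$ in $\A$ (as in the proof of Lemma~\ref{le:noetherian}) and applying $\rho$ in $K_0(\A)$ directly.
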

\begin{proof}
Let $A$ be an object in $\A$. We prove by induction on $\rho([A])$
that $A$ has finite length in $\A/\A_0$. If $\rho([A])=0$, then $A=0$
in $\A/\A_0$. Now suppose $\rho([A])>0$.  The category $\A/\A_0$ is
noetherian and therefore each non-zero object has a simple
quotient. Thus there exists a subobject $A'\subseteq A$ such that
$A/A'$ is simple in $\A/\A_0$. Then $\rho([A'])< \rho([A])$ and
therefore $A'$ has finite length in $\A/\A_0$. It follows that $A$ has
finite length in $\A/\A_0$.
\end{proof}

For each object $A$ in $\A$, we denote by $\rank A$ the length of
$A$ in $\A/\A_0$ and call it the \index{rank} \emph{rank}. This
function extends to a linear map $K_0(\A)\to \bbZ$. This linear map
is surjective and satisfies the conditions in Lemma
\ref{le:linear-rank}. Indeed, such a map is unique by Proposition
\ref{pr:K-split}.

\subsection{Line bundles}
Let $\A$ be a $k$-linear abelian category satisfying (H1)--(H4).  An
indecomposable object in $\A$ of rank one is called \index{line
bundle} \emph{line bundle}. Thus line bundles are precisely the
objects in $\A_+$ of rank one; they form the building blocks of the
category $\A_+$. Let us collect their basic properties.

\begin{prop}\label{pr:lb_filtr}
Every object $A$ in $\A_+$ admits
a filtration $0=A_0\subseteq A_1\subseteq \dots\subseteq A_n=A$ of
length $n=\rank A$ such that each factor $A_i/A_{i-1}$ is a line
bundle.
\end{prop}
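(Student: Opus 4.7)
The plan is to proceed by induction on $n = \rank A$, using that $\A/\A_0$ is a length category (established just above) together with Proposition~\ref{pr:Serreduality}(3), which splits off the finite-length part of any noetherian object. For $n = 0$ the object $A$ lies in $\A_0 \cap \A_+$ and must vanish, so the empty filtration works. For $n = 1$ the object $A$ is itself a line bundle: any non-trivial decomposition $A = B \oplus C$ would, by additivity of rank, force one summand (say $C$) into $\A_0$, and the split inclusion $C \hookrightarrow A$ would then contradict $A \in \A_+$.

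For the inductive step with $n \ge 2$, the crux will be to produce a subobject $L \subseteq A$ that is a line bundle with $A/L \in \A_+$ of rank $n-1$. Since $\A/\A_0$ is a length category, $A$ admits a simple subobject $\bar L$ there; Lemma~\ref{le:mor} allows me to lift $\bar L$ to a genuine subobject $L' \subseteq A$ in $\A$ with $\rank L' = 1$ and $\rank(A/L') = n-1$. To cope with a possibly non-trivial torsion part in $A/L'$, I will invoke Proposition~\ref{pr:Serreduality}(3) to decompose $A/L' = T \oplus B$ with $T \in \A_0$ and $B \in \A_+$, and take $L \subseteq A$ to be the preimage of $T$ under $A \twoheadrightarrow A/L'$. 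Then $L/L' \cong T$ (so $\rank L = 1$), $A/L \cong B \in \A_+$ has rank $n-1$, and $L$ itself lies in $\A_+$ because $\Hom_\A(\A_0, L) \subseteq \Hom_\A(\A_0, A) = 0$; the $n = 1$ case then makes $L$ a line bundle.

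Finally, the induction hypothesis applied to $A/L$ yields a filtration of $A/L$ of length $n-1$ with line bundle factors; pulling it back along $A \twoheadrightarrow A/L$ and prepending $0 \subseteq L$ gives the desired filtration of length $n$. The only substantial step is this torsion-absorption construction of $L$ from $L'$: without Proposition~\ref{pr:Serreduality}(3) one would be stuck with a candidate quotient that need not be torsion-free, and the induction would break down.
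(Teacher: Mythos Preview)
Your proof is correct and uses the same toolkit as the paper (the length-category structure of $\A/\A_0$, Lemma~\ref{le:mor} to lift to $\A$, the observation that the $\A_0$-subobject of $A$ vanishes because $A\in\A_+$, and Proposition~\ref{pr:Serreduality}(3) to absorb torsion). The only difference is the direction of the induction: you locate a \emph{simple subobject} of $A$ in $\A/\A_0$ and peel off a line bundle at the bottom of the filtration, whereas the paper locates a \emph{simple quotient} (equivalently a maximal subobject) in $\A/\A_0$ and peels off a line bundle at the top, taking $A_{n-1}$ as the preimage of the torsion part of the rank-one cokernel. Both arguments rely on exactly the same torsion-absorption trick, so neither buys anything the other does not.
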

\begin{proof}
We proceed by induction on $n$. The case $n\leq 1$ is clear. If $n>1$,
choose a monomorphism $U\to A$ in $\A/\A_0$ with simple cokernel. This
morphism is represented by a morphism $\p\colon U'\to A/A'$ in $\A$
such that $U'\subseteq U$ and $A'\subseteq A$ are subobjects with
$U/U'$ and $A'$ in $\A_0$; see Lemma~\ref{le:mor}. It follows that
$A'=0$ since $A\in\A_+$. Passing from $U'$ to the image of $\p$, we
may assume that $\Ker\p=0$. The cokernel $C=\Coker\p$ is simple in
$\A/\A_0$. Thus there is a decomposition $C=C_0\oplus C_1$ with $C_0$
of finite length and $C_1$ a line bundle. Forming the pullback of the
exact sequence $0\to U'\to A\to C\to 0$ along the inclusion $C_0\to C$
yields a monomorphism $A_{n-1}\to A$ with cokernel $C_1$. Clearly,
$A_{n-1}$ belongs to $\A_+$ and has rank $n-1$.
\end{proof}

\begin{lem}\label{le:filtr}
Let $A$ be a non-zero object in $\A_+$ and suppose that
$\Ext^1_\A(S,A)\neq 0$ for some simple object $S$.
\begin{enumerate}
\item There exists a chain of monomorphisms $A=A_0\xto{\p_1}
A_1\xto{\p_2}\cdots$ in $\A$ such that $A_i\in\A_+$ and
$A_{i}/\Im\p_{i}\cong\tau^{-i+1}S$ for all $i>0$.
\item For each $n\geq 0$, there exists an exact sequence $0\to A\to B\to
C\to 0$ in $\A$ such that $B\in\A_+$ and $[C]=\sum_{i=0}^n[\tau^{-i} S]$.
\end{enumerate}
\end{lem}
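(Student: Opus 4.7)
The plan is to prove (1) by induction on $i$ and to deduce (2) as an immediate corollary.

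For the base case $i=1$, since $\Ext^1_\A(S,A)\neq 0$, I would choose a non-split short exact sequence $\xi_1\colon 0\to A\xto{\p_1} A_1\to S\to 0$. To verify $A_1\in\A_+$, I would apply $\Hom_\A(T,-)$ for every simple object $T$, exploiting $\Hom_\A(T,A)=0$ (which holds by the hypothesis $A\in\A_+$). When $T\not\cong S$, we have $\Hom_\A(T,S)=0$ and the conclusion is immediate from the long exact sequence. When $T\cong S$, any morphism $f\colon S\to A_1$ composed with the quotient $\pi\colon A_1\to S$ yields an endomorphism of $S$; by simplicity this composite is either zero, in which case $f$ factors through $A$ and must vanish, or it is an isomorphism, in which case $\pi$ admits a section and $\xi_1$ splits, contradicting the choice of $\xi_1$. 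Hence $A_1\in\A_+$.

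For the inductive step, assume $A_i\in\A_+$ has been constructed so that $\tau^{-i+1}S$ is a quotient of $A_i$; this is true at $i=1$ from $\xi_1$ and is automatic for larger $i$ since $A_i/\Im\p_i\cong\tau^{-i+1}S$. By Serre duality (Proposition~\ref{pr:Serreduality}),
\[
\Ext^1_\A(\tau^{-i}S,A_i)\cong D\Hom_\A(A_i,\tau(\tau^{-i}S))=D\Hom_\A(A_i,\tau^{-i+1}S)\neq 0,
\]
so I can pick a non-split extension $0\to A_i\xto{\p_{i+1}} A_{i+1}\to\tau^{-i}S\to 0$. Repeating the argument of the base case verbatim, with $S$ replaced by $\tau^{-i}S$, yields $A_{i+1}\in\A_+$; moreover $A_{i+1}$ has $\tau^{-i}S$ as a quotient, which feeds the next step.

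For (2), given $n\geq 0$, I apply (1) to obtain the chain $A=A_0\xto{\p_1} A_1\xto{\p_2}\cdots\xto{\p_{n+1}}A_{n+1}$, set $B=A_{n+1}$, and let $C=B/A$. The induced filtration $0=A_0/A\subseteq A_1/A\subseteq\cdots\subseteq A_{n+1}/A=C$ has successive factors $\tau^{-i+1}S$ for $i=1,\dots,n+1$, so $[C]=\sum_{i=0}^n[\tau^{-i}S]$ in $K_0(\A)$, while $B\in\A_+$ by construction. The only delicate point in the entire argument is the verification $A_{i+1}\in\A_+$: one must rule out nonzero maps from simples into the extension, and the key observation is that such a map either factors through $A_i$ (contradicting $A_i\in\A_+$) or splits the extension (contradicting the non-split choice). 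Everything else reduces to Serre duality and bookkeeping in the Grothendieck group.
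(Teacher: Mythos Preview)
Your proof is correct and follows essentially the same approach as the paper: choose a non-split extension, verify membership in $\A_+$ by testing $\Hom_\A(T,-)$ on simples, invoke Serre duality to continue the chain, and for part (2) take the composite $\p_{n+1}\cdots\p_1$. Your treatment of the case $T\cong S$ is more explicit than the paper's (which simply asserts that $\Hom_\A(T,A)\to\Hom_\A(T,A_1)$ is an isomorphism), but the content is identical.
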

\begin{proof}
(1) Choose a non-split exact sequence $0\to A\xto{\p_1} A_1\to S\to
0$. For each simple object $T$, the induced morphism
$\Hom_\A(T,A)\to\Hom_\A(T,A_1)$ is an isomorphism. Thus $A_1$ belongs
to $\A_+$. Serre duality implies that $\Ext_\A^1(\tau^{-1}S,A_1)\neq
0$. Thus we can iterate the construction and obtain a sequence of
morphisms $\p_i\colon A_{i-1}\to A_i$

(2) Apply (1) by taking the composite $\p_{n+1}\dots\p_1$ for the
    morphism $A\to B$.
\end{proof}

\begin{lem}\label{le:lineb}
Let $L,L'$ be line bundles and $0\to L\to L'\to S\to 0$ an exact
sequence such that $S$ is simple. Then for each $x\in\bfX$,
$\Hom_\A(L,\A_x)= 0$ if and only if $\Hom_\A(L',\A_x)= 0$.
\end{lem}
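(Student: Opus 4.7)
The plan is to apply $\Hom_\A(-,T)$ for $T\in\A_x$ to the given short exact sequence and analyze the resulting long exact sequence, using Serre duality and the block decomposition $\A_0=\coprod_{y\in\bfX}\A_y$ as the two main tools. We may assume $S\neq 0$, otherwise $L=L'$ and the claim is trivial.

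First I would record two preliminary facts. \emph{(i)}~The sequence $0\to L\to L'\to S\to 0$ is non-split: otherwise $L'\cong L\oplus S$ would split with $L,S$ both non-zero, contradicting the indecomposability of the line bundle $L'$. Thus $\Ext^1_\A(S,L)\neq 0$, and Serre duality gives $\Hom_\A(L,\tau S)\neq 0$. \emph{(ii)}~Since $S$ is simple and of finite length, there is a unique $y\in\bfX$ with $S\in\A_y$; because $\tau$ preserves each $\A_y$ (as the Ext-quiver of $\A_y$ is a $\tau$-orbit, cf.\ Proposition~\ref{pr:serre_finlen}), we also have $\tau S\in\A_y$. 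Moreover the decomposition $\A_0=\coprod_{y\in\bfX}\A_y$ ensures $\Hom_\A(\A_y,\A_z)=0$ for $y\neq z$.

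Next I would apply $\Hom_\A(-,T)$ for $T\in\A_x$ and read off the long exact sequence
\[
0\to\Hom_\A(S,T)\to\Hom_\A(L',T)\to\Hom_\A(L,T)\to\Ext^1_\A(S,T)\to\cdots.
\]
The key intermediate claim is that each of the two hypotheses $\Hom_\A(L,\A_x)=0$ and $\Hom_\A(L',\A_x)=0$ forces $S\notin\A_x$, i.e.\ $y\neq x$. For the first, if $y=x$ then $\tau S\in\A_x$, but fact~\emph{(i)} gives $\Hom_\A(L,\tau S)\neq 0$, contradiction. For the second, if $y=x$ then taking $T=S$ the exact sequence shows $\Hom_\A(S,S)\hookrightarrow\Hom_\A(L',S)=0$, again a contradiction.

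Once $y\neq x$ is established, fact~\emph{(ii)} gives $\Hom_\A(S,T)=0$ and $\Hom_\A(T,\tau S)=0$ for every $T\in\A_x$; via Serre duality the latter yields $\Ext^1_\A(S,T)=0$. Plugging these vanishings into the long exact sequence collapses it to an isomorphism $\Hom_\A(L',T)\xto{\sim}\Hom_\A(L,T)$ for all $T\in\A_x$, which gives the desired equivalence. There is no real obstacle here beyond correctly identifying that both hypotheses alike pin $S$ into a component different from $\A_x$, after which the Serre-duality vanishing of $\Ext^1(S,T)$ makes the extension in the long exact sequence invisible on $\A_x$.
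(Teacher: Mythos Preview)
Your proof is correct and follows essentially the same route as the paper's: both arguments use the non-splitness of the sequence to obtain $\Hom_\A(L',S)\neq 0$ and (via Serre duality) $\Hom_\A(L,\tau S)\neq 0$, and then use the long exact sequence together with $\Hom_\A(S,T)=0=\Ext^1_\A(S,T)$ for $T$ in a component different from that of $S$ to get the isomorphism $\Hom_\A(L',T)\cong\Hom_\A(L,T)$. The only cosmetic difference is that the paper phrases the case $x=y$ directly (both Hom-spaces are nonzero there), whereas you phrase it contrapositively (either vanishing hypothesis forces $x\neq y$).
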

\begin{proof}
Note that $\Hom_\A(L,\A_x)\neq 0$ if and only if $\Hom_\A(L,S_x)\neq
0$ for some simple object $S_x\in\A_x$. The assumptions imply
$\Hom_\A(L',S)\neq 0$ and $\Hom_\A(L,\tau S)\neq 0$. If $T$ is a
simple object not lying in the $\tau$-orbit of $S$, then $L\to L'$
induces an isomorphism $\Hom_\A(L',T)\xto{\sim}\Hom_\A(L,T)$.
\end{proof}

\begin{lem}\label{le:lineb1}
Let $L,L'$ be line bundles. Then the following are
equivalent:
\begin{enumerate}
\item For each $x\in\bfX$, $\Hom_\A(L,\A_x)= 0$ if and only if
$\Hom_\A(L',\A_x)= 0$.
\item There exists $x\in\bfX$ such that $\Hom_\A(L,\A_x)\neq 0$ and
$\Hom_\A(L',\A_x)\neq 0$.
\item There exists $n\in\bbZ$ such that $L'\cong\tau^n L$ in $\A/\A_0$.
\end{enumerate}
\end{lem}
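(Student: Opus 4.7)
The plan is to prove the cycle $(1)\Rightarrow(2)\Rightarrow(3)\Rightarrow(1)$, with the main difficulty in $(2)\Rightarrow(3)$. The implication $(1)\Rightarrow(2)$ is immediate: by (H2), the non-zero noetherian object $L$ has a maximal proper subobject and hence a simple quotient $S$; the class $x\in\bfX$ of $S$ yields $\Hom_\A(L,\A_x)\neq 0$, and (1) then gives $\Hom_\A(L',\A_x)\neq 0$. For $(3)\Rightarrow(1)$, I introduce the type $T(L)=\{x\in\bfX\mid\Hom_\A(L,\A_x)\neq 0\}$ and argue it is invariant under the two operations relating the right-hand sides of (3). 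Since $\A_0=\coprod_{x\in\bfX}\A_x$ is indexed by $\tau$-orbits of simples (Proposition~\ref{pr:serre_finlen}), $\tau$ restricts to an auto-equivalence of each $\A_x$, so $\Hom_\A(\tau^nL,\A_x)\cong\Hom_\A(L,\A_x)$, giving $T(\tau^nL)=T(L)$. Moreover, an isomorphism $L\cong L'$ in $\A/\A_0$ is realised by a zig-zag of morphisms in $\A$ with kernel and cokernel in $\A_0$ (Lemma~\ref{le:mor}); refining each factor through a composition series of its $\A_0$-part, Lemma~\ref{le:lineb} applies step by step to yield $T(L)=T(L')$. Combining, (3) gives $T(L)=T(\tau^nL')=T(L')$, which is (1).

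For $(2)\Rightarrow(3)$, pick simples $S,T\in\A_x$ witnessing $(2)$. As $\A_x$ is connected uniserial with simples forming a single $\tau$-orbit (Proposition~\ref{pr:serre_finlen}), one has $T\cong\tau^mS$ for some $m\in\bbZ$. Replacing $L'$ by $\tau^{-m}L'$ (still a line bundle, since $\tau$ preserves the rank function) yields $\Hom_\A(\tau^{-m}L',S)\cong\Hom_\A(L',T)\neq 0$, so it suffices to prove that $L\cong L'$ in $\A/\A_0$ whenever both admit non-zero morphisms to a common simple $S$; applying $\bar\tau^m$ to such an isomorphism then produces the required $L'\cong\tau^mL$ in $\A/\A_0$. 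Fix epimorphisms $\p\colon L\twoheadrightarrow S$ and $\p'\colon L'\twoheadrightarrow S$ and form the pullback $P=L\times_SL'$, an object of $\A_+$ of rank $2$ fitting into short exact sequences
\[
0\lto K'\lto P\lto L\lto 0,\qquad 0\lto K\lto P\lto L'\lto 0,
\]
with $K:=\ker\p$ and $K':=\ker\p'$ line bundles satisfying $K\cong L$ and $K'\cong L'$ in $\A/\A_0$. The task reduces to producing a line-bundle subobject $V\subseteq P$ projecting non-trivially to both $L$ and $L'$: for such $V$, the induced morphisms $V\to L$ and $V\to L'$ are automatically monomorphisms --- a non-zero morphism between line bundles is injective, since its kernel would be a rank-$0$ subobject of a line bundle in $\A_+$ and hence zero --- with finite-length cokernels by a rank count, giving $V\cong L\cong L'$ in $\A/\A_0$.

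The principal obstacle will be producing such a $V$. By Proposition~\ref{pr:lb_filtr}, $P$ admits a line-bundle filtration $0\subset V\subset P$; if $V\notin\{K,K'\}$ we are done. The hard part is to rule out the alternative that every line-bundle subobject of $P$ coincides with $K$ or $K'$. I plan to attack this by combining Serre duality $\Ext^1_\A(L,K')\cong D\Hom_\A(K',\tau L)$ with the isomorphism $K'\cong L'$ in $\A/\A_0$ and the standing hypothesis that $L,L'$ both map non-trivially to $S$, in order to exhibit a further line-bundle subobject of $P$ outside $\{K,K'\}$ and thereby complete the proof.
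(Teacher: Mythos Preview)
Your treatment of $(1)\Rightarrow(2)$ and $(3)\Rightarrow(1)$ is correct and parallels the paper's argument. The gap lies in $(2)\Rightarrow(3)$.

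You set as intermediate target the statement that $L\cong L'$ in $\A/\A_0$ whenever both map nontrivially to a common simple $S$, and you propose to secure it by locating a ``diagonal'' line-bundle subobject $V\subseteq P$. Two problems arise. First, the obstruction is $V\subseteq K$ or $V\subseteq K'$, not merely $V\in\{K,K'\}$: a proper line-bundle subobject of $K$ still projects to zero in $L'$. Second, and decisively, the Serre-duality input you name does not manufacture a new subobject of $P$; it produces a \emph{morphism out of} $K$ (or $K'$). Concretely, if the sequence $0\to K\to P\to L'\to 0$ fails to split, then $\Ext^1_\A(L',K)\neq 0$, hence $\Hom_\A(K,\tau L')\neq 0$ by Serre duality, and since $K\cong L$ in $\A/\A_0$ one obtains $L\cong\tau L'$ in $\A/\A_0$. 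This is already an instance of $(3)$, but it is \emph{not} $L\cong L'$; your stated intermediate goal is strictly stronger than what your own tools deliver, and at this point in the paper there is no way to close the gap without circularity (uniqueness of the simple in $\A/\A_0$ is only proved later, using the present lemma).

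The paper runs exactly this split/non-split dichotomy on one of the two pullback sequences. If $0\to K\to P\to L'\to 0$ splits, the section $L'\hookrightarrow P$ composed with $P\to L$ is a nonzero morphism $L'\to L$ (were it zero, the surjection $P\to L$ would factor through $K\hookrightarrow L$), hence a monomorphism with finite-length cokernel, giving $L'\cong L$ in $\A/\A_0$. If it does not split, the Serre-duality step above yields $L\cong\tau L'$. Either way $L'\cong\tau^n L$ in $\A/\A_0$ with $n\in\{0,-1\}$. So your pullback and your Serre-duality ingredient are exactly right; the fix is to drop the search for a diagonal $V$ in the non-split case and read off $(3)$ directly from the morphism that Serre duality hands you.
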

\begin{proof}
(1) $\Rightarrow$ (2): Observe that $\Hom_\A(L,\A_x)\neq 0$ if and
only if $\Hom_\A(L,S)\neq 0$ for some simple object $S\in\A_x$. Thus
noetherianness of $L$ implies that $\Hom_\A(L,\A_x)\neq 0$ for some
$x\in\bfX$.

(2) $\Rightarrow$ (3): Choose a simple object $S$ and a non-zero
morphism $\p\colon L\to S$. Modulo some power of $\tau$, there is a
non-zero morphism $\p'\colon L'\to S$. Forming the pullback of $\p$
and $\p'$, one obtains the following commutative diagram with exact
rows.
$$\xymatrix{
0\ar[r]&K\ar@{=}[d]\ar[r]&P\ar[d]\ar[r]&L'\ar[d]^{\p'}\ar[r]&0\\
0\ar[r]&K\ar[r]&L\ar[r]^\p&S\ar[r]&0 }$$ If the top row splits, then
$\Hom_\A(L',L)\neq 0$ and therefore $L'\cong L$ in
$\A/\A_0$. Otherwise, $\Hom_\A(K,\tau L')\cong D\Ext_\A^1(L',K)\neq 0$.
Thus $L\cong\tau L'$ in $\A/\A_0$.

(3) $\Rightarrow$ (1): Suppose that $L'\cong\tau^n L$ in $\A/\A_0$.
Then there is a subobject $L''\subseteq L'$ with $L'/L''$ of finite
length and there is a monomorphism $L''\to \tau^n L$ with cokernel of
finite length.  For each $x\in\bfX$, it follows then by iterating
Lemma~\ref{le:lineb} that $\Hom_\A(L,\A_x)\neq 0$ if and only if
$\Hom_\A(L',\A_x)\neq 0$.
\end{proof}

\begin{prop}[Lenzing]\label{pr:Lmorph} 
Let  $L$ be a line bundle and $x\in\bfX$. Then we have
$\Hom_\A(L,\A_x)\neq 0$.
\end{prop}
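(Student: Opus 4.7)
The plan is to argue by contradiction: suppose that $\bfX'(L) := \{x \in \bfX : \Hom_\A(L,\A_x) \neq 0\}$ is a proper subset of $\bfX$, and manufacture from this a nontrivial decomposition $\A = \A_1 \amalg \A_2$, contradicting connectedness in (H1).

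First I would note that $\bfX'(L) \neq \emptyset$: by (H2) the line bundle $L$ is noetherian and hence admits a maximal proper subobject, giving a simple quotient, which lies in some $\A_x$. Set $\bfX_1 = \bfX'(L)$ and $\bfX_2 = \bfX \setminus \bfX_1$; by the assumption for contradiction $\bfX_2 \neq \emptyset$. By Lemma~\ref{le:lineb1}, for any line bundle $L'$ the set $\bfX'(L')$ is either equal to $\bfX_1$ or disjoint from it, so $\bfX'(L') \subseteq \bfX_1$ or $\bfX'(L') \subseteq \bfX_2$; call $L'$ of \emph{type $1$} or \emph{type $2$} accordingly. Moreover $\tau L'$ has the same type as $L'$, via $\Hom_\A(\tau L', S) \cong \Hom_\A(L', \tau^{-1} S)$ for simple $S$.

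Next I would establish the two structural vanishings between different-type line bundles. For $\Hom_\A(L',L'') = 0$ with $L'$ of type $1$ and $L''$ of type $2$: any nonzero morphism is forced to be a monomorphism with cokernel of finite length (since $L'' \in \A_+$ has no finite length subobject), so $L'$ becomes a subobject of $L''$ with finite length quotient, which by Lemma~\ref{le:lineb1} forces $\bfX'(L') = \bfX'(L'')$, a contradiction. Serre duality then yields $\Ext^1_\A(L',L'') \cong D\Hom_\A(L'', \tau L') = 0$, using type-invariance of $\tau$ and the Hom vanishing just shown. Using the line-bundle filtration of Proposition~\ref{pr:lb_filtr} and induction on rank, I would deduce that every torsion-free object decomposes as $V = V^{(1)} \oplus V^{(2)}$ with $V^{(i)}$ of type $i$: at each step of a line bundle filtration, an extension by a line bundle of the opposite type splits thanks to the $\Ext^1$-vanishing, so one peels off summands of the other type. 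The same vanishings give $\Hom_\A(V^{(1)}, V^{(2)}) = 0 = \Hom_\A(V^{(2)}, V^{(1)})$.

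Finally I would let $\A_i$ be the full subcategory of objects $A$ with torsion part $tA \in \A_{\bfX_i}$ and torsion-free quotient $A/tA$ of type $i$. Combining the decomposition $A = tA \oplus A/tA$, the decomposition $\A_0 = \coprod_x \A_x$, and the type splitting of torsion-free objects, every object of $\A$ lies in $\A_1 \oplus \A_2$. The mutual Hom-orthogonality breaks into four subcases, three of which are immediate (orthogonality of the $\A_x$, the vanishing $\Hom_\A(\A_0, \A_+) = 0$, and the very definition of type) and the fourth is the torsion-free-to-torsion-free vanishing from the previous paragraph. Both $\A_1$ and $\A_2$ contain simples (from $\A_{\bfX_1}$ and $\A_{\bfX_2}$) and are therefore non-zero, so the decomposition is non-trivial, contradicting (H1). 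The main technical obstacle is the inductive type decomposition of higher rank torsion-free objects; once that is in place, assembling the final decomposition is formal.
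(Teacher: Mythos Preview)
Your proposal is correct and follows essentially the same approach as the paper's proof. Both argue by contradiction, partition the line bundles into two classes via Lemma~\ref{le:lineb1}, use Serre duality to upgrade Hom-vanishing between the classes to Ext-vanishing, exploit the line-bundle filtration of Proposition~\ref{pr:lb_filtr} to see that every indecomposable torsion-free object lies in one class, and then assemble a nontrivial decomposition $\A=\A_1\amalg\A_2$ violating connectedness; the only cosmetic difference is that the paper phrases the type dichotomy via the quotient $\A/\A_0$ (condition~(3) of Lemma~\ref{le:lineb1}) whereas you phrase it directly via $\bfX'(L')$ (condition~(1)).
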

\begin{proof}
Let $\bfX_1$ be the set of all $y\in\bfX$ such that
$\Hom_\A(L,\A_y)\neq 0$, and let $\bfX_2=\bfX\smallsetminus\bfX_1$.
We show that $\bfX_1=\bfX$.  Let $\L_1$ be the class of line bundles
$L'$ such that $L'\cong\tau^n L$ in $\A/\A_0$ for some $n\in\bbZ$,
and let $\L_2$ be the class of all remaining line bundles.  Denote
by $\A_i$ ($i=1,2$) the full subcategory consisting of objects
$A\in\A$ having a filtration with factors in $\L_i$ or
$\bigcup_{x\in\bfX_i}\A_x$.  There are no non-zero morphisms between
objects from different $\A_i$'s, and therefore also no extensions by
Serre duality. This follows from Lemma~\ref{le:lineb1} and the fact
that each non-zero morphism between line bundles in $\A$ induces an
isomorphism in $\A/\A_0$.  On the other hand, each indecomposable
object belongs to one of the $\A_i$'s. This is clear for objects of
finite length. An object $A$ from $\A_+$ has a finite filtration
$0=A_0\subseteq \dots \subseteq A_r=A$ such that each factor
$A_i/A_{i-1}$ is a line bundle; see Proposition~\ref{pr:lb_filtr}.
The factors belong to a single $\L_i$ since there are no non-split
extensions between different $\L_i$'s. Thus $\A=\A_1\amalg\A_2$, but
this implies $\A_2=0$ since $\A$ is connected.
\end{proof}

\begin{lem}\label{le:lb}
Let $L$ be a line bundle and $0\neq A\in\A_+$. Then there are
monomorphisms $L\to A'$ and $A\to A'$ such that $A'$ belongs to
$\A_+$ and the cokernel  of $A\to A'$ belongs to  $\A_0$. Moreover,
each non-zero morphism $L\to A$ is a monomorphism.
\end{lem}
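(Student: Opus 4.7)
The second assertion is immediate. If $\phi\colon L\to A$ is non-zero, then $\Ker\phi\subseteq L$ lies in $\A_+$ (which is closed under subobjects, directly from the defining Hom-vanishing condition). Since $\rank L=1$, either $\Ker\phi=0$, or $\rank\Ker\phi=1$ and $\Im\phi\cong L/\Ker\phi$ has rank zero, hence lies in $\A_0$; but $\Im\phi\subseteq A\in\A_+$ forces $\Im\phi=0$, contradicting $\phi\neq 0$. Thus $\phi$ is a monomorphism.

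For the first assertion, the plan is to construct $A'$ as a torsion extension $A\hookrightarrow A'$ with $A'/A\in\A_0$ and $\Hom_\A(L,A')\neq 0$; by the second assertion, any non-zero morphism $L\to A'$ is then automatically a monomorphism, providing the required data. Fix any component $\A_x$ of $\A_0$. Proposition~\ref{pr:Lmorph} supplies a simple $T\in\A_x$ with $\Hom_\A(L,T)\neq 0$ and, applied to the top factor $L_n=A/A_{n-1}$ of a line-bundle filtration of $A$ from Proposition~\ref{pr:lb_filtr}, a simple $T'\in\A_x$ with $\Hom_\A(A,T')\neq 0$. Serre duality gives $\Ext^1_\A(\tau^{-1}T',A)\cong D\Hom_\A(A,T')\neq 0$, so Lemma~\ref{le:filtr}(1) applied with $S=\tau^{-1}T'$ yields a chain of monomorphisms $A=A_0\hookrightarrow A_1\hookrightarrow A_2\hookrightarrow\cdots$ with $A_i\in\A_+$ and $A_i/A_{i-1}\cong\tau^{-i}T'$.

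Under the standing axioms, Proposition~\ref{pr:euler} forces $K_0(\A)$, and hence $K_0(\A_0)$, to have finite rank, so each component $\A_x$ has only finitely many simples, and Proposition~\ref{pr:serre_finlen} identifies these as a single finite $\tau$-orbit of cyclic type $\tilde\bbA_{p(x)-1}$. Thus $T$ appears among the composition factors $\tau^{-k}T'$ of $A_i/A$ for all sufficiently large $i$. Moreover, Serre duality yields $\Ext^1_\A(L,\tau^{-k}T')\cong D\Hom_\A(\tau^{-k}T',\tau L)=0$, since $\tau L\in\A_+$ (as $\tau$ preserves $\A_+$) while $\tau^{-k}T'\in\A_0$. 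Applying $\Hom_\A(L,-)$ to the filtration of $A_i/A$ therefore gives
\begin{equation*}
\dim_k\Hom_\A(L,A_i/A)=\sum_{k=1}^{i}\dim_k\Hom_\A(L,\tau^{-k}T'),
\end{equation*}
which grows unboundedly with $i$. Combined with the Ext-finiteness of $\A$, the long exact sequence
\begin{equation*}
\Hom_\A(L,A_i)\lto\Hom_\A(L,A_i/A)\lto[\delta]\Ext^1_\A(L,A)
\end{equation*}
forces $\delta$ to have non-trivial kernel for $i$ large, so $\Hom_\A(L,A_i)\neq 0$. Setting $A'=A_i$ and invoking the second assertion completes the proof. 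The main subtlety is securing the unbounded growth of $\dim_k\Hom_\A(L,A_i/A)$, which rests on the finiteness of the $\tau$-orbits in $\A_0$ (from the rank bound on $K_0(\A)$) together with the perpendicular-type vanishing $\Hom_\A(\A_0,\A_+)=0$ propagated via Serre duality.
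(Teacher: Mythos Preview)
Your proof is correct and follows essentially the same strategy as the paper: extend $A$ by a finite-length object built from a single $\tau$-orbit of simples (via Lemma~\ref{le:filtr}) until $\Hom_\A(L,A')\neq 0$, then invoke the second assertion. The paper packages the growth argument more compactly through the Euler form: one writes $\langle[L],[B]\rangle=\langle[L],[A]\rangle+\langle[L],[C]\rangle$ and observes that $\langle[L],[C]\rangle=\sum_{i=0}^n\dim_k\Hom_\A(L,\tau^{-i}S)$ grows without bound, so eventually $\langle[L],[B]\rangle>0$ and hence $\Hom_\A(L,B)\neq 0$. Your version unpacks the same mechanism by computing $\dim_k\Hom_\A(L,A_i/A)$ directly and comparing it against the fixed finite bound $\dim_k\Ext^1_\A(L,A)$ via the long exact sequence; this is a bit longer but entirely equivalent.

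One correction: your justification that ``$K_0(\A_0)$ has finite rank'' is not right as stated --- $\A_0$ generally has infinitely many simple objects (one per point $x\in\bfX$), so $K_0(\A_0)$ is free of infinite rank. What you actually need, and what is available at this point in the paper, is only that each individual $p(x)$ is finite, i.e.\ each $\tau$-orbit of simples in a single component $\A_x$ is finite. This is precisely the lemma established immediately after the basic proposition collecting consequences of (H1)--(H4), proved by showing that the classes $[S]$ for $S\in\Si_0\setminus\{S_x:x\in\bfX\}$ are linearly independent in $K_0(\A)$. With that citation in place your argument goes through.
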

\begin{proof}
The object $A$ has a simple quotient since it is noetherian. Thus
$\Ext_\A^1(S,A)\neq 0$ for some simple object $S$ by Serre duality.
Depending on the value of $\langle [L],[A]\rangle$ and using that
$L$ admits a non-zero morphism to the $\tau$-orbit of $S$ by
Proposition~\ref{pr:Lmorph}, we choose in Lemma~\ref{le:filtr} the
number $n$ sufficiently big so that there exists an exact sequence
$0\to A\to B\to C\to 0$ in $\A$ with $B\in\A_+$,
$[C]=\sum_{i=0}^n[\tau^{-i} S]$, and
$$\langle [L],[B]\rangle= \langle [L],[A]\rangle+\langle
[L],[C]\rangle>0.$$ Now set $A'=B$.

If $\p\colon L\to A$ is a non-zero morphism, then
$\rank\Ker\p=0$. Thus $\Ker\p=0$ since $L$ belongs to $\A_+$.
\end{proof}

The discussion of line bundles yields further properties of $\A/\A_0$
and $K_0(\A)$.

\begin{prop}\label{pr:K-split}
Let $\A$ be a $k$-linear abelian category satisfying \emph{(H1)--(H4)}.
Then the following holds:
\begin{enumerate}
\item The abelian category $\A/\A_0$ has, up to isomorphism, a unique simple object.
\item Each non-zero object $A$ in $\A$ satisfies $[A]\neq 0$ in $K_0(\A)$.
\item Let $L$ be a line bundle in $\A$. Then $K_0(\A)=\bbZ [L]\oplus K'_0(\A)$,
where $K'_0(\A)$ denotes the image of the canonical map $K_0(\A_0)\to K_0(\A)$.
\end{enumerate}
\end{prop}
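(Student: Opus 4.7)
The plan for (1) is to identify the simples of $\A/\A_0$ with line bundles and show they constitute a single isomorphism class. First I would observe that every simple object of $\A/\A_0$ is represented by a rank-one object of $\A$, whose decomposition $A_+\oplus A_0$ has $A_+$ of rank one (hence a line bundle) and the same image as $A$ in $\A/\A_0$; thus every simple of $\A/\A_0$ is, up to isomorphism, of the form $\bar L$ for some line bundle $L$. Next, Proposition~\ref{pr:Lmorph} guarantees that any line bundle satisfies $\Hom_\A(L,\A_x)\neq 0$ for every $x\in\bfX$, so any two line bundles $L,L'$ meet condition (2) of Lemma~\ref{le:lineb1}, giving $L'\cong\tau^n L$ in $\A/\A_0$ for some $n\in\bbZ$. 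The remaining and pivotal step is to prove $\tau L\cong L$ in $\A/\A_0$.

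To establish that last isomorphism, I would note that $\tau L$ is indecomposable (because $\tau$ is an autoequivalence) and of rank one (because $\tau$ descends to an autoequivalence of $\A/\A_0$ and hence preserves the length there), which together with the decomposition $\tau L=(\tau L)_+\oplus(\tau L)_0$ forces $\tau L\in\A_+$. Lemma~\ref{le:lb} applied to the line bundle $L$ and the object $A=\tau L$ then supplies monomorphisms $L\to A'$ and $\tau L\to A'$ with $A'\in\A_+$ and cokernel of $\tau L\to A'$ in $\A_0$; since $A'$ has rank one (equal to that of $\tau L$), the cokernel of $L\to A'$ has rank zero and therefore also lies in $\A_0$. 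Hence $L\cong A'\cong\tau L$ in $\A/\A_0$, which combined with the previous paragraph proves (1). For (2), split $A=A_+\oplus A_0$: if $A_+\neq 0$, then $\rank[A]=\rank A_+>0$, giving $[A]\neq 0$. Otherwise $A\in\A_0\smallsetminus\{0\}$, and we expand $[A]=\sum_{x,i}c_{x,i}[S_{x,i}]$ with non-negative composition multiplicities (using Lemma~\ref{le:gr_gr1} on $\A_0$), choosing an index with $c_{x_0,i_0}>0$. Applying Proposition~\ref{pr:Lmorph} to an arbitrary line bundle $L_0$ and twisting by a suitable power of $\tau$ (which, as above, preserves line bundles) produces a line bundle $L$ with $\Hom_\A(L,S_{x_0,i_0})\neq 0$. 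Since $\tau L\in\A_+$, Serre duality yields $\Ext^1_\A(L,S)\cong D\Hom_\A(S,\tau L)=0$ for every simple $S\in\A_0$, so
\[
\langle[L],[A]\rangle=\sum_{x,i}c_{x,i}\dim\Hom_\A(L,S_{x,i})\;\geq\;c_{x_0,i_0}\dim\Hom_\A(L,S_{x_0,i_0})>0,
\]
forcing $[A]\neq 0$.

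For (3), I would decompose $A=A_+\oplus A_0$, filter $A_+$ by line bundles $L_1,\dots,L_n$ with $n=\rank A_+$ via Proposition~\ref{pr:lb_filtr}, and obtain $[A]=\sum_i[L_i]+[A_0]$. By part (1) each $\bar L_i\cong\bar L$ in $\A/\A_0$, so using the standard right-exact sequence $K_0(\A_0)\to K_0(\A)\to K_0(\A/\A_0)\to 0$ (whose kernel on the middle term is by definition $K'_0(\A)$), we see $[L_i]-[L]\in K'_0(\A)$ and hence $[A]\in\bbZ[L]+K'_0(\A)$. For the direct sum, apply the rank map $K_0(\A)\to\bbZ$, which vanishes on $K'_0(\A)$ and sends $[L]$ to $1$, so $\bbZ[L]\cap K'_0(\A)=0$. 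The main obstacle in the whole proof is the identification $\tau L\cong L$ in $\A/\A_0$ within part (1); it rests on the non-obvious invariance of the class of line bundles under $\tau$ (through indecomposability and rank preservation), after which Lemma~\ref{le:lb} elegantly provides a common line-bundle super-object of $L$ and $\tau L$ with both cokernels in $\A_0$. Once (1) is in hand, parts (2) and (3) reduce to bookkeeping with the rank map and the Euler pairing.
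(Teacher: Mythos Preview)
Your proof is correct, but part (1) contains a self-imposed detour. The argument you use to establish $\tau L\cong L$ in $\A/\A_0$ --- apply Lemma~\ref{le:lb} to the pair $(L,\tau L)$ to get a common rank-one object $A'\in\A_+$ with both cokernels in $\A_0$ --- works verbatim for \emph{any} two line bundles $L,L'$, not just $L'=\tau L$. This is exactly what the paper does: given line bundles $L,L'$, Lemma~\ref{le:lb} yields monomorphisms $L\to L''$ and $L'\to L''$ with $L''\in\A_+$ and the second cokernel in $\A_0$; since $\rank L''=\rank L'=1$, the first cokernel has rank zero and hence also lies in $\A_0$, so $L\cong L''\cong L'$ in $\A/\A_0$. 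Your passage through Lemma~\ref{le:lineb1} and the reduction to the $\tau$-invariance of line bundles is therefore unnecessary --- the ``main obstacle'' you identify dissolves once you notice that your own endgame argument already handles the general case.

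Parts (2) and (3) match the paper's reasoning closely. For (2) the paper states more tersely that $\Hom_\A(L,A)\neq 0$ for some line bundle $L$ and then uses $\Ext^1_\A(L,A)=0$; your expansion via composition multiplicities and a $\tau$-twist is a legitimate way to unpack this. For (3) the paper avoids invoking the localization sequence $K_0(\A_0)\to K_0(\A)\to K_0(\A/\A_0)\to 0$ by working directly with the monomorphism $A\to A'$ from Lemma~\ref{le:lb} (whose cokernel is explicitly in $\A_0$, so $[A]-[A']\in K'_0(\A)$ without appeal to exactness at the middle term) and inducting on rank; your filtration approach via Proposition~\ref{pr:lb_filtr} is an equally clean alternative.
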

\begin{proof}
(1) Let $L,L'$ be line bundles in $\A$. There are monomorphisms
$L\to L''$ and $L'\to L''$ with both cokernels in $\A_0$, by
Lemma~\ref{le:lb}. Thus $L\cong L''\cong L'$ in $\A/\A_0$.

(2) Let $A$ be a non-zero object. If $A$ is not of finite length, then
    $\rank{[A]}\neq 0$ and therefore $[A]\neq 0$. Now suppose that $A$
    is of finite length. It follows from Proposition~\ref{pr:Lmorph} that
    $\Hom_\A(L,A)\neq 0$ for some line bundle $L$. On the other hand,
    $\Ext^1_\A(L,A)=0$ by Serre duality. Thus $\langle
    [L],[A]\rangle\neq 0$, and it follows that $[A]\neq 0$.

(3) We have $\bbZ [L]\cap K'_0(\A)=0$ since $\rank L>0$ and $\rank x=0$
for all $x\in K'_0(\A)$. We show by induction on the rank that each
class $[A]$ belongs to $\bbZ [L]+ K'_0(\A)$. This is clear if $\rank
A=0$. If $\rank A>0$, then there is an exact sequence $0\to L\to A'\to
A''\to 0$ such that $A'\cong A$ in $\A/\A_0$; see
Lemma~\ref{le:lb}. It follows that $\rank A''=\rank A' -1$, and
therefore $[A']=[L]+[A'']$ belongs to $\bbZ [L]+ K'_0(\A)$. Finally
observe that $[A]-[A']$ belongs to $K'_0(\A)$.
\end{proof}

An immediate consequence is the fact that the rank of $K_0(\A)$ is at
least two.

\subsection{Exceptional objects}

Let $\A$ be a $k$-linear abelian category that is Hom-finite and
hereditary.  An object $A$ is called \index{object!exceptional}
\emph{exceptional} if $\Ext_\A^1(A,A)=0$ and $\End_\A(A)$ is a
division ring.

\begin{lem}[Happel-Ringel]\label{le:HR}
Let $A,B$ be indecomposable objects in $\A$. If $\Ext^1_\A(B,A)=0$, then
each non-zero morphism $A\to B$ is a monomorphism or an epimorphism.
\end{lem}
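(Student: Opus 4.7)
The plan is to proceed by contradiction. Suppose $\p\colon A\to B$ is non-zero but neither a monomorphism nor an epimorphism, so that $K=\Ker\p$ and $C=\Coker\p$ are both non-zero. Factoring $\p=\iota\pi$ through the image $I=\Im\p$ yields two non-split short exact sequences $\xi\colon 0\to K\to A\xrightarrow{\pi}I\to 0$ and $\eta\colon 0\to I\xrightarrow{\iota}B\to C\to 0$; the goal is to derive a contradiction with the indecomposability of $B$.

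First exploit heredity. Applying $\Hom_\A(-,A)$ to $\eta$ and using $\Ext^1_\A(B,A)=0$ together with $\Ext^2_\A=0$ forces $\Ext^1_\A(I,A)=0$. Applying $\Hom_\A(C,-)$ to $\xi$ and again using $\Ext^2_\A=0$ shows that $\pi_*\colon\Ext^1_\A(C,A)\to\Ext^1_\A(C,I)$ is surjective, so the class $[\eta]$ lifts to some $[\tilde\eta]\in\Ext^1_\A(C,A)$ realised by a short exact sequence $\tilde\eta\colon 0\to A\xrightarrow{s}E\to C\to 0$ together with a morphism of extensions from $\tilde\eta$ to $\eta$ with vertical maps $\pi$, some $\alpha\colon E\to B$, and $\id_C$. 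The snake lemma yields an exact sequence $0\to K\to E\xrightarrow{\alpha}B\to 0$.

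Next I describe $E$ as a pullback. Applying $\Hom_\A(B,-)$ to $\xi$ shows that the connecting map $\delta\colon\Hom_\A(B,I)\to\Ext^1_\A(B,K)$ is surjective, using $\Ext^1_\A(B,A)=0$. Hence the above extension of $B$ by $K$ is the pullback of $\xi$ along some $f\colon B\to I$, and one fixes an extension isomorphism $\beta\colon A\times_IB\xrightarrow{\sim}E$. Composing gives $\beta^{-1}s\colon A\to A\times_IB$; its $B$-component equals $\alpha s=\iota\pi=\p$, so this map has the form $a\mapsto(\gamma(a),\p(a))$ with $\gamma\colon A\to A$ satisfying $\pi\gamma=f\iota\pi$. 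On the subobject $K\subset A$, the morphism $s$ agrees with the canonical inclusion $K\to E$, which transports under $\beta^{-1}$ to $k\mapsto(k,0)$; so $\gamma|_K=\id_K$, whence $\gamma-\id_A$ factors through $\pi$ as $\gamma=\id_A+g\pi$ for some $g\colon I\to A$.

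The crux is now that $g\pi\colon A\to A$ factors through $\pi$ and therefore annihilates $K\neq 0$, so $g\pi$ is not a monomorphism and hence not invertible in $\End_\A(A)$. Since $\A$ is Hom-finite over $k$ and $A$ is indecomposable, Fitting's lemma makes $\End_\A(A)$ local, so $g\pi$ lies in its Jacobson radical and $\gamma=\id_A+g\pi$ is invertible. Therefore $s$ is a split monomorphism, $\tilde\eta$ splits, and $E\cong A\oplus C$; pushing this split sequence forward along $\pi$ recovers $\eta$ as the split extension $0\to I\to I\oplus C\to C\to 0$, so $B\cong I\oplus C$ with both summands non-zero---contradicting the indecomposability of $B$. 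The main obstacle is the third step: carrying the inclusion $s\colon A\to E$ through the non-canonical pullback identification to put $\gamma$ in the normal form $\id_A+g\pi$; once this is done the conclusion is immediate from localness of $\End_\A(A)$.
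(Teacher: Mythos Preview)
Your proof is correct. You and the paper both start the same way: lift the extension $\eta\colon 0\to I\to B\to C\to 0$ along the epimorphism $\pi\colon A\to I$ to obtain $\tilde\eta\colon 0\to A\xrightarrow{s}E\to C\to 0$, using right exactness of $\Ext^1_\A(C,-)$.

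From this point the paper is considerably more direct. The morphism of short exact sequences $\tilde\eta\to\eta$ has a pushout as its left square, and this immediately yields an exact sequence
\[
0\lto A\lto I\oplus E\lto B\lto 0,
\]
which splits since $\Ext^1_\A(B,A)=0$. Writing a retraction as $(r_1,r_2)$, one has $r_1\pi+r_2 s$ invertible; localness of $\End_\A(A)$ forces one of $r_1\pi$, $r_2 s$ to be invertible, so either $\pi$ is a split monomorphism (hence an isomorphism, so $\p$ is mono) or $s$ is a split monomorphism (hence $\tilde\eta$ and then $\eta$ split, so $\p$ is epi since $B$ is indecomposable).

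Your detour---realising $0\to K\to E\to B\to 0$ as a pullback of $\xi$ and then normalising the map $\beta^{-1}s$ to extract $\gamma=\id_A+g\pi$---is doing by hand the construction of a retraction of $s$ that the paper obtains for free from the split Mayer--Vietoris sequence above. The argument is valid, but the pullback bookkeeping is unnecessary once one has the short exact sequence $0\to A\to I\oplus E\to B\to 0$. Note also that your observation $\Ext^1_\A(I,A)=0$ is never used.
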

\begin{proof}
Let $\p\colon A\to B$ be a non-zero morphism and
$A\xto{\p'}\Im\p\xto{\p''}B$ its canonical factorization. We obtain
the following commutative diagram with exact rows
$$\xymatrix{0\ar[r]&A\ar[r]\ar[d]_-{\p'}&E\ar[r]\ar[d]&B/\Im\p\ar[r]\ar@{=}[d]&0\\
0\ar[r]&\Im\p\ar[r]^-{\p''}&B\ar[r]&B/\Im\p\ar[r]&0}$$ since
$\Ext^1_\A(B/\Im\p,-)$ is right exact. The induced exact sequence
$0\to A\to \Im\p\oplus E\to B\to 0$ splits. Thus $\p'$ or $\p''$ is
a split monomorphism since $\End_\A(A)$ is local. In the first case
$\p$ is a monomorphism, and in the second case $\p$ is an
epimorphism.
\end{proof}

Let us collect some immediate consequences.

\begin{prop}\label{pr:HR1}
Let $\A$ be a $k$-linear abelian category that is Hom-finite and
hereditary. Then the following holds:
\begin{enumerate}
\item An indecomposable object $A$ satisfying $\Ext_\A^1(A,A)=0$ is exceptional.
\item Let $A,B$ be non-isomorphic exceptional objects.  Suppose that
$\Ext^1_\A(A,B)=0$ and $\Ext^1_\A(B,A)=0$. Then $\Hom_\A(A,B)=0$ or
$\Hom_\A(B,A)=0$.
\item Assume further that $\A$ is Ext-finite. Let  $A,B$ be exceptional objects and $[A]=[B]$ in $K_0(\A)$. Then $A\cong B$.
\end{enumerate}
\end{prop}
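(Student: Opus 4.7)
The plan is to tackle the three parts separately. Parts (1) and (2) are short applications of Lemma~\ref{le:HR}, while (3) needs a Morita-style surjectivity argument supported by the Euler form together with a more delicate dimension count.

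For (1), Hom-finiteness combined with the Krull--Remak--Schmidt theorem makes the endomorphism ring of any indecomposable object local, so every noninvertible endomorphism is nilpotent. Applying Lemma~\ref{le:HR} to the pair $(A,A)$ and using $\Ext^1_\A(A,A)=0$, every nonzero endomorphism of $A$ is a monomorphism or an epimorphism. A nilpotent monomorphism $A\to A$ would force $A=0$ after iteration (monomorphisms compose to monomorphisms), and dually for epimorphisms. Hence every nonzero endomorphism is invertible and $\End_\A(A)$ is a division ring.

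For (2), I would argue by contradiction: suppose $A\not\cong B$ and choose nonzero $\varphi\colon A\to B$ and $\psi\colon B\to A$. Using $\Ext^1_\A(A,B)=0=\Ext^1_\A(B,A)$, Lemma~\ref{le:HR} makes each of $\varphi,\psi$ a monomorphism or an epimorphism. The composite $\psi\varphi$ lies in the division ring $\End_\A(A)$, so is zero or invertible; if invertible, $\varphi$ is a split monomorphism and hence an isomorphism by the indecomposability of $B$, contradicting $A\not\cong B$. Thus $\psi\varphi=0$, and then $(\varphi\psi)^2=\varphi(\psi\varphi)\psi=0$ together with $\End_\A(B)$ being a division ring forces $\varphi\psi=0$. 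A short case analysis on whether $\varphi,\psi$ are mono or epi then yields a contradiction by left- or right-cancellation; for instance, if $\psi$ is epic, $\varphi\psi=0$ forces $\varphi=0$.

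For (3), the Euler form combined with $[A]=[B]$ and $\Ext^1_\A(A,A)=0=\Ext^1_\A(B,B)$ yields $\dim_k\Hom_\A(A,B)\geq d$ and $\dim_k\Hom_\A(B,A)\geq d$, where $d:=\dim_k\End_\A(A)=\dim_k\End_\A(B)>0$. The key step is to observe that the image of the composition map
\[c\colon\Hom_\A(B,A)\otimes_k\Hom_\A(A,B)\to\End_\A(A)\]
is a sub-$(D_A,D_A)$-bimodule of the division ring $D_A:=\End_\A(A)$, hence either zero or all of $D_A$. When the image equals $D_A$, one can write $1_A=\sum_i\psi_i\varphi_i$ for suitable $\psi_i\colon B\to A$ and $\varphi_i\colon A\to B$; the morphism $(\varphi_i)\colon A\to B^n$ is then a split monomorphism with retraction $(\psi_i)\colon B^n\to A$, and the Krull--Remak--Schmidt theorem together with the indecomposability of $B$ forces $A\cong B$. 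The main obstacle is the opposite case, in which all compositions $\psi\varphi$ vanish, and the dual identity $(\varphi\psi)^2=0$ in $\End_\A(B)$ then forces all $\varphi\psi$ to vanish as well. Here I would argue as follows: for a nonzero $\varphi\colon A\to B$ with $I=\Im\varphi$, the universal vanishing of compositions translates, via the factorization $\varphi=\iota\pi$ with $\pi\colon A\twoheadrightarrow I$ and $\iota\colon I\hookrightarrow B$, into an isomorphism $\Hom_\A(B,A)\cong\Hom_\A(\Coker\varphi,\Ker\varphi)$. Exploiting the Ext-vanishings $\Ext^1_\A(I,B)=0$ (since $I\subseteq B$ with $B$ exceptional) and $\Ext^1_\A(A,I)=0$ (since $I$ is a quotient of the exceptional $A$), both obtained from the long exact sequences associated to the canonical short exact sequences, together with the additivity of the Euler form along these sequences and the identity $[\Ker\varphi]=[\Coker\varphi]$ coming from $[A]=[B]$, one should reach a dimension estimate that contradicts $\dim_k\Hom_\A(B,A)\geq d$ in the vanishing-composition scenario.
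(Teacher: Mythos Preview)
Your arguments for (1) and (2) are correct and match the paper's approach; the paper simply records that both follow from Lemma~\ref{le:HR} together with the fact that an endomorphism of an indecomposable object is either nilpotent or invertible.

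Your approach to (3), however, is genuinely different from the paper's and is incomplete. The bimodule argument reducing to the ``all compositions vanish'' case is fine, and the identification $\Hom_\A(B,A)\cong\Hom_\A(\Coker\varphi,\Ker\varphi)$ is correct. But the final sentence---``one should reach a dimension estimate that contradicts $\dim_k\Hom_\A(B,A)\geq d$''---is not a proof: you have not produced any upper bound on $\dim_k\Hom_\A(\Coker\varphi,\Ker\varphi)$, and none follows directly from the ingredients you list. The vanishings $\Ext^1_\A(I,B)=0$ and $\Ext^1_\A(A,I)=0$ together with $[\Ker\varphi]=[\Coker\varphi]$ do not by themselves control $\Hom_\A(\Coker\varphi,\Ker\varphi)$, since you know nothing about $\Ext^1_\A(\Coker\varphi,\Ker\varphi)$. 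Some further idea (perhaps an induction on a suitable size) would be needed, and you have not supplied it.

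The paper's argument for (3) is shorter and avoids the case split entirely. Since $\langle[A],[B]\rangle=\langle[A],[A]\rangle>0$, there is a nonzero $\varphi\colon A\to B$; set $B'=\Im\varphi$. Right-exactness of $\Ext^1_\A(-,B)$ applied to the inclusion $B'\hookrightarrow B$ gives $\Ext^1_\A(B',B)=0$, whence $\langle[B'],[A]\rangle=\langle[B'],[B]\rangle=\dim_k\Hom_\A(B',B)>0$, so $\Hom_\A(B',A)\neq 0$. Composing any nonzero morphism $B'\to A$ with the epimorphism $A\twoheadrightarrow B'$ yields a nonzero, hence invertible, element of the division ring $\End_\A(A)$; thus $A\twoheadrightarrow B'$ is an isomorphism and $\varphi$ is a monomorphism. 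The dual argument makes $\varphi$ an epimorphism. The decisive move you are missing is to work with the \emph{image} $B'$ rather than with $\Ker\varphi$ and $\Coker\varphi$: the inclusion $B'\hookrightarrow B$ simultaneously provides the Ext-vanishing and the strict positivity $\langle[B'],[B]\rangle>0$ needed to manufacture a nonzero morphism back to $A$.
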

\begin{proof}
We apply Lemma~\ref{le:HR} and use the fact that for each
indecomposable object, an endomorphism is either nilpotent or
invertible. In particular, an endomorphism that is a monomorphism or
an epimorphism is invertible. From this, (1) and (2) are clear.

(3) Observe that $\Hom_\A(A,B)\neq 0$ since $\langle
[A],[B]\rangle>0$. Let $\p\colon A\to B$ be a non-zero morphism and
$B'=\Im\p$. Applying the right exact functor $\Ext^1_\A(-,B)$ to the
inclusion $B'\to B$ shows that $\Ext^1_\A(B',B)=0$. Thus $\langle
[B'],[A]\rangle=\langle [B'],[B]\rangle>0$. Composing a non-zero
morphism $B'\to A$ with the epimorphism $A\to B'$ induced by $\p$
yields an isomorphism since $\End_\A(A)$ is a division ring. Thus $\p$
is a monomorphism.  The dual argument shows that $\p$ is an
epimorphism. Thus $A\cong B$.
\end{proof}

\subsection{Expansions of abelian categories}

We consider expansions of abelian categories that satisfy
the axioms (H1)--(H4).

\begin{lem}\label{le:reduction}
Let $\A$ be a $k$-linear abelian category satisfying
\emph{(H1)--(H4)}. For a full subcategory $\B$ of $\A$, the following
are equivalent:
\begin{enumerate}
\item The inclusion $\B\to\A$ is a non-split expansion of
abelian categories.
\item There exists a simple object $S$ such that $\tau S\not\cong S$
and $S^\perp=\B$.
\end{enumerate}
\end{lem}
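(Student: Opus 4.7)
For the direction (1) $\Rightarrow$ (2), my plan is to extract the required simple object directly from the data of the expansion. By definition ${}^\perp\B = \add S_\la$ for a simple $S_\la$, and by Lemma~\ref{le:leftmax}(1) we have $S_\la^\perp = ({}^\perp\B)^\perp = \B$, so I would set $S := S_\la$. To see $\tau S \not\cong S$, I would apply the Auslander--Reiten formula of Proposition~\ref{pr:ARformula} to the first sequence in \eqref{eq:locseq}: this is an almost split sequence, so $\tau S_\la \cong S_\rho$, and non-splitness of the expansion forces $S_\la \not\cong S_\rho$.

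For the converse (2) $\Rightarrow$ (1), the strategy is to realize the inclusion $\B \to \A$ simultaneously as a left expansion (via Lemma~\ref{le:localizable} applied to the localizable object $S$) and as a right expansion (via the dual of that lemma applied to a colocalizable object $T$ still satisfying ${}^\perp T = \B$). First I would verify that $S$ is localizable: $S$ is simple by hypothesis, $\Hom_\A(S,-)$ and $\Ext^1_\A(S,-)$ take values in finite-length $\End_\A(S)$-modules since $\A$ is Ext-finite, the hereditary hypothesis (H3) forces $\Ext^2_\A(S,-)=0$, and Serre duality gives $\Ext^1_\A(S,S) \cong D\Hom_\A(S,\tau S) = 0$ because $S$ and $\tau S$ are non-isomorphic simples. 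Lemma~\ref{le:localizable} then yields the left expansion structure.

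The heart of the argument, and what I anticipate as the main obstacle, is producing a colocalizable witness for the \emph{same} subcategory $\B$. I would take $T := \tau S$ and use Serre duality in the form $\Ext^1_\A(X,Y)\cong D\Hom_\A(Y,\tau X)$ to compute
\[
{}^\perp T = \{A \mid \Hom_\A(A,\tau S)=0 \text{ and } \Hom_\A(S,A)=0\} = S^\perp = \B.
\]
Since $\tau$ is an auto-equivalence of $\A$, the object $T$ is simple with $\tau T \not\cong T$, so the localizability argument applied dually to $T$ shows that $T$ is colocalizable. The dual of Lemma~\ref{le:localizable} then gives the right expansion structure, and hence $\B \to \A$ is an expansion. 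Finally, the construction in Lemma~\ref{le:localizable} identifies ${}^\perp\B = \add S$ and dually $\B^\perp = \add T = \add \tau S$; these subcategories differ since $\tau S \not\cong S$, so the expansion is non-split.
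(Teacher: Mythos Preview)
Your proof is correct and follows essentially the same route as the paper's: verify that $S$ is localizable (using Ext-finiteness, heredity, and $\Ext^1_\A(S,S)\cong D\Hom_\A(S,\tau S)=0$ from Serre duality), invoke Lemma~\ref{le:localizable} and its dual with the identity $S^\perp={^\perp(\tau S)}$ (this is exactly Proposition~\ref{pr:Serreduality}(5)), and read off non-splitness. The only cosmetic differences are that for (1)$\Rightarrow$(2) the paper argues $\tau S\not\cong S$ directly from $\Ext^1_\A(S,S)=0$ rather than via Proposition~\ref{pr:ARformula}, and for non-splitness in (2)$\Rightarrow$(1) the paper simply cites connectedness of $\A$ from (H1) instead of comparing $\add S$ with $\add\tau S$.
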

\begin{proof}
Apply Lemma~\ref{le:localizable}. If $\B\to\A$ is an expansion, then
$\B=S^\perp$ for some localizable object $S$, and Serre duality
implies $\tau S\not\cong S$. Conversely, if $S$ is simple and $\tau
S\not\cong S$, then $S$ is a (co)localizable object with
$S^\perp={^\perp\tau S}$. Thus the inclusion $S^\perp\to\A$ is an
expansion, and this is non-split since $\A$ is connected.
\end{proof}

Let $i\colon\B\to\A$ be a non-split expansion of
$k$-linear abelian categories satisfying (H1)--(H4). Then $i$
restricts to an expansion $\B_0\to \A_0$ by
Proposition~\ref{pr:onepoint_length}. Let
$\A_0=\coprod_{x\in\bfX_\A}\A_x$ and $\B_0=\coprod_{x\in\bfX_\B}\B_x$
be the decompositions into connected uniserial categories; see
Proposition~\ref{pr:serre_finlen}.

\begin{prop}\label{pr:onepoint_comp}
Let $i\colon\B\to\A$ be a non-split expansion of
$k$-linear abelian categories satisfying \emph{(H1)--(H4)}. There
exists a bijection $\p\colon\bfX_\B\to\bfX_\A$ and $x_0\in\bfX_\B$
such that $i$ restricts to an expansion
$\B_{x_0}\to\A_{\p(x_0)}$ and to equivalences
$\B_x\xto{\sim}\A_{\p(x)}$ for all $x\neq x_0$ in $\bfX_\B$.
\end{prop}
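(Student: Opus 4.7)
The idea is to transport the decomposition $\A_0=\coprod_{x\in\bfX_\A}\A_x$ through the restricted expansion $\B_0\to\A_0$ and observe that exactly one component receives a non-split expansion while all others are matched by equivalences.

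\textbf{Step 1: Locate the expansion.} The non-split expansion $i\colon\B\to\A$ singles out a localizable simple $S_\la\in{^\perp\B}$ and a colocalizable simple $S_\rho\in\B^\perp$. By Proposition~\ref{pr:ARformula}, any non-split extension $0\to S_\rho\to E\to S_\la\to 0$ is an almost split sequence, hence $\tau S_\la\cong S_\rho$. Since Proposition~\ref{pr:serre_finlen} identifies the indexing set of the connected decomposition of $\A_0$ with the set of $\tau$-orbits of simples, $S_\la$ and $S_\rho$ lie in the same component, which I denote by $\A_{y_0}$ with $y_0\in\bfX_\A$.

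\textbf{Step 2: Restrict to finite length objects.} By Proposition~\ref{pr:onepoint_length}, $i$ restricts to a non-split expansion $i_0\colon\B_0\to\A_0$ whose associated simple objects are the same $S_\la$ and $S_\rho$.

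\textbf{Step 3: Apply the decomposition lemma.} Write $\A_0=\A_{y_0}\amalg\A'$ with $\A'=\coprod_{y\neq y_0}\A_y$. Applying Lemma~\ref{le:decomp}(1) to $i_0$ together with this decomposition, and using that $S_\la\in\A_{y_0}$, one obtains a decomposition $\B_0=\B''\amalg\B'$ such that $\B''\to\A_{y_0}$ is a non-split expansion while $\B'\to\A'$ is an equivalence.

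\textbf{Step 4: Produce the bijection.} By the last clause of Lemma~\ref{le:decomp}, $\B''$ is connected (because $\A_{y_0}$ is), so it equals $\B_{x_0}$ for a unique $x_0\in\bfX_\B$. The equivalence $\B'\xto{\sim}\A'$ respects connected decompositions, hence induces a bijection between $\{\B_x\mid x\neq x_0\}$ and $\{\A_y\mid y\neq y_0\}$ under which each $\B_x$ maps equivalently to its image $\A_{\varphi(x)}$. Setting $\varphi(x_0)=y_0$ and extending by this bijection yields the required $\varphi\colon\bfX_\B\to\bfX_\A$ with the stated properties.

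\textbf{Main obstacle.} The essential content sits in Step 1: identifying the pair $(S_\la,S_\rho)$ within a single component of $\A_0$. Everything else is formal once Lemma~\ref{le:decomp} is in hand, so the argument rests on the almost split sequence furnished by Proposition~\ref{pr:ARformula} together with the orbit description of components from Proposition~\ref{pr:serre_finlen}.
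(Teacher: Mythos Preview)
Your proof is correct and follows the same route as the paper, whose entire proof is the single line ``Apply Lemma~\ref{le:decomp}.'' You have simply unpacked what that application entails: restrict to $\B_0\to\A_0$ via Proposition~\ref{pr:onepoint_length}, split off the component containing $S_\la$, and invoke Lemma~\ref{le:decomp}(1). One minor remark: your Step~1 does more work than needed, since to identify which factor receives the non-split expansion it suffices to know where $S_\la$ lives (this is immediate---a simple object lies in a unique connected component), and the proof of Lemma~\ref{le:decomp}(1) already shows that the factor containing $S_\la$ is the one carrying the non-split expansion; the Auslander--Reiten identification $\tau S_\la\cong S_\rho$ is a pleasant confirmation but not essential to the argument.
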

\begin{proof}
Apply Lemma~\ref{le:decomp}.
\end{proof}

Next we investigate the existence of tilting objects for expansions of
abelian categories.

\begin{lem}\label{le:onepoint_tilt}
Let $i\colon\B\to\A$ be a non-split expansion of $k$-linear
abelian categories that are Ext-finite.
\begin{enumerate}
\item If $\B$ admits a tilting object, then $\A$ admits a tilting
object.
\item Suppose in addition that $\A$ is hereditary and has no non-zero
projective object. If $\A$ admits a tilting object, then $\B$ admits a
tilting object.
\end{enumerate}
\end{lem}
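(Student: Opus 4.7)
The plan is to move between tilting objects of $\B$ and $\A$ by adjusting the contribution of the simple objects $S_\la$ and $S_\rho$ arising from the expansion. The main structural tools are the adjunction formula $\Ext^n_\A(A,iB)\cong\Ext^n_\B(i_\la A,B)$ (Lemma~\ref{le:leftmax}(3)), the defining exact sequence $0\to S_\rho\to i\bar S\to S_\la\to 0$ (Lemma~\ref{le:simple}(2)), and the bounds $\pdim_\A S_\la\le 1$, $\idim_\A S_\rho\le 1$ built into localizability and colocalizability.

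For part (1), given a tilting object $T$ of $\B$, my first candidate is $T'=iT\oplus S_\la$. The generation axiom is clean: if $\Hom_\A(T',A)=0=\Ext^1_\A(T',A)$, then the conditions on $S_\la$ force $A\in S_\la^\perp=\B$, and then the conditions on $iT$ combined with the adjunction force $A=0$ by tilting of $T$ in $\B$. The vanishings $\Ext^1_\A(iT,iT)=\Ext^1_\B(T,T)=0$, $\Ext^1_\A(S_\la,iT)=0$ (from $S_\la\in{^\perp\B}$), and $\Ext^1_\A(S_\la,S_\la)=0$ are immediate. Applying $\Hom_\A(iT,-)$ to the defining sequence and using $\Ext^1_\A(iT,S_\rho)=0=\Ext^2_\A(iT,S_\rho)$ (the second from colocalizability) identifies $\Ext^1_\A(iT,S_\la)\cong\Ext^1_\B(T,\bar S)$; when this vanishes, $T'$ already works. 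Otherwise, I would replace $iT$ by a universal extension $0\to S_\la^n\to E\to iT\to 0$ chosen so that the connecting map $\Hom_\A(S_\la^n,S_\la)\to\Ext^1_\A(iT,S_\la)$ is surjective, and take $T'=E\oplus S_\la$; then $\Ext^1_\A(E,S_\la)=0$ by construction and $\Ext^1_\A(E,E)=0$ by diagram chase. The bound $\pdim_\A T'\le 1$ requires checking that $\Ext^2_\A(E,-)$ vanishes, which follows because the universal extension is designed so that the Yoneda composition with $[\xi]$ surjects onto the obstruction in $\Ext^2_\A(iT,-)$; here $\pdim_\B T\le 1$ and $\idim_\A S_\rho\le 1$ are used to control higher Exts between $iT$ and objects of $\add S_\la$.

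For part (2), given a tilting object $T'$ of $\A$, the additional hypotheses together with Proposition~\ref{pr:serre} put $\A$ in a Serre-duality setting with $\tau S_\la\cong S_\rho$ (Proposition~\ref{pr:ARformula}). Since $\Ext^1_\A(S_\la,S_\rho)\neq 0$ (Lemma~\ref{le:ext_simple}(1)), at most one of $S_\la,S_\rho$ appears as an indecomposable summand of $T'$; by the symmetry between left and right expansions we may arrange that $S_\rho$ does not. Writing $T'=U\oplus S_\la^m$ with $U$ having no $S_\la$ summand, the natural candidate is $T=i_\la U\oplus\bar S^m$ in $\B$. The tilting axioms in $\B$ follow by dualizing part (1): the adjunction reduces $\Ext^*_\B(T,-)$ to $\Ext^*_\A(T',i(-))$, generation becomes the implication $iB=0\Rightarrow B=0$, and the Ext-vanishings involving the $\bar S$ summand follow from Lemma~\ref{le:ext_simple} and the tilting property of $T'$. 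The hereditary hypothesis on $\A$ makes the dimension control automatic, so no universal-extension modification is needed here; the bound $\pdim_\B T\le 1$ is forced by $\gldim\B\le\gldim\A\le 1$ via Lemma~\ref{le:cohdim}(1).

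The hardest part is the Ext-vanishing $\Ext^1_\A(iT,S_\la)\cong\Ext^1_\B(T,\bar S)$ in part (1); its potential non-vanishing is what forces the universal-extension modification and the subsequent verification of the projective-dimension bound via the interplay of $\pdim_\B T$, $\idim_\A S_\rho$, and the Yoneda product. Part (2) is more straightforward once one identifies the correct summand structure and exploits the symmetric roles of $S_\la$ and $\bar S$ under $i_\la$.
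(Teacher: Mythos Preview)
For part~(1) your construction coincides with the paper's: take a universal $S_\la$-extension $0\to S_\la^n\to E\to iT\to 0$ and set $U=E\oplus S_\la$. The gap is your argument for $\pdim_\A U\le 1$. You claim the connecting map $\Ext^1_\A(S_\la^n,A)\to\Ext^2_\A(iT,A)$ is surjective because $\xi$ is a universal extension, but universality only controls the degree-zero connecting map $\Hom_\A(S_\la^n,S_\la)\to\Ext^1_\A(iT,S_\la)$; it says nothing about the degree-one map for arbitrary $A$. What you are missing is the \emph{right} adjoint: since $i$ is also a right expansion, the dual of Lemma~\ref{le:leftmax}(3) gives $\Ext^n_\A(iT,A)\cong\Ext^n_\B(T,i_\rho A)$, so $\pdim_\A iT\le 1$ in one line, and then $\pdim_\A E\le 1$ follows from the extension.

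For part~(2) your approach diverges from the paper's and does not go through as written. Your claim that ``the adjunction reduces $\Ext^*_\B(T,-)$ to $\Ext^*_\A(T',i(-))$'' is false for your candidate $T=i_\la U\oplus\bar S^m$: the adjunction gives $\Ext^*_\B(i_\la U,B)\cong\Ext^*_\A(U,iB)=\Ext^*_\A(T',iB)$, but the extra summand $\bar S^m$ contributes $\Ext^*_\B(\bar S,B)^m$, which is not accounted for; in particular the vanishing $\Ext^1_\B(\bar S,i_\la U)=0$ is not established. More fundamentally, even with $m=0$ you still need $\Ext^1_\A(T',ii_\la T')=0$, and ``dualizing part~(1)'' does not give this. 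The paper takes $i_\la T$ as the candidate and proves $\Ext^1_\A(T,ii_\la T)=0$ by a genuine case analysis: if $\Ext^1_\A(S_\la,T)=0$ then the unit $T\to ii_\la T$ is epi and right-exactness of $\Ext^1_\A(T,-)$ finishes it; if $\Ext^1_\A(T,S_\la)=0$ one uses the four-term sequence \eqref{eq:local}; and if $\Ext^1_\A(S_\la,T)\neq 0$ then the Auslander--Reiten formula forces $\Hom_\A(T,S_\rho)\neq 0$, hence $\Ext^1_\A(T,S_\rho)=0$, hence $\Ext^1_\A(\tau^{-1}T,S_\la)=0$, and one replaces $T$ by the tilting object $\tau^{-1}T$ to land in the previous case. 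The Serre twist is the missing idea in your sketch.
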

\begin{proof}
(1) Let $T$ be a tilting object in $\B$. Choose an exact
sequence $0\to S\to T'\to iT\to 0$ in $\A$ with $S$ in $\add S_\la$ such that
the induced map $\Hom_\A(S,S_\la)\to\Ext^1_\A(iT,S_\la)$ is an
epimorphism. We claim that $U=T'\oplus S_\la$ is a tilting object for $\A$.

The formula $\Ext_\A^n(iT,A)\cong\Ext_\B^n(T,i_\rho A)$ for all
$A\in\A$ and $n\ge 0$ implies $\pdim iT\le 1$. Therefore $\pdim U\le
1$. The construction of $T'$ implies $\Ext_\A^1(T',S_\la)=0$, and
$\Ext_\A^1(S_\la,T')=0$ is clear since $\Ext_\A^1(S_\la,S)=0$ and
$\Ext_\A^1(S_\la,iT)=0$. We have
$$\Ext^1_\A(T',T')\cong\Ext^1_\A(T',iT)\cong\Ext^1_\B(i_\la
T',T)\cong\Ext^1_\B(T,T)=0,$$ and therefore
$\Ext_\A^1(U,U)=0$. Finally, assume that $\Ext_\A^n(U,A)=0$ for some
$A$ in $\A$ ($n=0,1$). The condition $\Ext_\A^n(S_\la,A)=0$ implies
that $A$ belongs to the image of $i$, say $A=iB$, and that
$\Ext_\A^n(iT,A)=0$.  Then $0=\Ext_\A^n(iT,iB)\cong\Ext_\B^n(T,B)$
implies $B=0$. Thus $U$ is a tilting object.

(2) Let $T$ be a tilting object in $\A$. We intend to show that $i_\la
T$ is a tilting object for $\B$. The formula $\Ext^n_\B(i_\la
T,B)\cong\Ext^n_\A(T,iB)$ for all $B\in\B$ and $n\ge 0$ shows that
$\pdim i_\la T\le 1$ and that $\Ext^n_\B(i_\la T,B)=0$ ($n=0,1$) implies
$B=0$. It remains to show that $\Ext^1_\B(i_\la T,i_\la T)=0$. In
fact, it is equivalent to show that $\Ext^1_\A(T,ii_\la T)=0$.

We proceed by cases. First assume that $\Ext_\A^1(S_\la,T)=0$. Thus
the adjunction morphism $\eta_T\colon T\to ii_\la T$ is an
epimorphism, and therefore $\Ext^1_\A(T,ii_\la T)=0$ since
$\Ext_\A^1(T,-)$ is right exact.

Next assume that $\Ext_\A^1(T,S_\la)=0$. Then $\Ext_\A^1(T,ii_\la T)=0$
follows since $\Ker\eta_T$ and $\Coker\eta_T$ belong to $\add S_\la$.

Finally, assume that $\Ext_\A^1(S_\la,T)\neq 0$. We apply the
Auslander-Reiten formula from Proposition~\ref{pr:ARformula} and have
$\Hom_\A(T,S_\rho)\neq 0$. Thus there is an epimorphism $T\to S_\rho$,
and this implies $\Ext_\A^1(T,S_\rho)=0$. The category $\A$ admits a
Serre functor $\tau\colon\A\xto{\sim}\A$ by
Proposition~\ref{pr:serre}, and the Auslander-Reiten formula implies
$\tau S_\la=S_\rho$. Thus $\Ext_\A^1(\tau^{-1}T,S_\la)=0$.  The object
$U=\tau^{-1}T$ is a tilting object for $\A$, and the above argument
shows that $i_\la U$ is a tilting object for $\B$.
\end{proof}

The following result says that the concept of an expansion of
abelian categories is compatible with the list of axioms (H1)--(H5).

\begin{thm}\label{th:reduction}
Let $k$ be a field and $\B\to\A$ a non-split expansion of
$k$-linear abelian categories with associated division ring $k$. Then
$\A$ satisfies \emph{(H1)--(H5)} if and only if $\B$ satisfies
\emph{(H1)--(H5)}. In that case the rank of $K_0(\B)$ is one less
than that of $K_0(\A)$.
\end{thm}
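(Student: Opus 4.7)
The plan is to verify each axiom (H1)--(H5) in both directions using the structural lemmas already established for expansions. Write $\C = \Ker i_\la$, which is equivalent to $\mod k$ via $\Hom_\A(S_\la,-)$, and recall that $i_\la$ identifies $\B$ with $\A/\C$. Most parts reduce to quoting the relevant lemma, so the proof is bookkeeping rather than invention; the main obstacle is handling the noetherian condition and the Ext-finiteness in the direction $\B \Rightarrow \A$, where one must build rather than restrict.

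For (H1), skeletal smallness passes in both directions via Lemma~\ref{le:small} applied to $\C \subseteq \A$, once we know Ext-finiteness (which makes extensions with objects in $\C$ into sets). Ext-finiteness itself is Lemma~\ref{le:extfin}. Connectedness in both directions is the final clause of Lemma~\ref{le:decomp}. For (H3), heredity and the absence of non-zero projectives are exactly the content of Lemma~\ref{le:cohdim}, once one checks $\gldim \B \leq 1$ translates to $\gldim \A = \max\{1,\gldim\B\} = 1$. For (H2), Lemma~\ref{le:noetherian} applied to $\C \subseteq \A$ gives the equivalence: $\C$ is noetherian since it is semisimple of finite type, $\A/\C \simeq \B$, and the condition that each object of $\A$ has a largest $\C$-subobject follows from $S_\la$ being localizable (so $\Hom_\A(S_\la,A)$ is of finite length over $k$), hence admits the subobject generated by all maps $S_\la \to A$.

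For (H4), Lemma~\ref{le:onepoint_tilt} handles both directions: given a tilting object $T \in \B$, one builds a universal $S_\la$-extension $0 \to S \to T' \to iT \to 0$ and shows $T' \oplus S_\la$ tilts $\A$; conversely, for a tilting $T \in \A$, the object $i_\la T$ (possibly after applying $\tau^{-1}$ to ensure $\Ext^1_\A(S_\la,T)=0$) tilts $\B$. This uses heredity of $\A$ and the Auslander-Reiten formula of Proposition~\ref{pr:ARformula} to make the case analysis go through.

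For (H5) together with the rank statement, I would invoke Lemma~\ref{le:euler_onepoint} with $\Delta = k$: it gives the decomposition $K_0(\A) \cong K_0(\B) \oplus \bbZ$ (yielding the rank increase by one) and the discriminant formula
\[
\disc\langle-,-\rangle_\A = \dim_k k \cdot \disc\langle-,-\rangle_\B = \disc\langle-,-\rangle_\B.
\]
Thus the discriminant is $\pm 1$ on one side if and only if on the other, and non-degeneracy likewise transfers since the direct summand $\bbZ[S_\la]$ pairs non-trivially with itself: by Lemma~\ref{le:ext_simple}, $\langle [S_\la],[S_\la]\rangle = \dim_k \End_\A(S_\la) - \dim_k \Ext^1_\A(S_\la,S_\la)$, and more importantly the Auslander-Reiten formula yields $\langle [S_\la],[S_\rho]\rangle = -\dim_k\End_\A(S_\la) = -1$, witnessing non-degeneracy on the new factor. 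Combining all five axioms gives the theorem, with the rank statement appearing as a direct byproduct of the $K_0$ decomposition.
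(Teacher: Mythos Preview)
Your proposal is correct and follows essentially the same route as the paper: the paper's proof is literally a list of references---Lemmas~\ref{le:small}, \ref{le:decomp}, \ref{le:extfin} for (H1); Lemma~\ref{le:noetherian} for (H2); Lemma~\ref{le:cohdim} for (H3); Lemma~\ref{le:onepoint_tilt} for (H4); and Proposition~\ref{pr:euler} together with Lemma~\ref{le:euler_onepoint} for (H5) and the rank---and your write-up is an expanded version of exactly this list. The one small addition worth making explicit is that in the direction $\B \Rightarrow \A$ you should invoke Proposition~\ref{pr:euler} (using the tilting object just constructed for $\A$) to guarantee that $K_0(\A)$ is free of finite rank before applying Lemma~\ref{le:euler_onepoint}, since that lemma assumes this as a hypothesis; your extra remarks on $\langle [S_\la],[S_\rho]\rangle$ are not needed once the discriminant is $\pm 1$.
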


\begin{proof}
We provide the references for each axiom.  The final assertion about
the rank of $K_0(\B)$ follows from Lemma~\ref{le:euler_onepoint}.

(H1) Lemmas~\ref{le:small}, \ref{le:decomp}, and \ref{le:extfin}.

(H2) Lemma~\ref{le:noetherian}.

(H3) Lemma~\ref{le:cohdim}.

(H4) Lemma~\ref{le:onepoint_tilt}.

(H5) Proposition~\ref{pr:euler} and Lemma~\ref{le:euler_onepoint}.
\end{proof}

\subsection{An equivalence via tilting}
We give a criterion so that an equivalence of derived categories
$\bfD^b(\A)\xto{\sim}\bfD^b(\A')$ restricts to an equivalence
$\A\xto{\sim}\A'$.

Let $\A$ be a $k$-linear abelian category satisfying (H1)--(H4) and
consider its bounded derived category $\bfD^b(\A)$. Recall that
$$\bfD^b(\A)=\bigsqcup_{n\in\bbZ}\A[n]$$ with non-zero morphisms
$\A[i]\to\A[j]$ only if $j-i\in\{0,1\}$ since $\A$ is hereditary; see
Corollary~\ref{co:der_hereditary}.

The isomorphism $K_0(\A)\xto{\sim}K_0(\bfD^b(\A))$ yields a rank
function $K_0(\bfD^b(\A))\to\bbZ$.  Note that for each complex $X$
concentrated in degree $n$, we have $(-1)^n\cdot\rank{[X]}\geq 0$.

Let $T$ be an indecomposable object in $\A_+$ and view it as a complex
concentrated in degree zero. Define $\L(T)$ to be the class of
indecomposable objects $L\in\bfD^b(\A)$ of rank one such that
$\Hom_{\bfD^b(\A)}(L',L)\neq 0$ for some indecomposable object
$L'\in\bfD^b(\A)$ of rank one satisfying $\Hom_{\bfD^b(\A)}(L',T)\neq
0$.

\begin{lem}\label{le:L(T)}
Let $T$ be an indecomposable object in $\A_+$. Then the objects in
$\L(T)$ are precisely those that are isomorphic to a line bundle in
$\A$, viewed as a complex concentrated in degree zero.
\end{lem}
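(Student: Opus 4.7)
The plan is to prove the two inclusions separately: hereditariness of $\A$ pins down the degree in the ``only if'' direction, while for ``if'' I build the witness by intersecting sub-line-bundles inside a common overbundle supplied by Lemma \ref{le:lb}. Throughout I will use the fact that, since $\A$ is hereditary, Corollary \ref{co:der_hereditary} identifies $\bfD^b(\A)$ with $\bigsqcup_{n\in\bbZ}\A[n]$, so every indecomposable object of $\bfD^b(\A)$ is of the form $A[n]$ with $A\in\A$ indecomposable, and $[A[n]]=(-1)^n[A]$ in $K_0$. Requiring $\rank A[n]=1$ together with $\rank\ge 0$ on $\A$ forces $n$ even and $\rank A=1$; Proposition \ref{pr:Serreduality}(3) combined with indecomposability then puts $A$ in $\A_+$, so $A$ is a line bundle in $\A$.

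For the ``only if'' direction, write $L\cong L_0[n]$ and its witness $L'\cong L'_0[m]$ with $L_0,L'_0$ line bundles and $n,m$ even. Via Proposition \ref{pr:ext}, the hypothesis $\Hom_{\bfD^b(\A)}(L',T)\ne 0$ reads $\Ext_\A^{-m}(L'_0,T)\ne 0$, and hereditariness restricts $-m$ to $\{0,1\}$; evenness then forces $m=0$. Applied again to $\Hom_{\bfD^b(\A)}(L',L)\cong\Ext_\A^n(L'_0,L_0)$, the same argument yields $n=0$, so $L\cong L_0$ is a line bundle in degree zero.

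For the ``if'' direction, let $L_0$ be a line bundle in $\A$. Proposition \ref{pr:lb_filtr} supplies a line bundle subobject $L_1\hookrightarrow T$ from the first step of a line-bundle filtration of $T\in\A_+$. Lemma \ref{le:lb}, applied to the line bundle $L_0$ and the nonzero object $L_1\in\A_+$, produces monomorphisms $L_0\hookrightarrow L''$ and $L_1\hookrightarrow L''$ with $L''\in\A_+$ and $L''/L_1\in\A_0$; hence $\rank L''=1$ and $L''$ is itself a line bundle. I then set $L':=L_0\cap L_1$, the pullback of the two inclusions inside $L''$, and from the short exact sequence $0\to L'\to L_0\oplus L_1\to L_0+L_1\to 0$ together with $0<\rank(L_0+L_1)\le\rank L''=1$, I read off $\rank L'=1$. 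As a rank-one subobject of $L_0\in\A_+$, $L'$ lies in $\A_+$, and a rank-one decomposition argument (any direct summand of rank zero in $\A_+$ must vanish because $\A_0\cap\A_+=0$) shows $L'$ is indecomposable, hence a line bundle. The inclusion $L'\hookrightarrow L_0$ and the composite $L'\hookrightarrow L_1\hookrightarrow T$ then supply the two nonzero morphisms witnessing $L_0\in\L(T)$.

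The main obstacle is the ``if'' direction: the witness $L'$ must map into both $L_0$ and $T$, whereas Lemma \ref{le:lb} only produces a common \emph{super}-object. The key idea is to exploit the fact that the overbundle $L''$ it supplies already has rank one, so one can dualize the situation by intersecting inside $L''$---two nonzero sub-line-bundles cannot intersect trivially on rank grounds---and the resulting intersection automatically receives monomorphisms into both $L_0$ and $L_1$, giving the required witness without any further appeal to the deeper structure of $\A$.
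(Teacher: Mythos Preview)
Your proof is correct and follows essentially the same strategy as the paper: for the ``only if'' direction you use hereditariness plus the parity constraint from the rank, and for the ``if'' direction you invoke Lemma~\ref{le:lb} to place everything inside a common object in $\A_+$ and then take an intersection (pullback) to produce the witness $L'$. The only difference is cosmetic---you first pass to a line subbundle $L_1\subseteq T$ via Proposition~\ref{pr:lb_filtr} before applying Lemma~\ref{le:lb}, whereas the paper applies Lemma~\ref{le:lb} directly with $A=T$ and intersects $L$ with $T$ inside $T'$; your rank and indecomposability justifications for $L'$ are in fact more explicit than the paper's.
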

\begin{proof}
Let $L'$ be a complex in $\bfD^b(\A)$ that is concentrated in one
degree, say $n$. If $\Hom_{\bfD^b(\A)}(L',T)\neq 0$, then $n=0$ or
$n=1$. If the rank of $L'$ is positive, then $n=0$. The same argument
shows that every indecomposable object $L$ in $\bfD^b(\A)$ of rank one
and satisfying $\Hom_{\bfD^b(\A)}(L',L)\neq 0$ is isomorphic to a line
bundle, viewed as a complex concentrated in degree zero.

Conversely, let $L$ be a line bundle. Then there exists a non-zero
morphism $L\to T'$ for some object $T'$ that admits an exact sequence
$0\to T\to T'\to C\to 0$ such that $C$ has finite length; see
Lemma~\ref{le:lb}. The pullback of $T\to T'$ and $L\to T'$ yields a
line bundle $L'$ with non-zero morphisms to $T$ and $L$. Thus $L$
belongs to $\L(T)$.
\end{proof}

\begin{lem}\label{le:tilt_zero}
Let $T$ be an indecomposable object in $\A_+$. Then the following are
equivalent for an indecomposable object $X$ in $\bfD^b(\A)$:
\begin{enumerate}
\item  $H^iX=0$ for all $i\neq 0$.
\item $\rank{[X]} \geq 0$ and $\Hom_{\bfD^b(\A)}(L,X)\neq 0$ for some $L\in\L(T)$.
\end{enumerate}
\end{lem}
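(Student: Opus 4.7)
The strategy is to combine Lemma~\ref{le:der_hereditary} with Lemma~\ref{le:L(T)}. Since $\A$ is hereditary, every indecomposable $X\in\bfD^b(\A)$ has the form $X\cong A[-j]$ for a unique integer $j$ and an indecomposable $A=H^jX\in\A$, so condition~(1) reads $j=0$. By Lemma~\ref{le:L(T)}, $\L(T)$ consists precisely of the line bundles of $\A$ (viewed in degree zero), and for any such $L$ the hereditary hypothesis gives
\[
\Hom_{\bfD^b(\A)}(L,A[-j])\cong\Ext^{-j}_\A(L,A),
\]
which is non-zero only when $-j\in\{0,1\}$.

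For (1) $\Rightarrow$ (2), I would first note $\rank[X]=\rank[A]\geq 0$, since rank is non-negative on $\A$, leaving only the existence of a line bundle $L$ with $\Hom(L,A)\neq 0$. If $A\in\A_+$, the filtration of Proposition~\ref{pr:lb_filtr} provides a line bundle $A_1\hookrightarrow A$. If $A\in\A_0$, I would choose a simple subobject $S\subseteq A$ lying in some component $\A_x$, and pick any line bundle $L_0$; by Proposition~\ref{pr:Lmorph} there is a non-zero map $\phi\colon L_0\to C$ with $C\in\A_x$. Composing with the epimorphism from the uniserial object $\Im\phi$ onto its top $T$ yields a non-zero morphism $L_0\to T$ with $T$ simple in $\A_x$. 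Uniseriality of $\A_x$ (Proposition~\ref{pr:serre_finlen}) gives $T\cong\tau^k S$ for some $k\in\bbZ$, and since $\tau$ is an autoequivalence of $\A$ preserving rank it restricts to the line bundles; thus $L:=\tau^{-k}L_0$ is a line bundle with $\Hom(L,S)\cong\Hom(L_0,T)\neq 0$, and composing with $S\hookrightarrow A$ gives the required morphism.

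For (2) $\Rightarrow$ (1), I would rule out $j=-1$. In that case $X=A[1]$ and $\rank[X]=-\rank[A]$, which together with the hypothesis $\rank[X]\geq 0$ and $\rank[A]\geq 0$ forces $\rank[A]=0$, i.e.\ $A\in\A_0$. Serre duality (Proposition~\ref{pr:serre}) then gives $\Ext^1_\A(L,A)\cong D\Hom_\A(A,\tau L)$; but $\tau L$ is indecomposable of rank one, hence lies in $\A_+$, so $\Hom_\A(A,\tau L)=0$ by the very definition of $\A_+$. This contradicts $\Hom_{\bfD^b(\A)}(L,X)=\Ext^1_\A(L,A)\neq 0$, so $j=0$ and (1) holds.

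The only non-routine step is the $A\in\A_0$ case of (1) $\Rightarrow$ (2): Proposition~\ref{pr:Lmorph} delivers a non-zero Hom from every line bundle into the entire component $\A_x$, whereas one needs a non-zero Hom into the prescribed simple subobject $S\subseteq A$. The resolution is to reduce to a simple target by passing to the top of the image, then use uniseriality of $\A_x$ together with the fact that $\tau$ permutes the line bundles to twist into $S$.
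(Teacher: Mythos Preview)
Your proof is correct and follows essentially the same approach as the paper's. The paper's argument is terser: for (1)~$\Rightarrow$~(2) it simply asserts that $\Hom_\A(L,H^0X)\neq 0$ for some line bundle $L$ without justification, whereas you supply the details via Proposition~\ref{pr:lb_filtr} in the torsion-free case and the $\tau$-twisting argument in the finite-length case; for (2)~$\Rightarrow$~(1) both arguments rule out the shifted degree by combining the rank constraint with the vanishing $\Ext^1_\A(L,A)=0$ for $A\in\A_0$, which you make explicit via Serre duality and $\tau L\in\A_+$.
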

\begin{proof}
We apply Lemma~\ref{le:L(T)}. Because $X$ is indecomposable, there
exists an integer $n$ such that $H^iX=0$ for all $i\neq n$.

(1) $\Rightarrow$ (2): If $n=0$, then $\rank{[X]} \geq 0$ and
$\Hom_\A(L,H^0X)\neq 0$ for some line bundle $L$ in $\A$.  Thus
$\Hom_{\bfD^b(\A)}(L,X)\neq 0$ for some $L\in\L(T)$.

(2) $\Rightarrow$ (1): If $\Hom_{\bfD^b(\A)}(L,X)\neq 0$ for some
    $L\in\L(T)$, then $n=0$ or $n=1$.  It follows that $n=0$ if $X$
    has positive rank. If the rank of $X$ is zero, then $H^nX$ has
    finite length and therefore $\Ext_\A^1(H^0L,H^nX)=0$. Thus $n=0$.
\end{proof}

\begin{prop}\label{pr:der_equiv}
Let $\A,\A'$ be abelian categories satisfying \emph{(H1)--(H4)} with
tilting objects $T\in\A$ and $T'\in\A'$.  Suppose that
$\End_\A(T)\cong\End_{\A'}(T')$ and that the induced equivalence
$\add T\xto{\sim}\add T'$ preserves the rank.  Then $\A$ and $\A'$ are
equivalent categories.
\end{prop}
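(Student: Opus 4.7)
The plan is to use Theorem~\ref{th:tilt} to lift the tilting data to a triangulated equivalence $F\colon\bfD^b(\A)\xto{\sim}\bfD^b(\A')$ and then show, via the intrinsic criterion of Lemma~\ref{le:tilt_zero}, that $F$ restricts to an equivalence $\A\xto{\sim}\A'$. Setting $\La=\End_\A(T)\cong\End_{\A'}(T')$, two applications of Theorem~\ref{th:tilt} produce $\bfD^b(\A)\xto{\sim}\bfD^b(\mod\La)\xto{\sim}\bfD^b(\A')$; the composite $F$ restricts on $\add T$ to the hypothesized equivalence $\add T\xto{\sim}\add T'$, since both tilting equivalences identify $T$ (resp.\ $T'$) with the free $\La$-module.

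I would next verify that $F$ preserves the rank function on $K_0$. Let $T_1,\dots,T_r$ be representatives of the isomorphism classes of indecomposable summands of $T$, and set $T_i'=F(T_i)$. By Lemma~\ref{le:euler_tilt} combined with the freeness of $K_0(\A)$ from Proposition~\ref{pr:euler}, the classes $[T_1],\dots,[T_r]$ form a $\bbZ$-basis of $K_0(\A)$, and similarly for the $[T_i']$ in $K_0(\A')$. The induced map $K_0(F)$ sends $[T_i]$ to $[T_i']$, and the hypothesis that the equivalence $\add T\xto{\sim}\add T'$ preserves rank gives $\rank[T_i]=\rank[T_i']$. Since rank is a linear functional and its values agree on a basis, $\rank[F(X)]=\rank[X]$ for every $X\in\bfD^b(\A)$. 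Because line bundles have positive rank while objects of $\A_0$ contribute zero, at least one $T_1$ must have positive rank, hence lies in $\A_+$; correspondingly $T_1'\in\A'_+$, and Lemma~\ref{le:tilt_zero} is available on both sides.

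The final step is to check $F(\L(T_1))=\L(T_1')$. For any $L\in\L(T_1)$ with witness $L'$, indecomposability and rank one are preserved by $F$, while the two non-vanishing Hom conditions transport directly along the equivalence to give $F(L)\in\L(T_1')$; the reverse inclusion follows by applying $F^{-1}$. The criterion of Lemma~\ref{le:tilt_zero}---an indecomposable $X$ lies in $\A$ iff $\rank[X]\geq 0$ and $\Hom(L,X)\neq 0$ for some $L\in\L(T_1)$---is then visibly invariant under $F$, so $F$ carries indecomposables of $\A$ to indecomposables of $\A'$. Krull--Schmidt and the symmetric argument for $F^{-1}$ upgrade this into an equivalence $\A\xto{\sim}\A'$. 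I expect the rank-preservation step to be the main point, as it hinges on the observation that the indecomposable summands of a tilting object form a $\bbZ$-basis of the Grothendieck group, so that rank is determined globally by its values on $\add T$.
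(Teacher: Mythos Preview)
Your proof is correct and follows essentially the same approach as the paper: construct the derived equivalence $F$ via the common endomorphism algebra $\La$, argue that $F$ preserves rank because the indecomposable summands of $T$ give a basis of $K_0$, and then invoke Lemma~\ref{le:tilt_zero} to see that $F$ restricts to $\A\xto{\sim}\A'$. You have in fact supplied more detail than the paper does---in particular, you make explicit why some indecomposable summand $T_1$ lies in $\A_+$ and why $F(\L(T_1))=\L(T_1')$, both of which the paper leaves implicit in the single sentence ``It follows from Lemma~\ref{le:tilt_zero} that $F$ identifies $\A$ with $\A'$.''
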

\begin{proof}
We identify $\La=\End_\A(T)=\End_{\A'}(T')$ and obtain equivalences
$$\bfD^b(\A)\xto{\RHom_\A(T,-)}\bfD^b(\mod\La)\xleftarrow{\RHom(T',-)}\bfD^b(\A').$$
This yields an equivalence $F\colon \bfD^b(\A)\xto{\sim}\bfD^b(\A')$
taking $T$ to $T'$.  The functor $F$ preserves the rank since the
indecomposable direct summands of $T$ form a basis of $K_0(\bfD^b(\A))$. It
follows from Lemma~\ref{le:tilt_zero} that $F$ identifies $\A$ with
$\A'$.
\end{proof}

\subsection{The homogeneous case}

Let $\A$ be a $k$-linear abelian category satisfying (H1)--(H4).  Call
$\A$ \index{category!homogeneous} \emph{homogeneous} if $\tau A\cong
A$ for each object $A$ of finite length, or equivalently, $\tau S\cong
S$ for each simple object $S$.  This property together with the
conditions (H1)--(H5) characterizes the category $\coh\bbP^1_k$.  The
following characterization of the property of $\A$ to be homogeneous will be useful.

\begin{prop}\label{pr:homog}
Let $\A$ be a $k$-linear abelian category satisfying
\emph{(H1)--(H4)}. Then the following are equivalent:
\begin{enumerate}
\item If $S$ is a simple object in $\A$, then $\tau S\cong S$.
\item If $A,B$ are  finite length objects in $\A$, then $\langle [A],[B]\rangle=0$.
\item The rank of $K_0(\A)$ equals two.
\item If $A$ has infinite length and $S$ is a simple object in $\A$,
then $\Hom_\A(A,S)\neq 0$.
\end{enumerate}
\end{prop}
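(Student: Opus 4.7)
The plan is to prove (1)$\Leftrightarrow$(2), (1)$\Leftrightarrow$(3), and (1)$\Leftrightarrow$(4). The main tools are Serre duality in the form $\Ext^1_\A(S,T)\cong D\Hom_\A(T,\tau S)$ together with its bilinear shadow $\langle[A],[B]\rangle=-\langle[B],[\tau A]\rangle$, the structure of $\A_0$ from Proposition~\ref{pr:serre_finlen}, the decomposition $K_0(\A)=\bbZ[L]\oplus K'_0(\A)$ of Proposition~\ref{pr:K-split}, non-degeneracy of the Euler form (Proposition~\ref{pr:euler}), and the existence of non-zero morphisms from line bundles to every component of $\A_0$ (Proposition~\ref{pr:Lmorph}).

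For (1)$\Leftrightarrow$(2): if every simple is $\tau$-fixed, then Serre duality yields $\dim_k\Hom_\A(S,T)=\dim_k\Ext^1_\A(S,T)$ for any simples $S,T$, so (2) follows by bilinearity. Conversely, if $\tau S\not\cong S$, then $\Hom_\A(S,\tau S)=0$ while $\Ext^1_\A(S,\tau S)\cong D\End_\A(\tau S)\neq 0$, so $\langle[S],[\tau S]\rangle<0$ contradicts~(2). For (1)$\Rightarrow$(3): under (1) each $\A_x$ has a unique simple $S_x$, so $\{[L]\}\cup\{[S_x]\}_{x\in\bfX}$ spans $K_0(\A)_\bbQ$. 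Proposition~\ref{pr:Lmorph} combined with uniqueness of the simple in $\A_x$ gives $\Hom_\A(L,S_x)\neq 0$; using that $\tau L$ is again a line bundle (rank and indecomposability are preserved by $\tau$, the former by Lemma~\ref{le:linear-rank}) one obtains $\Ext^1_\A(L,S_x)=D\Hom_\A(S_x,\tau L)=0$, whence $\langle[L],[S_x]\rangle>0$ and dually $\langle[S_x],[L]\rangle=-\langle[L],[S_x]\rangle<0$; also $\langle[S_x],[S_y]\rangle=0$ by (2). The Gram matrix of this spanning set therefore has rank at most two, and non-degeneracy of the Euler form then forces $\dim_\bbQ K_0(\A)_\bbQ\leq 2$; combined with the a priori lower bound of two, this gives~(3).

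For (3)$\Rightarrow$(1): if rank $K_0(\A)=2$, then $K'_0(\A)=\bbZ v$, and $\tau$ acts on $K'_0(\A)$ as $\pm\id$. For each $x$ the class $w_x=\sum_{S\in\Si_0\cap\A_x}[S]$ is $\tau$-invariant and non-zero (Proposition~\ref{pr:K-split}(2)), so writing $w_x=c_xv$ with $c_x\neq 0$ yields $\tau v=v$. The anti-symmetry then gives $\langle v,v\rangle=-\langle v,\tau v\rangle=-\langle v,v\rangle$, hence $\langle v,v\rangle=0$, hence~(2) and~(1) by bilinearity. For (1)$\Rightarrow$(4): an object $A$ of infinite length has a non-zero $\A_+$-summand which, by Proposition~\ref{pr:lb_filtr}, surjects onto some line bundle $L$; the argument just given yields $\Hom_\A(L,S)\neq 0$ for the unique simple $S$ of the relevant component, and composition provides $\Hom_\A(A,S)\neq 0$.

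For (4)$\Rightarrow$(1): suppose $\tau S\not\cong S$ for some simple $S$. Starting from $K_0=L$ a line bundle, inductively construct $K_{i+1}$ as the kernel of a surjection $K_i\twoheadrightarrow S$. Such a surjection exists because $\Hom_\A(K_i,S)\neq 0$ by~(4) (line bundles are of infinite length) and $\Ext^1_\A(K_i,S)=D\Hom_\A(S,\tau K_i)=0$ (as $\tau K_i$ is a line bundle, hence in $\A_+$); each $K_i$ is then a line bundle. Since $\Ext^1_\A(S,S)\cong D\Hom_\A(S,\tau S)=0$, induction gives $L/K_i\cong S^i$. Applying $\Hom_\A(-,S)$ to $0\to K_i\to L\to S^i\to 0$ yields the exact sequence $0\to\Hom_\A(S^i,S)\to\Hom_\A(L,S)\to\Hom_\A(K_i,S)\to 0$, so $\dim_k\Hom_\A(L,S)\geq(i+1)\dim_k\End_\A(S)$ for every $i$, contradicting Ext-finiteness. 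The main obstacle will be the Gram-matrix argument in (1)$\Rightarrow$(3): one must verify that a spanning set with pairing matrix of rank at most two really forces the ambient space to have dimension at most two, and, in (3)$\Rightarrow$(1), that the multi-component $\tau$-invariance argument genuinely produces $\tau v=v$ for the chosen generator~$v$.
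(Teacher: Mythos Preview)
Your proof is correct and gives genuinely different arguments for several implications compared to the paper. For $(1)\Rightarrow(2)$ and $(1)\Rightarrow(4)$ you essentially match the paper. For $(2)\Rightarrow(3)$ the paper invokes a short algebraic lemma (Lemma~\ref{le:Z2}) about a non-degenerate form on a free abelian group with an isotropic corank-one subgroup; you instead route through $(1)\Rightarrow(3)$ via the Gram matrix of the spanning set $\{[L]\}\cup\{[S_x]\}$, which is equally clean once one notes that for a spanning set the rank of the Gram matrix equals $\dim_\bbQ K_0(\A)_\bbQ$ by non-degeneracy. The more substantial divergence is in $(3)\Rightarrow(1)$ and $(4)\Rightarrow(1)$: the paper uses the expansion machinery of \S\ref{se:extensions} (Lemma~\ref{le:reduction} and Theorem~\ref{th:reduction}) to peel off a simple with $\tau S\not\cong S$ and reduce rank, respectively to produce an infinite-length object in ${}^\perp\tau S$. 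Your arguments avoid expansions entirely: for $(3)\Rightarrow(1)$ you exploit the $\tau$-invariance of $w_x$ to pin down $\tau v=v$ on $K'_0(\A)=\bbZ v$ and then use the Serre-duality identity $\langle a,b\rangle=-\langle b,\tau a\rangle$ to force $\langle v,v\rangle=0$; for $(4)\Rightarrow(1)$ you build an infinite descending chain of line bundles inside $L$ with all quotients $S$, contradicting Hom-finiteness. These are more elementary and self-contained, at the cost of not illustrating the expansion/contraction viewpoint that the paper is developing. One small remark: your citation of Lemma~\ref{le:linear-rank} for ``$\tau$ preserves rank'' is slightly indirect, since rank is defined afterwards as length in $\A/\A_0$; the cleanest justification is that $\tau$ is an equivalence preserving $\A_0$, hence induces an equivalence on $\A/\A_0$.
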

\begin{proof}
(1) $\Rightarrow$ (2): It suffices to show that $\langle
[S],[T]\rangle=0$ for each pair of simple objects $S,T$. The equality
$\langle [S],[T]\rangle=0$ is an immediate consequence of Serre
duality if $\tau T\cong T$.

(2) $\Rightarrow$ (3): Let $K'_0(\A)$ be the image of the canonical
    map $K_0(\A_0)\to K_0(\A)$. We have $K_0(\A)\cong \bbZ\oplus
    K'_0(\A)$ by Proposition~\ref{pr:K-split}. Now apply Lemma~\ref{le:Z2}
    below.

(3) $\Rightarrow$ (1): Suppose there exists a simple object $S$ such
that $\tau S\not\cong S$. Then $\B=S^\perp$ satisfies (H1)--(H4) and
$K_0(\A)\cong \bbZ [S]\oplus K_0(\B)$, by Theorem~\ref{th:reduction}
and Lemma~\ref{le:reduction}.  If $\B$ is homogeneous, then
$K_0(\A)\cong \bbZ^3$ by the first part of the proof. Otherwise, we
proceed as before and reduce to the homogeneous case. In any case, the
rank of $K_0(\A)$ is at least $3$.

(1) $\Rightarrow$ (4): This follows from Proposition~\ref{pr:Lmorph}
since each infinite length object $A$ admits a subobject $A'$ such
that $A/A'$ is a line bundle.

(4) $\Rightarrow$ (1): Suppose there exists a simple object $S$ such
that $\tau S\not\cong S$. Then $\B=S^\perp={^\perp\tau S}$ yields an
expansion $\B\to\A$ by Lemma~\ref{le:reduction}, and this induces an
equivalence $\B/\B_0\to\A/\A_0$ by
Proposition~\ref{pr:onepoint_length}. Thus any infinite length object
in $\A$ yields one in $\B$, say $A$, satisfying $\Hom_\A(A,\tau S)=0$
by construction.
\end{proof}

\begin{lem}\label{le:Z2}
Let $G$ be a free abelian group of finite rank with a non-degenerate
bilinear form $\p$. Suppose there is a subgroup $0\neq H\subseteq G$ such
that $G/H\cong\bbZ$ and $\p(x,y)=0$ for all $x,y\in H$. Then $G\cong\bbZ^2$.
\end{lem}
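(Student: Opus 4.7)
The plan is to show that the subgroup $H$ embeds into $\mathbb{Z}$, which forces the rank of $G$ to equal $2$.

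First I would use that $G/H \cong \mathbb{Z}$ is free to split the short exact sequence $0 \to H \to G \to \mathbb{Z} \to 0$, so $G = H \oplus \mathbb{Z}g$ for some $g \in G$ mapping to a generator. For each $h \in H$, consider the linear functional $\varphi(-,h) \colon G \to \mathbb{Z}$. By the hypothesis that $\varphi$ vanishes on $H \times H$, this functional vanishes on $H$, and therefore factors through the quotient $G/H \cong \mathbb{Z}$. This factorization defines a homomorphism
\[
\psi \colon H \lto \Hom(G/H,\bbZ) \cong \bbZ, \qquad h \longmapsto \varphi(-,h)|_{G/H}.
\]

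Next I would check that $\psi$ is injective. If $\psi(h) = 0$, then $\varphi(-,h)$ vanishes on $G$ entirely (it already vanishes on $H$, and $\psi(h) = 0$ means it vanishes on the $\mathbb{Z}g$-component too). Non-degeneracy of $\varphi$ then forces $h = 0$.

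Therefore $H$ is a nonzero subgroup of $\mathbb{Z}$, so $H$ has rank exactly $1$. Combined with $G/H \cong \mathbb{Z}$, this gives $\rank G = 2$, and since $G$ is free, $G \cong \mathbb{Z}^2$. There is no serious obstacle here; the only subtle point is remembering to use non-degeneracy on the correct side and that the quotient being free permits the splitting used to define $\psi$ concretely (though $\psi$ can also be described intrinsically without choosing a splitting).
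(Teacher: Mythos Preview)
Your proof is correct and follows essentially the same idea as the paper's. The paper phrases the key step as ``any two nonzero elements $x,y\in H$ satisfy $\alpha_x x=\alpha_y y$ for some nonzero integers $\alpha_x,\alpha_y$'', which is exactly the injectivity of your map $\psi$ in disguise; you have simply made the embedding $H\hookrightarrow\Hom(G/H,\bbZ)\cong\bbZ$ explicit rather than arguing by proportionality.
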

\begin{proof}
The assumption on $H$ implies that for each pair of non-zero elements
$x,y\in H$, there are non-zero integers $\a_x,\a_y$ with $\a_x x=\a_y
y$. This implies $H\cong\bbZ$ since $H$ is free.
\end{proof}

\begin{lem}\label{le:homog_except}
Let $\A$ be a $k$-linear abelian category satisfying \emph{(H1)--(H4)}
and suppose that $\A$ is homogeneous. Then the following holds:
\begin{enumerate}
\item Each line bundle $L$ is exceptional.
\item There exists a simple object $S$ such that in $K_0(\A)$ the
class of each simple object $S'$ is of the form $[S']=n\cdot [S]$ for
some $n> 0$.
\end{enumerate}
\end{lem}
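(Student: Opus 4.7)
The plan is to prove (1) first and then (2), in both cases exploiting the rank--two structure of $K_0(\A)$ from homogeneity and the Euler form.

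For part (1a), that $\End_\A(L)$ is a division ring: any nonzero $\varphi\colon L\to L$ is a monomorphism by Lemma~\ref{le:lb} (since $L\in\A_+$), and its cokernel $C$ satisfies $[C]=[L]-[L]=0$ in $K_0(\A)$; by Proposition~\ref{pr:K-split}(2) this forces $C=0$, so $\varphi$ is an isomorphism.

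For part (1b), that $\Ext^1_\A(L,L)=0$: by Proposition~\ref{pr:homog}(3), $K_0(\A)$ has rank two, so write $K_0(\A)=\bbZ[L]\oplus\bbZ w$ with $w$ a generator of $K'_0(\A)$. Homogeneity forces $\langle w,w\rangle=0$ via Proposition~\ref{pr:homog}(2), so the Euler matrix in this basis is $\smatrix{a&\epsilon\\-\epsilon&0}$ with $a=\langle[L],[L]\rangle$ and $\epsilon=\langle[L],w\rangle$; its discriminant is $\epsilon^2$, which equals $\pm 1$ by (H5), whence $\epsilon=\pm 1$. The Serre-duality identity $\langle[L],\tau[L]\rangle=-\langle[L],[L]\rangle$, together with $\tau[L]=[L]+cw$ (the form of $\tau$ on $K_0(\A)$, since $\tau$ fixes simples and preserves rank), gives $a=-c\epsilon/2$. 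Combining with $a=\dim_k\End_\A(L)-\dim_k\Ext^1_\A(L,L)$, the vanishing $\Ext^1_\A(L,L)=0$ reduces to the identity $c\epsilon=-2\dim_k\End_\A(L)$. Non-triviality of $\tau$ on $K_0(\A)$, hence $c\neq 0$, follows from examining an indecomposable summand $T_i$ of a tilting object: $T_i$ is exceptional by Proposition~\ref{pr:HR1}(1), and $\Hom_\A(T_i,\tau T_i)\cong D\Ext^1_\A(T_i,T_i)=0$ rules out $\tau T_i\cong T_i$. The main obstacle is pinning down the exact value of $c$; my plan is to use Lemma~\ref{le:filtr} (applicable because $\Ext^1_\A(S,L)\cong D\Hom_\A(L,S)\neq 0$ for every simple $S$, by homogeneity and Proposition~\ref{pr:Lmorph}) to construct chains of line bundles $L=L_0\subsetneq L_1\subsetneq\cdots$ with $L_i/L_{i-1}\cong S$, then to match class computations against the almost split sequence $0\to\tau L\to E\to L\to 0$ of Proposition~\ref{pr:Serreduality}(4), whose middle term has a line-bundle filtration by Proposition~\ref{pr:lb_filtr}.

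For (2), given (1) each line bundle has $\End_\A(L)=\Delta$ a division ring. Writing $[S']=a_{S'}w$, one gets $a_{S'}\epsilon=\langle[L],[S']\rangle=\dim_k\Hom_\A(L,S')>0$ (by Proposition~\ref{pr:Lmorph} and vanishing of $\Ext^1$ via Serre duality, as simples lie in $\A_0$); after choosing $\epsilon=1$ all $a_{S'}>0$, and since $w$ is in the subgroup of $K_0(\A)$ generated by $\{[S']\}$, one has $\gcd_{S'}a_{S'}=1$. The main obstacle is to sharpen $\gcd=1$ to $\min=1$, giving the simple $S$ with $[S]=w$. My plan here is to use the $(\End_\A(S'),\Delta^{\mathrm{op}})$-bimodule structure of $\Hom_\A(L,S')$, which forces both $\dim_k\Delta$ and $\dim_k\End_\A(S')$ to divide $a_{S'}$; combined with the identity $c\epsilon=-2\dim_k\Delta$ from (1) and the condition $\gcd a_{S'}=1$, this is intended to force $\Delta=k$ and the existence of some simple $S$ with $\End_\A(S)=k$ and $a_S=1$, yielding $[S']=a_{S'}[S]$ for all simples $S'$.
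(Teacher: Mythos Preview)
Your argument for (1a) is fine and matches the paper's implicit use of Lemma~\ref{le:lb}. The difficulty is in (1b) and (2).

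\medskip

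\textbf{Part (1b).} There is a genuine gap: the lemma assumes only (H1)--(H4), not (H5), so you may not invoke $\disc\langle-,-\rangle=\pm1$ to conclude $\epsilon=\pm1$. Without this, your chain of equalities collapses, and the plan to recover $c$ from almost split sequences and line-bundle chains is not carried out; even if it were, you would still need an independent way to pin down $a$ or $c$, and nothing in your sketch does that. The paper avoids all of this with a much shorter argument that uses only (H1)--(H4): writing $K_0(\A)=\bbZ e_1\oplus\bbZ e_2$ with $e_1=[L]$ and $e_2\in K'_0(\A)$, homogeneity gives $\langle e_2,e_2\rangle=0$ and Serre duality gives $\langle e_1,e_2\rangle=-\langle e_2,e_1\rangle$, so the quadratic form satisfies $\langle a,a\rangle=\alpha_1^2\langle e_1,e_1\rangle$ for $a=\alpha_1e_1+\alpha_2e_2$. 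The tilting object $T$ has $\langle[T],[T]\rangle=\dim_k\End_\A(T)>0$, forcing $\langle e_1,e_1\rangle>0$. Since $\End_\A(L)$ is a division ring and $\Ext^1_\A(L,L)$ is an $\End_\A(L)$-module, $\langle e_1,e_1\rangle=\dim_k\End_\A(L)-\dim_k\Ext^1_\A(L,L)$ is a multiple of $\dim_k\End_\A(L)$; positivity then forces $\Ext^1_\A(L,L)=0$.

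\medskip

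\textbf{Part (2).} Your strategy aims to show that some simple $S$ has $[S]$ equal to the generator $w$, i.e.\ that $\min_{S'}a_{S'}=1$. But your route to this---via divisibility of $a_{S'}$ by both $\dim_k\Delta$ and $\dim_k\End_\A(S')$, combined with an identity for $c\epsilon$---again leans on (H5) through $\epsilon=\pm1$, and the final implication (that $\gcd=1$ plus these divisibilities forces $\min=1$) is not justified: nothing rules out, say, $a_{S'}\in\{2,3\}$ with $\Delta=k$. The paper instead runs a Euclidean-type argument directly on the values $\langle[L],[S']\rangle$: take $S$ minimizing $d=\langle[L],[S]\rangle$, and for any simple $S'$ write $\langle[L],[S']\rangle=qd+r$ with $0\le r<d$. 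Using Lemma~\ref{le:filtr} one builds line bundles $E,E'$ with $[E]=[L]+q[S]$ and $[E']=[L]+[S']$; then $\langle[E],[E']\rangle=\langle[L],[L]\rangle+r>0$ yields a monomorphism $E\to E'$ whose finite-length cokernel $F$ satisfies $\langle[L],[F]\rangle=r<d$, whence $F=0$ by minimality and $[S']=q[S]$. This uses only (H1)--(H4) and the already-established positivity of $\langle[L],[L]\rangle$.
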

\begin{proof}
(1) Set $e_1=[L]$ and choose a second basis vector $e_2$ of $K_0(\A)\cong
\bbZ^2$ lying in the image of the map $K_0(\A_0)\to K_0(\A)$; see
Proposition~\ref{pr:K-split}. For $a=\a_1e_1+\a_2e_2$ in $K_0(\A)$, we
have $\langle a,a\rangle=\a_1^2\langle e_1,e_1\rangle$ since $\langle
e_1,e_2\rangle=-\langle e_2,e_1\rangle$ and $\langle
e_2,e_2\rangle=0$. From the existence of a tilting object $T$ in $\A$,
it follows that $\langle e_1,e_1\rangle>0$ since $\langle
[T],[T]\rangle>0$.  Now observe that $\langle
e_1,e_1\rangle=\dim_k\Hom_\A(L,L)-\dim_k\Ext_\A^1(L,L)$ is divisible by
$\dim_k\End_\A(L)$ since the endomorphism ring of any line bundle is a
division ring by Lemma~\ref{le:lb}. Thus $\Ext_\A^1(L,L)=0$.

(2) Choose a simple object $S$ such that $d=\langle[L],[S]\rangle$
is minimal. Given any simple object $S'$, there are integers
$q,r\geq 0$ with $\langle[L],[S']\rangle=q\cdot d+r$ and $r<d$.
Applying Proposition~\ref{pr:Lmorph} and Lemma \ref{le:filtr}, we
obtain extensions $0\to L\to E\to C\to 0$ and $0\to L\to E'\to C'\to
0$ such that $E$, $E'$ are line bundles, $[C]=q\cdot [S]$, and
$[C']=[S']$. Hence
$$\langle [E],[E']\rangle= \langle [L],[L]\rangle+\langle
[L],[S']\rangle-q\cdot\langle [L],[S]\rangle>0.$$ This gives an exact
sequence $0\to E\to E'\to F\to 0$, where $F$ is an object of finite
length and $\langle[L],[F]\rangle=r<d$. The minimality of
$\langle[L],[S]\rangle$ implies $F=0$, and therefore  $[S']=q\cdot [S]$.
\end{proof}

The class $[S]$ in Lemma~\ref{le:homog_except} yields a generator for
the image of the map $K_0(\A_0)\to K_0(\A)$. Thus for any finite
length object $A$ in $\A$, there exists some $n\ge 0$ with $[A]=n\cdot
[S]$.  We call this number the \index{degree}\emph{degree} of $A$ and
observe that it is independent of the choice of $S$.

Next we describe tilting objects for an abelian category that
satisfies (H1)--(H5) and is homogeneous.

\begin{prop}\label{pr:homog_tilt}
Let $\A$ be a $k$-linear abelian category satisfying \emph{(H1)--(H5)}
and suppose that $\A$ is homogeneous. Let $L$ be a line bundle and $S$
a simple object of degree one. Then
$$\Hom_\A(L,S)=k,\quad\Ext^1_\A(S,L)=k,\quad
\End_\A(L)=k, \quad\text{and}\quad \End_\A(S)=k.$$ Let $0\to L\to
L'\to S\to 0$ be a non-split extension.  Then $L\oplus L'$ is a
tilting object and its endomorphism algebra is isomorphic to the
Kronecker algebra (i.e.\ the path algebra of the quiver
$\xymatrix@=15pt{\cdot \ar@<.5ex>[r]\ar@<-.5ex>[r]&\cdot}$).
Moreover, the simple objects of degree one are precisely the objects
that arise as the cokernel of a non-zero morphism $L\to L'$.
\end{prop}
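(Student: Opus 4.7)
The plan rests on exploiting axiom (H5) in the basis $\{[L], [S]\}$ of $K_0(\A)$. This is a basis since homogeneity gives $K_0(\A) \cong \bbZ^2$ (Proposition~\ref{pr:homog}) and the degree-one condition on $S$ combined with Proposition~\ref{pr:K-split} makes $[S]$ the distinguished generator of the image of $K_0(\A_0)$. Exceptionality of line bundles (Lemma~\ref{le:homog_except}), together with Serre duality and the facts $\tau L \in \A_+$ and $\tau S \cong S$, reduces the Euler matrix to
\[
\begin{pmatrix}\dim_k\End_\A(L) & h\\ -h & 0\end{pmatrix},\qquad h = \dim_k\Hom_\A(L,S),
\]
whose determinant $h^2$ must equal $\pm 1$ by (H5), forcing $h = 1$. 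Serre duality then yields $\Ext^1_\A(S,L) \cong D\Hom_\A(L,S) = k$, and viewing $\Hom_\A(L,S)$ as a faithful right $\End_\A(L)$-module and left $\End_\A(S)$-module of $k$-dimension one (faithfulness is automatic because both endomorphism rings are division algebras acting on a non-zero space) forces $\End_\A(L) = \End_\A(S) = k$.

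For the tilting statement, I pick a non-split extension $0 \to L \to L' \to S \to 0$, available since $\Ext^1_\A(S,L) \neq 0$. A nonzero finite-length subobject $C \subseteq L'$ would either factor through $L \in \A_+$ (forcing $C = 0$) or map onto $S$ with kernel embedding into $L$ (forcing $C \cong S$ and splitting the extension); thus $L' \in \A_+$, and the identity $\rank L' = 1$ together with the absence of nonzero finite-length objects in $\A_+$ yields indecomposability, so $L'$ is a line bundle (hence exceptional). The vanishing $\Ext^1_\A(T,T) = 0$ for $T = L \oplus L'$ reduces to the two cross-terms, which I obtain by applying $\Hom_\A(L,-)$ and $\Hom_\A(-,L)$ to the defining sequence and observing that the connecting map $\Hom_\A(L,L) \to \Ext^1_\A(S,L)$ sends $\id_L$ to the nonzero extension class and is therefore an isomorphism of one-dimensional spaces. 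The generating condition follows by a Grothendieck-group argument: if $\Hom_\A(T,A) = \Ext^1_\A(T,A) = 0$, then $\langle [L],[A]\rangle = 0 = \langle [L'],[A]\rangle$, and writing $[A] = \a[L] + \b[S]$ together with $[L'] = [L] + [S]$ yields $\a = \b = 0$, whence $A = 0$ by Proposition~\ref{pr:K-split}.

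Long-exact-sequence computations then give $\dim_k\Hom_\A(L,L') = 2$ and $\Hom_\A(L',L) = 0$ (again via the connecting-map isomorphism), producing $\End_\A(T) \cong \smatrix{k & 0\\ k^2 & k}$, which is the Kronecker algebra. Any non-zero $\p\colon L \to L'$ is injective with cokernel in $\A_0$ of class $[S]$, hence is simple of degree one. Conversely, for any simple $S'$ of degree one, Serre duality produces a non-split $0 \to L \to E \to S' \to 0$ with $E$ a line bundle of class $[L']$; exceptionality and Proposition~\ref{pr:HR1}(3) give $E \cong L'$, exhibiting $S'$ as the required cokernel. The main obstacle will be the first-step Euler-form bookkeeping: making sure all the Serre-duality vanishings line up so that the matrix's discriminant really does become the perfect square $h^2$, for which (H5) is sharp enough to conclude $h = 1$; the rest is a sequence of diagram chases around the defining extension $0 \to L \to L' \to S \to 0$.
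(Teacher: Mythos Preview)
Your proof is correct and follows essentially the same route as the paper's: the determinant-$\pm 1$ argument on the $\{[L],[S]\}$ basis to force $h=1$, the module argument to get $\End_\A(L)=\End_\A(S)=k$, the long exact sequences for the $\Ext$-vanishing, the Grothendieck-group argument for the generating condition, and Proposition~\ref{pr:HR1}(3) for the final identification $E\cong L'$. Your explicit verification that $L'$ is a line bundle (hence exceptional by Lemma~\ref{le:homog_except}) is a detail the paper leaves implicit when asserting $\Ext^1_\A(T,T)=0$, so if anything your write-up is slightly more careful there.
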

\begin{proof}
It follows from Proposition~\ref{pr:K-split} and
Lemma~\ref{le:homog_except} that $[L]$ and $[S]$ form a basis of
$K_0(\A)$. The corresponding matrix $\smatrix{\langle [L],[L]
\rangle&\langle [L],[S] \rangle\\ \langle [S],[L] \rangle&\langle
[S],[S] \rangle}$ has determinant $\pm 1$. Thus $1=\langle [L],[S]
\rangle=-\langle [S],[L] \rangle$. This implies $\Hom_\A(L,S)=k$ and
$\Ext^1_\A(S,L)=k$. The space $\Hom_\A(L,S)$ is a module over
$\End_\A(L)$ and over $\End_\A(S)$. It follows that $\End_\A(L)=k$ and
$\End_\A(S)=k$.

Next we show that $T=L\oplus L'$ is a tilting object. An application
of $\Hom_\A(L,-)$ to $0\to L\to L'\to S\to 0$ yields
$\Ext^1_\A(L,L')=0$, while application of $\Hom_\A(-,L)$ implies
$\Ext^1_\A(L',L)=0$. Thus $\Ext_\A^1(T,T)=0$. For any non-zero object
$A$ in $\A$, we have $\langle [L],[A] \rangle\neq 0$ or $\langle
[L'],[A]\rangle\neq 0$ since $[L]$ and $[L']$ form a basis of
$K_0(\A)$ and $[A]\neq 0$. Thus $T$ is a tilting object.

A simple computation shows that $\dim_k\Hom_\A(L,L')=2$, while
$\Hom_\A(L',L)=0$. Thus $\End_\A(T)$ is isomorphic to the Kronecker
algebra.

Let $\p\colon L\to L'$ be a non-zero morphism. This is a monomorphism
since $L$ is a line bundle. The cokernel $C=\Coker \p$ is of finite
length since $L$ and $L'$ have the same rank. The degree of $C$ is one
since $[C]=[L']-[L]=[S]$. In particular, $C$ is simple.

Now let $S'$ be a simple object of degree one. Choose a non-split
extension $0\to L\to E\to S'\to 0$. Then $E$ is a line bundle and
therefore exceptional by Lemma~\ref{le:homog_except}. We have
$[E]=[L']$ in $K_0(\A)$ since $[S]=[S']$, and it follows from
Proposition~\ref{pr:HR1} that $E\cong L'$. Thus $S'$ arises as the
cokernel of a morphism $L\to L'$.
\end{proof}

The next theorem provides an axiomatic description of the
category $\coh\bbP^1_k$.

\begin{thm}[Lenzing]\label{th:homog}
Let $\A$ be a $k$-linear abelian category satisfying \emph{(H1)--(H5)}
and suppose that $\A$ is homogeneous. Then $\A$ is equivalent to
$\coh\bbP^1_k$.
\end{thm}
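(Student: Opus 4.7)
The plan is to deduce the equivalence $\A \simeq \coh\bbP^1_k$ by exhibiting tilting objects on both sides whose endomorphism algebras agree and whose indecomposable summands match ranks, and then invoking Proposition~\ref{pr:der_equiv}.

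First I would verify that $\coh\bbP^1_k$ itself satisfies (H1)--(H5) and is homogeneous. Axioms (H1)--(H3) follow from the hereditary noetherian nature of $\coh\bbP^1_k$, its Serre duality (which also rules out non-zero projectives by Proposition~\ref{pr:Serreduality}), and standard finiteness. Axiom (H4) is Proposition~\ref{pr:coh_tilt}, and the resulting Kronecker endomorphism algebra together with Proposition~\ref{pr:euler} and a direct computation of the Euler form on the basis $[\Oc],[\Oc(1)]$ yields (H5) with discriminant $\pm 1$. Homogeneity follows from Serre duality $\tau \F \cong \F(-2)$ applied to a simple sheaf $S_\frp$ supported at a closed point $\frp$ (such $S_\frp$ is invariant under the twist).

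Next, on the $\A$-side I would apply Proposition~\ref{pr:homog_tilt}: pick any line bundle $L\in\A$ and a simple object $S$ of degree one, form a non-split extension $0\to L\to L'\to S\to 0$, and obtain the tilting object $T = L\oplus L'$ in $\A$ with $\End_\A(T)$ isomorphic to the Kronecker algebra $\La = k(\xymatrix@=12pt{\cdot \ar@<.4ex>[r]\ar@<-.4ex>[r]&\cdot})$. Both $L$ and $L'$ are line bundles, hence indecomposable of rank one. On the $\coh\bbP^1_k$-side, the tilting object $T' = \Oc\oplus\Oc(1)$ from Proposition~\ref{pr:coh_tilt} again has $\End(T')\cong\La$ with both summands of rank one.

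Finally, fix an isomorphism $\End_\A(T) \xto{\sim} \End_{\coh\bbP^1_k}(T')$ that sends the idempotent corresponding to $L$ to the idempotent corresponding to $\Oc$ (and hence $L'\mapsto \Oc(1)$); such an isomorphism exists since the Kronecker algebra has, up to the automorphism swapping the two vertices, a unique decomposition into primitive idempotents. The induced equivalence $\add T \xto{\sim} \add T'$ matches $L\leftrightarrow\Oc$ and $L'\leftrightarrow\Oc(1)$, so it preserves the rank function on indecomposables. Proposition~\ref{pr:der_equiv} then yields an equivalence $\A \xto{\sim} \coh\bbP^1_k$.

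The only point requiring real care is the verification that $\coh\bbP^1_k$ is homogeneous and satisfies (H5); once these are in place, the argument is just a matching of invariants via the tilting criterion. In particular, no direct structural reconstruction of $\A$ (e.g.\ via graded rings or glueing of affine pieces) is needed---everything is routed through the derived-equivalence machinery of Section~3.
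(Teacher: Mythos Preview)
Your proposal is correct and follows essentially the same route as the paper: exhibit Kronecker tilting objects on both sides via Propositions~\ref{pr:coh_tilt} and \ref{pr:homog_tilt}, note that all indecomposable summands have rank one, and invoke Proposition~\ref{pr:der_equiv}. One small slip: the Kronecker algebra has no automorphism swapping the two vertices (the two indecomposable projectives have different dimensions), but this is harmless here---since every indecomposable summand of $T$ and of $T'$ has rank one, \emph{any} isomorphism $\End_\A(T)\cong\End(T')$ induces a rank-preserving equivalence $\add T\xto{\sim}\add T'$, so no matching of idempotents is needed.
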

\begin{proof}
The categories $\coh\bbP^1_k$ and $\A$ admit each a tilting object
such that its endomorphism algebra is isomorphic to the Kronecker
algebra, see Propositions~\ref{pr:coh_tilt} and \ref{pr:homog_tilt}.
Note that in both cases the indecomposable direct summands of a
tilting object have rank one.  Now apply
Proposition~\ref{pr:der_equiv}.
\end{proof}

\subsection{Coherent sheaves on weighted projective lines}

The following theorem characterizes the abelian categories that arise
as categories of coherent sheaves on weighted projective lines in the
sense of Geigle and Lenzing \cite{GL1987}.

\begin{thm}[Lenzing]\label{th:axioms}
Let $k$ be a field and $\A$ a $k$-linear abelian category. Then the
following are equivalent:
\begin{enumerate}
\item The category $\A$ satisfies \emph{(H1)--(H5)}.
\item There is a finite sequence $\A^0\subseteq\A^1\subseteq
\dots\subseteq\A^r=\A$ of full subcategories such that $\A^0$ is
equivalent to $\coh\bbP^1_k$ and $\A^{i+1}$ is a non-split
expansion of $\A^{i}$ with associated division ring $k$ for
$0\le i\le r-1$.
\item The category $\A$ is equivalent to $\coh\bbX$ for some weighted projective line $\bbX =
(\bbP^1_k,\bflambda,\bfp)$.
\end{enumerate}
\end{thm}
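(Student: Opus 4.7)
The plan is to establish the three implications $(3)\Rightarrow(1)$, $(1)\Rightarrow(2)$, and $(2)\Rightarrow(3)$ in turn, with the circular structure yielding all equivalences.

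For $(3)\Rightarrow(1)$, I would invoke the foundational results of Geigle-Lenzing \cite{GL1987}: $\coh\bbX$ is a connected, skeletally small, Hom- and Ext-finite, hereditary, noetherian $k$-linear abelian category. It admits the canonical tilting bundle $\bigoplus_{0\le\ox\le\oc}\Oc(\ox)$, and its lack of non-zero projectives follows from Serre duality via Proposition~\ref{pr:Serreduality}. For (H5), I would compute the Cartan matrix of the canonical tilting algebra (a so-called canonical algebra in the sense of Ringel) and verify that its determinant is $\pm 1$; this is a standard computation based on the combinatorics of $\bfL(\bfp)$.

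For $(1)\Rightarrow(2)$, I would argue by induction on $r=\rank K_0(\A)$, which is at least $2$ by the remark after Proposition~\ref{pr:K-split}. In the base case $r=2$, Proposition~\ref{pr:homog} forces $\A$ to be homogeneous, so Theorem~\ref{th:homog} gives $\A\simeq\coh\bbP^1_k$ and the chain consists of $\A^0=\A$ alone. For $r\ge 3$, Proposition~\ref{pr:homog} produces a simple object $S$ with $\tau S\not\cong S$; Lemma~\ref{le:reduction} then shows the inclusion $S^\perp\to\A$ is a non-split expansion with associated division ring $\Delta=\End_\A(S)$. Applying Lemma~\ref{le:euler_onepoint} gives $\pm 1=\disc\langle-,-\rangle_\A=(\dim_k\Delta)\cdot\disc\langle-,-\rangle_{S^\perp}$, which simultaneously forces $\Delta=k$ (so the expansion has associated division ring $k$) and establishes the discriminant condition (H5) for $S^\perp$. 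Theorem~\ref{th:reduction} then transfers (H1)--(H4) to $S^\perp$ and reduces the rank by one, so the inductive hypothesis furnishes a chain for $S^\perp$ that I extend by one step.

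For $(2)\Rightarrow(3)$, I would first establish the ``model'' statement that every $\coh\bbX$ is itself obtained from $\coh\bbP^1_k$ by a prescribed chain of non-split expansions: beginning with the empty weight data, the passage from weight sequence $(p_1,\ldots,p_{i-1},p_i,p_{i+1},\ldots,p_n)$ to $(p_1,\ldots,p_{i-1},p_i+1,p_{i+1},\ldots,p_n)$ (or from $n$ to $n+1$ points by inserting $p_{n+1}=2$) corresponds, on the level of graded $S(\bfp,\bflambda)$-modules, to a non-split expansion at an appropriate simple in the tube over $\la_i$. Given a chain as in (2), I would use Proposition~\ref{pr:onepoint_comp} at each step to identify the point $x\in\bfX$ of the weighted projective line at which the tube grows, extract a well-defined weight sequence $\bfp$ and rational point collection $\bflambda$, and form the corresponding $\bbX$. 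Both $\A$ and $\coh\bbX$ are then obtained from $\coh\bbP^1_k$ by chains of expansions with matching combinatorial data; applying Proposition~\ref{pr:der_equiv} to their canonical tilting objects, whose endomorphism algebras are isomorphic by construction of matching Cartan data, produces the required equivalence $\A\simeq\coh\bbX$.

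The main obstacle is the model statement in $(2)\Rightarrow(3)$: showing that $\coh\bbX$ really is built from $\coh\bbP^1_k$ by a concrete chain of expansions indexed by the weight data. This requires a careful analysis of how insertion of a weight at $\la_i$ refines the torsion tube over that point (splitting a simple $S_{\la_i}$ into a chain of simples $S_{i1},\ldots,S_{i,p_i}$), and the identification of this operation with an expansion in the axiomatic sense. Once this ``geometric to combinatorial'' dictionary is in place, the comparison of tilting objects via Proposition~\ref{pr:der_equiv} makes the identification of $\A$ with the constructed $\coh\bbX$ essentially automatic.
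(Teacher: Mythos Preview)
Your arguments for $(3)\Rightarrow(1)$ and $(1)\Rightarrow(2)$ match the paper's essentially line for line, including the use of Lemma~\ref{le:euler_onepoint} to force the associated division ring to be $k$.

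For $(2)\Rightarrow(3)$ your route diverges from the paper's. The paper does not prove your ``model statement'' that $\coh\bbX$ is itself built from $\coh\bbP^1_k$ by a prescribed chain of expansions, nor does it attempt to match two chains step by step. Instead it uses the chain $\A^0\subseteq\cdots\subseteq\A^r=\A$ only to extract the parameters $\bfp,\bflambda$ (via Proposition~\ref{pr:onepoint_comp} and the identification of the index set of $\A_0$ with closed points of $\bbP^1_k$), and then appeals forward to Proposition~\ref{pr:tiltobj} to construct in $\A$ an explicit tilting object with endomorphism algebra the squid algebra $\Sq(\bfp,\bflambda)$. Since $\coh\bbX$ is known to have a tilting object with the same endomorphism algebra \cite[Example~4.4]{LM}, Proposition~\ref{pr:der_equiv} finishes the job. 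This is more direct: the squid tilting object is written down concretely and its endomorphism algebra computed, so no ``matching Cartan data'' argument is needed. Your approach is not wrong, but the step where you assert that the two iteratively-constructed tilting objects have isomorphic endomorphism algebras ``by construction of matching Cartan data'' is exactly the content of Proposition~\ref{pr:tiltobj}; without it, the chain of expansions alone does not obviously pin down the endomorphism algebra, since the tilting object produced by Lemma~\ref{le:onepoint_tilt} depends on choices of extensions at each stage.
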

\begin{proof}
(1) $\Rightarrow$ (2): Suppose that $\A$ satisfies (H1)--(H5).  The rank of $K_0(\A)$
is finite, say $n$. So one constructs a filtration
$\A^0\subseteq\A^1\subseteq \dots\subseteq\A^r=\A$ of length $r=n-2$
by reducing the rank of the Grothendieck group as follows.  If $\A$ is
homogeneous, then $\A$ is equivalent to $\coh\bbP^1_k$ by
Theorem~\ref{th:homog}. Otherwise, there is a simple object $S$ such
that $S\not\cong\tau S$ by Proposition~\ref{pr:homog}. Then put
$\A^{r-1}=S^\perp$ and the inclusion $\A^{r-1}\to\A$ is a non-split
expansion by Lemma~\ref{le:reduction}. Moreover, $\A^{r-1}$
satisfies (H1)--(H5) by Theorem~\ref{th:reduction}, and the
associated division ring is $k$ by Lemma~\ref{le:euler_onepoint}. Note
that the rank of $K_0(\A^{r-1})$ is one less than that of
$K_0(\A)$. So one proceeds and constructs a sequence of subcategories
$\A^i$. The process stops after $r$ steps when $\A^0$ is homogeneous.

(2) $\Rightarrow$ (1): Suppose that $\A$ admits a filtration
$\A^0\subseteq\A^1\subseteq \dots\subseteq\A^r=\A$ such that $\A^0$
is equivalent to $\coh\bbP^1_k$ and $\A^{i+1}$ is a non-split
expansion of $\A^{i}$ with associated division ring $k$ for $0\le i\le
r-1$. The discussion in \S\ref{se:coh} shows that $\coh\bbP^1_k$
satisfies (H1)--(H5). An iterated application of
Theorem~\ref{th:reduction} yields that $\A$ satisfies (H1)--(H5).

(2) $\Rightarrow$ (3): Suppose again that $\A$ admits a sequence
$\A^0\subseteq\A^1\subseteq \dots\subseteq\A^r=\A$ of expansions such
that $\A^0$ is equivalent to $\coh\bbP^1_k$.  This yields a fully
faithful exact functor $\coh\bbP^1_k\to\A$, and it follows from
Proposition~\ref{pr:onepoint_comp} that this functor identifies the
index set of the decomposition \eqref{eq:equiv_tor}
$$ \coh_0\bbP^1_k\lto[\sim]\coprod_{0\neq\frp\in\Proj
k[x_0,x_1]}\mod_0\Oc_{\bbP^1_k,\frp}$$ into connected components with
the index set of the decomposition $\A_0=\coprod_{x\in\bfX}\A_x$.
Thus there is a canonical bijection between the set of closed points
of $\bbP^1_k$ and the set $\bfX$.  Moreover, if $x\in\bfX$ is a point
with $p(x)>1$, then the corresponding closed point $\frp$ of
$\bbP^1_k$ is rational since the residue field of the corresponding
local ring $\Oc_{\bbP^1_k,\frp}$ equals $k$. This follows from the
fact that in the filtration $\A^0\subseteq\A^1\subseteq
\dots\subseteq\A^r=\A$ the associated division ring of each expansion
equals $k$.

Let $\bflambda$ be the finite collection of points $\{x\in\bfX\mid
p(x)>1\}$, viewed as points of $\bbP^1_k$, and denote by $\bfp$ the
corresponding sequence of positive integers $p(x)$. Then there exists
a tilting object $T$ such that $\End_\A(T)\cong \Sq(\bfp,\bflambda)$;
see Proposition~\ref{pr:tiltobj} below.  On the other hand, let $\bbX
= (\bbP^1_k,\bflambda,\bfp)$ be the weighted projective line that is
determined by the parameters $\bflambda$ and $\bfp$.  The category
$\coh\bbX$ of coherent sheaves on $\bbX$ admits the following tilting
object
$$\Oc\oplus \Oc(\oc)\oplus(S_1^{[1]}\oplus\dots\oplus
S_1^{[p_1-1]})\oplus \dots \oplus(S_n^{[1]}\oplus\dots\oplus
S_n^{[p_n-1]})$$ with endomorphism algebra $\Sq(\bfp,\bflambda)$,
where the notation is taken from the introduction with $S_i=S_{i1}$;
see \cite[Example~4.4]{LM}.  This yields a derived equivalence
$\bfD^b(\A)\xto{\sim}\bfD^b(\coh\bbX)$ which restricts to an
equivalence $\A\xto{\sim}\coh\bbX$ by Proposition~\ref{pr:der_equiv}.

(3) $\Rightarrow$ (1):  See \cite{GL1987}.
\end{proof}

\begin{rem}
Let $\A$ be a $k$-linear abelian category satisfying (H1)--(H5).

(1) The reduction to the homogeneous case in the proof of
Theorem~\ref{th:axioms} shows that the rank of the Grothendieck group
of $\A$ is $2+\sum_{x\in\bfX}(p(x)-1)$.

(2) Let $x\in\bfX$ and $p(x)>1$. Then $\A_x$ is equivalent to the
category of finite dimensional nilpotent representations of a
quiver of extended Dynkin type $\tilde\bbA_{p(x)-1}$ with cyclic
orientation; see Example~\ref{ex:cyclic}.
\end{rem}

\subsection{A tilting object}

Let $k$ be a field and $\A$ a $k$-linear abelian category satisfying
(H1)--(H5). We construct a tilting object and compute its endomorphism
algebra, which is a \index{squid algebra}\emph{squid algebra} in the
sense of Brenner and Butler \cite{BB}.

Given a collection $\bflambda = (\la_1,\dots,\la_n)$ of distinct
rational points $\la_i = [\lambda_{i0}:\lambda_{i1}]$ of $\bbP^1_k$,
and a sequence $\bfp=(p_1,\dots,p_n)$ of positive integers, we define
$\Sq(\bfp,\bflambda)$ to be the finite dimensional associative algebra
given by the quiver \setlength{\unitlength}{1.5pt}
\[
\begin{picture}(110,80)
\put(-20,40){\circle*{2.5}}
\put(10,40){\circle*{2.5}}
\put(30,10){\circle*{2.5}}
\put(30,50){\circle*{2.5}}
\put(30,70){\circle*{2.5}}
\put(60,10){\circle*{2.5}}
\put(60,50){\circle*{2.5}}
\put(60,70){\circle*{2.5}}
\put(130,10){\circle*{2.5}}
\put(130,50){\circle*{2.5}}
\put(130,70){\circle*{2.5}}
\put(-18,38){\vector(1,0){26}}
\put(-18,42){\vector(1,0){26}}
\put(12,43){\vector(2,3){17}}
\put(12,41){\vector(2,1){16}}
\put(12,37){\vector(2,-3){17}}
\put(32,10){\vector(1,0){26}}
\put(32,50){\vector(1,0){26}}
\put(32,70){\vector(1,0){26}}
\put(62,10){\vector(1,0){24}}
\put(62,50){\vector(1,0){24}}
\put(62,70){\vector(1,0){24}}
\put(104,10){\vector(1,0){24}}
\put(104,50){\vector(1,0){24}}
\put(104,70){\vector(1,0){24}}
\put(90,10){\circle*{1}}
\put(95,10){\circle*{1}}
\put(100,10){\circle*{1}}
\put(90,50){\circle*{1}}
\put(95,50){\circle*{1}}
\put(100,50){\circle*{1}}
\put(90,70){\circle*{1}}
\put(95,70){\circle*{1}}
\put(100,70){\circle*{1}}
\put(30,25){\circle*{1}}
\put(30,30){\circle*{1}}
\put(30,35){\circle*{1}}
\put(60,25){\circle*{1}}
\put(60,30){\circle*{1}}
\put(60,35){\circle*{1}}
\put(130,25){\circle*{1}}
\put(130,30){\circle*{1}}
\put(130,35){\circle*{1}}
\put(8,30){$\scriptstyle{L'}$}
\put(-22,30){$\scriptstyle{L}$}
\put(-7,45){$\scriptstyle{b_0}$}
\put(-7,32){$\scriptstyle{b_1}$}
\put(14,60){$\scriptstyle{c_{1}}$}
\put(14,20){$\scriptstyle{c_{n}}$}
\put(24,2){$\scriptstyle{S_n^{[p_n-1]}}$}
\put(24,54){$\scriptstyle{S_2^{[p_2-1]}}$}
\put(24,74){$\scriptstyle{S_1^{[p_1-1]}}$}
\put(54,2){$\scriptstyle{S_n^{[p_n-2]}}$}
\put(54,54){$\scriptstyle{S_2^{[p_2-2]}}$}
\put(54,74){$\scriptstyle{S_1^{[p_1-2]}}$}
\put(126,2){$\scriptstyle{S_n^{[1]}}$}
\put(126,54){$\scriptstyle{S_2^{[1]}}$}
\put(126,74){$\scriptstyle{S_1^{[1]}}$}
\put(42,2){}
\put(42,54){}
\put(42,74){}
\end{picture}
\]
modulo the relations
$$c_i(\lambda_{i0} b_1 - \lambda_{i1} b_0)=0 \quad (i=1,\dots,n).$$

\begin{prop}[Lenzing-Meltzer]\label{pr:tiltobj}
A $k$-linear abelian category satisfying \emph{(H1)--(H5)}
admits a tilting object with endomorphism algebra isomorphic to
$\Sq(\bfp,\bflambda)$ for some pair $\bfp,\bflambda$.
\end{prop}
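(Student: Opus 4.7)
The plan is to construct the tilting object by transferring the Kronecker tilting object from the homogeneous base of the filtration provided by Theorem~\ref{th:axioms}, and enlarging it by truncated uniserial objects at each exceptional point. The implication (1)$\Rightarrow$(2) of Theorem~\ref{th:axioms}, whose proof is independent of Proposition~\ref{pr:tiltobj}, supplies a chain $\A^0\subseteq\A^1\subseteq\cdots\subseteq\A^r=\A$ of non-split expansions with associated division ring $k$ and base $\A^0$ equivalent to $\coh\bbP^1_k$. Iterated application of Proposition~\ref{pr:onepoint_comp} yields a canonical surjection from the set of closed points of $\bbP^1_k$ onto $\bfX$ compatible with the uniserial decompositions of the torsion parts. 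Take $\bflambda=(\la_1,\dots,\la_n)$ to be the closed points whose image in $\bfX$ has weight $p_i>1$; these are the desired parameters.

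By Proposition~\ref{pr:homog_tilt} there is a Kronecker tilting object $L\oplus L'$ in $\A^0$, coming from a non-split extension $0\to L\to L'\to S\to 0$ with $S$ simple of degree one. Viewed via the inclusion, $L$ and $L'$ remain line bundles in $\A$ because rank is preserved by expansions. Iterated application of Lemma~\ref{le:simple} shows that the simple object at $\la_i$ in $\A^0$ is carried to a uniserial object of length $p_i$ in $\A_{\la_i}$; write $S_{i,1}$ for its top, and $S_{i,1}^{[j]}$ for the uniserial object in $\A_{\la_i}$ of length $j$ with top $S_{i,1}$. The candidate tilting object is
\[
T \;=\; L\oplus L'\oplus \bigoplus_{i=1}^{n}\bigoplus_{j=1}^{p_i-1}S_{i,1}^{[j]}.
\]

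The verification that $T$ is tilting splits into cases. The bound $\pdim T\leq 1$ is immediate from (H3). For $\Ext_\A^1(T,T)=0$: the iterated adjunction $\Ext_\A^n(iA,iB)\cong\Ext_{\A^0}^n(A,B)$ of Lemma~\ref{le:leftmax} transfers the Kronecker vanishing from $\A^0$; the $\Ext$'s among the torsion summands vanish because Proposition~\ref{pr:serre_finlen} and Theorem~\ref{th:Gabriel} identify the Ext-quiver of $\A_{\la_i}$ with $\tilde\bbA_{p_i-1}$ and we have truncated at length $p_i-1$; the mixed $\Ext$'s vanish by Serre duality since $\tau L,\tau L'$ are torsion-free, together with the fact that $\Hom(L,S_{i,k})=0$ for $k\neq 1$, shown by induction on composition length. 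The generating condition then follows from Lemma~\ref{le:euler_tilt} once one observes, using Proposition~\ref{pr:K-split} and the basis of $K_0(\A_{\la_i})$ provided by its $p_i$ simples, that the classes of the summands of $T$ span $K_0(\A)$.

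Finally, the computation of $\End_\A(T)$: the Kronecker block on $L,L'$ is transferred from $\A^0$, fixing a basis $b_0,b_1$ of $\Hom(L,L')$; an induction on $j$ using the sequences $0\to S_{i,j}\to S_{i,1}^{[j]}\to S_{i,1}^{[j-1]}\to 0$ together with Serre duality yields $\dim\Hom(L',S_{i,1}^{[j]})=1$, with a canonical generator $c_i$ in the case $j=p_i-1$; the horizontal arrows of the squid quiver are the natural surjections $S_{i,1}^{[j+1]}\to S_{i,1}^{[j]}$. The relation $c_i(\lambda_{i0}b_1-\lambda_{i1}b_0)=0$ is derived by identifying, via Proposition~\ref{pr:Grothendieck} and the bijection \eqref{eq:bij_pol}, the basis $b_0,b_1$ with the homogeneous coordinates $x_0,x_1$ in $\Hom_{\coh\bbP^1_k}(\Oc,\Oc(1))\cong k[x_0,x_1]_1$; composition with $c_i$ is then the residue map at the point $\la_i=[\lambda_{i0}:\lambda_{i1}]$, whose kernel is spanned by $\lambda_{i0}x_1-\lambda_{i1}x_0$. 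The main obstacle is this last identification: one must track through the sequence of expansions how $L,L'$ correspond to $\Oc,\Oc(1)$ in the base and choose compatible normalizations of the $c_i$ so that the squid relations hold on the nose, rather than merely up to invertible rescaling.
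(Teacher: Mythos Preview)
Your construction of $T$ and the overall strategy coincide with the paper's proof, including the use of the filtration from Theorem~\ref{th:axioms}, the Kronecker tilting pair $L,L'$ from Proposition~\ref{pr:homog_tilt}, and the appeal to Lemma~\ref{le:euler_tilt} for generation.

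There is one point where the paper is more efficient and which dissolves the ``main obstacle'' you flag. Rather than pulling the relation back to explicit homogeneous coordinates on $\bbP^1_k$, the paper observes that the inclusion $F\colon\A^0\to\A$ sends the simple object $S'_i\in\A^0_{\la_i}$ to the uniserial object $S_i^{[p_i]}$; applying $F$ to the exact sequence \eqref{eq:squid} of Proposition~\ref{pr:homog_tilt} therefore gives an exact sequence $0\to L\xrightarrow{\lambda_{i0}b_1-\lambda_{i1}b_0}L'\to S_i^{[p_i]}\to 0$ in $\A$. One then \emph{defines} $c_i$ as the composite $L'\twoheadrightarrow S_i^{[p_i]}\twoheadrightarrow S_i^{[p_i-1]}$, so the squid relation holds on the nose by construction, with no normalization to track. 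The remaining identification $\End_\A(T)\cong\Sq(\bfp,\bflambda)$ reduces to the observation that each surjection $S_i^{[p_i]}\to S_i^{[j]}$ induces an isomorphism $\Hom_\A(L\oplus L',S_i^{[p_i]})\xrightarrow{\sim}\Hom_\A(L\oplus L',S_i^{[j]})$.

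A related simplification: your claim $\Hom_\A(L,S_{i,k})=0$ for $k\neq 1$ is correct, but you can bypass the inductive argument by noting the structural fact that $S_i^{[j]}\in{^\perp\B}$ for $1\le j<p_i$ (these objects lie in the Serre subcategory generated by the simples created in the expansions). Since $L,L'\in\B$, this immediately gives $\Ext^1_\A(S_i^{[j]},L\oplus L')=0$, and the reverse vanishing follows from Serre duality as you say.
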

\begin{proof}
Fix a $k$-linear abelian category $\A$ satisfying (H1)--(H5). We apply
Theorem~\ref{th:axioms}(2) and follow its proof.  Thus there exists a
sequence $\B=\A^0\subseteq\A^1\subseteq \dots\subseteq\A^r=\A$ of
expansions such that $\B$ is equivalent to $\coh\bbP^1_k$.  Let
$(x_1,\dots, x_n)$ be the collection of distinct points $x\in\bfX$
with $p(x)>1$ and set $\bfp=(p_1,\dots,p_n)$ with $p_i=p(x_i)$ for
each $i$.  Choose line bundles $L$ and $L'$ in $\B$ forming a tilting
object $L\oplus L'$ for $\B$, and choose a basis $b_0,b_1$ of
$\Hom_\A(L,L')$; see Proposition~\ref{pr:homog_tilt}.  The inclusion
$F\colon \B\to\A$ restricts to a family of inclusions
$\B_{x_i}\to\A_{x_i}$; see Proposition~\ref{pr:onepoint_comp}. Note
that each inclusion $\B_{x_i}\to\A_{x_i}$ is a composite of $p_i$
non-split expansions.  Thus there are simple objects $S'_i\in\B_{x_i}$
and $S_i\in\A_{x_i}$ such that $FS'_i=S_i^{[p_i]}$. Here,
$S_i^{[p_i]}$ denotes the uniserial object with top $S_i$ and length
$p_i$, and we use that an expansion sends a specific simple object to
an object of length two; see Lemma~\ref{le:simple}.  In particular,
$S_i^{[j]}$ belongs to $^\perp\B$ for $1\leq
j<p_i$. Note that
$\End_\A(S'_i)=k$ since the division ring of each expansion is $k$.
In $\coh\bbP^1_k$, a simple object with trivial endomorphism ring has
degree one. Thus each simple object $S'_i$ fits into an exact sequence
\begin{equation}\label{eq:squid}
0\lto L\xto{\lambda_{i0} b_1 - \lambda_{i1} b_0} L'\lto S'_i\lto 0
\end{equation}
by Proposition~\ref{pr:homog_tilt}, and this yields a collection
$\bflambda=(\la_1,\dots,\la_n)$ of rational points
$\la_i=[\la_{i0}:\la_{i1}]$ in $\bbP^1_k$. Moreover, there are
canonical morphisms $c_i\colon L'\twoheadrightarrow
S_i^{[p_i]}\twoheadrightarrow S_i^{[p_i-1]}$ in $\A$ satisfying the
relations $c_i(\lambda_{i0} b_1 - \lambda_{i1} b_0)=0$. It is
straightforward to verify that the object
$$T=L\oplus L'\oplus(S_1^{[1]}\oplus\dots\oplus S_1^{[p_1-1]})\oplus
\dots \oplus(S_n^{[1]}\oplus\dots\oplus S_n^{[p_n-1]})$$ is a
tilting object for $\A$, using the criterion of
Lemma~\ref{le:euler_tilt}. Indeed, $\Ext_\A^1(T,T)=0$, the
indecomposable direct summands of $T$ yield a basis of $K_0(\A)$, and
$[A]\neq 0$ for each object $A\neq 0$ by
Proposition~\ref{pr:K-split}. Finally, one checks that $\End_\A(T)$ is
isomorphic to $\Sq(\bfp,\bflambda)$. Here, one uses that each
epimorphism $S_i^{[p_i]}\to S_i^{[j]}$ induces an isomorphism
$\Hom_\A(L\oplus L',S_i^{[p_i]})\xto{\sim} \Hom_\A(L\oplus
L',S_i^{[j]})$.
\end{proof}

\section{Canonical algebras}
The canonical algebras in the sense of Ringel \cite{R1984,R} provide a
link between weighted projective lines and the representation theory
of finite dimensional algebras. In fact, Geigle and Lenzing
constructed in \cite{GL1987} for each weighted projective line $\bbX =
(\bbP^1_k,\bflambda,\bfp)$ a tilting object in $\coh\bbX$ such that
its endomorphism algebra is isomorphic to the canonical algebra with
the same parameters; see Example~\ref{ex:can}.  It turns out that this
tilting object is somehow canonical. From this it follows that the
parameters $\bflambda$ and $\bfp$ can be reconstructed from the
category $\coh\bbX$. To be more precise, we consider an abelian
category $\A$ that is equivalent to $\coh\bbX$ for some weighted
projective line $\bbX$. For each line bundle $L$ in $\A$ one
constructs a canonical tilting object $T_L$ such that its endomorphism
algebra is a canonical algebra. Then one shows that for each pair of
line bundles $L,L'$ there is a sequence of tubular mutations in the
sense of Lenzing and Meltzer \cite{LM1993,M,L2007} that yields an
equivalence $\A\xto{\sim}\A$ taking $L$ to $L'$ and therefore $T_L$ to
$T_{L'}$. In particular, the endomorphism algebras of $T_L$ and
$T_{L'}$ are isomorphic and therefore an invariant of $\A$. Finally
one observes that the parameters $\bflambda$ and $\bfp$ form an
invariant of the canonical algebra $\End_\A(T_L)$.

Throughout this section we fix an arbitrary field $k$.

\subsection{Canonical algebras from weighted projective lines}
Consider a collection $\bflambda = (\la_1,\dots,\la_n)$ of distinct
rational points $\la_i = [\lambda_{i0}:\lambda_{i1}]$ of $\bbP^1_k$,
and a sequence $\bfp=(p_1,\dots,p_n)$ of positive integers. We define
the \index{canonical algebra}\emph{canonical algebra}
$C(\bfp,\bflambda)$ to be the finite dimensional associative algebra
given by the quiver \setlength{\unitlength}{1.5pt}
\[
\begin{picture}(160,80)
\put(10,40){\circle*{2.5}}
\put(30,10){\circle*{2.5}}
\put(30,50){\circle*{2.5}}
\put(30,70){\circle*{2.5}}
\put(60,10){\circle*{2.5}}
\put(60,50){\circle*{2.5}}
\put(60,70){\circle*{2.5}}
\put(130,10){\circle*{2.5}}
\put(130,50){\circle*{2.5}}
\put(130,70){\circle*{2.5}}
\put(150,40){\circle*{2.5}}
\put(12,39){\vector(1,0){136}}
\put(12,41){\vector(1,0){136}}
\put(12,43){\vector(2,3){17}}
\put(12,41){\vector(2,1){16}}
\put(12,37){\vector(2,-3){17}}
\put(32,10){\vector(1,0){26}}
\put(32,50){\vector(1,0){26}}
\put(32,70){\vector(1,0){26}}
\put(62,10){\vector(1,0){24}}
\put(62,50){\vector(1,0){24}}
\put(62,70){\vector(1,0){24}}
\put(104,10){\vector(1,0){24}}
\put(104,50){\vector(1,0){24}}
\put(104,70){\vector(1,0){24}}
\put(90,10){\circle*{1}}
\put(95,10){\circle*{1}}
\put(100,10){\circle*{1}}
\put(90,50){\circle*{1}}
\put(95,50){\circle*{1}}
\put(100,50){\circle*{1}}
\put(90,70){\circle*{1}}
\put(95,70){\circle*{1}}
\put(100,70){\circle*{1}}
\put(30,22){\circle*{1}}
\put(30,27){\circle*{1}}
\put(30,32){\circle*{1}}
\put(60,22){\circle*{1}}
\put(60,27){\circle*{1}}
\put(60,32){\circle*{1}}
\put(130,22){\circle*{1}}
\put(130,27){\circle*{1}}
\put(130,32){\circle*{1}}
\put(131.5,12){\vector(2,3){17}}
\put(132,49){\vector(2,-1){16}}
\put(131.5,68){\vector(2,-3){17}}
\put(8,30){$\scriptstyle{L}$}
\put(148,30){$\scriptstyle{L'}$}
\put(80,43){$\scriptstyle{b_0}$}
\put(80,34){$\scriptstyle{b_1}$}
\put(15,60){$\scriptstyle{x_{1}}$}
\put(15,20){$\scriptstyle{x_{n}}$}
\put(140,60){$\scriptstyle{x_{1}}$}
\put(140,20){$\scriptstyle{x_{n}}$}
\put(15,60){$\scriptstyle{x_{1}}$}
\put(15,20){$\scriptstyle{x_{n}}$}
\put(26,2){$\scriptstyle{L_n^{(1)}}$}
\put(26,54){$\scriptstyle{L_2^{(1)}}$}
\put(26,74){$\scriptstyle{L_1^{(1)}}$}
\put(56,2){$\scriptstyle{L_n^{(2)}}$}
\put(56,54){$\scriptstyle{L_2^{(2)}}$}
\put(56,74){$\scriptstyle{L_1^{(2)}}$}
\put(124,2){$\scriptstyle{L_n^{(p_n-1)}}$}
\put(124,54){$\scriptstyle{L_2^{(p_2-1)}}$}
\put(124,74){$\scriptstyle{L_1^{(p_1-1)}}$}
\put(41,5){$\scriptstyle{x_n}$}
\put(41,53){$\scriptstyle{x_2}$}
\put(41,73){$\scriptstyle{x_1}$}
\put(71,5){$\scriptstyle{x_n}$}
\put(71,53){$\scriptstyle{x_2}$}
\put(71,73){$\scriptstyle{x_1}$}
\put(111,5){$\scriptstyle{x_n}$}
\put(111,53){$\scriptstyle{x_2}$}
\put(111,73){$\scriptstyle{x_1}$}

\end{picture}
\]
modulo the relations\footnote{Note that the relations do not generate
an admissible ideal of the path algebra, except when the collection
$\bflambda$ is empty. In that case $C(\bfp,\bflambda)$ equals the
Kronecker algebra.}
$$x_i^{p_i}=\lambda_{i0} b_1 - \lambda_{i1} b_0 \quad (i=1,\dots,n).$$

\begin{thm}[Geigle-Lenzing]\label{th:tiltobj}
A $k$-linear abelian category satisfying \emph{(H1)--(H5)} admits a
tilting object $T$ such that $T$ is a direct sum of line bundles and
the endomorphism algebra of $T$ is isomorphic to $C(\bfp,\bflambda)$
for some pair $\bfp,\bflambda$.
\end{thm}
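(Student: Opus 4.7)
The plan is to upgrade the squid tilting object from Proposition~\ref{pr:tiltobj} to a tilting object consisting entirely of line bundles by inserting, for each weighted point, a chain of line bundles interpolating between $L$ and $L'$. First I would reuse the setup from Proposition~\ref{pr:tiltobj}: choose line bundles $L,L'$ with a basis $b_0,b_1$ of $\Hom_\A(L,L')$, pick the distinguished simple $S_i\in\A_{x_i}$ at each weighted point $x_i$, and record from \eqref{eq:squid} the exact sequence
\[
0\lto L\xto{\mu_i} L'\lto S_i^{[p_i]}\lto 0,\qquad \mu_i=\lambda_{i0}b_1-\lambda_{i1}b_0,
\]
noting that the cokernel is the uniserial $S_i^{[p_i]}$ of length $p_i$ in $\A_{x_i}$ because each non-split expansion used to build $\A$ from $\A^0$ replaces a simple by a uniserial extension by one new simple (Lemma~\ref{le:simple}). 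For each $i$ and each $1\le j\le p_i-1$ I would define $L_i^{(j)}$ as the middle term in the pullback of the above sequence along the inclusion $S_i^{(j)}\hookrightarrow S_i^{[p_i]}$ of the unique length-$j$ subobject, so that there are exact sequences
\[
0\lto L\lto L_i^{(j)}\lto S_i^{(j)}\lto 0\quad\text{and}\quad 0\lto L_i^{(j)}\xto{x_i} L_i^{(j+1)}\lto S_{i,j+1}\lto 0,
\]
with the composite $L=L_i^{(0)}\xto{x_i}L_i^{(1)}\xto{x_i}\cdots\xto{x_i}L_i^{(p_i)}=L'$ equal to $\mu_i$ by construction.

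Next I would verify that each $L_i^{(j)}$ is a line bundle: since $L\in\A_+$ and the quotient $L_i^{(j)}/L\cong S_i^{(j)}$ is of finite length, $L_i^{(j)}$ has rank one, and it lies in $\A_+$ because any finite-length subobject would compose nontrivially with $L_i^{(j)}\twoheadrightarrow S_i^{(j)}$ or split off $L$ — neither is possible since $L\in\A_+$. Indecomposability follows from $\rank(L_i^{(j)})=1$ (a rank-one object in $\A_+$ cannot decompose nontrivially, by Proposition~\ref{pr:lb_filtr}). Setting $T=L\oplus L'\oplus\bigoplus_{i,j}L_i^{(j)}$, I would then invoke Lemma~\ref{le:euler_tilt}: since the classes $[L],[L'],[L_i^{(j)}]$ differ from $[L],[L'],[S_i^{[j]}]$ by elements of $K'_0(\A_0)$ and the latter give a basis (by the argument at the end of the proof of Proposition~\ref{pr:tiltobj}), the summands of $T$ generate $K_0(\A)$; it then remains only to check $\Ext^1_\A(T,T)=0$.

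The main obstacle will be verifying $\Ext^1_\A(T,T)=0$. For this I would use Serre duality: $\Ext^1_\A(L_i^{(j)},L_{i'}^{(j')})\cong D\Hom_\A(L_{i'}^{(j')},\tau L_i^{(j)})$, and rule out nonzero morphisms by comparing the line bundles $L_i^{(j)}$ with $\tau L$-twisted versions of the others. Concretely, applying $\Hom_\A(L_{i'}^{(j')},-)$ to $0\to L\to L_i^{(j)}\to S_i^{(j)}\to 0$ and using the known vanishing $\Ext^1_\A(L',L)=0$, $\Ext^1_\A(L,L)=0$ (from Lemma~\ref{le:homog_except}, valid in $\B$ and inherited through the expansion via Lemma~\ref{le:leftmax}(3)), plus the fact that morphisms from a line bundle into an object of $\A_{x_{i'}}$ factor through the specific chain built over $x_{i'}$, one reduces all Ext-vanishings to computations in $\B=\coh\bbP^1_k$ (where $\Ext^1(L\oplus L',L\oplus L')=0$) and to Ext-vanishings inside the uniserial categories $\A_{x_i}$ (which are standard). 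An essentially parallel argument handles Ext's involving $L$ and $L'$.

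Finally I would compute $\End_\A(T)$. The nontrivial Hom-spaces are: $\Hom(L,L')=kb_0\oplus kb_1$; $\Hom(L,L_i^{(j)})=k\,\xi_{i,j}$ for the canonical inclusion, and $\Hom(L_i^{(j)},L_i^{(j')})=k\,x_i^{j'-j}$ for $j<j'$, each one-dimensional as one sees from the exact sequences above combined with the vanishing $\Ext^1(L,L)=0$; together with their composites into $L'$. By construction the composite $L\to L_i^{(1)}\to\cdots\to L'$ is $\mu_i=\lambda_{i0}b_1-\lambda_{i1}b_0$, yielding the canonical relation $x_i^{p_i}=\lambda_{i0}b_1-\lambda_{i1}b_0$. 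There are no other relations among these generators because the dimension of $\End_\A(T)$ equals $2+\sum_i(p_i+1)+\binom{?}{?}\cdots$ which matches $\dim_k C(\bfp,\bflambda)$; a clean way to confirm no further relations is to compare with $\End_\A(T_{\text{sq}})\cong\Sq(\bfp,\bflambda)$ via the derived equivalence of Theorem~\ref{th:tilt} since both tilting objects live in the same derived category. This identifies $\End_\A(T)$ with $C(\bfp,\bflambda)$ and completes the proof.
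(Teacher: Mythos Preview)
Your approach is essentially the paper's: construct $L_i^{(j)}$ as pullbacks of the sequence $0\to L\to L'\to S_i^{[p_i]}\to 0$ along the socle filtration $(\tau^{-1}S_i)_{[j]}\hookrightarrow S_i^{[p_i]}$, then verify tilting via Lemma~\ref{le:euler_tilt} by following Proposition~\ref{pr:tiltobj}. The paper is in fact terser than you are (it simply says the verification is ``straightforward, following the line of arguments in the proof of Proposition~\ref{pr:tiltobj}''), so your added detail on $\Ext^1$-vanishing and the Hom computations is welcome---though your final paragraph's dimension count with ``$\binom{?}{?}$'' and the appeal to a derived comparison with the squid algebra should be replaced by a direct check that the only nonzero Hom-spaces are the one-dimensional ones along each arm and the two-dimensional $\Hom(L,L')$, which your exact sequences already give.
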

\begin{proof}
Fix a $k$-linear abelian category $\A$ satisfying (H1)--(H5). We adapt
the proof of Proposition~\ref{pr:tiltobj} and modify the tilting
object constructed in that proof as follows. Consider for each point
$x_i\in\bfX$ with $p(x_i)>1$ the exact sequence \eqref{eq:squid}
$$0\lto L\lto[\p_i] L'\lto S_i^{[p_i]}\lto 0$$ in
$\A$, where $\p_i=\lambda_{i0} b_1 - \lambda_{i1} b_0$. We form
successively the pullback along the chain of monomorphisms
$$(\tau^{-1}S_i)_{[1]}\rightarrowtail(\tau^{-1}S_i)_{[2]}\rightarrowtail\cdots
\rightarrowtail (\tau^{-1} S_i)_{[p_i-1]}\rightarrowtail (\tau^{-1}S_i)_{[p_i]}=S_i^{[p_i]}$$ and
obtain the following commutative diagram with exact columns.
\begin{equation}\label{eq:tilt}
\xymatrixrowsep{1.5pc} \xymatrixcolsep{1.5pc}\xymatrix{
0\ar[d]&0\ar[d]&&0\ar[d]&0\ar[d]\\
L\ar[d]\ar@{=}[r]&L\ar[d]\ar@{=}[r]&\hdots\ar@{=}[r]&
L\ar[d]\ar@{=}[r]&L\ar[d]^{\p_i}\\
L_i^{(1)}\ar[d]\ar[r]&L_i^{(2)}\ar[d]\ar[r]&\hdots\ar[r]&L_i^{(p_i-1)}\ar[d]\ar[r]&L'\ar[d]\\
(\tau^{-1}S_i)_{[1]}\ar[d]\ar[r]&(\tau^{-1}S_i)_{[2]}\ar[d]\ar[r]&\hdots\ar[r]&(\tau^{-1}
S_i)_{[p_i-1]}\ar[d]\ar[r]&(\tau^{-1}S_i)_{[p_i]}\ar[d]\\ 0&0&&0&0 }
\end{equation}
Note that each object $L_i^{(j)}$ is a line bundle.  It is
straightforward to verify that the object
$$T=L\oplus L'\oplus(L_1^{(1)}\oplus\dots\oplus L_1^{(p_1-1)})\oplus
\dots \oplus(L_n^{(1)}\oplus\dots\oplus L_n^{(p_n-1)})$$ is a
tilting object for $\A$, following the line of arguments in the proof of
Proposition~\ref{pr:tiltobj}.
\end{proof}

\begin{cor}
\pushQED{\qed} Let $\A$ be a $k$-linear abelian category satisfying
\emph{(H1)--(H5)}. Then there exists for some pair $\bfp,\bflambda$ an
equivalence of derived categories
\[\bfD^b(\A)\lto[\sim]\bfD^b(\mod C(\bfp,\bflambda)). \qedhere\]
\end{cor}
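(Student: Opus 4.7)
The statement is a direct corollary combining Theorem~\ref{th:tiltobj} (which was just established) with the Happel-Reiten-Smal\o{} theorem (Theorem~\ref{th:tilt}). So the plan is essentially a one-step deduction, and I do not expect any genuine obstacle.

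First I would invoke Theorem~\ref{th:tiltobj} to produce a tilting object $T\in\A$ whose endomorphism algebra $\End_\A(T)$ is isomorphic to a canonical algebra $C(\bfp,\bflambda)$ for some choice of weight sequence $\bfp$ and parameter collection $\bflambda$. Next I would observe that axiom (H1) guarantees that $\A$ is a $k$-linear Ext-finite abelian category, which is exactly the hypothesis needed to apply the Happel-Reiten-Smal\o{} theorem in the form stated as Theorem~\ref{th:tilt}. That theorem then yields an equivalence of triangulated categories
\[
-\otimes^{\bfL}_{\End_\A(T)}T\colon \bfD^b(\mod\End_\A(T))\lto[\sim]\bfD^b(\A),
\]
with quasi-inverse $\RHom_\A(T,-)$.

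Finally, substituting the isomorphism $\End_\A(T)\cong C(\bfp,\bflambda)$ gives the desired equivalence $\bfD^b(\A)\simeq\bfD^b(\mod C(\bfp,\bflambda))$. No further verification is needed: the hard work has already been done in producing the tilting object (Theorem~\ref{th:tiltobj}) and in establishing the tilting derived equivalence (Theorem~\ref{th:tilt}). The only conceivable subtlety is making sure the equivalence goes in the stated direction, which follows by passing to the quasi-inverse.
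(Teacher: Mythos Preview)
Your proposal is correct and matches the paper's approach: the corollary is stated without proof (just a \qedhere), since it follows immediately from Theorem~\ref{th:tiltobj} and Theorem~\ref{th:tilt} exactly as you describe.
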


\subsection{Vector bundle presentations}

Let $\A$ be a $k$-linear abelian category satisfying (H1)--(H4). The
category $\A_+$ consisting of the vector bundles in $\A$ determines
together with its exact structure the category $\A$. In fact, every
object $A$ in $\A$ admits a presentation $0\to A_1\to A_0\to A\to 0$
with $A_0,A_1$ in $\A_+$. Introducing appropriate morphisms between
complexes in $\A_+$, one can make these presentations functorial.

\begin{lem}\label{le:vb_pres}
Every object in $\A$ is a factor object of an object in $\A_+$.
\end{lem}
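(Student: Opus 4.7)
The plan is to reduce the claim to the case of a simple object and then construct an explicit vector bundle surjecting onto it by combining Proposition~\ref{pr:Lmorph} with Lemma~\ref{le:filtr}(1). First, by Proposition~\ref{pr:Serreduality}(3), every object $A$ of $\A$ decomposes as $A=A_0\oplus A_+$ with $A_0$ of finite length and $A_+\in\A_+$; since $A_+$ already lies in $\A_+$, it suffices to exhibit a vector bundle surjecting onto $A_0$. So we may assume $A$ has finite length and argue by induction on $\ell(A)$.

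For the base case $A=S$ simple, note first that line bundles exist: $\A_+$ is non-zero because $\A\neq\A_0$, and every non-zero object of $\A_+$ has a line bundle filtration by Proposition~\ref{pr:lb_filtr}. Pick any line bundle $L_0$. The Serre functor $\tau$ preserves $\A_0$ and hence descends to an auto-equivalence of $\A/\A_0$ preserving length; in particular $\tau$ preserves rank and sends line bundles to line bundles. Let $x\in\bfX$ be the component of $\A_0$ containing $\tau S$. Proposition~\ref{pr:Lmorph} provides a non-zero morphism $L_0\to T$ to some simple $T\in\A_x$, and by uniseriality of $\A_x$ (Proposition~\ref{pr:serre_finlen}) we have $T\cong\tau^k S$ for some $k\in\bbZ$. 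Setting $L=\tau^{1-k}L_0$ gives a line bundle with $\Hom_\A(L,\tau S)\neq 0$, and Serre duality yields $\Ext_\A^1(S,L)\neq 0$. Lemma~\ref{le:filtr}(1) then produces an exact sequence $0\to L\to L_1\to S\to 0$ with $L_1\in\A_+$, and $L_1\to S$ is the desired epimorphism.

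For the inductive step, suppose $\ell(A)\geq 2$ and choose an exact sequence $0\to A'\to A\to S\to 0$ with $S$ simple. By induction there are epimorphisms $\pi'\colon V'\to A'$ and $V_S\to S$ with $V',V_S\in\A_+$. Form the pullback $P=A\times_S V_S$ to obtain an exact sequence $0\to A'\to P\to V_S\to 0$ together with an epimorphism $P\to A$. Since $\A$ is hereditary, the long exact $\Ext$ sequence of $0\to\Ker\pi'\to V'\to A'\to 0$ shows that the pushforward $\Ext_\A^1(V_S,V')\to\Ext_\A^1(V_S,A')$ is surjective. Lifting the class of $P$ yields a commutative diagram with exact rows
\[
\xymatrix{
0 \ar[r] & V' \ar[d]_{\pi'} \ar[r] & Q \ar[d] \ar[r] & V_S \ar[r] \ar@{=}[d] & 0 \\
0 \ar[r] & A' \ar[r] & P \ar[r] & V_S \ar[r] & 0
}
\]
in which the snake lemma forces $Q\to P$ to be epimorphic, so $Q\to P\to A$ is also epimorphic. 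Finally $Q\in\A_+$, because $\A_+$ is closed under extensions: applying $\Hom_\A(F,-)$ with $F\in\A_0$ to the top row gives vanishing on both ends.

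The main obstacle is the base case: one cannot directly embed $S$ into a vector bundle via $\Ext^1$ without first producing a line bundle $L$ with $\Ext_\A^1(S,L)\neq 0$. This is achieved by combining Proposition~\ref{pr:Lmorph}---which supplies a non-zero morphism from any line bundle to some simple in a prescribed component---with the Serre auto-equivalence, which rotates the target within the $\tau$-orbit until it is exactly $\tau S$. Once this is secured, the inductive step is routine homological algebra enabled by hereditariness.
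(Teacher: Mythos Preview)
Your proof is correct and follows the same overall architecture as the paper's: reduce to finite length objects via the splitting $A=A_0\oplus A_+$, then induct on length. Your base case is essentially what the paper compresses into the phrase ``up to certain power of $\tau$, the case $\ell(A)=1$ follows from Proposition~\ref{pr:Lmorph}''; you have simply unpacked the $\tau$-shift argument explicitly.

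The inductive step is where the two arguments diverge. The paper first reduces to $A$ indecomposable, lifts the extension $0\to A'\to A\to S\to 0$ along the cover $E'\to A'$ to an extension $0\to E'\to E\to S\to 0$, and then uses indecomposability of $A$ to force this extension to be non-split---which is needed to verify $\Hom_\A(S,E)=0$ and hence $E\in\A_+$. Your route avoids the indecomposability hypothesis entirely: by first covering the simple quotient $S$ itself by a vector bundle $V_S$ (reusing the base case), your lifted object $Q$ sits in an extension of two objects of $\A_+$, so $Q\in\A_+$ is automatic. The trade-off is that you invoke the base case inside the inductive step, while the paper's version is more self-contained there but pays for it with the reduction to indecomposables and the non-splitting check. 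Both are clean; yours is slightly more robust in that it never needs Krull--Remak--Schmidt.
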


\begin{proof}
Every object of $\A$ decomposes into an object of $\A_+$ and an
object of finite length. Thus it suffices to show that for an
indecomposable object $A$ of finite length there is an epimorphism
$E\to A$ with $E$ in $\A_+$. We use induction on the length
$\ell(A)$ of $A$. Up to certain power of $\tau$, the case
$\ell(A)=1$ follows from Proposition~\ref{pr:Lmorph}. Assume that
$\ell(A)>1$. Take a maximal subobject $A'\subseteq A$.  By the
induction hypothesis there is an epimorphism $\phi\colon E'\to A'$.
Note that $\phi$ induces an epimorphism $\Ext^1_\A(A/A', E')\to
\Ext^1_\A(A/A', A')$. In particular, we obtain the following
commutative diagram with exact rows.
\[\xymatrix{0 \ar[r] & E' \ar[d]^{\phi} \ar[r] & E \ar[r] \ar[d] &
A/A' \ar@{=}[d]\ar[r] &0\\ 0\ar[r] & A' \ar[r] & A \ar[r] & A/A'
\ar[r] & 0.  }\] Note that the upper row does not split since $A$ is
indecomposable. Thus the morphism $E'\to E$ induces a bijection
$\Hom_\A(S,E')\to\Hom_\A(S,E)$ for each simple object $S$. It follows
that $E$ belongs to $\A_+$, and therefore $A$ is a quotient of an
object in $\A_+$.
\end{proof}

The following result says that the subcategory $\A_+$ determines
$\A$. However, it is important to notice that one uses the exact
structure on $\A_+$ that is inherited from $\A$. To be precise, a
morphism of complexes in $\A_+$ is by definition a
\emph{quasi-isomorphism} if it is a quasi-isomorphism  of complexes
in $\A$.

\begin{prop}\label{pr:vb_pres}
Let $\A$ be a $k$-linear abelian category satisfying
\emph{(H1)--(H4)}. The inclusion $\A_+\to\A$ induces an equivalence
\begin{equation*}\label{eq:vb_pres}
\bfK^b(\A_+)[\qis^{-1}]\lto[\sim]\bfD^b(\A).
\end{equation*}

\end{prop}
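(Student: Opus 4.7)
The plan is to establish two facts about $\A_+$ viewed as an exact subcategory of $\A$—closure under subobjects and extensions, and the existence of two-term vector bundle resolutions—and then deduce the equivalence from a standard resolution principle, made particularly transparent by the hereditary hypothesis on $\A$.

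First I would verify that $\A_+$ is closed under subobjects and under extensions in $\A$: closure under subobjects is immediate from the defining vanishing $\Hom_\A(\A_0,-)=0$, and closure under extensions follows from the long exact $\Hom$-sequence applied to a short exact sequence with outer terms in $\A_+$. Hence $\A_+$ inherits the structure of an exact category. Combining Lemma~\ref{le:vb_pres} with closure under subobjects, every $A\in\A$ admits an exact sequence $0\to A_1\to A_0\to A\to 0$ with $A_0,A_1\in\A_+$, since the kernel of an epimorphism $A_0\twoheadrightarrow A$ from a vector bundle is itself a subobject of $A_0$ and therefore lies in $\A_+$. Thus $\A_+$ resolves $\A$ in length at most one.

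The inclusion $\A_+\hookrightarrow\A$ induces a triangulated functor $F\colon\bfK^b(\A_+)[\qis^{-1}]\to\bfD^b(\A)$, and it remains to check essential surjectivity and full faithfulness. For essential surjectivity, by Lemma~\ref{le:der_hereditary} every $X\in\bfD^b(\A)$ is isomorphic in $\bfD^b(\A)$ to $\bigoplus_n H^n(X)[-n]$; each cohomology $H^n(X)$ is quasi-isomorphic to its two-term vector-bundle resolution placed in degrees $n-1,n$, and the direct sum of the resulting two-term complexes is an object of $\bfK^b(\A_+)$ whose image under $F$ is isomorphic to $X$.

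Full faithfulness is the main obstacle. I would establish it via the following replacement lemma: for every bounded complex $Z$ in $\A$ there is a quasi-isomorphism $Z'\to Z$ with $Z'$ a bounded complex in $\A_+$, proved by induction on the number of non-zero terms of $Z$, gluing two-term $\A_+$-resolutions of each term via a horseshoe construction in the exact category $\A_+$ (the hereditary hypothesis ensures that the obstructions to assembling the pieces vanish). From this replacement one verifies the Ore-type conditions for the class of quasi-isomorphisms in $\bfK^b(\A_+)$, so morphisms in $\bfK^b(\A_+)[\qis^{-1}]$ may be computed by roofs. Any roof $X\xleftarrow{\mathrm{qis}}Z\xrightarrow{f}Y$ in $\bfD^b(\A)$ with $X,Y\in\bfK^b(\A_+)$ can then be pulled back along $Z'\to Z$ to a roof in $\bfK^b(\A_+)[\qis^{-1}]$, yielding fullness; faithfulness follows by applying the same replacement to the apex of a roof representing a morphism that vanishes in $\bfD^b(\A)$.
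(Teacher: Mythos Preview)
Your argument is correct and shares its opening moves with the paper: you identify exactly the two structural facts about $\A_+$ that matter---closure under extensions and under subobjects (the paper phrases the latter as closure under kernels of epimorphisms, which is weaker but sufficient), together with the existence of finite $\A_+$-resolutions coming from Lemma~\ref{le:vb_pres}. At that point the paper simply invokes Verdier's general result \cite[Chap.~III, Prop.~2.4.3]{V}, which says precisely that for any full additive subcategory with these closure and resolution properties the induced functor on localized homotopy categories is an equivalence. You instead unfold a direct proof, exploiting the hereditary hypothesis via Lemma~\ref{le:der_hereditary} to reduce essential surjectivity to the case of a single object, and proving full faithfulness through a replacement lemma.

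Two small comments on your direct route. First, the Ore conditions for quasi-isomorphisms in $\bfK^b(\A_+)$ hold automatically: $\bfK^b(\A_+)$ is a triangulated subcategory of $\bfK^b(\A)$ (it is closed under cones because $\A_+$ is additive), and the acyclic complexes form a thick triangulated subcategory of it, so you do not need the replacement lemma to justify the calculus of fractions---only for the comparison of roofs. Second, your parenthetical that ``the hereditary hypothesis ensures that the obstructions to assembling the pieces vanish'' is slightly off: the inductive construction of a bounded $\A_+$-resolution of a bounded complex requires only the closure properties of $\A_+$ (extensions and kernels of epimorphisms), not heredity. Heredity buys you the pleasant shortcut for essential surjectivity, but the replacement lemma itself is the content of Verdier's proposition and holds in the generality stated there. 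Your approach is more self-contained; the paper's is more economical and does not lean on (H3) beyond what is already packaged into the cited reference.
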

\begin{proof}
The subcategory $\A_+$ of $\A$ is closed under forming extensions and
taking kernels of epimorphisms. Moreover, each object $A$ in $\A$ fits
into an exact sequence $0\to A_n\to\cdots\to A_1\to A_0\to A\to 0$
with each $A_i$ in $\A_+$; see Lemma~\ref{le:vb_pres}. With these
properties, the assertion follows from \cite[Chap.~III,
Prop.~2.4.3]{V}.
\end{proof}

\begin{cor}\label{co:vb_pres}
Let $F\colon\A_+\xto{\sim}\A_+$ be an equivalence and suppose that a
sequence $\xi$ in $\A_+$ is exact if and only if $F\xi$ is exact. Then
$F$ extends uniquely to an equivalence $\A\xto{\sim}\A$.
\end{cor}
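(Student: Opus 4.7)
The idea is to use Proposition~\ref{pr:vb_pres} to lift $F$ to a self-equivalence of $\bfD^b(\A)$ and then restrict it back to $\A$.

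First I would apply $F$ termwise to obtain a functor $\bfK^b(F)\colon\bfK^b(\A_+)\to\bfK^b(\A_+)$. This is an equivalence since $F$ is, and it both preserves and reflects quasi-isomorphisms: a morphism $\p$ of complexes in $\A_+$ is a quasi-isomorphism exactly when its mapping cone is acyclic, and acyclicity of a complex $X$ in $\A_+$ is encoded by the short exact sequences $0\to Z_{i+1}\to X_i\to Z_i\to 0$ in $\A_+$ (the cycles $Z_i$ automatically lie in $\A_+$, since $\A_+$ is closed under subobjects). The hypothesis on $F$ therefore transfers acyclicity in both directions. It follows that $\bfK^b(F)$ descends to a self-equivalence $\widetilde F\colon\bfD^b(\A)\xto{\sim}\bfD^b(\A)$ via the identification of Proposition~\ref{pr:vb_pres}.

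Next I would show that $\widetilde F$ preserves the subcategory $\A\subseteq\bfD^b(\A)$. By Lemma~\ref{le:vb_pres}, every object $A\in\A$ is a quotient of some $A_0\in\A_+$, and the kernel $A_1$ of any such epimorphism again lies in $\A_+$. Thus $A$ is represented in $\bfK^b(\A_+)[\qis^{-1}]$ by the two-term complex $A_1\to A_0$ with injective differential. As $F$ preserves exactness, $FA_1\to FA_0$ is also injective, so $\widetilde F(A)$ is quasi-isomorphic to $\Coker(FA_1\to FA_0)\in\A$. Applying the same argument to a quasi-inverse of $F$ (which inherits the exactness hypothesis from $F$) shows that the restriction $\widetilde F|_\A\colon\A\to\A$ is an equivalence. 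By construction it agrees with $F$ on $\A_+$, so it extends $F$.

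For uniqueness, any equivalence $G\colon\A\xto{\sim}\A$ is automatically exact and therefore commutes with cokernels; if $G$ extends $F$, then for every $A\in\A$ with a presentation $0\to A_1\to A_0\to A\to 0$ as above one is forced to have $G(A)\cong\Coker(F A_1\to F A_0)\cong\widetilde F(A)$, and naturality determines $G$ on morphisms as well. Hence $G\cong\widetilde F$.

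The main technical point I expect will require care is the verification that $\bfK^b(F)$ preserves and reflects quasi-isomorphisms, since this is the bridge between the hypothesis (phrased only in terms of exact sequences in $\A_+$) and the derived category; once this is established the rest is essentially bookkeeping.
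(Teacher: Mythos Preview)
Your proposal is correct and follows essentially the same approach as the paper: lift $F$ to a self-equivalence of $\bfD^b(\A)$ via Proposition~\ref{pr:vb_pres}, then restrict to $\A$ by identifying $\A$ with the two-term complexes $A_1\to A_0$ in $\A_+$ having injective differential (using Lemma~\ref{le:vb_pres}). You supply more detail than the paper does---in particular the verification that $\bfK^b(F)$ preserves and reflects acyclicity, and the uniqueness argument, both of which the paper leaves implicit---but the underlying strategy is identical.
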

\begin{proof}
We apply Proposition~\ref{pr:vb_pres}. Thus the assumption on $F$
implies that the functor extends to an equivalence
$\bfD^b(\A)\xto{\sim}\bfD^b(\A)$. This equivalence restricts to an
equivalence $\A\xto{\sim}\A$ because $\A$ identifies with the full
subcategory consisting of complexes $$\cdots \to 0\to
A_1\xto{\delta}A_0\to 0\to\cdots$$ with $A_0,A_1$ in $\A_+$ and
$\delta$ a monomorphism; see Lemma \ref{le:vb_pres}.
\end{proof}

\subsection{Line bundles and tubular mutations}

Let $\A$ be a $k$-linear abelian category satisfying (H1)--(H5) and
$\A_0=\coprod_{x\in\bfX}\A_x$ be the decomposition of $\A_0$ into
connected uniserial components. For each $x\in\bfX$ denote by $T_x$ the
direct sum of a representative set of simple objects in $\A_x$ and set
$\T_x=\add T_x$.

Note that $\T_x$ is a Hom-finite semisimple abelian category with
finitely many simple objects. Thus each additive functor $\T_x\to \mod
k$ is representable. This observation yields two functors
$\bar\d_x,\bar\e_x\colon\A\to\T_x$ such that for each object $A$ in $\A$
$$\Hom_\A(A,-)|_{\T_x}\cong\Hom_{\T_x}(\bar\d_x A,-)\quad\text{and}\quad
\Ext^1_\A(-,A)|_{\T_x}\cong\Hom_{\T_x}(-,\bar\e_x A).$$

Fix an object $A$ in $\A_+$.  The identity morphism of $\bar\d_x A$
corresponds to a morphism $A\to \bar\d_x A$. This is an epimorphism and we
complete it to an exact sequence $0\to \d_x A\to A\to \bar\d_x A\to 0$. On
the other hand, the identity morphism of $\bar\e_x A$ corresponds to an
exact sequence $0\to A \to \e_x A\to \bar\e_x A\to 0$. It is easily checked
that this defines two functors $\d_x,\e_x\colon\A_+\to\A$.

The following lemma shows that $\d_x$ and $\e_x$ yield equivalences
$\A_+\xto{\sim}\A_+$.

\begin{lem}\label{le:mutat}
Let $\xi\colon 0\to A\to A'\to T\to 0$ be an exact sequence in $\A$
with $T$ in $\T_x$ for some $x\in\bfX$. Then the
following are equivalent:
\begin{enumerate}
\item $\xi$ induces an iso
$\Hom_\A(-,T)|_{\T_x}\xto{\sim}\Ext^1_\A(-,A)|_{\T_x}$ and $A$ belongs
to $\A_+$.
\item $\xi$ induces an iso
$\Hom_\A(A,-)|_{\T_x}\xto{\sim}\Ext^1_\A(T,-)|_{\T_x}$ and $A$ belongs
to $\A_+$.
\item $\xi$ induces an iso
$\Hom_\A(T,-)|_{\T_x}\xto{\sim}\Hom_\A(A',-)|_{\T_x}$ and $A'$ belongs to $\A_+$.
\end{enumerate}
\end{lem}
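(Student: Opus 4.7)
My plan is to apply the long exact sequences obtained from $\xi$ by $\Hom_\A(-,U)$ and $\Hom_\A(U,-)$ for each $U\in\T_x$; since $\A$ is hereditary, these terminate at $\Ext^1$ and yield two six-term sequences
\begin{gather*}
0\to\Hom(T,U)\xto{\alpha_0}\Hom(A',U)\xto{\alpha_1}\Hom(A,U)\xto{\delta}\Ext^1(T,U)\xto{\beta_0}\Ext^1(A',U)\xto{\beta_1}\Ext^1(A,U)\to 0,\\
0\to\Hom(U,A)\xto{\gamma_0}\Hom(U,A')\xto{\gamma_1}\Hom(U,T)\xto{\delta'}\Ext^1(U,A)\xto{\epsilon_0}\Ext^1(U,A')\xto{\epsilon_1}\Ext^1(U,T)\to 0.
\end{gather*}
The isomorphism statements in (1), (2), (3) amount to saying that $\delta'$, $\delta$, and $\alpha_0$ are isomorphisms, respectively, for all $U\in\T_x$. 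An elementary exact-sequence manipulation gives: $\delta$ is iso iff $\alpha_0$ and $\beta_1\colon\Ext^1(A',U)\to\Ext^1(A,U)$ are both iso, and $\delta'$ is iso iff $\gamma_0\colon\Hom(U,A)\to\Hom(U,A')$ and $\epsilon_1\colon\Ext^1(U,A')\to\Ext^1(U,T)$ are both iso.

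Next I would invoke Serre duality (Proposition~\ref{pr:Serreduality}) together with the fact that the Serre functor $\tau$ restricts to an auto-equivalence of each component $\A_y$ of $\A_0$, hence of $\T_x$. The natural isomorphism $\Ext^1(U,X)\cong D\Hom(X,\tau U)$ identifies $\epsilon_1$ at $U$ with the $D$-dual of $\alpha_0$ at $\tau U$; since $\tau$ is bijective on $\T_x$, $\epsilon_1$ is iso for all $U\in\T_x$ iff $\alpha_0$ is iso for all $U\in\T_x$. A key auxiliary fact I would record first: $\tau$ preserves $\A_+$. This holds because $\tau$ preserves rank (Lemma~\ref{le:linear-rank}), indecomposability, and hence the class of line bundles; using the line-bundle filtration of Proposition~\ref{pr:lb_filtr} together with $\Ext_\A^2=0$, this extends to all of $\A_+$. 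Consequently, for $A\in\A_+$ and $V\in\A_0$, $\Ext^1(A,V)\cong D\Hom(V,\tau A)=0$, i.e.\ $\Ext^1(\A_+,\A_0)=0$.

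To assemble the equivalences, I would argue as follows. For (3) $\Rightarrow$ (1), (2): a subobject of an object in $\A_+$ lies in $\A_+$, so $A\in\A_+$; then $\beta_1$ and $\gamma_0$ both act between zero groups (using $\Hom(\A_0,\A_+)=0$ and $\Ext^1(\A_+,\A_0)=0$) and so are trivially iso, while $\alpha_0$ iso transfers to $\epsilon_1$ iso via Serre duality. For the converses (1) $\Rightarrow$ (3) and (2) $\Rightarrow$ (3), the iso parts transfer through the identifications above, so it remains to derive $A'\in\A_+$. For $U\in\A_y$ with $y\neq x$, orthogonality of the components of $\A_0$ gives $\Hom(U,T)=0$, and combined with $\Hom(U,A)=0$ the long exact sequence yields $\Hom(U,A')=0$. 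For $U\in\T_x$: in case (1), $\gamma_0$ iso with $\Hom(U,A)=0$ gives $\Hom(U,A')=0$; in case (2), $\beta_1$ iso together with $\Ext^1(A,U)=0$ (from $A\in\A_+$ and Ext-vanishing) forces $\Ext^1(A',U)=0$, whence $\Hom(U,\tau A')=0$ by Serre duality, and permuting via $\tau$ on $\T_x$ yields $\Hom(V,A')=0$ for all $V\in\T_x$. The main obstacle will be the careful Serre-duality bookkeeping that identifies $\epsilon_1$ at $U$ with $\alpha_0$ at $\tau U$ with correct naturality, together with the auxiliary verification that $\tau(\A_+)=\A_+$.
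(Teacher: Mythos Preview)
Your proof is correct and follows essentially the same route as the paper: both arguments apply $\Hom_\A(-,U)$ and $\Hom_\A(U,-)$ to $\xi$, use Serre duality together with $\tau\T_x=\T_x$ to pass between the two long exact sequences, and exploit the fact that $\A_+$ is closed under subobjects. The paper organizes the logic as $(1)\Leftrightarrow(2)$ by Serre duality, then $(1)\&(2)\Rightarrow(3)$, then $(3)\Rightarrow(2)$, writing the four-term sequence
\[
0\to\Hom_\A(T,S)\xto{\alpha}\Hom_\A(A',S)\to\Hom_\A(A,S)\xto{\gamma}\Ext^1_\A(T,S)\to 0
\]
and asserting ``$\alpha$ iso iff $\gamma$ iso''. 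The terminal zero here implicitly uses $\Ext^1_\A(A',S)=0$, i.e.\ exactly your auxiliary fact $\Ext^1_\A(\A_+,\A_0)=0$, which in turn rests on $\tau(\A_+)=\A_+$. So you have made explicit a step the paper leaves tacit.

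One simplification: your verification of $\tau(\A_+)=\A_+$ via line-bundle filtrations and $\Ext^2=0$ works, but is heavier than necessary. Since $\A_+=\{A:\Hom_\A(\A_0,A)=0\}$ and $\tau$ restricts to an auto-equivalence of $\A_0$, one has directly $\Hom_\A(\A_0,\tau A)\cong\Hom_\A(\tau^{-1}\A_0,A)=\Hom_\A(\A_0,A)$, so $A\in\A_+$ iff $\tau A\in\A_+$.
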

\begin{proof}
(1) $\Leftrightarrow$ (2): Apply Serre duality and observe that $\tau
    T_x\cong T_x$.

(1) \& (2) $\Rightarrow$ (3): Let $S$ be any simple object in $\A$ and
apply $\Hom_\A(S,-)$ to $\xi$. Using (1) and the fact that $A$ belongs
to $\A_+$, it follows that $\Hom_\A(S,A')=0$. Thus $A'$ is in $\A_+$.

Now let $S$ be any object in $\T_x$ and apply $\Hom_\A(-,S)$ to $\xi$. This yields the following exact sequence
$$0\to\Hom_\A(T,S)\xto{\a}\Hom_\A(A',S)\xto{\b}\Hom_\A(A,S)\xto{\g}\Ext^1_\A(T,S)\to
0$$ where $\a$ is an isomorphism if and only if $\g$ is an
isomorphism. Thus (3) holds.

(3) $\Rightarrow$ (2): The object $A$ is in $\A_+$ since $\A_+$
    is closed under taking subobjects. The rest follows as before by
choosing $S$ in $\T_x$ and    applying $\Hom_\A(-,S)$ to $\xi$.
\end{proof}

\begin{prop}
The functors $\d_x$ and $\e_x$ form a pair of mutually inverse
equivalences $\A_+\xto{\sim}\A_+$. Moreover, $\d_x$ and $\e_x$ take
exact sequences in $\A_+$ to exact sequences.
\end{prop}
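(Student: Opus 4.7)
I will proceed in three steps, exploiting Lemma~\ref{le:mutat} for the first two parts and a $3\times 3$-lemma argument for the third.

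First, I would verify that $\d_x$ and $\e_x$ indeed take values in $\A_+$. For $A\in\A_+$, the defining sequence $0\to \d_x A\to A\to\bar\d_x A\to 0$ is visibly condition~(3) of Lemma~\ref{le:mutat} with $A'=A$ and $T=\bar\d_x A$: by Yoneda the epimorphism $A\to\bar\d_x A$ represents $\id_{\bar\d_x A}$, so it induces the required natural isomorphism $\Hom_\A(\bar\d_x A,-)|_{\T_x}\xto{\sim}\Hom_\A(A,-)|_{\T_x}$, whence $\d_x A\in\A_+$. Dually, the sequence $0\to A\to\e_x A\to\bar\e_x A\to 0$ satisfies condition~(1), since it represents the identity of $\bar\e_x A$ under $\Hom_{\T_x}(\bar\e_x A,-)\cong\Ext^1_\A(-,A)|_{\T_x}$, whence $\e_x A\in\A_+$.

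Second, I would deduce mutual inverseness from the equivalence of the three conditions in Lemma~\ref{le:mutat}. The sequence defining $\d_x A$ satisfies condition~(3), so it also satisfies condition~(1); i.e.\ it induces an iso $\Hom_\A(-,\bar\d_x A)|_{\T_x}\xto{\sim}\Ext^1_\A(-,\d_x A)|_{\T_x}$. The right-hand functor is by definition represented by $\bar\e_x(\d_x A)$, so $\bar\d_x A\cong\bar\e_x(\d_x A)$ and the sequence $0\to\d_x A\to A\to\bar\d_x A\to 0$ coincides with the universal extension defining $\e_x(\d_x A)$; hence $A\cong\e_x(\d_x A)$ naturally in $A$. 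The other composite $\d_x\e_x\cong\id_{\A_+}$ follows by the symmetric argument.

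Third, I would establish exactness. Given a short exact sequence $0\to A\to B\to C\to 0$ in $\A_+$, the key ingredient is that the Serre functor $\tau$ preserves $\A_+$. This follows by decomposing $\tau X=T\oplus Y$ with $T\in\A_0$, $Y\in\A_+$ for $X\in\A_+$ (Proposition~\ref{pr:Serreduality}(3)); applying $\tau^{-1}$, which restricts to a Serre functor on $\A_0$, yields a decomposition of $X$ with summand $\tau^{-1}T\in\A_0$, which must therefore vanish. Consequently, by Serre duality, $\Ext^1_\A(X,S)\cong D\Hom_\A(S,\tau X)=0$ for every $X\in\A_+$ and every simple $S\in\T_x$, while $\Hom_\A(S,X)=0$ already. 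The long exact $\Hom$-sequence therefore gives a short exact sequence $0\to\Hom_\A(C,S)\to\Hom_\A(B,S)\to\Hom_\A(A,S)\to 0$, and by Yoneda in the semisimple category $\T_x$ we obtain exactness of $0\to\bar\d_x A\to\bar\d_x B\to\bar\d_x C\to 0$; symmetrically, the long exact sequence for $\Ext^1_\A(S,-)$ yields exactness of $0\to\bar\e_x A\to\bar\e_x B\to\bar\e_x C\to 0$. Feeding these into the $3\times 3$-diagrams whose columns are the defining sequences $0\to\d_x X\to X\to\bar\d_x X\to 0$ (respectively $0\to X\to\e_x X\to\bar\e_x X\to 0$) for $X=A,B,C$, and whose middle rows are the given exact sequence, the $3\times 3$-lemma delivers the desired exactness of the remaining row.

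The main obstacle is the supporting lemma in step three that $\tau$ preserves $\A_+$; it is exactly this fact that forces the vanishing of $\Ext^1_\A(X,S)$ and thereby makes $\bar\d_x$ and $\bar\e_x$ exact on $\A_+$. Without it the $3\times 3$-lemma input is missing and exactness fails.
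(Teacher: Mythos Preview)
Your proof is correct and follows the same architecture as the paper's: Lemma~\ref{le:mutat} for the equivalence, exactness of $\bar\d_x,\bar\e_x$ on $\A_+$, and then the $3\times 3$ lemma. The paper compresses all of this into three sentences, merely asserting that $\bar\d_x$ and $\bar\e_x$ are exact on $\A_+$; you have supplied the details the paper omits.

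One minor simplification: your argument that $\tau$ preserves $\A_+$ via the splitting of the torsion part is valid, but a quicker route is to note that $\tau$ is an autoequivalence of $\A$ which restricts to $\A_0$, hence also preserves $\A_+=\{A:\Hom_\A(\A_0,A)=0\}$, since $\Hom_\A(A_0,\tau A)\cong\Hom_\A(\tau^{-1}A_0,A)$ and $\tau^{-1}A_0\in\A_0$. Also, for $\bar\e_x$ you do not actually need $\Ext^1_\A(X,S)=0$: the long exact sequence for $\Hom_\A(S,-)$ already gives exactness of $\Ext^1_\A(S,-)$ on $\A_+$ from $\Hom_\A(S,\A_+)=0$ and heredity alone. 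The vanishing $\Ext^1_\A(\A_+,\T_x)=0$ is only needed for $\bar\d_x$.
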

\begin{proof}
The first assertion is an immediate consequence of
Lemma~\ref{le:mutat}. For the exactness observe that $\bar\d_x$ and
$\bar\e_x$ are exact when restricted to $\A_+$.  The exactness of
$\d_x$ and $\e_x$ then follows from the $3\times3$ lemma.
\end{proof}

Using Corollary~\ref{co:vb_pres}, the functors $\d_x,\e_x$ yield
equivalences $\A\xto{\sim}\A$. These functors are called
\index{tubular mutation}\emph{tubular mutations} and were introduced
by Lenzing and Meltzer \cite{LM1993,M,L2007}.

\begin{cor}\label{co:mutat}
\pushQED{\qed} The equivalences $\d_x,\e_x\colon\A_+\xto{\sim}\A_+$
extend to a pair of mutually inverse equivalences $\A\xto{\sim}\A$.
\qedhere
\end{cor}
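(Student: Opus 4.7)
The plan is to reduce the statement to Corollary~\ref{co:vb_pres}, which provides the unique extension criterion for an equivalence $\A_+\xto{\sim}\A_+$ to extend to $\A\xto{\sim}\A$.

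By the preceding proposition, the functors $\d_x$ and $\e_x$ are mutually inverse equivalences $\A_+\xto{\sim}\A_+$, and each of them sends exact sequences in $\A_+$ to exact sequences. To apply Corollary~\ref{co:vb_pres} to $F=\d_x$, I must verify the biconditional: a sequence $\xi$ in $\A_+$ is exact if and only if $\d_x\xi$ is exact. The forward direction is given. For the converse, suppose $\d_x\xi$ is exact. Then $\e_x(\d_x\xi)$ is exact by the same preservation property applied to $\e_x$. Since $\e_x\d_x\cong\Id_{\A_+}$, this sequence is isomorphic to $\xi$, so $\xi$ is exact. The same argument works with the roles of $\d_x$ and $\e_x$ swapped.

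Hence Corollary~\ref{co:vb_pres} yields unique extensions $\tilde\d_x,\tilde\e_x\colon\A\xto{\sim}\A$. Finally, the composites $\tilde\e_x\tilde\d_x$ and $\tilde\d_x\tilde\e_x$ are equivalences $\A\xto{\sim}\A$ whose restrictions to $\A_+$ are naturally isomorphic to the identity. By the uniqueness clause in Corollary~\ref{co:vb_pres}, applied to the identity functor on $\A_+$, both composites are isomorphic to $\Id_\A$. Therefore $\tilde\d_x$ and $\tilde\e_x$ are mutually inverse equivalences on $\A$.

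The only subtle point is the verification that $\xi$ exact implies $\d_x\xi$ exact \emph{as a sequence in $\A$}, not merely that $\d_x$ respects the subset of short exact sequences of $\A_+$; but this is precisely what the preceding proposition asserts, and the converse is automatic from having the quasi-inverse $\e_x$ available. No further obstacle remains.
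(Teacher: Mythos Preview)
Your proof is correct and follows exactly the approach the paper intends: the paper simply invokes Corollary~\ref{co:vb_pres}, and you have spelled out the verification of its hypothesis (the biconditional on exactness, obtained from the mutual inverse pair) and the use of uniqueness to conclude that the extensions are again mutually inverse.
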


Next we show that for each pair of line bundles $L,L'$ there exists a
sequence of tubular mutations taking $L$ to $L'$. We need the
following proposition which is of independent interest.

\begin{prop}\label{pr:hom_lb}
Let $L$ be a line bundle. For each $x\in\bfX$ there exists up to
isomorphism a unique simple object $S_x$ in $\A_x$ such that
$\Hom_\A(L,S_x)\neq 0$. Moreover, any non-zero morphism $L\to S_x$
induces an isomorphism $\End_\A(S_x)\xto{\sim}\Hom_\A(L,S_x)$.
\end{prop}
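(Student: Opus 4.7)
The plan is to split the statement into existence of $S_x$, and the combined uniqueness and endomorphism-isomorphism claim. The existence of a simple $S_x\in\A_x$ with $\Hom_\A(L,S_x)\neq 0$ is immediate from Proposition~\ref{pr:Lmorph}. For the remaining two claims, observe that $\bar\delta_x L\in\T_x$ represents the functor $\Hom_\A(L,-)|_{\T_x}$, so writing $\bar\delta_x L\cong\bigoplus_S S^{m_S}$ over a set of representatives of the simple objects of $\T_x$ yields $m_S=\dim_{\End_\A(S)}\Hom_\A(L,S)$; hence their combined content is precisely that $\bar\delta_x L$ is a simple object of $\T_x$.

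To establish this I invoke Theorem~\ref{th:axioms} to identify $\A$ with $\coh\bbX$ for some weighted projective line $\bbX=(\bbP^1_k,\bflambda,\bfp)$, so that every line bundle in $\A$ is isomorphic to a twist $\Oc(\oy)$ of the structure sheaf for some $\oy\in\bfL(\bfp)$. At a generic point $y\in\bfX\smallsetminus\bflambda$ corresponding to a closed point $\frp$ of $\bbP^1_k$, $\A_y$ has a unique simple $S_\frp$ with $\End_\A(S_\frp)=k(\frp)$, so uniqueness is automatic. The dimension claim follows from Remark~\ref{re:hom-simple} in the base case $\oy=0$, combined with an induction on $\oy$ using the exact sequence
\[
0\lto\Oc(\oy-d\oc)\lto\Oc(\oy)\lto S_\frp\lto 0
\]
(with $d=\deg\frp$) and the Serre-duality identity $\Ext^1(S_\frp,S_\frp)\cong D\End_\A(S_\frp)$, which forces $\dim_{k(\frp)}\Hom_\A(\Oc(\oy),S_\frp)$ to remain equal to one as $\oy$ varies.

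At a weighted point $y=\lambda_i$, the simples $S_{i,1},\dots,S_{i,p_i}$ of $\A_y$ form a single $\tau$-orbit of length $p_i$, each with endomorphism ring $k$. A direct computation with the $\bfL(\bfp)$-grading on $S(\bfp,\bflambda)$, using the short exact sequences induced by multiplication by $x_i$ together with the defining relation $x_i^{p_i}+\lambda_{i1}u-\lambda_{i0}v=0$, shows that $\Hom_\A(\Oc(\oy),S_{i,j'})$ is non-zero for exactly one $j'=j(\oy)$---determined by the residue of $\oy$ modulo $\ox_i$---and that this $\Hom$-space is then one-dimensional over $k$. The principal obstacle will be the combinatorial bookkeeping at weighted points: matching the $\bfL(\bfp)$-residue of $\oy$ to the correct simple in the $\tau$-orbit at $\lambda_i$, and verifying that $\Hom$ vanishes against each of the remaining $p_i-1$ simples.
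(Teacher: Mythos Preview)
Your approach is correct but takes a genuinely different route from the paper. The paper argues intrinsically: it calls a line bundle \emph{Hom-simple} if the conclusion holds, shows that a line bundle $L$ chosen in $\A^0\simeq\coh\bbP^1_k$ (from the expansion filtration of Theorem~\ref{th:axioms}) is Hom-simple---using that $(\A^0)^\perp\cap\A_x$ contains $p(x)-1$ of the simples, adjunction with $i_\rho$, and Remark~\ref{re:hom-simple}---and then proves that Hom-simplicity is preserved along any short exact sequence $0\to L\to L'\to S\to 0$ with $S$ simple, whence Lemma~\ref{le:lb} propagates it to all line bundles. You instead invoke Theorem~\ref{th:axioms} to transport to $\coh\bbX$, use the Geigle--Lenzing fact that every line bundle is a twist $\Oc(\oy)$, and compute $\Hom(\Oc(\oy),S)$ directly by showing its $\End_\A(S)$-dimension is constant in $\oy$ (via the $x_i$- and $u,v$-multiplication sequences) and equal to $1$ in a base case.

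What each buys: the paper's argument is self-contained within its axiomatic framework and isolates the conceptually useful stability step (Hom-simplicity passes along monomorphisms with simple cokernel), which is exactly what powers the later tubular-mutation argument in Proposition~\ref{pr:shift}. Your argument is more concrete and perhaps quicker if one is willing to import the Geigle--Lenzing description of line bundles, but that classification is only cited, not proved, in the paper; also your ``induction on $\oy$'' is really the observation that the dimension is invariant under translation by each generator of $\bfL(\bfp)$, which you should state as such rather than as an induction. Your reformulation via $\bar\delta_x L$ being simple in $\T_x$ is a nice touch and aligns with how the result is used downstream.
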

\begin{proof}
For the purpose of this proof, call a line bundle $L$
\emph{Hom-simple} if the assertion of the proposition holds for
$L$. We begin by showing that a specific line bundle is Hom-simple.

Fix a sequence of subcategories $\A^0\subseteq
\A^1\subseteq\dots\subseteq\A^r=\A$ as in Theorem~\ref{th:axioms}
and choose a line bundle $L$ in $\A^0$. Note that the inclusion
$\A^0\to\A$ sends $L$ to a line bundle of $\A$ by
Proposition~\ref{pr:onepoint_length}.  For each $x\in\bfX$, there is
at least one simple object $S_x\in\A_x$ with $\Hom_\A(L,S_x)\neq 0$
by Proposition~\ref{pr:Lmorph}. On the other hand, the intersection
of $(\A^0)^\perp$ with $\A_x$ is a Serre subcategory having $p(x)-1$
simple objects. In particular, $\Hom_\A(L,-)$ vanishes on them. Thus
there is a unique simple object $S_x$ in $\A_x$ with
$\Hom_\A(L,S_x)\neq 0$.

Let $i_\rho\colon\A\to\A^0$ denote the right adjoint of the inclusion
$\A^0\to\A$. Then $i_\rho S_x$ is a simple object by
Lemma~\ref{le:simple}.  Choose a non-zero morphism $L\to S_x$. This
yields the following commutative square, where $\eta\colon i_\rho S_x\to S_x$
denotes the adjunction morphism.
$$\xymatrix{\End_{\A^0}(i_\rho S_x)\ar[r]&\Hom_{\A^0}(L,i_\rho S_x)\ar[d]^{\Hom_\A(L,\eta)}\\
\End_{\A}(S_x)\ar[r]\ar[u]^{i_\rho}&\Hom_{\A}(L,S_x)}$$
The map $\Hom_\A(L,\eta)$ is an isomorphism since $L$ belongs to
$\A^0$, and $i_\rho$ induces an isomorphism
$\End_{\A}(S_x)\xto{\sim}\End_{\A^0}(i_\rho S_x)$ since $i_\rho$ is a
quotient functor and $S_x$ is simple; see
Lemma~\ref{le:quotient_simple}. Finally observe that $\A^0$ is
equivalent to $\coh\bbP^1_k$. Thus the induced map $\End_{\A^0}(i_\rho
S_x)\to\Hom_{\A^0}(L,i_\rho S_x)$ is an isomorphism because we may
assume that $L$ corresponds to the structure sheaf; see
Remark~\ref{re:hom-simple}. It follows that
$\End_\A(S_x)\cong\Hom_\A(L,S_x)$.

Having shown the assertion for a specific line bundle, we apply
Lemma~\ref{le:lb} to verify the assertion for an arbitrary line bundle.
Thus we need to show that for any pair $L,L'$ of line bundles and each
monomorphism $\p\colon L\to L'$ with cokernel in $\A_0$,
the object $L$ is Hom-simple if
and only if $L'$ is Hom-simple.

Using induction on the length of the cokernel of $\p$, we may assume
that the cokernel is simple.  The exact sequence $0\to L\xto{\p} L'\to
S\to 0$ induces for each simple object $T$ an exact sequence
$$0\to\Hom_\A(S,T)\xto{\a}\Hom_\A(L',T)\xto{\b}\Hom_\A(L,T)\xto{\g}D\Hom_\A(T,\tau
S)\to 0.$$ If $L$ is Hom-simple, then a simple calculation shows
that $\g$ is an isomorphism. Thus $\a$ is an isomorphism and it
follows that $L'$ is Hom-simple. The same argument shows that $L$ is
Hom-simple if $L'$ is Hom-simple.
\end{proof}

The following result is due to Kussin; see \cite[Proposition~4.2.3]{Ku}.

\begin{prop}\label{pr:shift}
Let $L,L'$ be two line bundles in $\A$. Then there exists an
equivalence $\A\xto{\sim}\A$ taking $L$ to $L'$. In particular, each
line bundle is exceptional.
\end{prop}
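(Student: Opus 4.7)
The plan is to construct the equivalence as a composite of tubular mutations $\d_x, \e_x \colon \A \xto{\sim} \A$ from Corollary~\ref{co:mutat}. The first step is to observe that these mutations preserve line bundles: for a line bundle $L$ and $x \in \bfX$, Proposition~\ref{pr:hom_lb} identifies $\bar\d_x L$ with the unique simple $S_x \in \A_x$ satisfying $\Hom_\A(L, S_x) \neq 0$, while Serre duality together with the same proposition gives $\bar\e_x L \cong \tau^{-1} S_x$ (non-vanishing of $\Ext^1_\A(T, L) \cong D\Hom_\A(L, \tau T)$ for $T \in \A_x$ simple forces $\tau T \cong S_x$). The defining short exact sequences
\[
0 \to \d_x L \to L \to S_x \to 0 \quad\text{and}\quad 0 \to L \to \e_x L \to \tau^{-1} S_x \to 0
\]
show that $\d_x L$ and $\e_x L$ are rank-one objects in $\A_+$; since $\A_+ \cap \A_0 = 0$, any nonzero direct summand of such an object has rank at least one, so both are indecomposable, hence line bundles.

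The base case is a monomorphism $\iota\colon L \hookrightarrow L'$ of line bundles with simple cokernel $T \in \A_x$. Being non-split (since $L'$ is indecomposable), $\iota$ represents a non-zero class in $\Ext^1_\A(T, L) \cong D\Hom_\A(L, \tau T)$, and by Proposition~\ref{pr:hom_lb} this forces $\tau T \cong S_x$, i.e., $T \cong \tau^{-1} S_x$. Moreover $\Ext^1_\A(\tau^{-1} S_x, L) \cong D\End_\A(S_x)$ has right-dimension one over the division ring $\End_\A(\tau^{-1} S_x)$, so $\Aut_\A(\tau^{-1} S_x)$ acts transitively on its non-zero elements by pullback, and any two non-split extensions of $\tau^{-1}S_x$ by $L$ therefore have isomorphic middle term. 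This identifies $L' \cong \e_x L$, so the mutation $\e_x$ is the desired equivalence.

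For the general case of a monomorphism $L \hookrightarrow L''$ with cokernel of finite length $n$, pulling back a composition series of the cokernel yields a chain $L = L_0 \subset L_1 \subset \dots \subset L_n = L''$ in which each factor $L_i/L_{i-1}$ is simple; each $L_i$ is a subobject of $L'' \in \A_+$ of rank one, hence a line bundle by the same indecomposability argument. Applying the base case to each successive inclusion produces mutations whose composite is an equivalence $\A \xto{\sim} \A$ taking $L$ to $L''$. For arbitrary line bundles $L$ and $L'$, Lemma~\ref{le:lb} supplies a line bundle $L''$ with monomorphisms $L \hookrightarrow L''$ and $L' \hookrightarrow L''$ of finite length cokernel, and composing the two resulting equivalences yields one taking $L$ to $L'$.

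For the final assertion, since exceptional objects are preserved by equivalences, it suffices to exhibit a single exceptional line bundle. Take $L_0$ in the subcategory $\A^0 \simeq \coh\bbP^1_k$ provided by Theorem~\ref{th:axioms}(2). Each inclusion $\A^i \hookrightarrow \A^{i+1}$ is an expansion with left adjoint $i_\la$ satisfying $i_\la i \cong \Id$, so Lemma~\ref{le:leftmax}(3) gives $\Ext^n_{\A^{i+1}}(iC, iC) \cong \Ext^n_{\A^i}(C, C)$ for every $C \in \A^i$ and $n \ge 0$; iterating along the filtration yields $\End_\A(L_0) \cong k$ and $\Ext^1_\A(L_0, L_0) = 0$, so $L_0$ is exceptional. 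The main obstacle is the base case: establishing that a simple cokernel is forced to be $\tau^{-1} S_x$ and that the resulting extension is unique up to isomorphism, which rests on combining Serre duality with the sharp uniqueness statement of Proposition~\ref{pr:hom_lb}.
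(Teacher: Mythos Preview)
Your proof is correct and follows essentially the same strategy as the paper: reduce via Lemma~\ref{le:lb} and a filtration to the case of a simple cokernel, then identify the given extension with the tubular mutation $\e_x$. The paper's treatment of the base case is slightly more streamlined---it checks directly that the sequence $0\to L\to L'\to C\to 0$ induces the isomorphism $\Hom_\A(-,C)|_{\T_x}\xto{\sim}\Ext^1_\A(-,L)|_{\T_x}$ required by Lemma~\ref{le:mutat}, hence $L'=\e_x L$ by definition---whereas you first pin down $C\cong\tau^{-1}S_x$ and then use transitivity of the $\Aut(\tau^{-1}S_x)$-action on nonzero extensions; both arguments rest on Proposition~\ref{pr:hom_lb} and Serre duality, and your additional verification that mutations preserve line bundles and your explicit treatment of exceptionality (via Theorem~\ref{th:axioms}(2) and Lemma~\ref{le:leftmax}(3)) simply make explicit what the paper leaves to the reader.
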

\begin{proof}
We apply Lemma~\ref{le:lb}. Thus we may assume that there is an
exact sequence $\xi\colon 0\to L\to L'\to C\to 0$ with $C$ of finite
length. Using induction on the length of $C$, we may even assume
that $C$ is simple. Suppose that $C$ belongs to $\A_x$ with
$x\in\bfX$. Then $\xi$ induces an isomorphism
$\Hom_\A(-,C)|_{\T_x}\xto{\sim}\Ext^1_\A(-,L)|_{\T_x}$ by
Proposition~\ref{pr:hom_lb} and Serre duality. Thus $L'=\e_x L$ and
the tubular mutation given by $\e_x$ sends $L$ to $L'$; see
Corollary~\ref{co:mutat}.
\end{proof}

\subsection{Weight functions}

Let $k$ be a field and $\A$ a $k$-linear abelian category satisfying
(H1)--(H5). We associate to $\A$ a weight function and show that it
is an invariant of $\A$ which determines $\A$ up to equivalence.

In the following we identify the projective linear group $\PGL(2,k)$
with $\Aut\bbP^1_k$; see Proposition~\ref{pr:aut}.

A \index{weight function}\emph{weight function} $w\colon \bbP^1_k\to
\bbZ$ is a map which assigns to each closed point $x$ of $\bbP^1_k$ a
positive integer $w(x)$ such that $w(x)=1$ for almost all $x$. Two
weight functions $w,w'$ are \index{weight
function!equivalent}\emph{equivalent} if there exists some linear
transformation $\sigma\in\PGL(2,k)$ such that $w'(x)=w(\sigma x)$ for
every closed point $x$.  Given a collection $\bflambda =
(\la_1,\dots,\la_n)$ of distinct closed points $\la_i\in\bbP^1_k$, and
a sequence $\bfp=(p_1,\dots,p_n)$ of positive integers, there is
associated a weight function $w_{\bfp,\bflambda}$, where
$w_{\bfp,\bflambda}(\la_i)=p_i$ for $1\leq i\leq n$ and
$w_{\bfp,\bflambda}(x)=1$ otherwise.

It follows from our convention that each weight function $w$ corresponding
to a weighted projective line satisfies $w(x)=1$ if the point $x$ is not rational.

\begin{thm}[Lenzing]\label{th:canon_tiltobj}
Let $\A$ be a $k$-linear abelian category satisfying
\emph{(H1)--(H5)}. For each line bundle $L$, there exists a
canonically\footnote{This canonical choice provides another
justification for the term `canonical algebra'.} defined tilting
object $T_L$ which is unique up to  isomorphism. Moreover:
\begin{enumerate}
\item The object $T_L$ determines parameters $\bfp,\bflambda$ such
that $\End_\A(T_L)\cong C(\bfp,\bflambda)$. The parameters
$\bfp,\bflambda$ depend on a choice and any other choice gives
parameters $\bfp',\bflambda'$ such that the weight functions
$w_{\bfp,\bflambda}$ and $w_{\bfp',\bflambda'}$ are equivalent.
\item Let $M$ be a second line bundle. Then there exists an
equivalence $\A\xto{\sim}\A$ taking $L$ to $M$. Thus $\End_\A(T_L)$
is isomorphic to $\End_\A(T_{M})$ and the associated weight functions
are equivalent.
\end{enumerate}
\end{thm}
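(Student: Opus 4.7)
My plan follows the structure of Theorem~\ref{th:tiltobj} and has four main steps.

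\textbf{Construction.} Given a line bundle $L$, I would produce the remaining ingredients of $T_L$ as follows. By Theorem~\ref{th:axioms}(2), $\A$ admits a chain of expansions $\A^0\subseteq\A^1\subseteq\cdots\subseteq\A^r=\A$ with $\A^0$ equivalent to $\coh\bbP^1_k$; using Proposition~\ref{pr:shift} I may arrange that $L$ lies in $\A^0$. Inside $\A^0$, Proposition~\ref{pr:homog_tilt} supplies a second line bundle $L'$ together with a basis $b_0,b_1$ of $\Hom_\A(L,L')$ such that $L\oplus L'$ is a tilting object of $\A^0$. For each index $i$ with $p_i=p(x_i)>1$, the argument of Proposition~\ref{pr:tiltobj} yields a simple object $S_i\in\A_{x_i}$ and scalars $\la_i=[\la_{i0}:\la_{i1}]\in\bbP^1_k$ via an essentially unique exact sequence $0\to L\xto{\la_{i0}b_1-\la_{i1}b_0}L'\to S_i^{[p_i]}\to 0$, and the pullbacks in \eqref{eq:tilt} produce line bundles $L_i^{(j)}$ for $1\le j<p_i$. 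Setting $T_L=L\oplus L'\oplus\bigoplus_{i,j}L_i^{(j)}$, the proof of Theorem~\ref{th:tiltobj} shows that $T_L$ is a tilting object with $\End_\A(T_L)\cong C(\bfp,\bflambda)$.

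\textbf{Uniqueness of $T_L$ up to isomorphism.} The sequence $\bfp$ consists of the multiplicities $p(x)>1$ in the decomposition $\A_0=\coprod_{x\in\bfX}\A_x$ and so is intrinsic to $\A$. I would next show that the isomorphism class of $L'$ is determined by that of $L$. To that end I would characterize $L'$ intrinsically as a line bundle $M$ satisfying $\dim_k\Hom_\A(L,M)=2$, $\Ext^1_\A(L,M)=0=\Ext^1_\A(M,L)$, together with the existence of a monomorphism $L\to M$ whose cokernel is a simple object with trivial endomorphism ring. Using Proposition~\ref{pr:shift} to reduce to the case when $L$ lies in an embedded copy $\coh\bbP^1_k\simeq\A^0\hookrightarrow\A$, one verifies via Propositions~\ref{pr:Grothendieck} and \ref{pr:homog_tilt} that any such $M$ must coincide up to isomorphism with the next shift of $L$ inside $\A^0$. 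The line bundles $L_i^{(j)}$ are then built from $L$, $L'$, and the unique simple $S_i\in\A_{x_i}$ admitting a non-zero morphism from $L$ (Proposition~\ref{pr:hom_lb}) through successive pullbacks along the chain $(\tau^{-1}S_i)_{[1]}\rightarrowtail\cdots\rightarrowtail S_i^{[p_i]}$; universality of pullbacks forces each $L_i^{(j)}$ to be unique up to isomorphism.

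\textbf{Part (1).} The algebra identification $\End_\A(T_L)\cong C(\bfp,\bflambda)$ is immediate from the defining relations $c_i(\la_{i0}b_1-\la_{i1}b_0)=0$, which hold because each $c_i$ factors through $S_i^{[p_i]}$. A different basis $b_0',b_1'$ of $\Hom_\A(L,L')$ differs from $b_0,b_1$ by an invertible $2\times 2$ matrix $\sigma$ over $k$, and this transformation replaces each $[\la_{i0}:\la_{i1}]$ by $\sigma\cdot[\la_{i0}:\la_{i1}]$. Passing to $\PGL(2,k)=\Aut\bbP^1_k$ via Proposition~\ref{pr:aut}, the weight functions $w_{\bfp,\bflambda}$ and $w_{\bfp,\bflambda'}$ are equivalent. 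Different choices of the simples $S_i$ within $\A_{x_i}$ only cyclically rotate the uniserial chain and so do not alter $\bflambda$.

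\textbf{Part (2) and the main obstacle.} Proposition~\ref{pr:shift} supplies an equivalence $F\colon\A\xto{\sim}\A$ with $F(L)\cong M$. Since every ingredient of the construction of $T_L$ (the line bundle $L'$, the simples $S_i$, the scalars $\la_i$ up to $\PGL(2,k)$, and the pullbacks in \eqref{eq:tilt}) is preserved by an equivalence of abelian categories, we obtain $F(T_L)\cong T_M$; hence $\End_\A(T_L)\cong\End_\A(T_M)$ and the associated weight functions agree up to equivalence. The main obstacle is the intrinsic uniqueness of $L'$ asserted in the second step: one must show that different chains of expansions $\A^0\subseteq\cdots\subseteq\A^r=\A$ produce isomorphic line bundles $L'$, which requires a characterization of $L'$ inside $\A$ that does not refer to the auxiliary embedding of $\coh\bbP^1_k$.
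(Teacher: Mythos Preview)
Your overall plan is sound and close to the paper's, but you have made the uniqueness step harder than necessary, and the ``main obstacle'' you identify dissolves once you adopt the paper's device.

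The paper does not attempt a morphism-theoretic characterization of $L'$. Instead it builds each $L_i^{(j)}$ directly from $L$ via the chain of extensions in Lemma~\ref{le:filtr}: starting from $\Ext^1_\A(\tau^{-1}S_i,L)\neq 0$ (where $S_i$ is the unique simple in $\A_{x_i}$ with $\Hom_\A(L,S_i)\neq 0$, supplied by Proposition~\ref{pr:hom_lb}), one obtains monomorphisms $L\to L_i^{(1)}\to\cdots\to L_i^{(p_i)}$ with successive cokernels $\tau^{-j}S_i$. The class $[L_i^{(j)}]=[L]+\sum_{l=1}^j[\tau^{-l}S_i]$ in $K_0(\A)$ therefore depends only on $L$, $i$, and $j$. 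Since every line bundle is exceptional (Proposition~\ref{pr:shift}), Proposition~\ref{pr:HR1}(3) forces each $L_i^{(j)}$ to be determined up to isomorphism by its class, hence by $L$ alone. For $L'$ one sets $L'=L_i^{(p_i)}$ and checks, via the inclusion $K_0(\A^0)\to K_0(\A)$ together with Lemma~\ref{le:homog_except}, that $[L_i^{(p_i)}]$ is independent of $i$; exceptionality then makes all the $L_i^{(p_i)}$ isomorphic. Proposition~\ref{pr:shift} is invoked only afterwards, to transport this $T_L$ to the tilting object of Theorem~\ref{th:tiltobj} and thereby verify the tilting property.

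This reduces your second step to a two-line argument and removes any dependence on a chosen chain of expansions. Your proposed intrinsic characterization of $L'$ is neither needed nor obviously sufficient: you would still have to exclude line bundles $M\notin\A^0$ satisfying your Hom/Ext conditions, and that is exactly what the $K_0$-plus-exceptionality argument does for free. The remainder of your proposal, namely Parts~(1) and~(2) via a change of basis in $\Hom_\A(L,L')$ and Proposition~\ref{pr:shift}, matches the paper.
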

\begin{proof}
Let $(x_1,\dots, x_n)$ be the collection of distinct points
$x\in\bfX$ with $p(x)>1$ and set $\bfp=(p_1,\dots,p_n)$ with
$p_i=p(x_i)$ for each $i$. We apply Proposition~\ref{pr:hom_lb} and
choose for each $i$ a simple object $S_i=S_{x_i}$ such that
$\Hom_\A(L,S_i)\neq 0$. Thus $\Ext_\A^1(\tau^{-1} S_i,L)\neq 0$ by
Serre duality, and Lemma~\ref{le:filtr} yields a chain of
monomorphism
\begin{equation}\label{eq:tilt_chain}
L\lto[\psi_1] L_i^{(1)} \lto[\psi_2] L_i^{(2)}\lto[\psi_3]\cdots
\lto[\psi_{p_i}] L_i^{(p_i)}
\end{equation}
with $\Coker\psi_j\cong\tau^{-j}S_i$ for all $j$.  Note that each
object $L_i^{(j)}$ is a line bundle and therefore exceptional.

The class $[L_i^{(p_i)}]=[L]-\sum_{j=1}^{p(x)}[\tau^{j}S_i]$ does not
depend on $i$ since the inclusion $K_0(\A^0)\to K_0(\A)$ sends the
class of the simple object $S_i'$ of $\A_{x_i}^0$ to
$\sum_{j=1}^{p(x)}[\tau^{j}S_i]$; see Lemma~\ref{le:simple}. Here, we
use that $S_i'$ is a simple object of degree one; so its class in
$K_0(\A^0)$ is independent of $i$ by Lemma~\ref{le:homog_except}. Thus
the object $L_i^{(p_i)}$ does not depend on $i$ by
Proposition~\ref{pr:HR1}, and we denote it by $L'$.

The construction of the $L_i^{(j)}$ is parallel to the construction in
the proof of Theorem~\ref{th:tiltobj}, and we refer to the commutative
diagram \eqref{eq:tilt} illustrating it.

Next observe that each object $L_i^{(j)}$ depends only on $L$ and $x_i$
because its class has this property and the object is exceptional.
Thus the object
$$T_L=L\oplus L'\oplus(L_1^{(1)}\oplus\dots\oplus
L_1^{(p_1-1)})\oplus \dots \oplus(L_n^{(1)}\oplus\dots\oplus
L_n^{(p_n-1)})$$ depends up to isomorphism only on $L$. In
particular, the object equals up to  equivalence the tilting object
constructed in the proof of Theorem~\ref{th:tiltobj}, because for
each pair of line bundles $M,N$ there exists an equivalence
$\A\xto{\sim}\A$ taking $M$ to $N$, by Proposition~\ref{pr:shift}.
It follows that $T_L$ is a tilting object for $\A$.

(1) Choosing a basis $b_0,b_1$ of $\Hom_\A(L,L')$, we obtain
rational points $\la_i=[\la_{i0}:\la_{i1}]$ in $\bbP^1_k$ such that
$\p_i=\lambda_{i0} b_1 - \lambda_{i1} b_0$, where $\p_i$ is the
composite of the morphisms in \eqref{eq:tilt_chain}. Note that each
$\p_i$ depends on the choice of the $\psi_j$, but it is unique up to
a non-zero scalar.  It follows that $\End_\A(T_L)$ is isomorphic to
$C(\bfp,\bflambda)$, where $\bflambda=(\la_1,\dots,\la_n)$.  Any
other choice of the basis $b_0,b_1$ gives another set of parameters
$\bflambda'$ and a linear transformation $\sigma\in\PGL(2,k)$ such
that $\la_i'=\sigma(\la_i)$ for each $i$. Thus the weight function
$w_{\bfp,\bflambda}$ is unique up to  equivalence.

(2) Apply Proposition~\ref{pr:shift}.
\end{proof}

\begin{rem}
There is an analogue of Theorem~\ref{th:canon_tiltobj} with the canonical algebra
$C(\bfp,\bflambda)$ replaced by the squid algebra $\Sq(\bfp,\bflambda)$.
\end{rem}

The following example gives an explicit description of the tilting
object for the category of coherent sheaves on a weighted projective
line; see \cite[Proposition~4.1]{GL1987}.

\begin{exm}\label{ex:can}
Let $\bbX =(\bbP^1_k,\bflambda,\bfp)$ be a weighted projective
line. Then $T_\Oc=\bigoplus_{0\le\ox\le\oc}\Oc(\ox)$ is the tilting
object for $\coh\bbX$ that is associated to the line bundle $\Oc$. The
endomorphism algebra is isomorphic to the canonical algebra
$C(\bfp,\bflambda)$.
\end{exm}

It is now a consequence of Theorem~\ref{th:canon_tiltobj} that a
$k$-linear abelian category satisfying (H1)--(H5) is determined up
to equivalence by its associated weight function.

\begin{cor}\label{co:wpl_wf}
For two weighted projective lines $\bbX =(\bbP^1_k,\bflambda,\bfp)$
and $\bbX' =(\bbP^1_k,\bflambda',\bfp')$ over a field $k$, the
following are equivalent:
\begin{enumerate}
\item The weight functions $w_{\bfp,\bflambda}$ and
$w_{\bfp',\bflambda'}$ are equivalent.
\item The algebras $C(\bfp,\bflambda)$ and $C(\bfp',\bflambda')$ are isomorphic.
\item The categories $\coh\bbX$ and $\coh\bbX'$ are equivalent.
\end{enumerate}
\end{cor}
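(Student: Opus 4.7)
The plan is to prove the cyclic chain (1) $\Rightarrow$ (2) $\Rightarrow$ (3) $\Rightarrow$ (1), leveraging Example~\ref{ex:can}, Proposition~\ref{pr:der_equiv}, and Theorem~\ref{th:canon_tiltobj}.

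For (1) $\Rightarrow$ (2), the plan is to produce an explicit algebra isomorphism by transporting the defining quiver and relations through the linear transformation. If $w_{\bfp,\bflambda}$ and $w_{\bfp',\bflambda'}$ are equivalent, there exists $\sigma\in\PGL(2,k)$ with $w'(x)=w(\sigma x)$ for every closed point. Then $\sigma$ yields a bijection between the supports such that after a reindexing we may assume $\bfp=\bfp'$ and $\la_i'=\sigma(\la_i)$ for all $i$. Lifting $\sigma$ to a linear automorphism $\tilde\sigma$ of $k\{b_0,b_1\}$, the assignment sending the arrows $b_0,b_1$ to $\tilde\sigma(b_0),\tilde\sigma(b_1)$ and fixing the arrows $x_i$ preserves the defining relations $x_i^{p_i}=\la_{i0}b_1-\la_{i1}b_0$ up to the change-of-basis on $\{b_0,b_1\}$, hence extends to an isomorphism $C(\bfp,\bflambda)\xto{\sim}C(\bfp',\bflambda')$.

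For (2) $\Rightarrow$ (3), I will invoke Example~\ref{ex:can}, which gives $T_\Oc=\bigoplus_{0\le\ox\le\oc}\Oc(\ox)$ as a tilting object in $\coh\bbX$ with endomorphism algebra $C(\bfp,\bflambda)$, and similarly $T'_\Oc$ in $\coh\bbX'$ with endomorphism algebra $C(\bfp',\bflambda')$. An isomorphism $C(\bfp,\bflambda)\cong C(\bfp',\bflambda')$ identifies these endomorphism algebras, and the induced equivalence $\add T_\Oc\xto{\sim}\add T'_\Oc$ sends each indecomposable direct summand $\Oc(\ox)$ to a line bundle (rank one), since both sides consist of line bundles. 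Thus the rank is preserved on indecomposable summands, and Proposition~\ref{pr:der_equiv} yields the desired equivalence $\coh\bbX\xto{\sim}\coh\bbX'$.

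For (3) $\Rightarrow$ (1), the plan is to apply Theorem~\ref{th:canon_tiltobj}, which shows that the weight function attached to a category satisfying (H1)--(H5) is an equivalence invariant. An equivalence $F\colon\coh\bbX\xto{\sim}\coh\bbX'$ sends a line bundle $L$ to a line bundle $F(L)$ (since line bundles are characterized in intrinsic terms: indecomposable torsion-free objects of rank one). By the canonical nature of the construction of $T_L$, the equivalence $F$ takes $T_L$ to $T_{F(L)}$, so $\End(T_L)\cong \End(T_{F(L)})$. By Theorem~\ref{th:canon_tiltobj}(1) and (2), the corresponding weight functions, hence $w_{\bfp,\bflambda}$ and $w_{\bfp',\bflambda'}$, are equivalent. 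The only delicate point here, which I expect to be the main obstacle in the write-up, is the checking that the equivalence $F$ genuinely intertwines the canonical tilting-object constructions, so that transport of structure yields the identification of parameters up to $\PGL(2,k)$; this boils down to the fact that the construction in the proof of Theorem~\ref{th:canon_tiltobj} is purely category-theoretic (using chains of monomorphisms with simple cokernels of the form $\tau^{-j}S_i$, which are preserved by any equivalence commuting with the Serre functor).
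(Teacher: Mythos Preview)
Your proposal is correct and follows essentially the same route as the paper's proof: the same cyclic chain (1) $\Rightarrow$ (2) $\Rightarrow$ (3) $\Rightarrow$ (1), using an explicit linear change of basis on $b_0,b_1$ for the first implication, the canonical tilting object together with Proposition~\ref{pr:der_equiv} for the second (the paper phrases the construction via the modification in Theorem~\ref{th:tiltobj} rather than citing Example~\ref{ex:can}, but these give the same object), and Theorem~\ref{th:canon_tiltobj} for the third. Your remark that rank is automatically preserved because every indecomposable summand of $T_\Oc$ is a line bundle is exactly the point needed to invoke Proposition~\ref{pr:der_equiv}, and your justification for why $F$ transports $T_L$ to $T_{FL}$ is, if anything, more explicit than the paper's.
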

\begin{proof}
(1) $\Rightarrow$ (2): Suppose that $w_{\bfp,\bflambda}$ and
$w_{\bfp',\bflambda'}$ are equivalent via some linear transformation
$\sigma=\smatrix{\s_{00}&\s_{01}\\ \s_{10}&\s_{11}}$ in $\PGL(2,k)$.
Thus we may assume that the points of $\bflambda$ and $\bflambda'$
are related via $\sigma (\la_i)=\la_i'$ for $1\le i\le n$. The
algebra $C(\bfp,\bflambda)$ is generated by a collection of arrows
$b_0,b_1,x_i$; analogously $C(\bfp',\bflambda')$ is generated by
arrows $b'_0,b'_1,x'_i$.  We obtain an isomorphism $f\colon
C(\bfp,\bflambda)\xto{\sim} C(\bfp',\bflambda')$ by defining
$f(b_0)=\s_{11}b'_0-\s_{01}b'_1$,  $f(b_1)=\s_{00}b'_1-\s_{10}b'_0$
and $f(x_i)=x'_i$ ($1\le i\le n$).

(2) $\Rightarrow$ (3): The category $\coh\bbX$ admits a tilting object $T$
    with endomorphism algebra $C(\bfp,\bflambda)$, which is obtained from
the tilting object
$$\Oc\oplus \Oc(\oc)\oplus(S_1^{[1]}\oplus\dots\oplus
S_1^{[p_1-1]})\oplus \dots \oplus(S_n^{[1]}\oplus\dots\oplus
S_n^{[p_n-1]})$$ by modifying it as in the proof of
Theorem~\ref{th:tiltobj}.  Analogously, $\coh\bbX'$ admits a tilting
object $T'$ with endomorphism algebra $C(\bfp',\bflambda')$. If both
algebras are isomorphic, then Proposition~\ref{pr:der_equiv} implies
that $\coh\bbX$ and $\coh\bbX'$ are equivalent.

(3) $\Rightarrow$ (1): Suppose that there is an equivalence
$F\colon\coh\bbX\xto{\sim}\coh\bbX'$. We apply
Theorem~\ref{th:canon_tiltobj}. Thus the functor $F$ takes for any
line bundle $L$ of $\coh\bbX$ the canonically defined tilting object
$T_L$ to $T_{FL}$. The associated weight function for $T_L$ is
equivalent to $w_{\bfp,\bflambda}$, whereas for $T_{FL}$ it is
equivalent to $w_{\bfp',\bflambda'}$. It follows that
$w_{\bfp,\bflambda}$ and $w_{\bfp',\bflambda'}$ are equivalent.
\end{proof}

\section{Further topics}

In this section we list a few topics which have attracted interest in
the past, and which are areas of present research. The lists of papers
is certainly not complete and we refer to the references in the listed
papers for more information.

\subsection*{1}
The classification of indecomposable vector bundles on weighted
projective lines: the trichotomy `domestic/tubular/wild'
based on the Euler characteristic.
\medskip

\noindent{\small W. Geigle\ and\ H. Lenzing, A class of weighted
projective curves arising in representation theory of
finite-dimensional algebras, in {\it Singularities, representation of
algebras, and vector bundles (Lambrecht, 1985)}, 265--297, Lecture
Notes in Math., 1273, Springer, Berlin, 1987.
\smallskip

\noindent H. Lenzing\ and\ J. A. de la Pe\~na, Wild canonical
algebras, Math. Z. {\bf 224} (1997), no.~3, 403--425.
\smallskip

\noindent H. Lenzing, Hereditary categories, in {\it Handbook of
tilting theory}, 105--146, Cambridge Univ. Press, Cambridge, 2007.}

\subsection*{2}
Noncommutative curves of genus zero: the study of weighted projective
lines over arbitrary base fields.
\medskip

\noindent{\small H. Lenzing, Representations of finite dimensional
algebras and singularity theory, in {\it Trends in ring theory
(Miskolc, 1996)}, 71--97, Amer. Math. Soc., Providence, RI, 1998.
\smallskip

\noindent H. Lenzing\ and\ J. A. de la Pe\~na,
Concealed-canonical algebras and separating tubular families,
Proc. London Math. Soc. (3) {\bf 78} (1999), no.~3, 513--540.
\smallskip

\noindent D. Kussin, Noncommutative curves of genus zero:
related to finite dimensional algebras, Mem. Amer. Math. Soc. {\bf
201} (2009), no.~942, x+128 pp.}

\subsection*{3}
Graded singularities: the study of weighted projective lines in terms
of graded singularities (maximal Cohen-Macaulay modules, vector
bundles, the triangulated category of singularities in the sense of
Buchweitz and Orlov).
\medskip

\noindent{\small H. Kajiura, K. Saito\ and\ A. Takahashi, Matrix
factorization and representations of quivers. II. Type $ADE$ case,
Adv. Math. {\bf 211} (2007), no.~1, 327--362.
\smallskip

\noindent H. Lenzing\ and\ J. A. de la Pe\~na, Extended canonical
algebras and Fuchsian singularities, arXiv:math/0611532, Math. Z.,
to appear.
\smallskip

\noindent H. Lenzing\ and\ J. A. de la Pe\~na, Spectral analysis
of finite dimensional algebras and singularities, in {\it Trends in
representation theory of algebras and related topics}, 541--588,
Eur. Math. Soc., Z\"urich, 2008.
\smallskip

\noindent A. Takahashi, Weighted projective lines associated to
regular systems of weights of dual type, arXiv:0711.3907, Adv. Stud.
Pure Math., to appear. }

\subsection*{4}
Kac's theorem, Hall algebras: the theorem characterizes the dimension types of
indecomposable coherent sheaves over weighted projective lines in
terms of loop algebras of Kac-Moody Lie algebras; the proof uses Hall algebras.
\medskip

\noindent{\small W. Crawley-Boevey, Kac's Theorem for weighted
projective lines, arXiv:math/0512078, J. Eur. Math. Soc., to appear.
\smallskip

\noindent W. Crawley-Boevey, Quiver algebras, weighted
projective lines, and the Deligne-Simpson problem, in {\it
International Congress of Mathematicians. Vol. II}, 117--129,
Eur. Math. Soc., Z\"urich, 2006.
\smallskip

\noindent O. Schiffmann, Noncommutative projective curves and quantum
loop algebras, Duke Math. J. {\bf 121} (2004), no.~1, 113--168.}

\printindex


\begin{thebibliography}{99}


\bibitem{ARS} M. Auslander, I. Reiten and S.O. Smal{\o}, {\it Representation
theory of artin algebras}, Cambridge Univ. Press, Cambridge, 1995.
%
\bibitem{B} A. A. Be\u\i linson, Coherent sheaves on ${\bf P}\sp{n}$
and problems in linear algebra, Funktsional. Anal. i Prilozhen. {\bf
12} (1978), no.~3, 68--69.
%
\bibitem{BBD}A. A. Be\u\i linson, J. Bernstein\ and\ P. Deligne,
Faisceaux pervers, in {\it Analysis and topology on singular spaces, I
(Luminy, 1981)}, 5--171, Ast\'erisque, 100, Soc. Math. France, Paris,
1982.
%
\bibitem{Bi}G. D. Birkhoff, A theorem on matrices of analytic
functions, Math. Ann. {\bf 74} (1913), no.~1, 122--133.
%
\bibitem{BB} S. Brenner\ and\ M. C. R. Butler, Generalizations of the
Bernstein-Gelfand-Ponomarev reflection functors, in
{\it Representation theory, II (Proc. Second Internat. Conf., Carleton
Univ., Ottawa, Ont., 1979)}, 103--169, Lecture Notes in Math., 832,
Springer, Berlin, 1980.
%
\bibitem{CK} X. W. Chen\ and\ H. Krause, Expansions of abelian
  categories, arXiv:1009.3456.
%
\bibitem{EET} E. Enochs, S. Estrada\ and\ B. Torrecillas, An
elementary proof of Grothendieck's theorem, in {\it Abelian groups,
rings, modules, and homological algebra}, 67--73, Chapman \& Hall/CRC,
Boca Raton, FL, 2006.
%
\bibitem{G}P. Gabriel, Des cat\'egories ab\'eliennes,
Bull. Soc. Math. France {\bf 90} (1962), 323--448.
%
\bibitem{GZ}P. Gabriel\ and\ M. Zisman, {\it Calculus of fractions and
homotopy theory}, Springer-Verlag New York, Inc., New York, 1967.
%
\bibitem{GL1987} W. Geigle\ and\ H. Lenzing, A class of weighted
projective curves arising in representation theory of
finite-dimensional algebras, in {\it Singularities, representation of
algebras, and vector bundles (Lambrecht, 1985)}, 265--297, Lecture
Notes in Math., 1273, Springer, Berlin, 1987.
%
\bibitem{GL1991} W. Geigle\ and\ H. Lenzing, Perpendicular categories
with applications to representations and sheaves, J. Algebra {\bf 144}
(1991), no.~2, 273--343.
%
\bibitem{Gr} A. Grothendieck, Sur la classification des fibr\'es
holomorphes sur la sph\`ere de Riemann, Amer. J. Math. {\bf 79}
(1957), 121--138.
%
\bibitem{Ha} D. Happel, A characterization of hereditary categories
with tilting object, Invent. Math. {\bf 144} (2001), no.~2, 381--398.
%
\bibitem{HRS}D. Happel, I. Reiten\ and\ S. O. Smal\o, Tilting in
abelian categories and quasitilted algebras,
Mem. Amer. Math. Soc. {\bf 120} (1996), no.~575, {\rm viii}+ 88 pp.
%
\bibitem{H} R. Hartshorne, {\it Algebraic geometry}, Springer, New York, 1977.
%
\bibitem{Ku} D. Kussin, Graduierte Faktorialit\"at und die
Parameterkurven tubularer Familien, Dissertation, Paderborn, 1997.
%
\bibitem{L} H. Lenzing, Hereditary Noetherian categories with a
tilting complex, Proc. Amer. Math. Soc. {\bf 125} (1997), no.~7,
1893--1901.
%
\bibitem{L2007} H. Lenzing, Hereditary categories, in {\it Handbook of
tilting theory}, 105--146, Cambridge Univ. Press, Cambridge, 2007.
%
\bibitem{LM1993} H. Lenzing\ and\ H. Meltzer, Sheaves on a weighted
projective line of genus one, and representations of a tubular
algebra, in {\it Representations of algebras (Ottawa, ON, 1992)},
313--337, Amer. Math. Soc., Providence, RI, 1993.
%
\bibitem{LM} H. Lenzing\ and\ H. Meltzer, Tilting sheaves and concealed-canonical
algebras, in {\it Representation theory of algebras (Cocoyoc, 1994)},
455--473, Amer. Math. Soc., Providence, RI, 1996.
%
\bibitem{M} H. Meltzer, Tubular mutations, Colloq. Math. {\bf 74}
(1997), no.~2, 267--274.
%
\bibitem{R1984} C. M. Ringel, {\it Tame algebras and integral
quadratic forms}, Lecture Notes in Math., 1099, Springer, Berlin,
1984.
%
\bibitem{R} C. M. Ringel, The canonical algebras, in {\it Topics in
algebra, Part 1 (Warsaw, 1988)}, 407--432, PWN, Warsaw, 1990.
%
\bibitem{S} J.-P. Serre, Faisceaux alg\'ebriques coh\'erents, Ann. of
Math. (2) {\bf 61} (1955), 197--278.
%
\bibitem{V}J.-L. Verdier, Des cat\'egories d\'eriv\'ees des
cat\'egories ab\'eliennes, Ast\'erisque No. 239 (1996), {\rm xii}+253
pp.
%
\bibitem{W} C. A. Weibel, {\it An introduction to homological algebra}, Cambridge
Univ. Press, Cambridge, 1994.
%
\end{thebibliography}
\end{document}